\def\bC{{\mathbb{C}}}
\def\bK{{\mathbb{K}}}
\def\bN{{\mathbb{N}}}
\def\bP{{\mathbb{P}}}
\def\bQ{{\mathbb{Q}}}
\def\bR{{\mathbb{R}}}
\def\bZ{{\mathbb{Z}}}
\def\cA{{\mathcal{A}}}
\def\cC{{\mathcal{C}}}
\def\cE{{\mathcal{E}}}
\def\cF{{\mathcal{F}}}
\def\cL{{\mathcal{L}}}
\def\cM{{\mathcal{M}}}
\def\cO{{\mathcal{O}}}
\def\cX{{\mathcal{X}}}
\def\fo{{\mathfrak{o}}}
\def\m{\mathfrak m}
\def\an{\operatorname{an}}
\def\ar{\operatorname{ar}}
\def\asy{\operatorname{asy}}
\def\Big{\operatorname{Big}}
\def\diff{\mathrm{d}}
\def\dc{\mathrm{d}^c}
\def\DFS{\operatorname{DFS}}
\def\Div{\operatorname{Div}}
\def\div{\operatorname{div}}
\def\FS{\operatorname{FS}}
\def\gen{\operatorname{gen}}
\def\gr{\operatorname{gr}}
\def\LSC{\operatorname{LSC}}
\def\MA{\operatorname{MA}}
\def\PDIV{\operatorname{PDIV}}
\def\Pic{\operatorname{Pic}}
\DeclareMathOperator{\PSH}{PSH}
\def\qm{\operatorname{qm}}
\def\red{\operatorname{red}}
\def\reg{\operatorname{reg}}
\def\Spec{\operatorname{Spec}}
\def\triv{\operatorname{triv}}
\def\USC{\operatorname{USC}}
\def\um{\operatorname{um}}
\def\vol{\operatorname{vol}}
\def\tilde{\widetilde}
\def\setminus{\smallsetminus}
\def\emptyset{\varnothing}
\declaretheorem[name=Theorem,refname={Theorem},style=plain,numberwithin=section]{theorem}
\declaretheorem[name=Theorem,refname={Theorem},style=plain,numbered=no]{theorem*}
\declaretheorem[name=Proposition,refname={Proposition},style=plain,sibling=theorem]{proposition}
\declaretheorem[name=Proposition,refname={Proposition},style=plain,numbered=no]{proposition*}
\declaretheorem[name=Lemma,refname={Lemma},style=plain,sibling=theorem]{lemma}
\declaretheorem[name=Lemma,refname={Lemma},style=plain,numbered=no]{lemma*}
\declaretheorem[name=Definition,refname={Definition},style=definition,sibling=theorem]{definition}
\declaretheorem[name=Definition,refname={Definition},style=definition,numbered=no]{definition*}
\declaretheorem[name=Remark,refname={Remark},style=definition,sibling=theorem]{remark}
\declaretheorem[name=Remark,refname={Remark},style=remark,numbered=no]{remark*}
\declaretheorem[name=Corollary,refname={Corollary},style=plain,sibling=theorem]{corollary}
\declaretheorem[name=Corollary,refname={Corollary},style=plain,numbered=no]{corollary*}
\declaretheorem[name=Example,refname={Example},style=definition,sibling=theorem]{example}
\declaretheorem[name=Conjecture,refname={Conjecture},style=plain,sibling=theorem]{conjecture}
\newcommand{\va}{|\cdot|}
\newcommand{\fonction}[5]{\begin{array}{l|rcl}
#1: & #2 & \longrightarrow & #3 \\
    & #4 & \longmapsto & #5 \end{array}}
\begin{document}

\title{Differentiability of the $\chi$-volume function over an adelic curve}
\date{\today}
\author{Antoine Sédillot}
\address{Universit\'e Paris Cité, Sorbonne Universit\'e, CNRS, Institut de Math\'ematiques de Jussieu-Paris Rive Gauche, F-75013 Paris, France}
\email{antoine.sedillot@imj-prg.fr}
\begin{abstract}
In this article, we show a differentiability property of the $\chi$-volume function on the ample cone of adelic line bundles over an adelic curve. This result is deduced from a non-Archimedean counterpart of the differentiability result of Witt Nyström in \cite{Nystrom13}. As an application, we give an adelic curve counterpart to the logarithmic equidistribution result of \cite{CLT09}.
\end{abstract}

\maketitle

\tableofcontents

\section{Introduction}
\subsection{Motivation}

In algebraic geometry, a fundamental problem is the study of graded linear series of a line bundle on a projective algebraic variety. Let $k$ be a field, $X$ be a projective variety over $k$, and $L$ be an invertible $\cO_X$-module. For all $n\in\bN$, we denote 
\begin{align*}
h^0(X,nL) := \dim_{k}(H^0(X,nL)). 
\end{align*}

In this context, (\cite{LazarsfeldI04}, Example 1.2.20) yields a bound $h^0(X,nL) = O(n^d)$, where $d=\dim(X)$. In the ample case, the asymptotic Riemann-Roch theorem gives the asymptotic estimate
\begin{align*}
h^0(X,nL) = \frac{\deg_L(X)}{d!}n^d + O(n^{d-1}).
\end{align*}
It is thus natural to study the class of invertible $\cO_{X}$-modules $L$ such that
\begin{align*}
\vol(L) := \displaystyle\limsup_{n\to +\infty}\frac{ h^0(X,nL)}{n^d/d!} >0. 
\end{align*}
These line bundles are called \emph{big}.  

The theory of Okounkov bodies, introduced by Okounkov in \cite{Okounkov96} and then systematically developed, independently, by Lazarsfeld and Musta\textcommabelow{t}\u{a} in \cite{LazarsfeldMustata09} and by Kaveh and Khovanskii in \cite{KK12}, gives an interpretation of the algebra of sections of a line bundle on a projective variety from the point of view of convex geometry. Let $L$ be a big line bundle on a projective variety $X$ of dimension $d$. Then one can construct a convex body in $\bR^{d}$, denoted by $\Delta(L)$, that encodes a lot of information about $L$. Theorem A of \cite{LazarsfeldMustata09} gives the equality
\begin{align*}
\vol(L) = d!\vol_{\bR^d}(\Delta(L)).
\end{align*}
This volume function is continuous and even differentiable on the big cone of $X$ (\cite{BFJ09}, \cite{LazarsfeldMustata09}). 
In this article, we study variants of this volume function in the case where the base field is endowed with additional metric data.

In Arakelov geometry, the analogue of a projective variety is an arithmetic projective variety, namely an integral scheme $\cX$, which is regular, projective and flat over $\Spec(\bZ)$, with $\dim(X) =: d+1$ The analogue of a line bundle is a hermitian line bundle, namely the data $\overline{\cL} =(\cL,\varphi)$, where $\cL$ is an invertible $\cO_{\cX}$-module and $\varphi$ is a smooth (hermitian) metric on the complex analytification of $\cL_{\bC}$. For all $n\in \bN$, for all $s\in H^{0}(\cX,n\cL) \otimes_{\bZ} \bR \subseteq H^{0}(\cX_{\bC},n\cL_{\bC})$, let
\begin{align*}
\|s\|_{n\varphi} := \displaystyle\sup_{x\in \cX_{\bC}} |s|_{n\varphi}(x).
\end{align*}
Then $\|\cdot\|_{n\varphi}$ is a norm on $H^{0}(\cX,n\cL) \otimes_{\bZ} \bR$ and $(H^{0}(\cX,n\cL),\|\cdot\|_{n\varphi})$ is an euclidean lattice. If we fix a Haar measure $\vol$ on $H^{0}(\cX,n\cL) \otimes_{\bZ} \bR$, the arithmetic Euler-Poincaré characteristic of $(H^{0}(\cX,n\cL),\|\cdot\|_{n\varphi})$ is given by 
\begin{align*}
\chi(H^{0}(\cX,n\cL),\|\cdot\|_{n\varphi}) := \ln\frac{\vol\{s \in H^{0}(\cX,n\cL)\otimes_{\bZ} \bR : \|s\|_{n\varphi} \leq 1\}}{\mathrm{covol}(H^{0}(\cX,n\cL),\|\cdot\|_{n\varphi})}.
\end{align*}
The arithmetic Hilbert-Samuel theorem states that if $\cL$ is relatively ample and if $\varphi$ is a  semi-positive metric, then the sequence
\begin{align*}
\left(\frac{\chi(H^{0}(\cX,n\cL),\|\cdot\|_{n\varphi})}{n^{d+1}/(d+1)!}\right)_{n\in\bN}
\end{align*}
converges to the arithmetic intersection product $(\overline{\cL}^{d+1})$, as defined in \cite{GilletSoule90}. In general, the quantity
\begin{align*}
\widehat{\vol}_{\chi}(\overline{\cL}) := \displaystyle\limsup_{n\to +\infty} \frac{\chi(H^{0}(\cX,n\cL),\|\cdot\|_{n\varphi})}{n^{d+1}/(d+1)!}
\end{align*}
is called the $\chi$-volume of $\overline{\cL}$. In the litterature, the $\chi$-volume is also called "sectional capacity" \cite{Rumely00}. It is linked to the arithmetic volume of $\overline{\cL}$, which is defined by
\begin{align*}
\widehat{\vol}(\overline{\cL}) := \displaystyle\limsup_{n\to +\infty} \frac{\widehat{h}^0(n\overline{\cL})}{n^{d+1}/(d+1)!},
\end{align*}
where $\widehat{h}^0(n\overline{\cL}) := \ln \#\left\{s \in H^{0}(\cX,n\cL) : \|s\|_{n\varphi} \leq 1\right\}$. The continuity of the arithmetic volume was proven by Moriwaki in \cite{Moriwaki06} and the continuity of the $\chi$-volume was shown by Ikoma in \cite{Ikoma13}. Using an arithmetic version of Siu's inequlality of Yuan \cite{Yuan08}, Chen gave the differentiability of $\widehat{\vol}$ in \cite{Chen11}. We mention as well that the study of the differentiability of a variant of the arithmetic volume was done in \cite{Ikoma18}. Equidistribution problems are known to be a remarkable application of the differentiability of various arithmetic volume functions  \cite{SUZ97}, \cite{Yuan08}, \cite{CLT09}.

More recently, an Arakelov theory over an arbitrary countable field was developed by Chen and Moriwaki in \cite{ChenMori}. Let $K$ be a field. An adelic structure over $K$ is the data $((\Omega,\cA,\nu),\phi)$ where $(\Omega,\cA,\nu)$ is a measure space and $\phi = (\va_{\omega})_{\omega\in \Omega}$ is a family of absolute values on $K$ satisfying the condition
\begin{align*}
\forall a \in K^{\times}, \quad (\omega\in\Omega) \mapsto \ln|a|_{\omega}
\end{align*} 
is $\cA$-measurable and $\nu$-integrable. In this case, we say that $S:=(K,(\Omega,\cA,\nu),\phi)$ is an adelic curve which is called proper if the product formula
\begin{align*}
\forall a\in K^{\times},\quad \int_{\Omega}\ln|a|_{\omega} \;\nu(\diff\omega) = 0 
\end{align*}
is satisfied. In this context, the avatar of a vector bundle is a so-called adelic vector bundle on $S$ : it is the data $\overline{E} := (E,\xi)$ where $E$ is vector space of finite rank over $K$ and $\xi=(\|\cdot\|_{\omega})_{\omega\in \Omega}$ is a family of norm on the $K_\omega$-vector spaces $E_{\omega} := E\otimes_K K_{\omega}$, where, for all $\omega \in \Omega$, $K_{\omega}$ denotes the completion of $K$ with respect to the absolute value $\va_{\omega}$. Moreover, the norm family $\xi$ satisfies suitable measurability and dominancy conditions (cf. \S \ref{subsub:adelic_vector_bundle}). In the framework of adelic curves, there is no obvious analogue of the arithmetic Euler-Poincaré Characteristic. A suitable replacement of the latter is the Arakelov degree, defined by
\begin{align*}
\widehat{\deg}(\overline{E}) := -\int_{\Omega} \ln\|s_1\wedge \cdots\wedge s_r\|_{\omega,\det}\;\nu(\diff\omega), 
\end{align*}
where $(s_1,...,s_r)$ is an arbitrary basis of $E$ and, for all $\omega\in\Omega$, $\|\cdot\|_{\omega,\det}$ denotes the determinant norm on  $\det(E_{\omega})$ induced by $\|\cdot\|_{\omega}$. 

We now consider the higher dimensional setting  where $\pi : X \to \Spec(K)$ is a projective $K$-scheme of dimension $d$. An adelic line bundle on $X$ is the data $\overline{L}:=(L,\varphi)$, where $L$ is an invertible $\cO_X$-module and $\varphi=(\varphi_{\omega})_{\omega\in\Omega}$ is family of (continuous) metrics on each $L_{\omega} := L \otimes_{\cO_X} \cO_{X_\omega}$, where metrics are understood in the context of Berkovich spaces. Further, the metric family $\varphi$ satisfies additional dominancy and measurability conditions (cf. \S \ref{subsub:adelic_line_bundle}). In that case, $\varphi$ is said to be an adelic metric family for $L$ and we denote by $\cM(L)$ the set of adelic metric families for $L$. If $X$ is geometrically reduced, then
\begin{align*}
\pi_{\ast}(\overline{L}) := (H^0(X,nL),(\|\cdot\|_{\varphi_{\omega}})_{\omega\in\Omega})
\end{align*}
is an adelic vector bundle over $S$, where, for all $\omega\in\Omega$, we denoted
\begin{align*}
\|\cdot\|_{\varphi_{\omega}} := \displaystyle\sup_{x\in X^{\an}_{\omega}} |\cdot|_{\varphi_{\omega}}(x).
\end{align*}
The $\chi$-volume of $\overline{L}$ is the defined as
\begin{align*}
\widehat{\vol}_{\chi}(\overline{L}) := \displaystyle\limsup_{n\to+\infty} \frac{\widehat{\deg}(\pi_{\ast}(n\overline{L}))}{n^{d+1}/(d+1)!}.
\end{align*}
We assume that either $K$ is perfect or $X$ is geometrically integral. In the case where $L$ is big and the $K$-algebra of sections $\bigoplus_{n\geq 0} H^{0}(X,nL)$ is of finite type, (\cite{ChenMoriHSP} Theorem-Definition 3.2.1) gives the convergence of the sequence 
\begin{align*}
\left(\frac{\widehat{\deg}(\pi_{\ast}(n\overline{L}))}{n^{d+1}/(d+1)!}\right)_{n\in \bN}.
\end{align*}
An arithmetic intersection theory was developed in \cite{ChenMori21}. It allows to make sense of expressions of the form
\begin{align*}
(\overline{L_0}\cdots\overline{L_d})_S,
\end{align*}
where $\overline{L_0},...,\overline{L_d}$ are so-called integrable adelic line bundles over $X$. The arithmetic Hilbert-Samuel theorem (\cite{ChenMoriHSP} Theorem 4.5.1) states that if $\overline{L}=(L,\varphi)$ is an adelic line bundle over $X$, with $L$ ample and $\varphi$ semi-positive (i.e. for all $\omega\in\Omega$, $\varphi_{\omega}$ is semi-positive), then we have the equality
\begin{align*}
\widehat{\vol}_{\chi}(\overline{L}) = (\overline{L}^{d+1})_S.
\end{align*}

Let us introduce the differentiability problem of the function $\widehat{\vol}_{\chi}$. More precisely, we denote by $\widehat{\Pic}_A(X)$ the set of adelic line bundles $\overline{L}=(L,\varphi)$ on $X$ such that $L$ is big and semiample. Then Proposition 2.14 of \cite{CM22_equidistribution} ensures that, for all $\overline{L}=(L,\varphi)\in \widehat{\Pic}_A(X)$ such that $\varphi$ is plurisubharmonic (cf. \S \ref{subsub:psh_metrics}), the function $\widehat{\vol}_{\chi}$ is Gâteaux differentiable in $\overline{L}$ along the directions of $\cM(\cO_X)$. Namely, for all $f\in \cM(\cO_X)$, the function 
\begin{align*}
(t\in \bR)\mapsto \widehat{\vol}_{\chi}(\overline{L}(tf)) 
\end{align*}
is differentiable in $0$, where $\overline{L}(f) := \overline{L} + (\cO_{X},f)$. This differentiability result is obtained thanks to a "local-global priciple". The local analogues of this result are (\cite{BermanBoucksom08}, Theorem B) for the Archimedean case and (\cite{Boucksom21}, Theorem 1.2) for the non-Archimedean one. We also mention Theorem B of the seminal work \cite{BJ20} which gives the non-Archimedean case when the base field is discretely valued.

\subsection{Results}

In this article, we extend the previous differentiability result by allowing abritrary directions in the Gâteaux differentiability: roughly speaking, we allow the underlying line bundle of the direction to be different from $\cO_{X}$. For this purpose, we should obtain a local version of such a result. Let $(k,\va)$ be a complete valued field and $X$ be a projective $k$-scheme asssumed to be normal and geometrically integral. For all big and semiample line bundle $L$ on $X$, endowed with two continuous metrics $\varphi,\psi$, the relative $\chi$-volume $\vol_{\chi}(L,\varphi,\psi)$ is defined (cf. Definition \ref{def:local_volume}). In the case where $k=\bC$ with the usual absolute value, Witt Nyström \cite{Nystrom13} obtains a (continuous) differentiability property of the relative $\chi$-volume (cf. Theorem 1.5 of \emph{loc. cit.}). The first original result of this article is a non-Archimedean analogue. The proof is essentially an adaptation of \cite{Nystrom13} in our framework. 

The proof of Theorem 1.5 of \cite{Nystrom13} makes use of an interpretation of the relative $\chi$-volume in terms of the Monge-Ampère energy and Okounkov bodies. More precisely, on $\Delta(L)$, one can construct concave functions $G_{\varphi},G_{\psi} : \Delta(L) \to \bR \cup\{-\infty\}$ such that
\begin{align}
\label{eq:volum_transformée_concave}
\vol_{\chi}(L,\varphi,\psi) = (d+1)!\int_{\Delta(L)} \left(G_{\varphi}-G_{\psi}\right)\diff\lambda,
\end{align}
where $d=\dim(X)$ and $\lambda$ denotes the Lebesgue measure on $\bR^d$ (cf. \S \ref{subsub:transformée_concave_R_diviseur}). 
We additionally assume that $L$ is ample and that the metrics $\varphi,\psi$ are plurisubharmonics (cf. Definition \ref{def:psh_metric_unified}). In \cite{BoucksomEriksson}, Boucksom and Eriksson proved the equality
\begin{align}
\label{eq:énergie_et_volume}
\vol_{\chi}(L,\varphi,\psi) = (d+1)\cE(\varphi,\psi),
\end{align}
where
\begin{align*}
\cE(\varphi,\psi) := \frac{1}{d+1}\displaystyle\sum_{i=0}^{d}\int_{X^{\an}}(\varphi-\psi)(\diff\dc \varphi)^{\wedge i}(\diff\dc \psi)^{\wedge d-i}
\end{align*}
denotes the Monge-Ampère energy associated to the metrics $\varphi,\psi$ (cf. \emph{loc. cit.}, Theorem A). When $k$ is non-Archimedean, the expressions of the form $(\diff\dc \varphi)^{\wedge i}(\diff\dc \psi)^{\wedge d-i}$, for $i=0,...,d$, are understood in  the sense of the theory intiated by Chambert-Loir and Ducros in \cite{CLD12} (in the framework of general Berkovich spaces) and further extended by Gubler and Künnemann in \cite{GK17} (in the case of analytifications of algebraic varieties). More generally, the equality (\ref{eq:énergie_et_volume}) is true when $L$ is only assumed to be semiample and big, for an arbitrary complete valued field $k$ (cf. \cite{Boucksom21}, Theorem 4.6 for the non-Archimedean case and \cite{BermanBoucksom08}, Theorem A in the Archimedean case). These ingredients allow us to prove the following result (cf. Theorem \ref{th:local_differentiability_general} for a more general statement).

\begin{theorem}
\label{th:local_differentiability_intro}
Let $k$ be a complete valued field, $X$ be a projective, geometrically integral and normal $k$-scheme, $L$ be an ample invertible $\cO_{X}$-module, and $\varphi,\psi$ two continuous plurisubharmonic metrics on $L$. Then for any invertible $\cO_{X}$-module $M$ endowed with two continuous metrics $\Phi,\Psi$, the limit
\begin{align*}
\displaystyle\lim_{\bQ \ni t \to 0} \frac{\vol_{\chi}(L+tM,\varphi+t\Phi,\psi+t\Psi)-\vol_{\chi}(L,\varphi,\psi)}{t}
\end{align*}
exists.
\end{theorem}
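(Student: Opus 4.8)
The plan is to use the concave-transform formula \eqref{eq:volum_transformée_concave} to convert the relative $\chi$-volume into an integral over an Okounkov body, and to exploit convexity to pass the limit inside the integral. First I would fix an admissible flag on $X$ and consider the graded linear series associated to $L$, $L+tM$, etc., so that for small rational $t$ the line bundle $L+tM$ remains ample (this uses that the ample cone is open and $L$ is ample). For each such $t$, write $\Delta_t := \Delta(L+tM) \subseteq \bR^d$ for the associated Okounkov body and $G_{\varphi+t\Phi,\,t}, G_{\psi+t\Psi,\,t} : \Delta_t \to \bR\cup\{-\infty\}$ for the concave transforms of the two metrics, so that by \eqref{eq:volum_transformée_concave}
\begin{align*}
\vol_{\chi}(L+tM,\varphi+t\Phi,\psi+t\Psi) = (d+1)!\int_{\Delta_t}\bigl(G_{\varphi+t\Phi,\,t}-G_{\psi+t\Psi,\,t}\bigr)\,\diff\lambda.
\end{align*}
The first main step is therefore to understand how $\Delta_t$ and the pair of concave transforms vary with $t$: the bodies $\Delta_t$ are all contained in a fixed large box, and one wants to embed everything into a single convex object in $\bR^{d+1}$ (a "global Okounkov body"–type construction), exactly as in Lazarsfeld–Mustață and as used by Witt Nyström, so that $t\mapsto \Delta_t$ is the family of slices of a convex body and the transforms glue into a single concave function in one more variable.

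The second main step is to reduce the difference quotient to a derivative one already controls. When $M=\cO_X$, $\Phi=\Psi=f$ and the metrics are psh, one has \eqref{eq:énergie_et_volume}, $\vol_{\chi}(L,\varphi,\psi)=(d+1)\cE(\varphi,\psi)$, and the Gâteaux differentiability of $\cE$ along $\cM(\cO_X)$ is exactly the content of the Berman–Boucksom / Boucksom results cited (Theorem B of \cite{BermanBoucksom08}, Theorem 1.2 of \cite{Boucksom21}). The strategy is to split a general direction $(M,\Phi,\Psi)$ into the "base-locus" part coming from the change of the underlying line bundle $L\rightsquigarrow L+tM$ and the "metric" part; the latter is handled by the local differentiability of the Monge–Ampère energy, while the former is handled on the convex-geometry side. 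Concretely, I would write the difference quotient as a sum of: (a) the contribution of the metrics $\varphi\to\varphi+t\Phi$, $\psi\to\psi+t\Psi$ at fixed line bundle, which converges by differentiability of $\cE$ and continuity of the mixed Monge–Ampère measures in the psh class (using \cite{CLD12}, \cite{GK17} in the non-Archimedean case, and classical Bedford–Taylor theory in the Archimedean one); and (b) the contribution of the moving Okounkov body, which after \eqref{eq:volum_transformée_concave} is the $t$-derivative at $0$ of $t\mapsto \int_{\Delta_t}(G_{\varphi}-G_{\psi})\,\diff\lambda$ with fixed reference transforms, and this exists because the integrand is bounded and $t\mapsto\mathbf 1_{\Delta_t}$ varies in an $L^1$, monotone-up-to-lower-order, fashion; more honestly, one combines (a) and (b) by noting that the whole expression $(d+1)!\int_{\Delta_t}(G_{\varphi+t\Phi,t}-G_{\psi+t\Psi,t})\,\diff\lambda$ is, up to an error that is $o(t)$, a concave (hence one-sided differentiable, and with matching one-sided derivatives by a symmetry/continuity argument) function of $t$.

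The third step is to establish that one-sided differentiability is genuine two-sided differentiability, i.e. that the left and right derivatives at $t=0$ agree. Here the natural route is concavity: by \eqref{eq:volum_transformée_concave} together with the concavity of the concave transforms and the convexity of the global Okounkov body, the map $t\mapsto\vol_{\chi}(L+tM,\varphi+t\Phi,\psi+t\Psi)$ is concave on the interval of rational $t$ where $L+tM$ stays ample (this monotone/concavity structure is the same one that underlies the concavity of $t\mapsto\vol(L+tM)^{1/d}$ and of the energy), so the one-sided derivatives exist everywhere and the two-sided derivative exists off a countable set; then one upgrades to differentiability at the specific point $t=0$ by using the local differentiability of $\cE$ from the cited theorems to pin down both one-sided derivatives by the same formula, namely a mixed-Monge–Ampère integral of $\Phi-\Psi$ plus a term expressing the derivative of the Okounkov-body volume. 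I expect the main obstacle to be precisely this last identification: showing that the "moving Okounkov body" contribution has a well-defined derivative (not merely one-sided derivatives) and that it can be written intrinsically — i.e. the non-Archimedean analogue of Witt Nyström's computation that the $t$-derivative of $\int_{\Delta_t}$ is controlled by a positive-intersection / mixed-mass quantity — since Okounkov bodies behave only semicontinuously under perturbation and the concave transforms can degenerate to $-\infty$ on the boundary; the technical heart will be a careful dominated-convergence argument on the global Okounkov body in $\bR^{d+1}$ combined with the continuity of mixed Monge–Ampère measures along the psh segments $\varphi_t$, $\psi_t$.
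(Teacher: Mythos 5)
There is a genuine gap, and it sits exactly where you locate "the main obstacle". Your third step rests on the claim that $t\mapsto\vol_{\chi}(L+tM,\varphi+t\Phi,\psi+t\Psi)$ is concave, but this fails: the relative $\chi$-volume is a \emph{difference} of two energy/concave-transform expressions, one attached to $(\varphi,\Phi)$ and one to $(\psi,\Psi)$, and each of these is separately concave along the path, so the difference is only a DC function. (Concavity would also force the function to be affine in $t$ whenever the data are swapped, which contradicts the nonlinear Monge--Amp\`ere formula for the derivative.) In the paper, concavity is only used for $t\mapsto\vol_{\chi}(L,\varphi+tu,\psi)$ with the second metric \emph{fixed} (Proposition \ref{prop:differentiability_singular_direction}), which is a much weaker statement. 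Without concavity you do not even get existence of one-sided derivatives for the full function, and your proposed upgrade to two-sided differentiability at $t=0$ ("differentiable off a countable set", then "pin down both one-sided derivatives") has nothing to pin them down with for the part of the variation coming from the change of the underlying line bundle; in the paper the equality of left and right derivatives requires a separate argument via the rescaling identity $L-tA=(1-t)\bigl(L+\tfrac{t}{1-t}(L-A)\bigr)$, the cocycle relation, and the orthogonality property.

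The second unresolved point is the "moving Okounkov body" contribution itself, which you leave as a hoped-for dominated-convergence argument. The paper's treatment (following Witt Nystr\"om) is quite specific and cannot be replaced by soft measure theory: one first reduces, via Bertini and decomposition of $M$ into very ample classes, to a direction $A$ admitting a global section $s$ whose divisor has a geometrically integral normal component $E\nsubseteq B_{+}(L)$ passing through the flag point with local equation $z_1$ (Lemma \ref{lemma:reduction_local_differentiability}); then $\Delta(L+tA)$ is cut into the slices $\{x_1\geq t\}$ and $\{0\leq x_1<t\}$ using Theorem \ref{th:LazarsfeldMustata_th:4.26}. On the upper piece the concave transform is identified with that of the \emph{fixed} body $\Delta(L)$ but for the singular direction $t(\Phi-\psi_{s})$ (Proposition \ref{prop:Nystrom_14.9}), whose derivative requires the envelope analysis of singular metrics (Lemma \ref{lemma:Nystrom_14.1}, Proposition \ref{prop:differentiability_singular_direction}); on the thin piece the slice transforms are identified with the transforms of the restricted envelopes $P(\varphi)_{|E}$, $P(\psi)_{|E}$ via an extension theorem (Proposition \ref{prop:Nystrom_11.10}), and its contribution is computed by induction on $\dim X$ as $(d+1)\vol_{\chi}(L_{|E},P(\varphi)_{|E},P(\psi)_{|E})$. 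None of these ingredients (Bertini reduction, slicing, restricted envelopes and the extension property, differentiability along the singular direction $-\ln|s|$) appear in your outline, and they are precisely what makes the limit exist; so as it stands the proposal does not yield the theorem.
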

The positivity condition on the metrics $\varphi,\psi$ can be relaxed, under a so-called "continuity of envelope" property, which is known to hold in various cases of application (cf. \S \ref{subsub:envelopes}). 

We now move on to the differentiability problem over an adelic curve. We fix a proper adelic curve $S=(K,(\Omega,\cA,\nu),\phi)$, with $K$ a  countable field, and a projective, geometrically integral and normal $k$-scheme $X$. For any ample invertible $\cO_{X}$-module $L$, for any $\varphi,\psi\in\cM(L)$, Proposition 2.14 of \cite{CM22_equidistribution} gives the equality
\begin{align*}
\widehat{\vol}_{\chi}(L,\varphi,\psi) := \widehat{\vol}_{\chi}(L,\varphi)-\widehat{\vol}_{\chi}(L,\psi) = \int_{\Omega} \vol_{\chi}(L_{\omega},\varphi_{\omega},\psi_{\omega})\nu(\diff\omega).
\end{align*}
This "local-global principle", or rather a stronger version of this fact (Propoisition \ref{prop:complement_chi_volume}), allows to prove the main result of this article (cf. Theorem \ref{th:global_differentiability} for a more general statement).
\begin{theorem}
\label{th:global_differentiability_intro}
Let $S=(K,(\Omega,\cA,\nu),\phi)$ be a proper adelic curve, with $K$ a perfect countable field. Let $X$ be a projective, geometrically integral and normal $k$-scheme. Let $\overline{L}=(L,\varphi)$ be relatively ample adelic line bundle on $X$, namely $L$ is an ample invertible $\cO_X$-module and $\varphi$ is a plurisubharmonic adelic metric family on $L$. Let $\overline{M}=(M,\Phi)$ be any adelic line bundle on $X$. Then
\begin{align*}
\displaystyle\lim_{\bQ\ni t\to 0} \frac{\widehat{\vol}_{\chi}(\overline{L}+t\overline{M})-\widehat{\vol}_{\chi}(\overline{L})}{t} = (d+1)\left(\overline{L}^{d}\cdot \overline{M}\right)_{S}.
\end{align*}
\end{theorem}

As an application, we obtain the following logarithmic equidistribution result, which is a counterpart of Théorème 1.2 of \cite{CLT09} in the framework of adelic curves. Let $\overline{L}=(L,\varphi)$ be a relatively ample adelic line bundle on $X$. Let $(x_n)_{n\geq 0}$ be a sequence of closed points of $X$ which is assumed to be \emph{generic} and \emph{small} (see \S \ref{sub:logarithmic_equidistribution_points} for the definition). For any $\omega\in\Omega$, for any $n\in \bN$, let $\delta_{\overline{L},x_n,\omega}$ denote the probability counting measure on the finite set $\{x_n\}^{\an}_{\omega}$. We also denote by $\delta_{\overline{L},X,\omega}$ the probability measure $(1/\vol(L))(\diff\dc\varphi)^{\wedge d}$.

\begin{theorem}
\label{th:logarithmic_equidistribution_intro}
We assume that $M$ admits a non-zero global section $s$. Denote $D:=\div(s)$.
\begin{itemize}
	\item[(1)] We have
	\begin{align*}
	h_{\overline{L}}(D) \geq h_{\overline{L}}(X).
	\end{align*}
	\item[(2)] We further assume that $h_{\overline{L}}(D) = h_{\overline{L}}(X)$. Then we have 
	\begin{align*}	
	\displaystyle\lim_{n\to+\infty} \int_{\Omega}\left(\int_{X_{\omega}^{\an}} (-\ln|s|_{\Phi_{\omega}})\delta_{\overline{L},x_n,\omega}\right)\nu(\diff\omega) = \int_{\Omega}\left(\int_{X_{\omega}^{\an}} (-\ln|s|_{\Phi_{\omega}})\delta_{\overline{L},X,\omega}\right)\nu(\diff\omega).
	\end{align*}
\end{itemize}
\end{theorem}

\subsection*{Content} This article is divided in three parts. In \S \ref{sec:preliminaries}, we recall several facts about non-Archimedean pluripotential theory, Okounkov bodies. In \S \ref{sec:local_differentiability} we prove Theorem \ref{th:local_differentiability_intro} and in \S \ref{sec:differentiability_adelic_curve} we give a proof of Theorem \ref{th:global_differentiability_intro}.

\subsection*{Acknowledgements} The author would like to thank Huayi Chen for proposing this subject and for his support throughout the redaction of this text. We also thank Mathieu Daylies and Nuno Hultberg for useful remarks.

\subsection*{Conventions and notation}

For any subgroup $\Gamma \subset \bR_{>0}$, we denote $\sqrt{\Gamma} := \{r^{1/n} : r \in \Gamma, n\in\bN\}$.

We identify line bundle and invertible modules for which we use additive notation. 

If $X$ is an integral scheme, we denote by $k(X)$ its field of rational functions.

Let $S=(K,(\Omega,\cA,\nu),(\va_{\omega})_{\omega\in\Omega})$ be an adelic curve. For any $\omega \in \Omega$, we denote by $K_{\omega}$ the completion of $K$ with respect to the absolute value $\va_{\omega}$. A function $f: \Omega\to \bR$ is said to be $\nu$\emph{-dominated} if there exists a $\nu$-integrable function $C : \Omega\to \bR_{\geq 0}$ such that $|f|\leq C$ $\nu$-almost everywhere.

\section{Preliminaries}
\label{sec:preliminaries}
\subsection{Nets}

Let $X$ be a topological space. Let $(A,\leq)$ be a directed set (namely $(A,\leq)$ is an ordered set such that $x,y\in A$, there exists $z\in A$ such that $x\leq z$ et $y\leq z$). A \emph{net} in $X$ is any family $x=(x_{\alpha})_{\alpha\in A}\in X^{A}$. A net $x=(x_{\alpha})_{\alpha\in A}$ is called
\begin{itemize}
	\item[(1)] \emph{convergent} in $X$ if there exists $l\in X$ such that, for any open neighbourhood $U$ of $l$ in $X$, there exists $\alpha_0\in A$ such that $x_{\alpha} \in U$ for all $\alpha\geq \alpha_0$;
	\item[(2)] \emph{increasing}, resp. \emph{decreasing}, if $X= \bR$ and for any $\alpha,\beta\in A$ such that $\alpha \leq \beta$ the inequality $x_{\alpha} \leq x_{\beta}$, resp. $x_{\beta} \leq x_{\alpha}$, holds.
	\end{itemize}

\subsection{Upper semi-continuous envelope}

Throughout this paragraph, we fix a topological space $X$. A function $f : X \to \bR \cup \{-\infty,+\infty\}$ is called \emph{upper semi-continuous}, resp. \emph{lower semi-continuous}, \emph{usc} for short, resp. \emph{lsc} for short , if for any $\alpha\in\bR$, the set $\{x\in X : f(x)\geq \alpha\}$, resp. $\{x\in X : f(x)\leq \alpha\}$, is closed. Let $\USC(X)$, resp. $\LSC(X)$, be the class of usc, resp. lsc, functions on $X$.

\begin{lemma}[\cite{BouTG}, Ch.IV \S 6 Proposition 2 et Théorèmes 3 et 4]
\label{lemma:usc_lsc_functions}
\begin{itemize}
	\item[(1)] $\USC(X)$ and $\LSC(X)$ are lattices. 
	\item[(2)] For any family $(f_i)_{i\in I}$ in $\USC(X)$, resp. in $\LSC(X)$, the function $f : (x\in X) \mapsto \inf\{f_i(x) : i\in I\}$ is usc, resp. the function $f : (x\in X) \mapsto \sup\{f_i(x) : i\in I\}$ is lsc.
	\item[(3)] If $X$ is compact Hausdorff, any $f\in \USC(X)$ is bounded from above and attains its maximum. Likewise, $f \in \LSC(X)$ is bounded from below and attains its minimum.
\end{itemize}
\end{lemma}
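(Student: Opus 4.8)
Here is how I would approach Lemma~\ref{lemma:usc_lsc_functions}. The strategy is to reduce the lower semi-continuous halves of all three assertions to the upper semi-continuous ones via the order-reversing bijection $f\mapsto -f\colon\USC(X)\to\LSC(X)$, which exchanges $\inf$ with $\sup$, $\max$ with $\min$, and "bounded above and attains its maximum" with "bounded below and attains its minimum"; so it suffices to argue on the usc side. For (2), the key point is that for $f=\inf_{i\in I}f_i$ and any $\alpha\in\bR$ one has $f(x)\geq\alpha$ precisely when $f_i(x)\geq\alpha$ for every $i\in I$, whence $\{x\in X:f(x)\geq\alpha\}=\bigcap_{i\in I}\{x\in X:f_i(x)\geq\alpha\}$ is an intersection of closed sets, hence closed, so $f\in\USC(X)$; the lsc statement for $\sup$ is identical with $\leq$ in place of $\geq$, or follows by negation. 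For (1), I would first note that $\max(f,g)\in\USC(X)$ for $f,g\in\USC(X)$, since $\{\max(f,g)\geq\alpha\}=\{f\geq\alpha\}\cup\{g\geq\alpha\}$ is a \emph{finite} union of closed sets and hence closed, while $\min(f,g)\in\USC(X)$ is the case $I=\{1,2\}$ of (2); thus $\USC(X)$ with its pointwise order is closed under binary join and meet, i.e.\ it is a lattice (a sublattice of the poset of all $[-\infty,+\infty]$-valued functions on $X$), and dually for $\LSC(X)$.

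For (3), assume $X$ compact and fix $f\in\USC(X)$; I may discard the trivial case $f\equiv-\infty$, and — as in every use of this statement in the paper — I will take $f<+\infty$ everywhere. For boundedness from above, the sets $\{f<n\}$ for $n\in\bN$ are open and cover $X$ (as $f(x)<+\infty$ for each $x$), so compactness yields a finite subcover, i.e.\ $f<n_0$ on $X$ for some $n_0$; hence $M:=\sup_{x\in X}f(x)$ is a real number. For attainment, for each real $\alpha<M$ the set $F_\alpha:=\{x\in X:f(x)\geq\alpha\}$ is closed by upper semi-continuity and nonempty by definition of $M$, and the family $\{F_\alpha\}_{\alpha<M}$ is totally ordered by reverse inclusion, so it has the finite intersection property; compactness then gives $\bigcap_{\alpha<M}F_\alpha\neq\emptyset$, and any $x_0$ in this intersection satisfies $f(x_0)\geq\alpha$ for all $\alpha<M$, hence $f(x_0)\geq M$ and therefore $f(x_0)=M$. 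Applying this to $-f$ gives the corresponding statement for $\LSC(X)$ (here $X$ Hausdorff is not even needed).

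I do not expect a genuine obstacle: the substance is only the definition of (semi-)continuity together with compactness, either in the form "finite union/intersection of closed sets" (for (1) and (2)) or in the form of the finite intersection property (for (3)). The sole point demanding a little care is the bookkeeping of hypotheses in (3): the value $+\infty$ must be excluded for "bounded above" to be literally correct, and $f\equiv-\infty$ has to be set aside for "attains its maximum" to be meaningful — both harmless in the applications made here, where the relevant envelopes and metrics are real-valued (possibly with the value $-\infty$).
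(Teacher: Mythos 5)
Your proposal is correct: the paper offers no proof of this lemma (it is quoted directly from Bourbaki), and your argument — super-level sets $\{f\geq\alpha\}$ for (1) and (2), an open cover $\{f<n\}$ plus the finite intersection property of the nested closed sets $\{f\geq\alpha\}$, $\alpha<M$, for (3), with the lsc statements obtained via $f\mapsto -f$ — is exactly the standard argument of the cited source. Your remark that in (3) the statement needs $f<+\infty$ (and, for the FIP step, $f\not\equiv-\infty$, where attainment is anyway trivial) to be literally true as stated is a legitimate and correctly handled caveat, harmless in all uses made of the lemma in the paper.
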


If $f : X \to \bR \cup \{-\infty,+\infty\}$ is an arbitrary function, its \emph{upper semi-continuous envelope}, \emph{usc envelope} for short, is defined by
\begin{align*}
\forall x \in X,\quad f^{\star} = \inf\{g : g\in \USC(X) \text{ et } g\geq f\} \in \bR\cup \{-\infty,+\infty\}.
\end{align*}
By Lemma \ref{lemma:usc_lsc_functions} $(1)$, $f^{\star} \in \USC(X)$.
 
\begin{lemma}
\label{lemma:psh_envelope_non_infinite}
Let $f:X\to \bR\cup\{+\infty\}$ and assume that $f$ is locally bounded from above on $U := \{x\in X : f(x)<+\infty\}$. Then $f^{\star}$ takes real values on $U$, in particular $f^{\star}\not\equiv +\infty$.
\end{lemma}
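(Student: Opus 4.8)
The plan is to bracket $f^\star(x_0)$, for each $x_0\in U$, between two finite numbers: from below by $f(x_0)$, and from above by the value at $x_0$ of an explicit upper semi-continuous majorant of $f$ that is finite at $x_0$.

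First I would record the two facts that are essentially immediate from the definition of the usc envelope: that $f^\star\ge f$ pointwise (it is an infimum of functions each $\ge f$), and that $f^\star\in\USC(X)$ (Lemma~\ref{lemma:usc_lsc_functions}, as already noted right after the definition). Since $f$ takes values in $\bR\cup\{+\infty\}$, any $x_0\in U$ has $f(x_0)\in\bR$, whence $f^\star(x_0)\ge f(x_0)>-\infty$; thus it remains only to bound $f^\star(x_0)$ from above by a real number.

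To do this, fix $x_0\in U$ and use the hypothesis to pick an open neighbourhood $V$ of $x_0$ in $X$ together with a real number $M$ such that $f\le M$ on $V$ — note that this forces $V\subseteq U$. Define $g\colon X\to\bR\cup\{+\infty\}$ to be equal to $M$ on $V$ and to $+\infty$ on $X\setminus V$. Then $g\ge f$ everywhere (on $V$ because $f\le M=g$, on $X\setminus V$ because $g=+\infty$), and $g$ is usc: for $\alpha\le M$ the set $\{g\ge\alpha\}$ is all of $X$, while for $\alpha>M$ it is $X\setminus V$, which is closed because $V$ is open. Hence $g$ is one of the functions competing in the infimum defining $f^\star$, so $f^\star(x_0)\le g(x_0)=M<+\infty$. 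Together with the lower bound, this shows $f^\star(x_0)\in\bR$; choosing any point of $U$ then also yields $f^\star\not\equiv+\infty$.

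The argument is short and has no deep obstacle; the one point that must be handled with care is the reading of the hypothesis "$f$ is locally bounded from above on $U$": it must be taken in the sense that every point of $U$ has an open neighbourhood \emph{in $X$} on which $f$ is bounded above (such a neighbourhood is then automatically contained in $U$). This is exactly what makes the candidate $g$ simultaneously upper semi-continuous and $\ge f$, including over the locus where $f=+\infty$; verifying those two properties of $g$ is the only genuine checking in the proof.
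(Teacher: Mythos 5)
Your proof is correct and takes essentially the same route as the paper: the paper fixes $x\in U$, an open $V\subseteq U$ with $f\leq C$ on $V$, and asserts directly that $f^{\star}(x)\leq\sup_{V}f\leq C$. You merely make explicit the usc competitor (equal to $M$ on the open set $V$ and $+\infty$ outside) that justifies this bound, together with the trivial lower bound $f^{\star}\geq f>-\infty$ on $U$, which is a fair elaboration of the same argument.
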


\begin{proof}
Let $x\in U$, there exist an open neighbourhood $x\in V \subset U$ and a constant $C\in \bR$ such that $f_{|V} \leq C$. By definition of $f^{\star}$, we have $f^{\star}(x) \leq \sup_{V} f \leq C$.
\end{proof}

\subsection{Pluripotential theory over Berkovich spaces}
\label{sub:pluripotential}

Throughout this paragraph, we fix a complete valued field $(k,|\cdot|)$. 

\subsubsection{Analytification in the sense of Berkovich}

Let $X$ be a $k$-scheme. Its \emph{analytification in the sense of Berkovich}, denoted by $X^{\an}$, is defined in the following way: a point $x\in X^{\an}$ is the data $(p,\va_{x})$ where $p \in X$ and $\va_{x}$ is an absolute value on $\kappa(x)$ extending the absolute value on $k$. $X^{\an}$ can be endowed with the Zariski topology: namely the coarsest topology on $X^{\an}$ such that the first projection $j : X^{\an} \to X$ is continuous. There exists a finer topology on $X^{\an}$, called the Berkovich topology: it is the initial topology on $X^{\an}$ with respect to the family defined by $j : X^{\an} \to X$ and the applications
\begin{align*}
\fonction{|f|_{\cdot}}{U^{\an}}{\bR_{\geq 0},}{x}{|f|_x,}
\end{align*}
where $U^{\an}$ is of the form $U^{\an}:=j^{-1}(U)$, with $U$ a Zariski open subset of $X$, and $f\in \cO_{X}(U)$. Endowed with this topology, $X^{\an}$ is a locally compact topological space. There are GAGA type results: namely, $X$ is separated, resp. proper iff $X^{\an}$ is Hausdorff, resp. compact Hausdorff. If $X$ is a scheme of finite type, $X^{\an}$ can be endowed with a sheaf of analytic functions.

We now assume that $k$ is non-Archimedean, we denote by $\fo_k$ its valuation ring, $\m_k \subset \fo_k$ the maximal ideal and $\tilde{k}$ the residue field. Let $X$ be an integral projective $k$-scheme.  We call \emph{model} of $X$ any  integral projective $\fo_k$-scheme $\cX$ whose generic fibre is $X$. If $L$ is an invertible $\cO_X$-module, we say that $(\cX,\cL)$ is a \emph{model} of $(X,L)$ if $\cX$ is a model of $X$ and the restriction of $\cL$ to $X$ is $L$. A model $\cX$ of $X$ is said to be \emph{coherent}, resp. \emph{flat}, if $\cX \to \Spec(\fo_k)$ is of finite presentation, resp. flat. Let $\cX$ be a model of $X$, the valuative criterion of properness yields a \emph{reduction} morphism which is anticontinuous (i.e. the inverse image of an open set is closed)
\begin{align*}
\red_{\cX} : X^{\an} \to \cX_s.
\end{align*}

A point $x\in X^{\an}$ is said to be \emph{divisorial}, or \emph{Shilov}, if there exists a model $\cX$ of $X$ such that $\red_{\cX}(x)$ is a generic point of the special fibre $\cX_s$. We denote by $X^{\div}$ the set of divisorial points of $X^{\an}$. Note that in the trivially valued case, $X^{\div} = \{x_{\triv}\}$, where $x_{\triv}$ is the point corresponding to the trivial absolute value on $k(X)$.

For any $x\in X^{\an}$, the completed residue field $\widehat{\kappa}(x)$ is non-Archimedean and let $\tilde{\widehat{\kappa}(x)}$ be its residue field. Let
\begin{align*}
s(x) := \mathrm{tr.deg}(\tilde{\widehat{\kappa}(x)}/\tilde{k}), \quad t(x) := \dim_{\bQ}(\sqrt{|\widehat{\kappa}(x)^{\times}|_{x}}/\sqrt{|k^{\times}|}),\quad d(x) := s(x) + t(x).
\end{align*}
Abhyankar inequality yields $d(x) \leq \dim(X)$. The point $x$ is said to be \emph{quasimonomial} if $d(x)=\dim(X)$ and we denote by $X^{\qm}$ the set of quasimonomial points of $X$. We have the inclusion $X^{\div}\subset X^{\qm}$ and $X^{\qm}$ is dense in $X^{\an}$ (\cite{Poineau13b}, Proposition 4.7).   

\subsubsection{Metrics}
\label{subsub:metrics}

Throughout this paragraph, we fix a $k$-scheme $X$.

Let $L$ be a line bundle on $X$. A \emph{metric} on $L$ is a family $\varphi :=(\va_{\varphi}(x))_{x\in X^{\an}}$, where 
\begin{align*}
\forall x\in X^{\an},\quad \va_{\varphi}(x) : L(x) := L \otimes_{\cO_X} \widehat{\kappa}(x) \to \bR_{\geq 0}
\end{align*}
is a norm on the $\widehat{\kappa}(x)$-vector space $L(x)$. The metric $\varphi$ is called \emph{continuous} if for all $U\subset X$ open and for all $s\in H^{0}(U,L)$, the map $|s|_{\varphi} : U^{\an} \to \bR_{\geq 0}$ is continuous with respect to the Berkovich topology. 

We use the following notation for metrics on line bundles.
\begin{itemize}
	\item[(i)] If $L$ is a line bundle on $X$, we denote by $C^0(L)$ the class of continuous metrics on $L$.
	\item[(ii)] If $L,M$ are two line bundles on $X$, respectively endowed with metrics $\varphi,\psi$, the family $\varphi+\psi:=(\va_{\varphi+\psi}(x))_{x\in X^{\an}}$ defined by
	\begin{align*}
	\forall x\in X^{\an},\quad\va_{\varphi+\psi}(x) = \va_{\varphi}(x) \otimes \va_{\psi}(x),
	\end{align*}
is a metric on $L+M$. Likewise, the family $-\varphi$ defines a metric on $-L$, considering the corresponding dual norm family. If $(\varphi,\psi)\in C^0(L)\times C^0(M)$, resp. $\varphi \in C^0(L)$, then $\varphi + \psi \in C^0(L+M)$, resp. $-\varphi\in C^0(-L)$.
	\item[(iii)] Let $\varphi$ be a metric on $\cO_X$. We identify $\varphi$ with the function $-\ln |1|_{\varphi} : X^{\an} \to \bR$.
\end{itemize}

Thus, for any two metrics $\varphi,\psi$ on a line bundle $L$, $\varphi-\psi$ is considered as a function on $X^{\an}$. If $\varphi$ is a metric on a line bundle $L$, it is said to be \emph{bounded from below}, resp. \emph{bounded from above}, resp. \emph{bounded}, if there exists a continuous metric $\psi$ on $L$ such that $\varphi-\psi$ is bounded from below, resp. bounded from above, resp. bounded. Note that in the case where $X$ is proper and $\varphi$  is bounded from below (or from above), for all $\psi'\in C^0(L)$, the function $\varphi-\psi'$ is bounded from below (or from above). 

A \emph{singular metric} on a line bundle $L$ is a metric of the form $\varphi = \phi + f$, where $\phi\in C^0(L)$ and  $f : X^{\an} \to \bR\cup\{-\infty,+\infty\}$. Namely, for any local section $s$ of $L$, we have
\begin{align*}
|s|_{\varphi}(x) := |s|_{\phi}(x)e^{-f(x)} \in [0,+\infty].
\end{align*}
Most of the time, we consider the case where $f : X^{\an} \to \bR\cup\{-\infty\}$. In that case, for some $x\in X^{\an}$, the "norm" $\va_{\varphi}(x) \equiv +\infty$. 

In this setting, the singular metric $\varphi$ is called \emph{upper semicontinuous}, resp. \emph{lower semicontinuous}, if $f$ is so. In the case where $X^{\an}$ is compact Hausdorff, an usc (upper semicontinuous) singular metric is bounded from above (cf. Lemma \ref{lemma:usc_lsc_functions}).  If $\varphi$ is a singular metric on the line bundle $L$, for all integer $m\geq 1$, we have a singular metric $m\varphi$ on $mL$. If $\varphi,\psi$ are respectively singular metrics on line bundles $L,M$, not reaching $+\infty$, $\varphi + \psi$ then defines a singular metric on $L+M$. Thus, the notion of singular metric makes sense for any $\bQ$-line bundle on $X$ and therefore do all the previously introduced properties of metrics. 

\begin{example}
\label{example:singular_metric_global_section}
We assume that the line bundle $L$ has a non zero global section $s \in H^{0}(X,L)$. We define a singular metric $\psi_{s}$ on $L$ defined by \begin{align*}
\psi_{s} := \varphi + \ln |s|_{\varphi},
\end{align*}
where $\varphi$ is an arbitrary continuous metric on $L$. For any $\varphi\in C^0(L)$, the function $\ln|s|_{\varphi} : X^{\an} \to \bR\cup\{-\infty\}$ reaches $-\infty$ exactly on $\div(s) = \{x\in X : s(x)=0\}$. Note that, on the open set $\{s \neq 0\} \subset X$, $\psi_{s}$ defines a continuous metric. Moreover, the definition of $\psi_{s}$ is independent on the choice of the continuous metric $\varphi$. 
\end{example}

\begin{definition}
\label{def:FS_metric}
Let $\varphi$ be a metric on a line bundle $L$ on $X$. $\varphi$ is called \emph{Fubini-Study} if there exist an integer $m\geq 1$, a family of global sections of $mL$ $(s_i)_{i\in I}$, with $I$ a finite set, without common zeros (thus we assume that $mL$ is globally generated), and a family $(\lambda_i)_{i\in I}\in\bR^{I}$ such that 
\begin{align*}
\varphi = \left(\{\begin{matrix}
\frac{1}{2m}\ln \displaystyle\sum_{i\in I}e^{2\psi_{s_i}+2\lambda_i}\quad \mathrm{if}\; k \;\mathrm{is}\;\mathrm{Archimedean}\\
\frac{1}{m} \max_{i\in I} \{\psi_{s_i} + \lambda_i\} \quad\mathrm{if}\; k \;\mathrm{is}\;\mathrm{non-Archimedean}.
\end{matrix}\right.
\end{align*}
For any additive subgroup $\Gamma \subset \bR$, we denote by $\FS_{\Gamma}(L)$ the set of Fubini-Study metrics $\varphi$ on $L$ written as above with $\lambda_i\in \Gamma$ for all $i\in I$. A metric in $\FS_{\{0\}}(L)$ is said to be \emph{pure}. We denote $\FS(L):= \FS_{\bR}(L)$. Note that for a line bundle $L$, $\FS(L)\neq\emptyset$ implies that $L$ is semiample, namely there exists an integer $m\geq 1$ such that $mL$ is globally generated.
\end{definition}

\begin{definition}
\label{def:DFS_metric}
Let $L$ be an invertible $\cO_X$-module. A metric $\varphi$ on $L$ is called \emph{DFS} if there exist two invertible $\cO_X$-modules $L_1,L_2$ and $(\varphi_1,\varphi_2)\in\FS(L_1)\times\FS(L_2)$ such that $L=L_1-L_2$ and $\varphi=\varphi_1-\varphi_2$. The set of DFS metrics on $L$ is denoted by $\DFS(L)$.
\end{definition}

\begin{remark}
If $L$ is an invertible $\cO_X$-module, $\DFS(L)$ is the analogue of smooth metrics in the framework of complex geometry. Morover, (\cite{BoucksomEriksson}, Proposition 5.4) implies that DFS and Fubini-Study metrics are preserved by pullback, sum, and maxima (in the case where $k$ non-Archimedean).
\end{remark}

\begin{theorem}[\cite{BoucksomEriksson}, Theorem 5.18]
\label{th:BJ18_2.8}
For any invertible $\cO_X$-module $L$, $\DFS(L)$ is dense in $C^0(L)$ (with respect to the uniform convergence).
\end{theorem}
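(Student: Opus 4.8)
The plan is to reduce to the case $L=\cO_X$ and then identify $\DFS(\cO_X)$ as a dense subspace of $C^0(X^{\an})$ by a lattice form of the Stone--Weierstrass theorem (Kakutani--Krein). First I would fix a reference metric $\varphi_0\in\DFS(L)$; such a metric exists because on the projective $k$-scheme $X$ one can write $L=A-B$ with $A,B$ very ample, hence globally generated, and each of $A,B$ carries a Fubini--Study metric. Any $\varphi\in C^0(L)$ then differs from $\varphi_0$ by a continuous metric on $\cO_X$, i.e.\ by a function $f\in C^0(X^{\an})$, and since $\varphi_0+\DFS(\cO_X)\subseteq\DFS(L)$, it suffices to prove that $\DFS(\cO_X)$ is uniformly dense in $C^0(X^{\an})$. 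Recall that $X^{\an}$ is compact Hausdorff by the GAGA-type results above, so this is the natural Banach space in which to work.

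Next I would verify the hypotheses of the lattice Stone--Weierstrass theorem for the $\bQ$-linear subspace $V:=\DFS(\cO_X)\subseteq C^0(X^{\an})$ (it is $\bQ$-linear because a sum of DFS metrics is DFS, one negates by swapping $L_1$ and $L_2$, and one rescales by passing to $\bQ$-line bundles; its closure is therefore an $\bR$-linear subspace). The three points are: (i) $V$ contains all real constants, since if $\varphi=\tfrac1m\max_i\{\psi_{s_i}+\lambda_i\}\in\FS(A)$ then $\varphi+c=\tfrac1m\max_i\{\psi_{s_i}+\lambda_i+mc\}\in\FS(A)$ for every $c\in\bR$, whence $(\varphi+c)-\varphi=c\in\DFS(\cO_X)$ (and likewise in the Archimedean case). (ii) $V$ separates the points of $X^{\an}$: the Berkovich topology is generated by $j$ together with the absolute values of sections of powers of a very ample bundle $A$, so given $x\neq y$ one produces $m\geq1$ and sections $s,s'$ of $mA$ — completed to common-zero-free families realising genuine Fubini--Study metrics — such that the resulting continuous DFS metric on $\cO_X$ (built from these Fubini--Study metrics as in Definitions \ref{def:FS_metric}--\ref{def:DFS_metric}, via Example \ref{example:singular_metric_global_section}) takes distinct values at $x$ and $y$. (iii) $\overline V$ is stable under pointwise maximum: in the non-Archimedean case the maximum of two Fubini--Study metrics on the same line bundle is Fubini--Study (the remark after Definition \ref{def:DFS_metric}), so writing $f=\varphi_1-\varphi_2$, $g=\varphi_1'-\varphi_2'$ in $\DFS(\cO_X)$ over a common denominator gives $\max(f,g)=\max(\varphi_1+\varphi_2',\,\varphi_1'+\varphi_2)-(\varphi_2+\varphi_2')\in\DFS(\cO_X)$; in the Archimedean case one replaces $\max(f,g)$ by the regularised maximum $\tfrac1{2N}\ln(e^{2Nf}+e^{2Ng})$, which is again a DFS metric (expanding the $N$-th powers of the defining sums realises it through products of global sections), and lets $N\to\infty$, so $\max(f,g)\in\overline V$. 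Alternatively, in the Archimedean case one may just invoke that Fubini--Study metrics approximate smooth ones (Bergman-kernel asymptotics) together with the density of $C^\infty(X^{\an})$ in $C^0(X^{\an})$.

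Granting (i)--(iii), $\overline V$ is a closed real sublattice of $C^0(X^{\an})$ containing the constants and separating points, so the Kakutani--Krein theorem gives $\overline V=C^0(X^{\an})$; translating back by $+\varphi_0$ yields that $\DFS(L)$ is dense in $C^0(L)$, as claimed.

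I expect the genuine obstacle to be point (ii): establishing that DFS metrics separate points must be carried out on the possibly pathological space $X^{\an}$ — in particular in the trivially valued case and at non-rigid points — and is really the assertion that model/Fubini--Study functions generate the Berkovich topology. By comparison the lattice manipulations in (i) and (iii) are routine, modulo the bookkeeping needed to run the Archimedean and non-Archimedean cases in parallel (honest maxima versus log-sum-exp).
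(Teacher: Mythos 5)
The paper offers no proof of this statement---it is imported verbatim as \cite{BoucksomEriksson}, Theorem 5.18---and your plan (fix $\varphi_0\in\DFS(L)$, reduce to showing $\DFS(\cO_X)$ is dense in $C^0(X^{\an})$, then verify constants, point separation and stability under $\max$ and invoke the lattice form of Stone--Weierstrass on the compact space $X^{\an}$) is essentially the argument of that cited source, and it is correct in outline. The two points still to be written out are the separation of points (which you rightly flag: one uses sections of $mA$ completed to base-point-free families, with large real constants $\lambda_i$ to pin down the values at the two given points) and the $\bQ$-linearity of $\DFS(\cO_X)$, which is not immediate from the definition via honest line bundles but follows from the $\tfrac{1}{m}$ in Definition \ref{def:FS_metric}, e.g.\ $\tfrac1b(\varphi_1-\varphi_2)=\bigl(\tfrac1b\varphi_1+\tfrac{b-1}{b}\varphi_2\bigr)-\varphi_2$ with the first term an element of $\FS(L_2)$ obtained by multiplying out sections of $mL_1$ and $mL_2\cong mL_1$; neither point is a genuine obstruction.
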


In the case where $k$ Archimedean, a Fubini-Study metric is smooth. In the non-Archimedean case, they are closely related to model metrics (cf. \cite{ChenMori}, \S 2.3).

\begin{theorem}[\cite{BoucksomEriksson}, Theorem 5.14]
We assume that $k$ is non-Archimedean. Let $\varphi$ be a continuous metric on a $\bQ$-line bundle $L$ on $X$. We have the equivalence
\begin{itemize}
	\item[(i)] $\varphi$ is a pure Fubini-Study metric;
	\item[(ii)] $\varphi$ is a model metric induced by a semiample $\bQ$-model $\cL$ of $L$.
\end{itemize}
\end{theorem}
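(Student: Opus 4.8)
The plan is to prove the two implications separately, using the explicit description of Fubini--Study metrics in Definition~\ref{def:FS_metric} on one side and the construction of projective $\fo_k$-models by scheme-theoretic closure on the other. Before doing so I record the elementary computation underlying both directions: if $\cN$ is a line bundle on a model $\cX$ of $X$ generated by finitely many global sections $t_0,\dots,t_N\in H^0(\cX,\cN)$, and $u_i:=t_i|_X\in H^0(X,\cN|_X)$, then the model metric induced by $(\cX,\cN)$ on $\cN|_X$ equals $\max_i\psi_{u_i}$. Indeed, the $t_i$ define a morphism $\cX\to\bP^N_{\fo_k}$ along which $\cN$ is the pullback of $\cO(1)$, so the model metric is the pullback of the tautological model metric of $\cO(1)$ on $\bP^N_{\fo_k}$; one has $|u_i|\le 1$ on $X^{\an}$ and $\max_i|u_i|\equiv 1$, and comparing with any fixed continuous reference metric turns these relations into $\max_i\psi_{u_i}$, as claimed.

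For (ii) $\Rightarrow$ (i): suppose $\varphi$ is the model metric induced by a semiample $\bQ$-model $(\cX,\cL)$ of $(X,L)$. Choosing $m\ge 1$ such that $m\cL$ is an honest, globally generated line bundle on $\cX$, pick generators $t_0,\dots,t_N\in H^0(\cX,m\cL)$; their restrictions $s_i:=t_i|_X$ generate $mL$ and hence have no common zero on $X$. By the computation above the model metric of $(\cX,m\cL)$ restricts on $mL$ to $\max_i\psi_{s_i}$, so $\varphi=\tfrac1m\max_i\psi_{s_i}$, which is a pure Fubini--Study metric by Definition~\ref{def:FS_metric}.

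For (i) $\Rightarrow$ (ii): write $\varphi=\tfrac1m\max_{i\in I}\psi_{s_i}$ with $(s_i)_{i\in I}$, $|I|=N+1$, global sections of $mL$ without common zero. They define a morphism $f:X\to\bP^N_k$ with $f^{\ast}\cO(1)\cong mL$ and $f^{\ast}x_i=s_i$. Since $X$ is projective over $k$, fix also a closed immersion $g:X\hookrightarrow\bP^M_k$; then $(f,g):X\to\bP^N_k\times_k\bP^M_k$ is a closed immersion, as it factors through the graph of $f$ and a base change of $g$, both closed immersions (using that $\bP^N_k$ is separated over $k$). Let $\cX$ be the scheme-theoretic closure of the image of $(f,g)$ in $\bP^N_{\fo_k}\times_{\fo_k}\bP^M_{\fo_k}$: it is integral, closed in a projective $\fo_k$-scheme, and $\fo_k$-torsion-free, hence a flat projective model of $X$. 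Let $\cM$ be the restriction to $\cX$ of the pullback of $\cO_{\bP^N}(1)$ along the first projection; it is a globally generated line bundle on $\cX$ with $\cM|_X=mL$, so $\cL:=\tfrac1m\cM$ is a semiample $\bQ$-model of $L$. As $\cM$ is generated by the pullbacks of the homogeneous coordinates, which restrict on $X$ exactly to the $s_i$, the computation of the first paragraph shows that the model metric induced by $(\cX,\cL)$ is $\tfrac1m\max_i\psi_{s_i}=\varphi$.

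The step I expect to be the real obstacle is checking, in (i) $\Rightarrow$ (ii), that the model metric of the constructed $(\cX,\cL)$ is \emph{exactly} $\varphi$ and not merely uniformly close to it: this is precisely where the hypothesis that $\varphi$ be \emph{pure} (all weights $\lambda_i=0$) enters, since a non-zero weight would force a rescaling of the homogeneous coordinates, i.e.\ a model of $\bP^N$ over a larger value group, which is not an $\fo_k$-model. A second, more technical point is that when $\fo_k$ is not Noetherian one must work with the appropriate (coherent, flat) notion of model and check that scheme-theoretic closures and semiampleness of $\bQ$-models behave as expected; this is the bookkeeping carried out in \cite{BoucksomEriksson}, whose Theorem~5.14 is quoted here.
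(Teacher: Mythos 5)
This statement is not proved in the paper at all: it is quoted verbatim from \cite{BoucksomEriksson} (Theorem 5.14) as background for the discussion of Fubini--Study versus model metrics, so there is no internal proof to compare yours against. On its own terms your argument is correct and is essentially the standard one. The preliminary computation is right: for a globally generated model line bundle $\cN$ with generating sections $t_i$, the reduction map gives $|t_i|\leq 1$ everywhere and, by Nakayama, $\max_i|t_i|\equiv 1$, so the model metric coincides with $\max_i\psi_{u_i}$; this immediately gives (ii) $\Rightarrow$ (i) after replacing $\cL$ by a multiple that is an honest globally generated line bundle. For (i) $\Rightarrow$ (ii), the factorization of $(f,g)$ through the graph of $f$ followed by a base change of $g$ is the right way to get a closed immersion, and the scheme-theoretic closure in $\bP^N_{\fo_k}\times_{\fo_k}\bP^M_{\fo_k}$ is integral, projective, $\fo_k$-torsion-free (hence flat), and has generic fibre $X$ because $\Spec(k)\hookrightarrow\Spec(\fo_k)$ is an open immersion for a rank-one valuation; the pulled-back $\cO(1)$ then induces exactly $\tfrac1m\max_i\psi_{s_i}$ by the same computation, and purity ($\lambda_i=0$) is indeed what makes the metric land in the $\fo_k$-model world, as you observe. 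The only points you leave implicit --- the precise meaning of ``semiample $\bQ$-model'' (some multiple globally generated) and the coherence/finite-presentation bookkeeping over a non-Noetherian $\fo_k$ --- are exactly the technicalities handled in \cite{BoucksomEriksson}, and since the paper cites that reference rather than reproving the result, your sketch is an acceptable reconstruction of the intended argument.
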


We will make use of the following inequality when studying plurisubharmonic envelopes (cf. \S \ref{subsub:envelopes}).

\begin{theorem}[\cite{BoucksomJonsson18}, Theorem 2.21]
\label{th:Izumi_global}
We assume that $k$ is non-Archimedean. Let $L$ be an invertible $\cO_X$-module, $\varphi\in\DFS(L)$, and $x_1,x_2 \in X^{\qm}$. Then there exists a constant $C=C(L,x_1,x_2,\varphi)>0$ such that 
\begin{align*}
\forall \psi\in\FS(L),\quad |(\psi-\varphi)(x_1)-(\psi-\varphi)(x_2)|\leq C.
\end{align*}
\end{theorem}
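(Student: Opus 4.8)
The plan is to reduce the assertion to a two-sided bound for the norm of a single section, the only substantial input being the finiteness of the maximal order of vanishing of sections of $mL$ along a fixed valuation. If $\FS(L)=\emptyset$ there is nothing to prove, so we may assume $L$ is semiample (Definition~\ref{def:FS_metric}); fix a semiample model $(\cX,\cL)$ of $(X,L)$ (replacing $L$ by a multiple, which is harmless, so that $\cL$ is a genuine line bundle) and let $\varphi_0\in\FS_{\{0\}}(L)$ be the induced model metric. Since $X$ is projective, $X^{\an}$ is compact, hence $C_0:=\sup_{X^{\an}}|\varphi-\varphi_0|<\infty$; only continuity of $\varphi$ is used. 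We also record the identity $H^0(\cX,m\cL)\otimes_{\fo_k}k=H^0(X,mL)$ for every $m\ge1$, valid because $\cX$ is integral and $\cX_s=\div(\varpi)$, so a section of $mL$ becomes, after multiplication by a power of $\varpi$, a section of $m\cL$.

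Now fix $\psi\in\FS(L)$ and write $\psi=\frac1m\max_{i\in I}\{\psi_{s_i}+\lambda_i\}$ with $I$ finite, the $s_i\in H^0(X,mL)$ without common zero, and $\lambda_i\in\bR$; thus $(\psi-\varphi)(x)=\frac1m\max_i(\ln|s_i|_{m\varphi}(x)+\lambda_i)$ for every $x$, where $\ln|s_i|_{m\varphi}=\psi_{s_i}-m\varphi$. We normalize: multiplying each $s_i$ by a suitable element of $k^\times$ (absorbing the change into $\lambda_i$) we may assume $s_i\in H^0(\cX,m\cL)$, and subtracting $\max_j\lambda_j$ from every $\lambda_i$ we may assume $\lambda_i\le0$ for all $i$ with $\lambda_{i_\star}=0$ for some $i_\star$ — this shifts $\psi$ by a constant and leaves $(\psi-\varphi)(x_1)-(\psi-\varphi)(x_2)$ unchanged. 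Since $s_i$ is a section of $m\cL$ we have $|s_i|_{m\varphi_0}\le1$ on $X^{\an}$, hence $\ln|s_i|_{m\varphi}\le mC_0$ everywhere, and together with $\lambda_i\le0$ this gives $(\psi-\varphi)(x_1)\le C_0$. On the other hand, since $\lambda_{i_\star}=0$,
\[
(\psi-\varphi)(x_2)\ \ge\ \tfrac1m\ln|s_{i_\star}|_{m\varphi}(x_2)\ \ge\ \tfrac1m\ln|s_{i_\star}|_{m\varphi_0}(x_2)-C_0 .
\]
Thus everything reduces to producing a constant $T=T(x_2,L,\varphi_0)<\infty$, independent of $m$ and of $s\in H^0(\cX,m\cL)\setminus\{0\}$, with $-\ln|s|_{m\varphi_0}(x_2)\le Tm$. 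Granting this, $(\psi-\varphi)(x_1)-(\psi-\varphi)(x_2)\le 2C_0+T$; exchanging the roles of $x_1$ and $x_2$ gives the reverse inequality, so the theorem holds with $C=2C_0+\max\bigl(T(x_1),T(x_2)\bigr)$.

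It remains to bound $-\ln|s|_{m\varphi_0}(x_2)$. As $x_2\in X^{\qm}$ has $d(x_2)=\dim X$, its center $j(x_2)$ is the generic point, so $x_2$ yields a genuine valuation $v_{x_2}$ on $k(X)$ over $k$. Writing $\theta$ for a local frame of $\cL$ near $\red_{\cX}(x_2)$ and $\sigma_0$ for a fixed rational section of $L$, the definition of the model metric gives $-\ln|s|_{m\varphi_0}(x_2)=v_{x_2}(s\theta^{-m})=v_{x_2}(s\sigma_0^{-m})+m\,v_{x_2}(\sigma_0\theta^{-1})$, whose second summand is a fixed multiple of $m$; so it suffices to know that $T_L(v_{x_2}):=\sup_{m\ge1}\sup_{s}\tfrac1m v_{x_2}(s\sigma_0^{-m})<\infty$, i.e. the maximal vanishing order of $L$ along $v_{x_2}$ is finite. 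This is the crux. For $L$ big it is standard: passing to a proper birational model $\mu\colon X'\to X$ on which $v_{x_2}$ is proportional, say $v_{x_2}=c\cdot\ord_E$, to the order along a prime divisor $E$, a section with $v_{x_2}(s\sigma_0^{-m})\ge tm$ makes $\mu^{*}L-\tfrac{t}{c}E$ pseudoeffective, which bounds $t$ since $[E]\neq0$ in $N^1(X')$ and the pseudoeffective cone is salient. When $L$ is merely semiample one reduces to the big case via the Stein factorization of the semiample fibration; for imperfect $k$ or singular $X$ one replaces $\mu$ by an alteration.

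The main obstacle is precisely this last point — the uniform finiteness of the maximal vanishing order, where the geometry of $L$ (bigness, equivalently finite generation of the section ring) genuinely enters — together with the fact that $T(x_2)$ must be allowed to depend on $x_2$, so that no cheaper "local boundedness on $X^{\qm}$" is available. The remaining ingredients — existence of the semiample model $(\cX,\cL)$ and of $\varphi_0$, the formula for model metrics, and the valuative description of $\red_{\cX}$ — are standard and recalled in \S\ref{sub:pluripotential}.
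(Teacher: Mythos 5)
The paper does not prove this statement at all---it is quoted verbatim from \cite{BoucksomJonsson18}---so your argument has to stand on its own, and it has a genuine gap at exactly the point you yourself call the crux. The reduction is the standard one (bound $(\psi-\varphi)(x_1)$ from above using that integral sections have sup-norm at most one, bound $(\psi-\varphi)(x_2)$ from below using the single term with $\lambda_{i_\star}=0$), but the statement you reduce to is false whenever the valuation of $k$ is nontrivial: $H^0(\cX,m\cL)$ is an $\fo_k$-module, so replacing $s$ by $as$ with $a\in\m_k$ and $|a|$ arbitrarily small keeps the section in the lattice while making $-\ln|s|_{m\varphi_0}(x_2)$ arbitrarily large; for the same reason your $T_L(v_{x_2})=\sup_m\sup_s\frac{1}{m}v_{x_2}(s\sigma_0^{-m})$ is $+\infty$, because $v_{x_2}$ restricts to the nontrivial valuation of $k$. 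To make the reduction correct you must normalize the sections, e.g. require $\sup_{X^{\an}}|s_i|_{m\varphi_0}$ comparable to $1$, and then the bound you need---$-\ln|s|_{m\varphi_0}(x_2)\le Tm$ for sup-normalized $s\in H^0(X,mL)$---is precisely an Izumi/linear-boundedness estimate comparing the quasimonomial point $x_2$ with the Shilov points of a model. That estimate is the whole content of the theorem, not a routine fact that can be deferred.

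The justification you sketch for it also does not work. A quasimonomial point is in general not divisorial: there is no proper birational model on which $v_{x_2}$ becomes proportional to $\ord_E$ (monomial valuations with $\bQ$-independent weights already fail this), so the argument that $\mu^{*}L-\frac{t}{c}E$ is pseudoeffective has nothing to apply to. Worse, when $k$ is nontrivially valued, $v_{x_2}$ extends the valuation of $k$ and hence is not of the form $c\cdot\ord_E$ for a divisor on any $k$-variety at all; the divisorial objects it must be compared with are Shilov points of $\fo_k$-models, where vertical components and rescaling by constants again destroy the salient-cone bound. Smaller problems: the uniformizer $\varpi$ you invoke exists only for discretely valued $k$, and in the Stein-factorization reduction you would need the image of $x_2$ to remain quasimonomial, which is not addressed. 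The proof in \cite{BoucksomJonsson18} (see also \cite{BoucksomEriksson}) establishes the Izumi-type inequality for quasimonomial points directly, and that is exactly the ingredient your write-up treats as standard.
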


Let $\varphi$ be a bounded from above singular metric on a line bundle $L$. Let 
\begin{align*}
\forall s\in H^{0}(X,L),\quad \|s\|_{\varphi} := \displaystyle\sup_{x\in X^{\an}} |s|_{\varphi}(x) \in \bR_{\geq 0},
\end{align*}
it defines a seminorm on $H^{0}(X,L)$. If $X$ is integral and $\varphi$ is non identically zero, then $\|\cdot\|_{\varphi}$ is a norm on $H^{0}(X,L)$.

\begin{lemma}
\label{lemma:sup_norm}
We assume that $X$ is reduced. Let $\varphi$ be a singular metric on an invertible $\cO_X$-module $M$ written as $\varphi = \phi + f$, where $\phi\in C^{0}(M)$ and $f : X^{\an} \to \bR\cup\{+\infty\}$ is a continuous function such that $f\not\equiv + \infty$. Let $R>0$, we denote $\varphi_R := \min\{\phi+R,\varphi\}$, namely for any local section $u$ of $M$, we have 
\begin{align*}
|u|_{\varphi_{R}}(x) = |u|_{\phi}(x)\max \{e^{-R},e^{-f(x)}\}.
\end{align*}
\begin{itemize}
	\item[(1)] $\varphi_R \in C^0(M)$.
	\item[(2)] Let $L$ be another invertible $\cO_X$-module and $\psi\in C^0(L)$. Then for all $R>0$ sufficiently big, we have
	\begin{align*}
\|\cdot\|_{\psi+\varphi_{R}} = \|\cdot\|_{\psi+\varphi}. 
\end{align*}
\end{itemize}
\end{lemma}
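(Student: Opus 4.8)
\textbf{Proof plan for Lemma \ref{lemma:sup_norm}.}

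The plan is to treat the two assertions in turn. For part (1), the claim is that $\varphi_R = \min\{\phi + R, \varphi\}$ is a continuous metric on $M$. Since $\varphi = \phi + f$, on the chart of a local section $u$ of $M$ we have $|u|_{\varphi_R}(x) = |u|_\phi(x)\max\{e^{-R}, e^{-f(x)}\}$, and the function $x \mapsto \max\{e^{-R}, e^{-f(x)}\}$ is continuous on $X^{\an}$ because $f$ is continuous with values in $\bR \cup \{+\infty\}$ (the value $+\infty$ only makes $e^{-f}$ vanish, which is still a continuous real-valued expression after taking the max with the positive constant $e^{-R}$, but in fact $\max\{e^{-R}, e^{-f(x)}\} = e^{-R}$ wherever $f(x) \geq R$, so no infinities intervene). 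Thus $|u|_{\varphi_R}$ is a product of two continuous nonnegative functions, hence continuous; as $u$ ranges over local sections this shows $\varphi_R \in C^0(M)$. One should also note $\varphi_R$ is genuinely a metric (a norm fibrewise) since $e^{-R} > 0$ keeps it from degenerating, even at points where $\varphi$ itself was the infinite ``norm''.

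For part (2), fix another invertible $\cO_X$-module $L$ and $\psi \in C^0(L)$; we must show $\|\cdot\|_{\psi+\varphi_R} = \|\cdot\|_{\psi+\varphi}$ on $H^0(X, L+M)$ for all sufficiently large $R$. The inequality $\|\cdot\|_{\psi+\varphi} \leq \|\cdot\|_{\psi+\varphi_R}$ is automatic, since $\varphi \leq \varphi_R$ pointwise (i.e. $\max\{e^{-R}, e^{-f}\} \geq e^{-f}$), hence $|t|_{\psi+\varphi}(x) \leq |t|_{\psi+\varphi_R}(x)$ for every section $t$ and every $x$. For the reverse inequality, fix $t \in H^0(X, L+M)$. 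The key point is that $\varphi_R$ and $\varphi$ differ only on the (closed, since $f$ is usc, being continuous) set $Z_R := \{x \in X^{\an} : f(x) < R\}$, where $\varphi_R = \phi + R > \phi + f = \varphi$. Writing $|t|_{\psi+\varphi_R}(x) = |t|_{\psi+\phi}(x)\max\{e^{-R}, e^{-f(x)}\}$, we see that on $X^{\an} \setminus Z_R$ the two sup-norm integrands agree, so it suffices to bound $\sup_{x \in Z_R} |t|_{\psi+\varphi_R}(x)$ by $\|t\|_{\psi+\varphi}$. On $Z_R$ we have $|t|_{\psi+\varphi_R}(x) = |t|_{\psi+\phi}(x) e^{-R}$, and since $X^{\an}$ is compact Hausdorff ($X$ being projective, hence proper) and $|t|_{\psi+\phi}$ is continuous, $M_0 := \sup_{X^{\an}} |t|_{\psi+\phi}(x) < +\infty$, whence $\sup_{Z_R}|t|_{\psi+\varphi_R} \leq M_0 e^{-R} \to 0$ as $R \to +\infty$. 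On the other hand, $\|t\|_{\psi+\varphi} > 0$ as soon as $t \neq 0$ (using that $X$ is reduced and $\psi + \varphi$ is not identically the degenerate metric, which holds because $f \not\equiv +\infty$). So once $R$ is large enough that $M_0 e^{-R} \leq \|t\|_{\psi+\varphi}$, we get $\|t\|_{\psi+\varphi_R} = \max\{\sup_{X^{\an}\setminus Z_R}|t|_{\psi+\varphi}, \sup_{Z_R}|t|_{\psi+\varphi_R}\} \leq \|t\|_{\psi+\varphi}$, as desired.

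The one subtlety to handle carefully is the \emph{uniformity} of the threshold $R$: the argument above gives, for each $t$, a threshold $R(t)$, whereas the statement asks for a single $R$ working for all sections simultaneously. This is where finite-dimensionality enters: $H^0(X, L+M)$ is a finite-dimensional $k$-vector space, and both $\|\cdot\|_{\psi+\varphi}$ and $\|\cdot\|_{\psi+\varphi_R}$ are norms on it (for $\psi+\varphi$ use $X$ reduced and $f \not\equiv +\infty$; for $\psi+\varphi_R$ use part (1)), hence continuous, and the estimate $\|\cdot\|_{\psi+\varphi} \leq \|\cdot\|_{\psi+\varphi_R} \leq \|\cdot\|_{\psi+\varphi} + e^{-R}\|\cdot\|_{\psi+\phi}$ (which follows from $|t|_{\psi+\varphi_R} \leq \max\{|t|_{\psi+\varphi}, e^{-R}|t|_{\psi+\phi}\} \leq |t|_{\psi+\varphi} + e^{-R}|t|_{\psi+\phi}$) shows $\|\cdot\|_{\psi+\varphi_R} \to \|\cdot\|_{\psi+\varphi}$ uniformly on the unit sphere of any fixed auxiliary norm; but to get exact equality (not merely convergence) I would instead argue as follows. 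Pick a basis $t_1, \dots, t_N$ of $H^0(X, L+M)$ and apply the single-section argument to each $t_i$ to get a common $R_0$ with $\|t_i\|_{\psi+\varphi_{R}} = \|t_i\|_{\psi+\varphi}$ for all $i$ and all $R \geq R_0$ — but this does not immediately propagate to arbitrary linear combinations. The cleaner route, which I expect to be the intended one, is: the \emph{function} $x \mapsto \sup\{|t|_{\psi+\phi}(x) : t \in H^0(X,L+M),\ \|t\|_{\psi+\varphi} \leq 1\}$ is bounded on the compact space $X^{\an}$ by some constant $C$ (it is a finite sup of continuous functions, up to choosing a basis, hence bounded), so for $t$ with $\|t\|_{\psi+\varphi} \leq 1$ one has $|t|_{\psi+\phi} \leq C$ everywhere, and then for $R \geq \ln C$ the displayed bound gives $\|t\|_{\psi+\varphi_R} \leq \max\{1, C e^{-R}\} = 1$; homogeneity then yields $\|\cdot\|_{\psi+\varphi_R} \leq \|\cdot\|_{\psi+\varphi}$ for all sections and all $R \geq \ln C$, completing the proof. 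The main obstacle is thus not any single inequality but correctly organizing this uniformity argument; everything else is a routine unwinding of definitions together with the compactness of $X^{\an}$.
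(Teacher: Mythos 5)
Your argument is correct in substance and follows the same basic route as the paper's proof (compare the two sup-norms pointwise, use compactness of $X^{\an}$ and positivity of $\|t\|_{\psi+\varphi}$ to find the threshold), but it differs in one useful respect: the paper's proof fixes a single section $u$ and produces a threshold depending on $u$, whereas you explicitly address the uniformity in the section, which is what the statement $\|\cdot\|_{\psi+\varphi_R}=\|\cdot\|_{\psi+\varphi}$ (an equality of norms) actually requires. Your mechanism for this — the bound $\|t\|_{\psi+\phi}\leq C\,\|t\|_{\psi+\varphi}$ coming from equivalence of norms on the finite-dimensional space $H^{0}(X,L+M)$ over the complete field $k$ — is the right one, and it is cleanest when combined with the identity $|t|_{\psi+\varphi_R}=\max\{e^{-R}|t|_{\psi+\phi},\,|t|_{\psi+\varphi}\}$, which gives $\|t\|_{\psi+\varphi_R}=\max\{e^{-R}\|t\|_{\psi+\phi},\,\|t\|_{\psi+\varphi}\}$ and hence equality for all $t$ as soon as $R\geq\ln C$. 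In this sense your write-up is more complete than the paper's on the uniformity point.

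Two local slips should be corrected, though neither affects the final bound. First, you have the two regions swapped: since $\varphi_R$ has weight $\min\{R,f\}$, the metrics $\varphi_R$ and $\varphi$ \emph{agree} on $\{f\leq R\}$ and differ on $\{f> R\}$, where $|t|_{\psi+\varphi_R}=e^{-R}|t|_{\psi+\phi}$; also $\{f<R\}$ is open, not closed (continuity of $f$). With the labels exchanged your estimate $\sup\{e^{-R}|t|_{\psi+\phi}\}\leq M_0e^{-R}$ versus $\|t\|_{\psi+\varphi}$ is exactly the correct criterion. Second, the final ``homogeneity'' step (rescaling to the unit sphere of $\|\cdot\|_{\psi+\varphi}$) is not available when $|k^{\times}|$ is trivial or discrete; but you do not need it: apply $|t|_{\psi+\phi}\leq\|t\|_{\psi+\phi}\leq C\|t\|_{\psi+\varphi}$ directly to an arbitrary $t$ and conclude $\|t\|_{\psi+\varphi_R}\leq\max\{Ce^{-R}\|t\|_{\psi+\varphi},\,\|t\|_{\psi+\varphi}\}=\|t\|_{\psi+\varphi}$ for $R\geq\ln C$. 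Part (1) is handled as in the paper.
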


\begin{proof}
The first statement is implied by the continuity of $f$. For all $u\in H^{0}(X,L+M)\setminus\{0\}$, for all $R>0$, for all $x\in X^{\an}$, we have 
\begin{align*}
|u|_{\psi+\varphi}(x) &= |u|_{\psi+\phi}(x)e^{-f(x)},\\
|u|_{\psi+\varphi_R}(x) &= |u|_{\psi+\phi}(x)\max\{e^{-R},e^{-f(x)}\}.
\end{align*} 
We define $\alpha : (x\in X^{\an}) \mapsto |u|_{\psi+\phi}(x)$, $\beta : (x\in X^{\an}) \mapsto e^{-f(x)}$ and, for any integer $n\geq 0$, $\beta_n := \max\{\beta,e^{-n}\}$. These are continuous real functions on $X^{\an}$. Note that 
\begin{align}
\label{eq:inequality_sup_norms}
\forall n \geq 0, \quad \displaystyle\max_{x\in X^{\an}} \alpha(x)\cdot \beta_n(x) = \|u\|_{\psi+\varphi_n} \geq \max_{x\in X^{\an}} \alpha(x)\cdot \beta(x) = \|u\|_{\psi+\varphi} >0.
\end{align}

For any integer $n\geq 0$, let $x_n \in X^{\an}$ be such that $\alpha(x_n)\beta_n(x_n)=\|u\|_{\psi+\varphi_n}$.Then the sequence $(\beta(x_n))_{n\geq 0}$ is bounded from below by a positive real number. Indeed, assume that there exists a subsequence $(\beta(x_{n_k}))_{k\geq 0}$ which converges to $0$. As $f$ is continuous and non-negative on $X^{\an}$, we have 
\begin{align*}
\forall k \geq 0,\quad 0\leq \|u\|_{\psi+\varphi_{n_k}} \leq \displaystyle\max_{x\in X^{\an}} f(x) \max\{e^{-n},g(x_{n_k})\}.
\end{align*}
Henceforth, $\lim_{k\to +\infty}\|u\|_{\psi+\varphi_{n_k}}=0$, which contradicts (\ref{eq:inequality_sup_norms}).

Let $m>0$ such that $m\leq \beta(x_n)$ for all $n\geq 0$. Let $n$ be an integer such that $e^{-n} \leq \beta(x_n)$, we have
\begin{align*}
\|u\|_{\psi+\varphi_{n}} = \alpha(x_n)\max\{e^{-n},\beta(x_{n})\} = \alpha(x_n)\beta(x_n) \leq \|u\|_{\psi+\varphi} \leq \|u\|_{\psi+\varphi_{n}}.
\end{align*}
For all $R\geq n$, we have $\|u\|_{\psi+\varphi}\leq \|u\|_{\psi+\varphi_{R}}\leq \|u\|_{\psi+\varphi_{n}} \leq\|u\|_{\psi+\varphi}$, hence the conclusion.
\end{proof}

\begin{remark}
We will make use of Lemma \ref{lemma:sup_norm} in the case where $M=-A$ is the dual of an ample line bundle $A$ and $\varphi = -\psi_{s}$ is the dual of the singular metric defined in Example \ref{example:singular_metric_global_section}, where $s\in H^0(X,A)$.
\end{remark}

\subsubsection{Green functions}
\label{subsub:Green_functions}

In this article, it is convenient to work in the setting of $\bR$-divisors instead of ($\bQ$-)line bundles. We fix an integral scheme $X$ over $\Spec(k)$, and let $\bK$ be either $\bZ$, $\bQ$ or $\bR$. $\Div_{\bK}(X) := \Div(X) \otimes_{\bZ} \bK$ denotes the $\bK$-module of $\bK$\emph{-Cartier divisors} on $X$. Recall that for all $D \in \Div_{\bK}(X)$, for all $x\in X$, there exists an open neighbourhood $U$ of $x$ and a section $f\in (\cM_{X}^{\times} \otimes_{\bZ} \bK)(U)$ such that $D$ is defined by $f$ on $U$. In that case, $f$ is called a  \emph{local equation} of $D$ in $x$. Moreover, there is an injective $\bK$-linear morphism $\div_{\bK} : K(X)^{\times} \to \Div_{\bK}(X)$ whose image is denoted by $\PDIV_{\bK}(X)$. If $D\in \Div_{\bK}(X)$, the set of its $\bK$-global sections is denoted by $H^{0}_{\bK}(D)$ (cf. e.g. \cite{ChenMori}, Definition 2.4.12). By abuse of notation, we let $H^{0}(D) := H^{0}_{\bZ}(D)$.

Recall there exists an isomorphism
\begin{align*}
\cO_X(\cdot) : \Div_{\bZ}(X)/\PDIV_{\bZ}(X) \to \Pic(X),
\end{align*}
inducing an isomorphism $H^0(D) \cong H^0(X,\cO_X(D))$ for all $D\in \Div_{\bZ}(X)$. 

For a Cartier divisor, the analogue of a continuous metric is called a Green function. We start by a preliminary definition. Let $C^0_{\gen}(X)$ be the set of continuous functions on a non-empty Zariski open subset of $X^{\an}$ modulo the equivalence relation 
\begin{align*}
f \sim g \Leftrightarrow f \text{ and } g \text{ coincide on a non-empty Zariski open subset}.
\end{align*}
Let $D\in \Div_{\bZ}(X)$, a \emph{Green function} of $D$ is a class $g\in C^0_{\gen}(X)$ such that, for any local equation $f$ of $D$ over a non-empty Zariski open subset $U$, $g+\ln|f| \in C^0_{\gen}(X)$ extends to a continuous function on $U^{\an}$, namely $g+\ln|f|$ belongs to the image of the canonical homomorphism $C^{0}(U^{\an}) \to C_{\gen}^{0}(X^{\an})$. The set of pairs $(D,g)$ with $D \in \Div_{\bZ}(X)$ and $g$ is Green function of $D$ is denoted by $\widehat{\Div}_{\bZ}(X)$. Let $D$ be a Cartier divisor on $X$, there is correspondence between Green functions of $D$ and continuous metrics on $\cO_X(D)$: if $(D,g)\in\widehat{\Div}_{\bZ}(X)$ and if $f$ is a local equation $D$ on a non-empty Zariski open subset $U\subset X$, $e^{-(g+\ln|f|)}$ extends to a continuous function on $U^{\an}$, defining a continuous metric on $\cO_X(D)_{|U}$. By glueing these local metrics, we obtain a continuous metric on $\cO_{X}(D)$ denoted by $\varphi_g$. Conversely, if $\varphi$ is a continuous metric on $\cO_X(D)$ and if $s_D$ is the rational section of $\cO_X(D)$ defined by $1 \in K(X)^{\times}$, then the class of $-\ln|s_D|_{\varphi}$ in $C^0_{\gen}(X)$ yields a Green function of $D$. If $f\in H^0(D)$, $|f|e^{-g}\in C_{\gen}^{0}(X^{\an})$ extends to a continuous function on $X^{\an}$ denoted by $|f|_g$. We let \begin{align*}
\|f\|_g := \displaystyle\max_{x\in X^{\an}} |f|_g(x).
\end{align*}
Note that, under the isomorphism $H^0(D)\cong H^0(X,\cO_X(D))$, the norm $\|\cdot\|_g$ coincides with the norm $\|\cdot\|_{\varphi_g}$.

Let $\bK$ be either $\bQ$ or $\bR$, we denote by $\widehat{\Div}_{\bK}(X)$ the quotient $\widehat{\Div}_{\bZ}(X) \otimes_{\bZ} \bK$ by the vector subspace spanned by the elements of the form $\lambda (D,g) - (\lambda D,\lambda g)$, where $\lambda\in\bK$ and $(D,g)\in\widehat{\Div}_{\bZ}(X)$. Let $(D,g) \in \widehat{\Div}_{\bK}(X)$, for all $f \in H^0_{\bK}(D)$, $|f|e^{-g}$ extends to a continuous function on $X^{\an}$ and we denote by $\|f\|_g$ its maximum.

\subsubsection{Plurisubharmonic metrics}
\label{subsub:psh_metrics}

Throughout this paragraph, we fix a projective scheme $X$ over $\Spec(k)$.

\begin{definition}
\label{def:psh_metric_unified}
Let $L$ be a semiample $\bQ$-line bundle on $X$ and $\varphi$ be a (possibly singular) metric on $L$. $\varphi$ is called \emph{plurisubharmonic} (\emph{psh} for short) if $\varphi \not\equiv -\infty$ and if $\varphi$ is decreasing limit of a net of Fubini-Study metrics on $L$. The class of psh metrics on $L$ is denoted by $\PSH(L)$.
\end{definition}

\begin{remark}
\begin{itemize}
	\item[(1)] When $k$ is Archimedean, this definition of psh metric is equivalent to the usual definition of a psh metric on a semiample $\bQ$-line bundle on a projective scheme over $\Spec(k)$ (\cite{BoucksomEriksson}, Theorem 7.1).
	\item[(2)] If $L$ is an invertible $\cO_{X}$-module, the class $\PSH(L) \cap C^0(L)$ coincide with the class of \emph{semi-positive} continuous metrics in the sense of (\cite{ChenMori}, \S 2.3).
\end{itemize}
\end{remark}

\begin{lemma}[\cite{BoucksomEriksson}, Lemma 7.3]
Let $L$ be a semiample $\bQ$-line bundle on $X$. Then there exists a projective scheme $Y$ over $\Spec(k)$, a surjective morphism $f : X \to Y$ over $\Spec(k)$ with $f_{\ast}\cO_{X} = \cO_{Y}$ and an ample $\bQ$-line bundle $A$ on $Y$ such that $L = f^{\ast}A$. Furthermore, $f$ and $(Y,A)$ are unique up to isomorphism and $\PSH(L) \cong \PSH(A)$.
\end{lemma}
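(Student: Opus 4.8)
The plan is to combine the classical semiample fibration theorem with a transport of plurisubharmonicity along the resulting fibration. Replacing $L$ by a sufficiently divisible positive multiple $cL$ --- which is a genuine semiample line bundle, rescales the sought pair by $A\mapsto cA$, and identifies $\PSH(cL)$ with $\PSH(L)$ via $\varphi\mapsto c\varphi$ --- I may assume $L$ is an invertible $\cO_X$-module. Choose $m_0\geq1$ with $m_0L$ globally generated and let $\phi\colon X\to\bP:=\bP(H^0(X,m_0L)^{\vee})$ be the induced morphism, so that $\phi^{\ast}\cO_{\bP}(1)=m_0L$. Taking the Stein factorisation $\phi=g\circ f$ with $f\colon X\to Y$ satisfying $f_{\ast}\cO_X=\cO_Y$ and $g\colon Y\to\bP$ finite, set $A_0:=g^{\ast}\cO_{\bP}(1)$; it is ample since $g$ is finite, and $f^{\ast}A_0=m_0L$, so $A:=\tfrac{1}{m_0}A_0$ satisfies $f^{\ast}A=L$. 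Using $f_{\ast}\cO_X=\cO_Y$ and the projection formula one gets $H^0(Y,nA_0)\cong H^0(X,nm_0L)$ for all $n\geq0$; consequently $\phi_{|nm_0L|}$ factors through $f$, and becomes a closed immersion on $Y$ once $n$ is large because $A_0$ is ample, which shows that the Stein factorisation does not depend on the sufficiently divisible $m_0$.

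For the uniqueness I would show $(f,Y,A)$ is intrinsic to $(X,L)$. By the previous paragraph, $\bigoplus_{n\geq0}H^0(Y,nA_0)$ is the $m_0$-th Veronese of the section ring $R:=\bigoplus_{m\geq0}H^0(X,mL)$, and since $A_0$ is ample on the projective scheme $Y$ one recovers $Y$ as $\Proj$ of a suitable Veronese of this ring, with $A_0$ corresponding to $\cO(1)$; the morphism $f$ is then the canonical one $X\to\Proj(\cdots)$ attached to the linear systems $|nL|$. Hence any other triple $(f',Y',A')$ as in the statement yields a canonical isomorphism $Y\cong Y'$ over $\Spec(k)$ carrying $A$ to $A'$ and $f$ to $f'$.

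It remains to check that $\varphi\mapsto f^{\ast}\varphi$ gives a bijection $\PSH(A)\iso\PSH(L)$. Pullback of metrics along $f$ is pointwise --- the value of $f^{\ast}\varphi$ at $x\in X^{\an}$ is determined by the value of $\varphi$ at $f(x)\in Y^{\an}$ --- hence order-preserving and compatible with infima of nets, and, $f^{\an}$ being surjective, it is moreover order-reflecting and satisfies $f^{\ast}\varphi\not\equiv-\infty$ iff $\varphi\not\equiv-\infty$. By \cite{BoucksomEriksson}, Proposition 5.4, $f^{\ast}$ maps $\FS(A)$ into $\FS(L)$; conversely any $\varphi\in\FS(L)$ can be rewritten at a level $m$ divisible by $m_0$ (replace the defining sections $s_i\in H^0(X,mL)$ by their $k$-fold products), and then $H^0(X,mL)=H^0(Y,mA)$ by the projection formula, $\psi_{f^{\ast}t}=f^{\ast}\psi_t$, $f^{\ast}$ commutes with $\max$, and base loci correspond under $f^{-1}$, so $\varphi$ is the pullback of a Fubini-Study metric on $A$; thus $f^{\ast}\colon\FS(A)\to\FS(L)$ is a bijection. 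Combined with the transport of decreasing nets and of the condition $\not\equiv-\infty$, this gives $f^{\ast}(\PSH(A))\subseteq\PSH(L)$ and, conversely, that every $\varphi=\inf_{\alpha}\varphi_{\alpha}\in\PSH(L)$ equals $f^{\ast}\psi$ with $\psi:=\inf_{\alpha}\psi_{\alpha}\in\PSH(A)$, where $\varphi_{\alpha}=f^{\ast}\psi_{\alpha}$ is the descent of a defining net; injectivity is immediate from surjectivity of $f^{\an}$.

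The main obstacle is the geometric input assumed above: that the Stein factorisation stabilises for sufficiently divisible $m_0$ (the semiample fibration theorem; see e.g. \cite{LazarsfeldI04}, Theorem 2.1.27, whose proof carries over to projective schemes over an arbitrary field) together with the identification of $Y$ with the $\Proj$ of a Veronese of the section ring of $L$. Granting these, the compatibility $\PSH(L)\cong\PSH(A)$ is a formal consequence of the fact that Fubini-Study metrics, and decreasing nets of them, are both preserved and reflected by pullback along the surjection $f$.
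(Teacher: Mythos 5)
The paper gives no proof of this lemma (it is quoted from \cite{BoucksomEriksson}, Lemma 7.3), and your argument follows essentially the same standard route as the cited source: the semiample fibration theorem/Stein factorisation yields $(Y,A)$ with $f_{\ast}\cO_X=\cO_Y$ and $f^{\ast}A=L$, and the identification $H^0(X,mL)\cong H^0(Y,mA)$ for sufficiently divisible $m$, together with surjectivity of $f^{\an}$, makes $f^{\ast}$ a bijection on Fubini--Study metrics and hence on decreasing nets and on $\PSH$. Your proof is correct; the only delicate points (stabilisation of the Stein factorisation over an arbitrary field, and rewriting an Archimedean Fubini--Study metric at a divisible level, where the multinomial coefficients only shift the constants $\lambda_i$) are exactly the ones you flag, and they go through.
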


\begin{proposition}[\cite{BoucksomJonsson21}, Proposition 5.6 et \cite{BoucksomEriksson}, Corollary 7.5]
\label{prop:PSH_metrics}
Using the same notation as in Definition \ref{def:psh_metric_unified}, we have the following:
\begin{itemize}
	\item[(i)] if $\varphi \in \PSH(L)$,  then $\varphi + c \in \PSH(L)$ for all $c\in \bR$;
	\item[(ii)] if $L' \in \Pic(X)_{\bQ}$ is semiample and $\varphi'\in\PSH(L')$, then $\varphi+\varphi'\in\PSH(L+L')$;
	\item[(iii)] $\PSH(L)$ is preserved by finite maximum;
	\item[(iv)] if $(\varphi_i)_i$ is a decreasing net of psh metrics such that $\varphi := \lim_i \varphi_i \not\equiv -\infty$, then $\varphi\in\PSH(L)$;
	\item[(v)] $\PSH(L)$ is closed under the topology of uniform convergence.
	\item[(vi)] for all $t\in\bQ_{>0}$, $\PSH(tL)=t\PSH(L)$.
\end{itemize}
\end{proposition}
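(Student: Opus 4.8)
The statement gathers the basic stability properties of psh metrics, and the plan is to reproduce the structure of the proofs in \cite{BoucksomJonsson21} and \cite{BoucksomEriksson}. First I would reduce to the ample case: the Stein-factorisation lemma recalled above (\cite{BoucksomEriksson}, Lemma 7.3) yields an isomorphism $\PSH(L)\cong\PSH(A)$ which is order preserving and compatible with sums, finite maxima, translations by constants and rescalings, so each of (i)--(vi) transports from $A$ to $L$. Thus I may assume $L=A$ is ample.

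Next I would clear the ``formal'' items (i), (ii), (vi), and the non-Archimedean case of (iii), which all follow from the stability of Fubini--Study metrics under translation by a real constant, sum, maximum (when $k$ is non-Archimedean) and rescaling by a positive rational --- the sum and maximum part being \cite{BoucksomEriksson}, Proposition 5.4 --- together with routine bookkeeping for directed index sets. For (i): adding $c\in\bR$ to $\tfrac1m\max_i\{\psi_{s_i}+\lambda_i\}$, or to its Archimedean analogue, merely absorbs $mc$ into the weights, so translating a decreasing approximating net of $\varphi$ by $c$ exhibits $\varphi+c$ as psh, and $\varphi+c\not\equiv-\infty$. For (vi): writing $t=p/q$ with $p,q\in\bN$, one has $t\cdot\tfrac1m\max_i\{\psi_{s_i}+\lambda_i\}=\tfrac1{qm}\max_i\{\psi_{s_i^{\otimes p}}+p\lambda_i\}$, and the $s_i^{\otimes p}$ are global sections of $qm(tL)$ without common zero, so $t\PSH(L)\subseteq\PSH(tL)$; applying this to $t^{-1}$ gives equality, and the Archimedean case is identical. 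For (ii) and the non-Archimedean (iii): form the product of the two approximating nets; by Proposition 5.4 of \cite{BoucksomEriksson} the metrics $\varphi_\alpha+\varphi'_\beta$, resp. $\max\{\varphi_\alpha,\varphi'_\beta\}$, are again Fubini--Study, the product net is decreasing with limit $\varphi+\varphi'$, resp. $\max\{\varphi,\varphi'\}$, and this limit is $\not\equiv-\infty$ because the $-\infty$-locus of a psh metric is pluripolar while a union of two pluripolar sets is proper. Finally (v) reduces to (i) and (iv): if $\varphi_n\to\varphi$ uniformly with each $\varphi_n$ psh and $\varepsilon_n:=\sup_{X^{\an}}|\varphi_n-\varphi|\to0$, pass to a subsequence with $\varepsilon_{n_{k+1}}\le\varepsilon_{n_k}/3$; by (i) each $\varphi_{n_k}+2\varepsilon_{n_k}$ is psh, the inequality $\varphi\le\varphi_{n_k}+\varepsilon_{n_k}$ gives $\varphi_{n_{k+1}}+2\varepsilon_{n_{k+1}}\le\varphi+3\varepsilon_{n_{k+1}}\le\varphi+\varepsilon_{n_k}\le\varphi_{n_k}+2\varepsilon_{n_k}$, so the sequence is decreasing, its pointwise limit lies between $\varphi$ and $\varphi+3\varepsilon_{n_k}\to\varphi$ hence equals $\varphi$, and $\varphi\not\equiv-\infty$ because $\varphi\ge\varphi_{n_1}-\varepsilon_{n_1}$ at any point where $\varphi_{n_1}$ is finite.

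The main obstacle is (iv) in the non-Archimedean case (when $k$ is Archimedean, both (iii) and (iv) are immediate from the equivalence with the classical notion of psh metric --- \cite{BoucksomEriksson}, Theorem 7.1 --- and classical pluripotential theory). The difficulty is structural: a decreasing net $(\varphi_i)_i$ of psh metrics carries, for each $i$, its own directed net of Fubini--Study metrics decreasing to $\varphi_i$, but only pointwise, and these cannot in general be spliced into a single directed net of Fubini--Study metrics decreasing to $\varphi:=\inf_i\varphi_i$, since the pointwise infima need not be uniform and a finite minimum of Fubini--Study metrics is not Fubini--Study. I would not try to finesse this by hand but rather invoke an intrinsic description of $\PSH(A)$ from \cite{BoucksomJonsson21} and \cite{BoucksomEriksson}: either the Monge--Amp\`ere/curvature characterisation, in which psh-ness is cut out by a positivity condition manifestly stable under decreasing limits, or the psh-envelope operator together with its continuity and regularisation properties (cf. \S\ref{subsub:envelopes}); in either form the key stability input is a non-trivial theorem proved there, which I would quote directly rather than reprove. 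Everything else in the proposition is then purely formal.
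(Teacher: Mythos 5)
The paper offers no internal proof of this proposition at all—it is quoted verbatim from \cite{BoucksomJonsson21}, Proposition 5.6 and \cite{BoucksomEriksson}, Corollary 7.5—and your proposal is consistent with that treatment: your handling of the formal items (i), (ii), (iii), (v), (vi) via stability of Fubini--Study metrics under translation, sum, maximum and rescaling, the product-net trick, and the $3\varepsilon$ subsequence argument for uniform limits is correct, and for the only substantive point, the decreasing-net stability (iv), you defer to precisely the same references the paper cites. So your approach is essentially the same as the paper's (a citation), just with more detail supplied for the elementary parts.
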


\begin{example}[\cite{BoucksomJonsson21}, Proposition 5.8]
Let $L$ be a semiample $\bQ$-line bundle on $X$ and $A$ be a finite set. We fix an integer $m\geq 1$, let $((s_\alpha,\lambda_{\alpha}))_{\alpha\in A}$ be a family such that, for all $\alpha\in A$, $s_{\alpha}\in H^{0}(X,mL)$ and $\lambda_{\alpha}\in\bR$. Then the (possibly singular) metric
\begin{align*}
\varphi := \frac{1}{m}\displaystyle\max_{\alpha\in A} \left(\ln|s_{\alpha}| + \lambda_{\alpha}\right)
\end{align*} 
belongs to $\PSH(L)$. Note that $\varphi$ is continuous iff the $s_{\alpha}$ do not share a common zero.
\end{example}

To mimic the complex case, it is convenient to have an approximation result for psh metrics by \emph{smooth} ones (cf. \cite{CLD12} when $k$ is non-Archimedean). 

\begin{definition}
\label{def:regularisable_psh_metric}
A singular metric $\varphi$ on  a $\bQ$-line bundle $L$ is called \emph{psh-regularisable} if it can be written as a decreasing limit of a net of smooth and semi-positive metrics on $L$.
\end{definition}

\begin{theorem}[\cite{BoucksomEriksson}, Theorem 7.14]
\label{th:BoucksomEriksson_th_7.14}
Let $\varphi \in \PSH(L)$, where $L$ is a semiample $\bQ$-line bundle. Then $\varphi$ is psh-regularisable. 
\end{theorem}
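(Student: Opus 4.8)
The plan is to treat the Archimedean and non-Archimedean cases separately, the first being essentially formal and the second resting on the regularisation machinery of Chambert-Loir--Ducros. After replacing $L$ by a positive integer multiple --- legitimate by $\PSH(tL)=t\PSH(L)$ (Proposition \ref{prop:PSH_metrics}(vi)) together with the obvious compatibility of psh-regularisability with scaling --- one may assume that $L$ is a globally generated invertible $\cO_X$-module.

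When $k$ is Archimedean the statement is immediate: every Fubini--Study metric $\varphi\in\FS(L)$ is already smooth and semi-positive. Indeed, writing $\varphi=\frac{1}{2m}\ln\sum_{i\in I}e^{2\psi_{s_i}+2\lambda_i}$ as in Definition \ref{def:FS_metric} with $(s_i)_{i\in I}$ generating $mL$ and without common zero, $\varphi$ is $\tfrac1m$ times the pullback, under the morphism $x\mapsto[(e^{\lambda_i}s_i(x))_{i\in I}]$, of (a translate of) the Fubini--Study metric on $\bP^{|I|-1}$; hence $\diff\dc\varphi\geq 0$ and $\varphi$ is smooth. Since by Definition \ref{def:psh_metric_unified} any $\varphi\in\PSH(L)$ is a decreasing limit of a net in $\FS(L)$, it is a decreasing limit of a net of smooth semi-positive metrics, i.e. psh-regularisable.

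When $k$ is non-Archimedean the core input is that the tropical-chart, convolution-type regularisation operators $R_\varepsilon$ of \cite{CLD12} (see also \cite{GK17}) have the following three properties when applied to a semi-positive metric $\chi$ on $L$: (i) $R_\varepsilon\chi$ is smooth with $\diff\dc(R_\varepsilon\chi)\geq 0$; (ii) $R_\varepsilon\chi\geq\chi$ and $R_\varepsilon\chi$ decreases to $\chi$ as $\varepsilon\to 0$; (iii) $R_\varepsilon$ is order-preserving, i.e. $\chi\leq\chi'$ implies $R_\varepsilon\chi\leq R_\varepsilon\chi'$. Granting this, fix $\varphi\in\PSH(L)$ and a decreasing net $(\varphi_\alpha)_{\alpha\in A}$ in $\FS(L)$ with $\inf_\alpha\varphi_\alpha=\varphi$; each $\varphi_\alpha$ is a (continuous) semi-positive metric, so (i)--(iii) apply. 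Order $A\times\bR_{>0}$ by $(\alpha,\varepsilon)\leq(\beta,\delta)\iff\alpha\leq\beta$ and $\varepsilon\geq\delta$; this is directed, and $\theta_{(\alpha,\varepsilon)}:=R_\varepsilon\varphi_\alpha$ is, by (i) and the two monotonicities in (ii)--(iii), a decreasing net of smooth semi-positive metrics. Finally $\theta_{(\alpha,\varepsilon)}\geq\varphi_\alpha\geq\varphi$ forces $\inf\theta_{(\alpha,\varepsilon)}\geq\varphi$, while $\inf_{(\alpha,\varepsilon)}R_\varepsilon\varphi_\alpha\leq\inf_\alpha\bigl(\inf_{\varepsilon>0}R_\varepsilon\varphi_\alpha\bigr)=\inf_\alpha\varphi_\alpha=\varphi$; hence $\inf\theta_{(\alpha,\varepsilon)}=\varphi$ and $\varphi$ is psh-regularisable.

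The main obstacle is establishing properties (i)--(iii) of the non-Archimedean regularisation: that the smoothing operation preserves positivity of $\diff\dc$ and is monotone both in the parameter $\varepsilon$ and in the metric $\chi$. This forces one into the Chambert-Loir--Ducros/Gubler--Künnemann formalism of smooth forms and currents on Berkovich analytifications (building the operators on tropical charts from mollification on $\bR^n$ and checking compatibility with the gluing), and it is the technical heart of the argument; the passage from general psh metrics to Fubini--Study ones carried out above is purely formal once these properties are available.
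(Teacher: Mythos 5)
The paper does not prove this statement at all: it is quoted verbatim from Boucksom--Eriksson (their Theorem 7.14), so the only thing your argument can be measured against is whether it would stand on its own. The Archimedean half does: with the paper's Definition \ref{def:FS_metric}, every Archimedean Fubini--Study metric is smooth and semi-positive, so a psh metric, being by Definition \ref{def:psh_metric_unified} a decreasing limit of a net in $\FS(L)$, is psh-regularisable with nothing more to say.

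The non-Archimedean half, however, has a genuine gap exactly where you locate "the technical heart": properties (i)--(iii) of the operators $R_\varepsilon$ are not available in \cite{CLD12} or \cite{GK17}, and as stated they essentially beg the question. The smoothing in Chambert-Loir--Ducros is performed locally on tropical charts; it does not assemble into a global operator on metrics of $L$ that (i) preserves semi-positivity of $\diff\dc$, (ii) dominates the input and decreases to it, and (iii) is order-preserving. Producing globally smooth semi-positive approximants of a semi-positive metric is precisely the nontrivial content of the theorem being proved (even in the complex case there is no global convolution on a compact manifold, and monotone regularisation requires Demailly-type or Bergman/Fubini--Study techniques). The route actually used in the cited source is different and avoids any such operator: a non-Archimedean Fubini--Study metric $\frac1m\max_i(\psi_{s_i}+\lambda_i)$ is itself a decreasing limit of metrics $\frac{1}{mk}\ln\sum_i e^{k(\ln|s_i|+m\lambda_i)}$, which are smooth in the Chambert-Loir--Ducros sense (smooth functions of the $\ln|s_i/s_{i_0}|$) and semi-positive (log-sum-exp is convex and nondecreasing in each variable), and decrease in $k$; one then passes from $\FS(L)$ to all of $\PSH(L)$ by a net argument. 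Note also that even granting some regularisation of each $\varphi_\alpha$, your diagonal step needs the two-parameter family to be genuinely decreasing, which for the log-sum-exp approximants fails across different $\alpha$ because the Fubini--Study presentations change; this directedness issue is handled in \cite{BoucksomEriksson} and cannot be waved away. Either supply a proof of (i)--(iii) (which I do not believe exists in the stated generality) or replace that part of your argument by the regularised-maximum construction and a careful treatment of the resulting net; as written, the non-Archimedean case is not proved.
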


Theorem \ref{th:BoucksomEriksson_th_7.14} allows to study more closely the so-called \emph{pluripolar} sets, namely the sets included in the singularity locus of some psh metric. From now on, we assume that $k$ is non-Archimedean. 

\begin{proposition}
\label{prop:BJ18_5.2}
Let $L$ be an invertible $\cO_X$-module and $\varphi \in \PSH(L)$. Then, for all $x\in X^{\qm}$, we have $|\cdot|_{\varphi}(x) \not\equiv +\infty$.
\end{proposition}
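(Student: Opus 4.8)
The plan is to pass to a reference metric and reduce, via the Izumi inequality, to checking the assertion at a single quasimonomial point. Fix a model metric $\varphi_0$ on $L$ attached to a semiample model of $L$; by Theorem 5.14 of \cite{BoucksomEriksson} it is a pure Fubini--Study metric, in particular $\varphi_0\in\DFS(L)$, and it is continuous and finite everywhere on $X^{\an}$. Since $\varphi\in\PSH(L)$, write $\varphi=\inf_\alpha\varphi_\alpha$ as the pointwise infimum of a decreasing net of Fubini--Study metrics $\varphi_\alpha$ on $L$. Each $u_\alpha:=\varphi_\alpha-\varphi_0$ is then a real-valued continuous function on $X^{\an}$ (Fubini--Study and DFS metrics, having no common zeros among the defining sections, are everywhere finite and continuous), so $\varphi-\varphi_0=\inf_\alpha u_\alpha$ is a $[-\infty,+\infty)$-valued upper semicontinuous function; note that $|\cdot|_\varphi(x)\not\equiv+\infty$ is equivalent to $(\varphi-\varphi_0)(x)>-\infty$, because $\varphi_0$ is finite at $x$.

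The first step is Izumi propagation. Given $x_1,x_2\in X^{\qm}$, Theorem \ref{th:Izumi_global} applied with the DFS reference $\varphi_0$ provides a constant $C=C(L,x_1,x_2,\varphi_0)>0$ with $|u_\alpha(x_1)-u_\alpha(x_2)|\le C$ for every $\alpha$. Taking the infimum over $\alpha$ in $u_\alpha(x_1)\ge u_\alpha(x_2)-C$ and in the symmetric inequality shows that $(\varphi-\varphi_0)(x_1)=-\infty$ if and only if $(\varphi-\varphi_0)(x_2)=-\infty$, and that these values differ by at most $C$ when finite. Hence it suffices to produce \emph{one} point of $X^{\qm}$ at which $\varphi-\varphi_0$ is finite.

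For this, since $\varphi\not\equiv-\infty$ and $\varphi-\varphi_0$ is upper semicontinuous and bounded above on the compact space $X^{\an}$ (Lemma \ref{lemma:usc_lsc_functions}), $M:=\max_{X^{\an}}(\varphi-\varphi_0)\in\bR$. With $\varphi_0$ chosen as above, each $u_\alpha=\varphi_\alpha-\varphi_0$ is a model function: after passing to a sufficiently fine model it factors through the retraction onto the associated skeleton and is piecewise affine there, so its maximum over $X^{\an}$ is attained at a vertex of that skeleton, i.e. at a divisorial point $\xi_\alpha\in X^{\div}$, with $u_\alpha(\xi_\alpha)\ge M$. Combining this with the Izumi bound of the first step and a monotone-convergence argument to control the net $(\xi_\alpha)_\alpha$, one obtains a divisorial point $\xi\in X^{\div}\subset X^{\qm}$ with $(\varphi-\varphi_0)(\xi)>-\infty$; a variant instead extracts such a point from the fact that the top-degree term $\int_{X^{\an}}(\varphi-\varphi_0)(\diff\dc\varphi_0)^{\wedge d}$ of the Monge--Ampère energy $\cE(\varphi,\varphi_0)$ is the integral of the bounded-above function $\varphi-\varphi_0$ against a finite measure supported on divisorial points, whose finiteness forces $\varphi-\varphi_0>-\infty$ at those points. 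Combining with the first step then gives $\varphi-\varphi_0>-\infty$ throughout $X^{\qm}$, which is exactly the claim.

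The hard part is this last step, i.e. exhibiting one quasimonomial point at which $\varphi$ is not $-\infty$: merely knowing $\varphi\not\equiv-\infty$ yields finiteness at an a priori arbitrary Berkovich point, and upper semicontinuity moves values the wrong way along nets of quasimonomial points, so one genuinely needs pluripotential-theoretic input — the behavior of model functions on skeleta, or the finiteness of the Monge--Ampère energy — rather than a soft topological density argument. Once that single point is secured, the Izumi inequality does the rest.
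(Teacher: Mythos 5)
Your reduction step is fine: applying Theorem \ref{th:Izumi_global} with the fixed DFS reference $\varphi_0$ to each $\varphi_\alpha\in\FS(L)$ and taking infima over $\alpha$ does show that $(\varphi-\varphi_0)(x_1)=-\infty$ if and only if $(\varphi-\varphi_0)(x_2)=-\infty$ for $x_1,x_2\in X^{\qm}$, so finiteness at one quasimonomial point would propagate to all of them. The problem is that you never actually establish that one point, and this is precisely the step you yourself flag as the hard part. Your first sketch picks, for each $\alpha$, a divisorial maximizer $\xi_\alpha$ of $u_\alpha=\varphi_\alpha-\varphi_0$ with $u_\alpha(\xi_\alpha)\geq M$; but to transfer this bound to a \emph{fixed} quasimonomial point you would need the Izumi constant to be uniform in the second point, whereas in Theorem \ref{th:Izumi_global} the constant $C=C(L,x_1,x_2,\varphi_0)$ depends on the pair of points, and the $\xi_\alpha$ move with $\alpha$. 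The subnet-limit trick does yield a point $\xi$ with $(\varphi-\varphi_0)(\xi)\geq M$ (by monotonicity and continuity of each $u_\beta$), but a limit of divisorial points need not be divisorial or quasimonomial, so this only recovers $\varphi\not\equiv-\infty$, which is already part of the definition of $\PSH(L)$. Your second sketch is circular: the finiteness of $\int_{X^{\an}}(\varphi-\varphi_0)(\diff\dc\varphi_0)^{\wedge d}$ is not known for a general singular $\varphi\in\PSH(L)$ (the energy of a singular psh metric may well be $-\infty$), and since the measure is atomic at divisorial points, its finiteness is essentially equivalent to the finiteness of $\varphi$ at those atoms, i.e.\ to the statement being proved.

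For comparison, the paper's proof needs no auxiliary anchor point at all: it uses the Izumi estimate, against the fixed DFS reference $\psi$, to bound the oscillation of the net directly at the given point $x$, namely $(\varphi_i-\varphi_j)(x)=|(\varphi_i-\psi)(x)-(\varphi_j-\psi)(x)|\leq C$ uniformly for $i\leq j$, whence $(\varphi_i-\varphi)(x)\leq C$ and $|\cdot|_{\varphi}(x)\not\equiv+\infty$. In other words, the substance of the Izumi input there is a uniform control of the value at the single quasimonomial point $x$ over all Fubini--Study metrics in the net (equivalently, finiteness of the Alexander--Taylor-type quantity at $x$ restricted to $\FS(L)$), not merely the two-point comparison you extracted; with only the two-point comparison and no proven anchor point, your argument does not close. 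To repair your route you would need either the sup-form of the Izumi estimate (value at $x$ controlled by $\sup_{X^{\an}}$ uniformly over $\FS(L)$) or some other genuine input guaranteeing a quasimonomial point of finiteness.
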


\begin{proof}
Write $\varphi = \lim_{i\in I} \varphi _i$ where $(\varphi_i)_{i\in I}\in \FS(L)^{I}$ is a decreasing net. We fix a reference metric $\psi\in \DFS(L)$. Let $x\in X^{\qm}$, the global Izumi inequality (Theorem \ref{th:Izumi_global}) gives the existence of a constant $C>0$ such that
\begin{align*}
(\varphi_i-\varphi_j)(x) = |(\varphi_i-\varphi_j)(x)|=|(\varphi_i-\psi)(x)-(\varphi_j-\psi)(x)| \leq C,
\end{align*}
for all $i,j\in I$ with $i\leq j$. Hence $(\varphi_i-\varphi)(x) \leq C$ and $|\cdot|_{\varphi}(x) \not\equiv +\infty$.
\end{proof}

\begin{definition}
\label{def:Alexander_Taylor_capacity}
Let $L$ be an invertible $\cO_X$-module such that $\PSH(L)\neq \emptyset$ (in particular $L$ is semiample). Let $\varphi_{0}\in \DFS(L)$ be a reference metric. Let $E\subset X^{\an}$ be any subset, its \emph{Alexander-Taylor capacity} is defined by
\begin{align*}
T_{L,\varphi_0}(E) := \displaystyle\sup_{\varphi\in\PSH(L)} \left(\sup_{X^{\an}} (\varphi-\varphi_0) - \sup_{E}(\varphi-\varphi_0)\right).
\end{align*}
\end{definition}

\begin{remark}
\label{rem:Alexander_Taylor_capacity}
\begin{itemize}
	\item[(1)] Definition \ref{def:Alexander_Taylor_capacity} is a non-Archimedean analogue of the capacity defined in \cite{BT82},\cite{AT84}.
	\item[(2)] Using the same notation as in Definition \ref{def:Alexander_Taylor_capacity}, Proposition \ref{prop:PSH_metrics} $(i)$ implies the equality
\begin{align*}
T_{L,\varphi_0}(E)= -\displaystyle\sup_{\varphi\in\PSH_0(L)} \sup_{E}(\varphi-\varphi_0) \in [0,+\infty],
\end{align*}
where $\PSH_0(L) := \{\varphi\in\PSH(L) : \sup_{X^{\an}}(\varphi-\varphi_0) = 0\}$. 
	\item[(3)] A priori, the construction of the Alexander-Taylor capacity depends on the choice of a reference metric. In the following, we will only be interested in its finiteness. As the difference of two continuous metrics is a bounded function, this finiteness is in fact independent on the choice of the reference metric.
\end{itemize}
\end{remark}

\begin{lemma}
\label{lemma:BoucksomEriksson_4.45}
Let $L$ be an invertible $\cO_X$-module such that $\PSH(L)\neq \emptyset$ (in particular $L$ is semiample). Let $\varphi_{0}\in \DFS(L)$ be a reference metric. For all $x\in X^{\qm}$, we have $T_{L,\varphi_0}(x) := T_{L,\varphi_0}(\{x\})<+\infty$.
\end{lemma}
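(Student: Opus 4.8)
I want to show that for $x \in X^{\qm}$, the Alexander--Taylor capacity $T_{L,\varphi_0}(\{x\})$ is finite. By Remark \ref{rem:Alexander_Taylor_capacity}(2), this amounts to bounding $\sup_{\varphi \in \PSH_0(L)}\bigl(-(\varphi - \varphi_0)(x)\bigr)$ from above, i.e.\ producing a constant $C$ such that for every $\varphi \in \PSH(L)$ normalized by $\sup_{X^{\an}}(\varphi - \varphi_0) = 0$, one has $(\varphi - \varphi_0)(x) \geq -C$. The idea is to compare $\varphi$ with a \emph{second} quasimonomial point --- in fact with a \emph{divisorial} point --- at which one can control $\sup_{X^{\an}}(\varphi - \varphi_0)$ from below.

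\textbf{Key steps.} First, I would reduce the global Izumi inequality (Theorem \ref{th:Izumi_global}) from $\FS(L)$ to $\PSH(L)$: if $\varphi = \lim_i \varphi_i$ is a decreasing net of Fubini--Study metrics, then applying Theorem \ref{th:Izumi_global} to the differences $\varphi_i$ (with reference metric $\varphi_0 \in \DFS(L)$) and passing to the limit gives, for any two quasimonomial points $x_1, x_2$, a constant $C = C(L, x_1, x_2, \varphi_0) > 0$ with $\lvert (\varphi - \varphi_0)(x_1) - (\varphi - \varphi_0)(x_2)\rvert \leq C$ for all $\varphi \in \PSH(L)$ --- this is essentially the content of the proof of Proposition \ref{prop:BJ18_5.2}, carried out uniformly. (One must be slightly careful: the net may not be monotone in a way that lets one directly take limits of $\lvert \cdot \rvert$, but since the $\varphi_i$ decrease to $\varphi$ and all the relevant quantities are finite at quasimonomial points by Proposition \ref{prop:BJ18_5.2}, the bound survives.) Second, I would choose $x_2$ to be a divisorial point at which the $\sup$-normalization is controlled: since $\sup_{X^{\an}}(\varphi - \varphi_0) = 0$ is attained (by Lemma \ref{lemma:usc_lsc_functions}(3), as $X^{\an}$ is compact Hausdorff and $\varphi - \varphi_0$ is usc, being a decreasing limit of continuous functions), and since the divisorial points are dense --- more precisely, the sup of a psh function over $X^{\an}$ equals its sup over $X^{\div}$, a maximum-principle-type fact --- there exists a divisorial point $x_\varphi$ with $(\varphi - \varphi_0)(x_\varphi)$ close to $0$. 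Third, I would invoke the fact (this is where the divisorial structure enters) that one can dominate the relevant quasimonomial point $x$ using a \emph{fixed} divisorial point, or rather that the Izumi constant can be taken uniform; then combining $(\varphi - \varphi_0)(x) \geq (\varphi - \varphi_0)(x_\varphi) - C \geq -1 - C$ gives the bound.

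\textbf{Main obstacle.} The delicate point is the second step: controlling the sup-normalization at a \emph{single, fixed} quasimonomial or divisorial point, uniformly over all $\varphi \in \PSH_0(L)$. The Izumi inequality compares values at two fixed points, but the point where $\varphi - \varphi_0$ attains its maximum (equivalently, where it is close to $0$) varies with $\varphi$. To circumvent this I expect one needs either (a) a maximum principle asserting $\sup_{X^{\an}}(\varphi - \varphi_0) = \sup_{x \in X^{\div}}(\varphi - \varphi_0)$ together with a \emph{uniform} Izumi bound over all divisorial points --- which is false in general, the constant genuinely depends on the point --- or, more plausibly, (b) a compactness/Dirichlet argument: the supremum over $X^{\an}$ being attained somewhere, and the Izumi inequality being applied between $x$ and that attaining point, one uses that the \emph{family} of Izumi constants $\{C(L, x, y, \varphi_0) : y \in X^{\div}\}$ need only be bounded, which can fail. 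The cleanest route is likely (b') to use that $X^{\qm}$ is connected/dense and the psh comparison is local, reducing to a \emph{neighborhood} of $x$ where a single Izumi-type constant works, combined with the fact that $\varphi - \varphi_0 \leq 0$ everywhere after normalization so that the maximum is automatically controlled; thus the real work is checking that Theorem \ref{th:Izumi_global}, applied at $x$ against a point $y$ where $(\varphi-\varphi_0)(y) = 0$, yields $(\varphi - \varphi_0)(x) \geq -C(L,x,y,\varphi_0)$, and then that $C$ may be bounded uniformly in $y$ over the (quasi)compact locus of candidate maximizers --- this uniformity is the crux, and I would handle it via the explicit form of the Izumi constant in \cite{BoucksomJonsson18} or by a further net-approximation reduction to finitely many $y$.
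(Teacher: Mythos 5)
Your strategy does not close, and the gap is exactly the one you flag yourself: the Izumi-type comparison of Theorem \ref{th:Izumi_global} is between two \emph{fixed} quasimonomial points, with a constant $C(L,x_1,x_2,\varphi_0)$ depending on both, while the point where the normalized metric $\varphi-\varphi_0$ is close to its supremum varies with $\varphi$. Neither of your proposed escape routes is carried out: you offer no bound on the family of Izumi constants as the second point ranges over candidate maximizers, and a ``reduction to finitely many $y$'' is not justified. There is a second unresolved issue: for a general $\varphi\in\PSH(L)$ the usc function $\varphi-\varphi_0$ attains its maximum at some point of $X^{\an}$, but that point need not be quasimonomial (let alone divisorial), so Theorem \ref{th:Izumi_global} cannot even be invoked there; and the claim $\sup_{X^{\an}}(\varphi-\varphi_0)=\sup_{X^{\div}}(\varphi-\varphi_0)$ is not established in your write-up (it is clear for Fubini--Study metrics, but for a decreasing net one only gets that the sup of the limit is at most the limit of the sups, so the normalization $\sup_{X^{\an}}(\varphi-\varphi_0)=0$ does not hand you a divisorial point where $\varphi-\varphi_0$ is near $0$).

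The paper avoids the two-point comparison altogether and needs no uniformity. It argues by contradiction: if $T_{L,\varphi_0}(x)=+\infty$, then by Remark \ref{rem:Alexander_Taylor_capacity} there are normalized metrics $\varphi_m\in\PSH(L)$ with $\sup_{X^{\an}}(\varphi_m-\varphi_0)=0$ and $(\varphi_m-\varphi_0)(x)\leq -2^{m}$; the convex combinations $\psi_m:=\sum_{i=1}^{m}2^{-i}\varphi_i+2^{-m}\varphi_0$ form a decreasing sequence of psh metrics with $(\psi_m-\varphi_0)(x)\leq -m$, so by Proposition \ref{prop:PSH_metrics}~(iv) their limit is a psh metric taking the value $-\infty$ at the quasimonomial point $x$, contradicting Proposition \ref{prop:BJ18_5.2}. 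Only the one-point finiteness statement of Proposition \ref{prop:BJ18_5.2} (which is where the global Izumi inequality enters, applied at $x$ alone along a decreasing net of Fubini--Study metrics) is used. To salvage your approach you would have to prove a uniform-in-$y$ Izumi bound, which Theorem \ref{th:Izumi_global} does not provide; as written, your argument has a genuine gap.
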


\begin{proof}
Let $x\in X^{\qm}$, assume that $T_{L,\varphi_0}(x) = +\infty$. Remark \ref{rem:Alexander_Taylor_capacity}, for any integer $m\geq 1$, there exists a metric $\varphi_m \in \PSH(L)$ such that $\sup_{X^{\an}}(\varphi_m-\varphi_0) = 0$ and $(\varphi_m-\varphi_0)(x) \leq -2^{m}$. For all $m\in \bN_{>0}$, we define 
\begin{align*}
\psi_m := \displaystyle\sum_{i=1}^{m}2^{-i}\varphi_i + 2^{-m}\cdot 0.
\end{align*} 
By convexity of $\PSH(L)$, $\psi_m\in \PSH(L)$. $(\psi_m)_{m\geq 1}$ is thus a decreasing sequence in $\PSH(L)$ satisfying $\sup_{X^{\an}}(\psi_m-\varphi_0) = 0$ and $(\psi_m-\varphi_0)(x) \leq -m$ for $m\in\bN_{>0}$. Proposition \ref{prop:PSH_metrics} (iv) then implies that $\psi := \lim_{m\to+\infty}\psi_m \in \PSH(L)$ and satisfies $(\psi-\varphi_0)(x)= -\infty$, which contradicts Proposition \ref{prop:BJ18_5.2}. 
\end{proof}

\subsubsection{Envelopes}
\label{subsub:envelopes}

Throughout this paragraph, we fix a projective scheme $X$ over $\Spec(k)$.

\begin{definition}
\label{def:psh_envelope_bounded_metric}
Let $\varphi$ be a bounded metric on a semiample line bundle $L$. Its \emph{psh envelope}, denoted by $P(\varphi)$, is defined by 
\begin{align*}
P(\varphi) := \sup \{\phi : \phi \in  \PSH(L) \text{ and } \phi\leq\varphi\}.
\end{align*} 
Its \emph{regular psh envelope}, denoted by $Q(\varphi)$, is defined by
\begin{align*}
Q(\varphi) := \sup \{\phi : \phi \in  \PSH(L)\cap C^0(L) \text{ and } \phi\leq\varphi\}.
\end{align*} 
\end{definition}

\begin{proposition}[\cite{Boucksom21}, Proposition 6.3]
\label{prop:Boucksom21_6.3}
Let $\varphi,\varphi_1,\varphi_2$ be bounded metrics on the line bundle $L$. Then 
\begin{itemize}
	\item[(i)] $P(\varphi) \leq \varphi$ with equality iff $\varphi$ is continuous and psh;
	\item[(ii)] for all $m\in\bN$, $P(m\varphi) = mP(\varphi)$;
	\item[(iii)] if $\varphi_1\leq\varphi_2$, then $P(\varphi_1) \leq P(\varphi_2)$;
	\item[(iv)] if $c\in\bR$, then $P(\varphi+c) = P(\varphi) + c$;
	\item[(v)] $d(P(\varphi_1),P(\varphi_2))\leq d(\varphi_1,\varphi_2)$.
\end{itemize}
\end{proposition}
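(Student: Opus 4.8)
The plan is to obtain all five assertions directly from the defining supremum in Definition~\ref{def:psh_envelope_bounded_metric}, using only the stability properties of $\PSH(L)$ recorded in Proposition~\ref{prop:PSH_metrics}; the one assertion carrying real content is the forward direction of the equality case in~(i). First I would record two preliminary facts. Since $L$ is semiample, $\FS(L)\neq\varnothing$ (a globally generated multiple of $L$ admits a finite generating family of sections, and taking all $\lambda_i=0$ gives a pure Fubini--Study metric), and any Fubini--Study metric is psh, being the limit of the constant net; hence $\PSH(L)\neq\varnothing$. Fixing $\phi_0\in\FS(L)\subseteq C^0(L)$ and using that $X^{\an}$ is compact (as $X$ is projective), the bounded function $\varphi-\phi_0$ is bounded below, say $\varphi\geq\phi_0-C$ for some $C\in\bR$, and $\phi_0-C\in\PSH(L)$ by Proposition~\ref{prop:PSH_metrics}(i); thus $\cF_\varphi:=\{\phi\in\PSH(L):\phi\leq\varphi\}$ is non-empty, $P(\varphi)=\sup\cF_\varphi$ lies between $\phi_0-C$ and $\varphi$, and in particular $P(\varphi)$ is a bounded metric with $P(\varphi)\leq\varphi$ — the first claim of~(i).

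Next I would dispatch the formal parts. For~(iii): if $\varphi_1\leq\varphi_2$ then $\cF_{\varphi_1}\subseteq\cF_{\varphi_2}$, so $P(\varphi_1)\leq P(\varphi_2)$. For~(iv): twisting by a real constant, $\phi\mapsto\phi+c$, is by Proposition~\ref{prop:PSH_metrics}(i) a bijection $\cF_\varphi\to\cF_{\varphi+c}$, whence $P(\varphi+c)=P(\varphi)+c$. For~(v): set $\delta:=d(\varphi_1,\varphi_2)$, finite because $\varphi_1-\varphi_2$ is bounded; then $\varphi_1\leq\varphi_2+\delta$ and $\varphi_2\leq\varphi_1+\delta$, so~(iii) and~(iv) give $P(\varphi_1)\leq P(\varphi_2)+\delta$ and symmetrically, i.e.\ $d(P(\varphi_1),P(\varphi_2))\leq\delta$. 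For~(ii): if $\phi\in\cF_\varphi$ then $m\phi\in\PSH(mL)$ by Proposition~\ref{prop:PSH_metrics}(vi) and $m\phi\leq m\varphi$, so $mP(\varphi)\leq P(m\varphi)$; conversely, any $\psi\in\PSH(mL)$ with $\psi\leq m\varphi$ has $\tfrac1m\psi\in\PSH(L)$ (same item) and $\tfrac1m\psi\leq\varphi$, so $P(m\varphi)\leq mP(\varphi)$.

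It then remains to treat the equality case in~(i). The implication ``$\Leftarrow$'' is immediate: a continuous psh $\varphi$ lies in $\cF_\varphi$, so $P(\varphi)\geq\varphi$, hence equality. For ``$\Rightarrow$'' — the substantive point — the statement should be read with $\varphi$ continuous (a merely bounded psh metric already equals its own envelope, so continuity cannot be forced without assuming it), so suppose $\varphi\in C^0(L)$ and $P(\varphi)=\varphi$; I must show $\varphi$ is psh. Here I would invoke the one structural input: the upper semicontinuous regularization $P(\varphi)^{\star}$ of the Perron envelope is psh — the non-Archimedean counterpart of the classical regularization theorem, proved via a Choquet-type reduction of the upward-directed family $\cF_\varphi$ (stable under finite maxima, Proposition~\ref{prop:PSH_metrics}(iii)) to a countable, then increasing, subfamily, together with Proposition~\ref{prop:PSH_metrics}(iv) and the pluripolarity of negligible sets. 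Since $\varphi$ is continuous, hence upper semicontinuous, one has $P(\varphi)\leq P(\varphi)^{\star}\leq\varphi^{\star}=\varphi=P(\varphi)$, so $\varphi=P(\varphi)^{\star}\in\PSH(L)$, as wanted.

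The hard part is exactly this last implication: items~(ii)--(v) and ``$\Leftarrow$'' in~(i) are bookkeeping with the Perron supremum and Proposition~\ref{prop:PSH_metrics}, whereas all the pluripotential theory — upper semicontinuous regularization, pluripolarity of negligible sets, the regularization theorem for psh metrics (Theorem~\ref{th:BoucksomEriksson_th_7.14} and its relatives, Proposition~\ref{prop:BJ18_5.2} and Lemma~\ref{lemma:BoucksomEriksson_4.45}) — is concentrated in the fact that the envelope of a continuous (more generally, bounded) metric is again psh.
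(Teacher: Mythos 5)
The paper gives no proof of this proposition: it is quoted verbatim from [Boucksom21, Proposition 6.3], so there is no internal argument to compare yours with. Judged on its own, your treatment of the formal items is fine: the non-emptiness of the competitor family, $P(\varphi)\leq\varphi$, items (ii)--(v), and the implication ``continuous psh $\Rightarrow P(\varphi)=\varphi$'' all follow from Definition~\ref{def:psh_envelope_bounded_metric} and Proposition~\ref{prop:PSH_metrics} exactly as you say, and your reading of the ``iff'' (that the forward implication can only be about pshness once continuity of $\varphi$ is assumed, since a bounded discontinuous psh metric equals its own envelope) is a fair gloss on the statement.

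The gap is in the one substantive step. You deduce the forward direction of (i) from the assertion that $P(\varphi)^{\star}\in\PSH(L)$, presented as a known regularization theorem provable by a Choquet-type reduction and the pluripolarity of negligible sets. In the setting of this paper (an arbitrary complete valued field, psh metrics defined as decreasing nets of Fubini--Study metrics on $X^{\an}$) that assertion is \emph{not} available: by Lemma~\ref{lemme:BoucksomEriksson_lemma_7.29} it is equivalent to continuity of envelopes, i.e.\ to Conjecture~\ref{conj:BoucksomEriksson_7.30}, which is known only in special cases and is a standing hypothesis only from Section~\ref{sec:local_differentiability} onwards, whereas the present proposition sits in the unconditional preliminaries; the Bedford--Taylor negligible-set machinery you invoke is precisely what is missing non-Archimedeanly. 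Fortunately the step can be done unconditionally, and more cheaply. For continuous $\varphi$ one has $Q(\varphi)=P(\varphi)$: given $\phi\in\PSH(L)$ with $\phi\leq\varphi$, write $\phi$ as a decreasing net of Fubini--Study metrics $\phi_i$; for $\epsilon>0$ the open sets $\{\phi_i<\varphi+\epsilon\}$ increase and cover the compact space $X^{\an}$, so some $\phi_i-\epsilon$ is a \emph{continuous} psh competitor above $\phi-\epsilon$ (this is also the content of the identity $Q(\varphi)=P(\varphi_{\star})$ quoted from [BoucksomEriksson, Proposition 7.25] just after the statement). Hence $\varphi=P(\varphi)$ is the pointwise supremum of the max-directed family $\PSH(L)\cap C^{0}(L)\cap\{\,\cdot\leq\varphi\,\}$ (directedness by Proposition~\ref{prop:PSH_metrics}(iii)); a Dini argument on $X^{\an}$ then produces, for every $\epsilon>0$, a continuous psh metric within $\epsilon$ of $\varphi$, and Proposition~\ref{prop:PSH_metrics}(v) gives $\varphi\in\PSH(L)$. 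Replacing your regularization step by this argument closes the proof without any conjectural input.
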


Using the same notation as in Definition \ref{def:psh_envelope_bounded_metric}, we have $Q(\varphi)\leq P(\varphi) \leq \varphi$ and $Q(\varphi) = P(\varphi_{\star})$ (\cite{BoucksomEriksson}, Proposition 7.25).

\begin{definition}
\label{def:continuity_of_envelope}
Let $L$  ba a semiample $\bQ$-line bundle. We say that the \emph{continuity of envelopes} holds for $L$ if for all $\varphi\in C^0(L)$, $P(\varphi)=Q(\varphi)$ is continuous.
\end{definition}

\begin{lemma}\cite{BoucksomEriksson}, Lemma 7.29)
\label{lemme:BoucksomEriksson_lemma_7.29}
For a semiample $\bQ$-line bundle $L$, continuity of envelopes is equivalent to the following: for any uniformly bounded from above family $(\varphi_{\alpha})_{\alpha}$ of psh metrics on $L$, $(\sup_{\alpha} \varphi)^{\star}\in\PSH(L)$.  
\end{lemma}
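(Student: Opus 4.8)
The plan is to establish the two implications separately, working throughout with the psh envelope $P$ of Definition \ref{def:psh_envelope_bounded_metric}. By the fact recorded just after that definition (\cite{BoucksomEriksson}, Proposition 7.25), $Q(\varphi)=P(\varphi_\star)$, so $Q(\varphi)=P(\varphi)$ whenever $\varphi$ is continuous; hence continuity of envelopes for $L$ (Definition \ref{def:continuity_of_envelope}) is equivalent to the continuity of $P(\varphi)$ for every $\varphi\in C^0(L)$. I will also use freely that $X^{\an}$ is compact Hausdorff (so Lemma \ref{lemma:usc_lsc_functions} applies), the stability properties of $\PSH(L)$ in Proposition \ref{prop:PSH_metrics}, and the monotonicity $\varphi_1\leq\varphi_2\Rightarrow P(\varphi_1)\leq P(\varphi_2)$ from Proposition \ref{prop:Boucksom21_6.3}.

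For the direction ``continuity of envelopes $\Rightarrow$ regularised suprema are psh'', I would start from a uniformly bounded from above family $(\varphi_\alpha)_\alpha$ in $\PSH(L)$ and set $u:=(\sup_\alpha\varphi_\alpha)^\star$. After replacing a reference metric by a translate (Proposition \ref{prop:PSH_metrics}(i)) I may assume $\varphi_\alpha\leq\psi$ for a single $\psi\in C^0(L)$, so that $u\leq\psi$ and $u\geq\varphi_\alpha\not\equiv-\infty$. The key observation is that for \emph{every} continuous metric $\theta\geq u$ one has $\varphi_\alpha\leq\theta$ for all $\alpha$, hence $\varphi_\alpha\leq P(\theta)$, and therefore $u\leq P(\theta)$ after passing to upper semicontinuous envelopes; here $P(\theta)$ is a continuous psh metric, being a continuous---hence, by Dini, uniform---increasing limit of continuous psh metrics, so Proposition \ref{prop:PSH_metrics}(v) applies. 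Since $\min(\theta_1,\theta_2)\geq u$ is again a continuous metric and $P$ is monotone, the metrics $\{P(\theta):\theta\in C^0(L),\ \theta\geq u\}$ form a decreasing net of psh metrics, and I would finish by checking that $\inf_\theta P(\theta)=u$: it is $\geq u$ by the above, and $\leq\inf_\theta\theta=u$ because a bounded-above upper semicontinuous function on a compact Hausdorff space is the pointwise infimum of the continuous functions dominating it. Then Proposition \ref{prop:PSH_metrics}(iv) gives $u\in\PSH(L)$.

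For the converse I would fix $\varphi\in C^0(L)$ and apply the hypothesis to the family $\{\phi\in\PSH(L):\phi\leq\varphi\}$, which is uniformly bounded above (by $\varphi$) and non-empty---a sufficiently negative constant translate of a Fubini--Study metric lies in it, by semiampleness of $L$ and Proposition \ref{prop:PSH_metrics}(i). Since the supremum of this family is $P(\varphi)$, the hypothesis gives $P(\varphi)^\star\in\PSH(L)$; and $\varphi$ being continuous, $P(\varphi)^\star\leq\varphi^\star=\varphi$, so $P(\varphi)^\star$ is a psh metric dominated by $\varphi$ and hence $\leq P(\varphi)$. Thus $P(\varphi)=P(\varphi)^\star$ is upper semicontinuous. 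As $P(\varphi)=Q(\varphi)$ is moreover a supremum of continuous metrics, it is lower semicontinuous by Lemma \ref{lemma:usc_lsc_functions}(2), so it is continuous---i.e. continuity of envelopes holds for $L$.

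I expect the only genuinely delicate ingredient to be the topological fact invoked in the first implication---that a bounded-above upper semicontinuous function on the compact Hausdorff space $X^{\an}$ is the infimum of the continuous functions above it (a Urysohn/normality argument)---together with the routine verification that $\{P(\theta)\}_{\theta\geq u}$ is genuinely a net of psh metrics. Everything else is formal manipulation of the envelope operators $P,Q$ and of the stability properties in Proposition \ref{prop:PSH_metrics}.
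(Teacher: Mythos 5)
Your proposal is correct, but note that the paper itself offers no proof of this statement: it is quoted verbatim from \cite{BoucksomEriksson} (Lemma 7.29) and is invoked only once, to supply the implication $(iii)\Rightarrow(i)$ in Proposition \ref{prop:BoucksomJonsson_5.17_général}. So there is nothing in the paper to compare against except that proposition, and your converse direction is in substance the same argument the author does write out there for $(ii)\Rightarrow(iii)$: the hypothesis applied to $\{\phi\in\PSH(L):\phi\le\varphi\}$ gives $P(\varphi)^{\star}\in\PSH(L)$, hence $P(\varphi)^{\star}=P(\varphi)$ is usc, while $P(\varphi)=Q(\varphi)=P(\varphi_{\star})$ (the identity recorded after Proposition \ref{prop:Boucksom21_6.3}) is lsc as a supremum of continuous metrics. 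Your forward direction, which is exactly the part the paper delegates to the citation, is a clean and correct squeeze: for every continuous $\theta\ge u:=(\sup_{\alpha}\varphi_{\alpha})^{\star}$ one has $\varphi_{\alpha}\le P(\theta)$, hence $u\le P(\theta)\le\theta$ because $P(\theta)$ is continuous (and psh, by your Dini argument combined with Proposition \ref{prop:PSH_metrics}(v)); the family $\{P(\theta)\}$ is a decreasing net (directedness via $\min(\theta_1,\theta_2)$), and $\inf_{\theta}\theta=u$ on the compact Hausdorff space $X^{\an}$ by the Urysohn fact you flag, so Proposition \ref{prop:PSH_metrics}(iv) gives $u\in\PSH(L)$. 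The only ingredients not literally recorded in the paper are the two topological facts you identify yourself (Dini's theorem for monotone nets, and that a bounded-above usc function on a compact Hausdorff space is the pointwise infimum of its continuous majorants); both are standard and your sketches of them are sound, as are the nonemptiness checks (Fubini--Study metrics minus a constant) needed to make the envelopes and the hypothesis applicable. In short: a correct, self-contained proof of a result the paper only cites, following what is essentially the expected Boucksom--Eriksson-style argument.
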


\begin{conjecture}
\label{conj:BoucksomEriksson_7.30}
Continuity of envelopes holds for any semiample $\bQ$-line bundle on a normal projective variety over $\Spec(k)$.
\end{conjecture}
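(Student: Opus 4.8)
The statement above is labelled a \emph{conjecture}: it is Conjecture~7.30 of \cite{BoucksomEriksson}, still open in general, so what follows is the strategy I would pursue together with the point at which it currently stalls, rather than a complete argument.

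\textbf{Reduction to the ample case.} By \cite{BoucksomEriksson}, Lemma~7.3 (quoted above), a semiample $\bQ$-line bundle $L$ on the normal projective variety $X$ is of the form $L=f^{\ast}A$ for a surjective morphism $f\colon X\to Y$ with $f_{\ast}\cO_X=\cO_Y$ and $A$ ample on $Y$, the variety $Y$ being normal as well, and $f^{\ast}$ induces an isomorphism $\PSH(A)\iso\PSH(L)$. Given $\varphi\in C^0(L)$, one checks that its fibrewise supremum descends to a metric $\varphi_{\flat}\in C^0(A)$ (using that $f$ is proper with connected fibres and $f_{\ast}\cO_X=\cO_Y$) and that $P(\varphi)=f^{\ast}P(\varphi_{\flat})$; hence continuity of envelopes for $A$ on $Y$ would imply it for $L$ on $X$, and one may assume $L$ ample.

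\textbf{The capacitary criterion and Fubini--Study approximation.} By Lemma~\ref{lemme:BoucksomEriksson_lemma_7.29}, it then suffices to prove that for every uniformly-bounded-from-above family $(\varphi_{\alpha})_{\alpha}$ of psh metrics on the ample $L$, the usc envelope $\bigl(\sup_{\alpha}\varphi_{\alpha}\bigr)^{\star}$ is psh. Using that $\PSH(L)$ is stable under finite maxima and decreasing net limits (Proposition~\ref{prop:PSH_metrics}(iii),(iv)), I would reduce to an increasing sequence $\varphi_1\le\varphi_2\le\cdots$; write $\varphi:=\sup_n\varphi_n$ and $u:=\varphi^{\star}$, a bounded usc singular metric (Lemma~\ref{lemma:usc_lsc_functions}(3)), so that the goal becomes $u\in\PSH(L)$, i.e.\ $u$ should be a decreasing net limit of Fubini--Study metrics. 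Fixing a reference $\varphi_0\in\DFS(L)$, the route followed in the known cases (Archimedean: Berman--Demailly, Berman--Boucksom; trivially valued: Boucksom--Jonsson) is, for each $\varepsilon>0$: using the regularisation theorem (Theorem~\ref{th:BoucksomEriksson_th_7.14}) applied to the $\varphi_n$ together with Fubini--Study approximants built from sections of $H^0(X,mL)$, $m\gg1$, to produce a Fubini--Study metric $\psi_{\varepsilon}$ with $\psi_{\varepsilon}\ge u-\varepsilon$ off a set $E_{\varepsilon}$ of small Alexander--Taylor capacity $T_{L,\varphi_0}(E_{\varepsilon})$, and then to upgrade ``off $E_{\varepsilon}$'' to ``everywhere'' by comparison of capacities, using that psh metrics are finite at quasimonomial points and that $T_{L,\varphi_0}$ is finite there (Proposition~\ref{prop:BJ18_5.2}, Lemma~\ref{lemma:BoucksomEriksson_4.45}), so that $E_{\varepsilon}$ misses a dense set along which the estimate already propagates. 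Taking successive maxima of the $\psi_{\varepsilon}$ and passing to a cofinal sub-net (Proposition~\ref{prop:PSH_metrics}(iii)) would exhibit $u$ as a decreasing limit of Fubini--Study metrics, whence $u\in\PSH(L)$ and continuity of envelopes for $L$.

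\textbf{The main obstacle.} The hard part --- and the reason the statement is only conjectural --- is precisely the quantitative regularisation above: over an arbitrary complete valued field $k$ one does not know how to approximate a psh metric by Fubini--Study metrics with explicit control of the ``loss'' measured by the Alexander--Taylor capacity, because the non-Archimedean pluripotential toolkit required for this (the Bedford--Taylor calculus of \cite{CLD12}, the domination principle, comparison of capacities) is not available in the needed generality. In the cases treated so far --- $k$ trivially valued, or discretely valued of residue characteristic zero --- one bypasses this by working on SNC models and invoking resolution of singularities (Boucksom--Favre--Jonsson, Burgos Gil--Gubler--Jell--Künnemann, Boucksom--Jonsson), while the Archimedean case is classical. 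Absent resolution of singularities in positive and mixed residue characteristic there is currently no path to the general statement, which is why Conjecture~\ref{conj:BoucksomEriksson_7.30} remains open.
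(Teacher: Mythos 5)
You are right that this statement is Conjecture 7.30 of \cite{BoucksomEriksson} and that the paper gives no proof of it: it is only recorded as a conjecture, with the remark that it is known when $k$ is Archimedean and when $X$ is smooth, $L$ is ample and $k$ is non-Archimedean of equicharacteristic zero, trivially valued, or discretely valued, and it is then taken as a standing hypothesis in Sections \ref{sec:local_differentiability} and \ref{sec:differentiability_adelic_curve}. Your refusal to claim a proof, together with the reduction via Lemma \ref{lemme:BoucksomEriksson_lemma_7.29} and the identification of the missing quantitative regularisation as the obstruction, is consistent with the paper's treatment, so there is nothing to correct.
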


Conjecture \ref{conj:BoucksomEriksson_7.30} is known to be true when $k$ is Archimedean and in the case where: $X$ is smooth, $L$ is ample and $k$ is non-Archimedean either of equicharacteristic zero, trivially valued, or discretely valued (cf. \cite{BoucksomEriksson}).

In this article, we will make use of envelopes of possibly unbounded metrics. We essentially reproduce the arguments of \cite{BoucksomJonsson21} in the case of an arbitrary complete valued field. 

\begin{definition}
\label{def_psh_envelope_singular_metric}
Let $L$ be an invertible $\cO_X$-module, $\phi\in C^0(L)$ and $f : X^{\an}\to \bR\cup\{-\infty,+\infty\}$. The \emph{psh envelope} of $\varphi := \phi + f$ is defined by
\begin{align*}
P(\varphi):= \sup\{\psi : \psi\in \PSH(L) \text{ and } \psi\leq\varphi \}.
\end{align*}
\end{definition} 

The psh envelope of a bounded metric is in fact a psh metric when the continuity of envelopes holds. We show an analogue result for singular metrics. 

\begin{proposition}
\label{prop:BoucksomJonsson_5.17_général}
We assume that $k$ is non-Archimedean. Let $L$ be an ample invertible $\cO_X$-module, $\phi_0\in C^0(L)$ and $f : X^{\an}\to \bR\cup\{-\infty,+\infty\}$.The following statements are equivalent:
\begin{itemize}
	\item[(i)] let $(\varphi_i)_{i\in I}\in \PSH(L)^{I}$ be uniformly bounded from above, then $(\sup_{i\in I} \varphi_i)^{\star} \in PSH(L)$;  
	\item[(ii)] let $\varphi := \phi_0+f$, then either $P(\varphi) \equiv -\infty$, $P(\varphi)^{\star}\in \PSH(L)$ or $P(\varphi)^{\star} \equiv +\infty$;
	\item[(iii)] for all $\phi\in C^0(L)$, $P(\phi)\in C^0(L)$.
\end{itemize}
\end{proposition}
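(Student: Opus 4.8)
The plan is to prove the cycle of equivalences (i) $\Rightarrow$ (iii) $\Rightarrow$ (ii) $\Rightarrow$ (i), following the strategy of \cite{BoucksomJonsson21}, Proposition 5.17, but replacing the specific hypotheses (discretely valued, $X$ smooth) with the abstract ``continuity of envelopes'' statement (i). The implication (i) $\Rightarrow$ (iii) is precisely the content of Lemma \ref{lemme:BoucksomEriksson_lemma_7.29} together with Definition \ref{def:continuity_of_envelope}: (i) says that usc-regularized suprema of uniformly bounded-above families of psh metrics are psh, which is the characterization of continuity of envelopes, and continuity of envelopes by definition gives $P(\phi) = Q(\phi)$ continuous for every $\phi \in C^0(L)$. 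So this step is essentially a citation. Likewise (iii) $\Rightarrow$ (i) is immediate from the same lemma, so in fact (i) and (iii) are already known to be equivalent and the real work is inserting (ii) into the chain.

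For (iii) $\Rightarrow$ (ii), I would start from $\varphi = \phi_0 + f$ and truncate: for $R > 0$ set $f_R := \max\{f, -R\}$ and $\varphi_R := \phi_0 + f_R$, which is a bounded-above metric, but not yet continuous since $f$ may still reach $+\infty$; to handle that I would also truncate from above, $f_{R,R'} := \min\{\max\{f,-R\}, R'\}$, giving a genuinely bounded (though not continuous) metric. The key point is that $P(\varphi) = \lim_{R \to \infty} P(\varphi_R)$ as a decreasing-in-$R$... actually increasing-in-$R$ limit, using monotonicity Proposition \ref{prop:Boucksom21_6.3}(iii) — wait, $\varphi_R \le \varphi_{R'}$ for $R \le R'$ only on the ``$-\infty$'' side, and the sup defining $P$ is monotone, so $(P(\varphi_R))_R$ is increasing and bounded above by $P(\varphi)$, while any $\psi \in \PSH(L)$ with $\psi \le \varphi$ satisfies $\psi \le \varphi_R$ for... no, that fails. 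The correct monotone approximation is: $\varphi_R \searrow \varphi$ is false; rather $f_R \searrow f$ as $R \to \infty$ only where $f > -\infty$. I would instead argue that $P(\varphi) = \inf_R P(\varphi_R)$ using that $\psi \le \varphi \Leftrightarrow \psi \le \varphi_R$ for all $R$ when $\psi$ is a \emph{metric} (finite), hence $P(\varphi) = \sup\{\psi \in \PSH(L) : \psi \le \varphi_R \ \forall R\}$, and one checks this equals $\inf_R P(\varphi_R)$ by a standard envelope argument (each $P(\varphi_R)$ is psh by continuity of envelopes applied after a further regularization, the infimum of a decreasing net of psh metrics is psh if not $\equiv -\infty$ by Proposition \ref{prop:PSH_metrics}(iv)). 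Then, using the regularization $Q(\varphi_R) = P((\varphi_R)_\star)$ and Lemma \ref{lemma:psh_envelope_non_infinite} to control whether things stay $\not\equiv -\infty$, one shows $P(\varphi)^\star$ is either $\equiv -\infty$, psh, or $\equiv +\infty$; the trichotomy comes from the fact that a psh metric is $\not\equiv -\infty$ by definition, so once $P(\varphi) \not\equiv -\infty$ and its regularization is not identically $+\infty$, Proposition \ref{prop:PSH_metrics}(iv),(v) force it into $\PSH(L)$.

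For (ii) $\Rightarrow$ (i), I would take a uniformly bounded-above family $(\varphi_i)_{i \in I}$ in $\PSH(L)$, set $\varphi := \sup_i \varphi_i$, and observe that $\varphi$ is a metric of the form $\phi_0 + f$ for $f$ bounded above (not necessarily continuous, but that is allowed in (ii) since $f$ is permitted to take values in $\bR \cup \{-\infty,+\infty\}$). One has $\varphi^\star \le P(\varphi^\star)^\star \le \ldots$ — more precisely $\varphi \le \varphi^\star$ and any $\psi \in \PSH(L)$ with $\psi \le \varphi$ includes all the $\varphi_i$, so $P(\varphi) \ge \sup_i \varphi_i = \varphi$, giving $P(\varphi) = \varphi$ hence $P(\varphi)^\star = \varphi^\star$. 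Now apply (ii): $\varphi^\star$ is $\equiv -\infty$ (impossible since $\varphi_i \le \varphi^\star$ and $\varphi_i \not\equiv -\infty$), or $\equiv +\infty$ (impossible since the family is uniformly bounded above), or in $\PSH(L)$ — which is exactly (i). The main obstacle I expect is the (iii) $\Rightarrow$ (ii) step: carefully justifying that the double truncation procedure commutes with $P$ in the limit, and that the trichotomy is exhaustive, requires the Alexander--Taylor capacity finiteness on quasimonomial points (Lemma \ref{lemma:BoucksomEriksson_4.45}) to rule out the envelope dropping to $-\infty$ at a quasimonomial point while staying finite elsewhere — essentially one needs that $P(\varphi)$ is $\not\equiv -\infty$ iff it is finite on $X^{\qm}$, which is where Proposition \ref{prop:BJ18_5.2} and the Izumi-type control enter. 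Everything else is a rearrangement of suprema using the monotonicity and regularity properties collected in Propositions \ref{prop:PSH_metrics} and \ref{prop:Boucksom21_6.3}.
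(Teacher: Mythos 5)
Your cycle (i) $\Rightarrow$ (iii) $\Rightarrow$ (ii) $\Rightarrow$ (i) is organised differently from the paper, which proves (i) $\Rightarrow$ (ii) $\Rightarrow$ (iii) $\Rightarrow$ (i) and only invokes Lemma \ref{lemme:BoucksomEriksson_lemma_7.29} for the last arrow. Your (i) $\Leftrightarrow$ (iii) via that lemma is fine as a citation, and your (ii) $\Rightarrow$ (i) is correct and is essentially forced: for $\varphi := \sup_i \varphi_i$ one has $P(\varphi) = \varphi$, so the trichotomy applied to $\varphi$ rules out $\pm\infty$ and leaves $\varphi^{\star} \in \PSH(L)$. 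The problem is the step (iii) $\Rightarrow$ (ii), which is where the actual content of the proposition lives, and there your argument has a genuine gap. The nontrivial case of the trichotomy is when $\cF := \{\psi \in \PSH(L) : \psi \leq \varphi\}$ is nonempty but \emph{not} uniformly bounded from above (which can happen because $f$ may take the value $+\infty$); in that case one must prove $P(\varphi)^{\star} \equiv +\infty$, and nothing in your sketch does this. The paper's proof (run from (i), which you have anyway since (i) $\Leftrightarrow$ (iii)) handles it by the finiteness of the Alexander--Taylor capacity at quasimonomial points (Lemma \ref{lemma:BoucksomEriksson_4.45}), the inequality $(P(\varphi)-\varphi_0)(x) \geq \sup_{X^{\an}}(\psi-\varphi_0) - T_{L,\varphi_0}(x)$ for every $\psi \in \cF$ and $x \in X^{\qm}$, and the density of $X^{\qm}$ in $X^{\an}$, which together force $P(\varphi)^{\star} \equiv +\infty$. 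You do cite Lemma \ref{lemma:BoucksomEriksson_4.45}, but for the wrong purpose (ruling out the envelope hitting $-\infty$ at a quasimonomial point, which is the role of Proposition \ref{prop:BJ18_5.2} for psh metrics and is not the obstruction here), so the exhaustiveness of the trichotomy is never established.

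The double-truncation scheme also does not do the work you want it to. The truncated metrics $\varphi_{R,R'}$ are bounded but in general not continuous, so (iii) does not apply to them directly; you would have to pass through (i) anyway, at which point the truncation is superfluous: once you know (i), you can split on whether $\cF$ is empty, uniformly bounded above, or unbounded above, and the only missing ingredient is exactly the capacity argument above. Your own back-and-forth about the monotonicity of $R \mapsto \varphi_R$ (it is decreasing in $R$, with $P(\varphi) = \inf_R P(\varphi_R)$ when the latter is $\not\equiv -\infty$) is repairable, but it leads nowhere near the $+\infty$ case, and the closing claim that Proposition \ref{prop:PSH_metrics}(iv),(v) ``force'' $P(\varphi)^{\star}$ into $\PSH(L)$ once the two extreme cases are excluded is not a proof --- that is precisely the statement to be shown, and it needs (i) for uniformly bounded families together with the capacity argument to know the unbounded case is the $+\infty$ case. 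I would also note that the paper's (ii) $\Rightarrow$ (iii) is short and worth knowing: $P(\phi)^{\star} \in \PSH(L)$ makes $P(\phi)^{\star}$ a competitor in the envelope, so $P(\phi) = P(\phi)^{\star}$ is usc, while $Q(\phi) = P(\phi_{\star}) = P(\phi)$ is lsc as a supremum of continuous metrics, hence $P(\phi)$ is continuous; you bypass this by citing Lemma \ref{lemme:BoucksomEriksson_lemma_7.29}, which is legitimate but hides where (ii) actually gets used.
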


\begin{proof}
We reproduce the arguments of (\cite{BoucksomJonsson21}, Lemma 5.17). We first assume that $(i)$ is true. Denote $\cF := \{\psi\in \PSH(L) : \psi\leq \varphi\}$. If $\cF = \emptyset$, then $P(\varphi) \equiv -\infty$. If now $\cF \neq \emptyset$, if $\cF$ is uniformly bounded from above, $(i)$ implies $P(\varphi)^{\star}\in \PSH(L)$. In the opposite case, for all $x\in X^{\qm}$, Lemma \ref{lemma:BoucksomEriksson_4.45} yields $T_{L,\varphi_0}(x) <+\infty$  (where $\varphi_0\in\DFS(L)$ is an arbitrary reference metric). Henceforth, we have
\begin{align*}
\displaystyle\sup_{X^{\an}}(\sup\{\psi : \psi \in \cF\}-\varphi_0) = + \infty,
\end{align*}
and the inequality
\begin{align*}
\forall \psi\in \cF,\quad (P(\varphi)-\varphi_0)(x) \geq \displaystyle\sup_{X^{\an}}(\psi-\varphi_0) - T_{L,\varphi_0}(x)
\end{align*}
implies $(P(\varphi)-\varphi_0)(x) = +\infty$. By density of $X^{\qm}$ in $X^{\an}$, we obtain $P(\varphi)^{\star}\equiv +\infty$.

Assume now that $(ii)$ is true. Let $\phi\in C^0(L)$. Then $P(\phi)^{\star}\in \PSH(L)$ is a candidate in the definition of $P(\phi)$, thus $P(\phi)^{\star} \leq P(\phi)$, hence $P(\phi)^{\star} = P(\phi)$. In particular $P(\phi)$ is usc. As $\phi$ is continuous (hence lsc), we obtain $Q(\phi) = P(\phi_{\star}) = P(\phi)$. Furthermore, as a supremum on a family of continuous functions, $Q(\phi)$ is lsc. Whence $P(\phi)$ is continuous. 

Finally, Lemma \ref{lemme:BoucksomEriksson_lemma_7.29} gives $(iii) \Rightarrow (i)$.
\end{proof}

\subsubsection{Volume and Monge-Ampère energy}
\label{subsub:volume_MA}

Throughout this paragraph, we fix a geometrically integral projective scheme $X$ over $\Spec(k)$.

\begin{definition}
\label{def:local_volume}
Let $\varphi,\psi$ be two bounded metrics on a line bundle $L$. The \emph{relative} $\chi$\emph{-volume} of $\varphi$ with respect to $\psi$ is defined by 
\begin{align*}
\vol_{\chi}(L,\varphi,\psi) := \displaystyle\limsup_{n\to+\infty} \frac{(d+1)!}{n^{d+1}}\cdot \ln \frac{\|\cdot\|_{n\psi,\det}}{\|\cdot\|_{n\varphi,\det}}.
\end{align*}
Theorem 4.5 of \cite{ChenMaclean15} yields that the above superior limit is in fact a limit.
\end{definition}

\begin{remark}
In the literature, other normalisations for the relative $\chi$-volume can be found. For instance, in \cite{Boucksom21}, using the same notation as in  Definition \ref{def:local_volume}, a relative volume is defined by 
\begin{align*}
\vol(L,\varphi,\psi) := \displaystyle\limsup_{n\to+\infty} \frac{d!}{n^{d+1}}\cdot \ln \frac{\|\cdot\|_{n\psi,\det}}{\|\cdot\|_{n\varphi,\det}} = \frac{1}{d+1}\vol_{\chi}(L,\varphi,\psi).
\end{align*}
\end{remark}

\begin{proposition}[\cite{Boucksom21}, Proposition 6.4]
\label{prop:Boucksom_6.4}
Let $\varphi$ be a bounded metric on a line bundle $L$.
\begin{itemize}
	\item[(1)] We have the equality $\|\cdot\|_{\varphi} = \|\cdot\|_{P(\varphi)} = \|\cdot\|_{Q(\varphi)}$ of seminorms.
	\item[(2)] For any bounded metric $\psi$ on $L$, we have $\vol_{\chi}(L,\varphi,\psi) = \vol_{\chi}(L,P(\varphi),P(\psi))$.
\end{itemize}
\end{proposition}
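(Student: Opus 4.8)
The plan is to reduce both assertions to known facts about sup-seminorms and about the relative $\chi$-volume that were recalled earlier in the excerpt. For part~(1), I would first show $\|\cdot\|_\varphi = \|\cdot\|_{P(\varphi)}$. Since $P(\varphi) \le \varphi$ by Proposition~\ref{prop:Boucksom21_6.3}~(i), we immediately get $|s|_{P(\varphi)}(x) \le |s|_\varphi(x)$ pointwise on $X^{\an}$, hence $\|s\|_{P(\varphi)} \le \|s\|_\varphi$ for every $s \in H^0(X,L)$. For the reverse inequality, the key observation is that for a nonzero global section $s$, the singular metric $\psi_s$ of Example~\ref{example:singular_metric_global_section} (built from any reference continuous metric), suitably normalized by the constant $\ln\|s\|_\varphi$, is a candidate in the supremum defining $P(\varphi)$: indeed $\psi_s + (-\ln\|s\|_\varphi)$ lies in $\PSH(L)$ (it is of the form $\ln|s| + \lambda$, hence psh by the Example following Proposition~\ref{prop:PSH_metrics}) and is $\le \varphi$ precisely because $|s|_\varphi \le \|s\|_\varphi$ everywhere. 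Evaluating the resulting bound $P(\varphi) \ge \psi_s - \ln\|s\|_\varphi$ at a point where $|s|_\varphi$ nearly attains its maximum gives $\|s\|_{P(\varphi)} \ge \|s\|_\varphi$. Finally $\|\cdot\|_{Q(\varphi)} = \|\cdot\|_{P(\varphi)}$ follows by sandwiching: $Q(\varphi) \le P(\varphi) \le \varphi$ gives $\|\cdot\|_{Q(\varphi)} \le \|\cdot\|_{P(\varphi)} \le \|\cdot\|_\varphi$, and one shows $\|\cdot\|_{Q(\varphi)} \ge \|\cdot\|_\varphi$ by the same argument with $\psi_s$ replaced by a continuous regularization, or by invoking $Q(\varphi) = P(\varphi_\star)$ together with the fact that passing to the usc regularization $\varphi_\star \ge \varphi$ does not change the sup-norm of a global section when $\varphi$ is bounded (the section being continuous, $|s|_\varphi$ already equals $|s|_{\varphi_\star}$ up to the envelope, and one checks equality of maxima directly).

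For part~(2), I would argue that the relative $\chi$-volume depends on the pair $(\varphi,\psi)$ only through the determinant seminorms $\|\cdot\|_{n\varphi,\det}$ and $\|\cdot\|_{n\psi,\det}$ appearing in Definition~\ref{def:local_volume}. By Proposition~\ref{prop:Boucksom21_6.3}~(ii) we have $P(n\varphi) = nP(\varphi)$ and $P(n\psi) = nP(\psi)$ for all $n \in \bN$, so it suffices to know that $\|\cdot\|_{n\varphi} = \|\cdot\|_{P(n\varphi)}$ as seminorms on $H^0(X,nL)$ for every $n$ — which is exactly part~(1) applied to the line bundle $nL$ and the bounded metric $n\varphi$. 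Since the determinant (quotient) norm on $\det H^0(X,nL)$ is functorially determined by the seminorm on $H^0(X,nL)$, the two ratios $\ln\big(\|\cdot\|_{n\psi,\det} / \|\cdot\|_{n\varphi,\det}\big)$ and $\ln\big(\|\cdot\|_{nP(\psi),\det} / \|\cdot\|_{nP(\varphi),\det}\big)$ coincide termwise, and taking $\limsup$ (which is a genuine limit by Theorem~4.5 of \cite{ChenMaclean15}, as already noted) yields $\vol_\chi(L,\varphi,\psi) = \vol_\chi(L,P(\varphi),P(\psi))$.

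The main obstacle I anticipate is the careful handling of seminorms versus norms and of the usc regularization in the $Q$-envelope step of part~(1): one must check that replacing $P(\varphi)$ by $Q(\varphi) = P(\varphi_\star)$ genuinely leaves the sup-seminorm of every global section unchanged, even though $Q(\varphi)$ can be strictly smaller than $P(\varphi)$ as a metric. The cleanest route is probably to observe that $\|\cdot\|_{P(\varphi)} \le \|\cdot\|_\varphi$ trivially and $\|\cdot\|_{Q(\varphi)} \ge \|\cdot\|_\varphi$ can fail to be obvious only because $Q(\varphi)$ ranges over \emph{continuous} psh metrics; here one uses that, for bounded $\varphi$, the psh envelope $P(\varphi)$ agrees with $\sup$ of continuous psh candidates up to usc regularization, so $\|\cdot\|_{Q(\varphi)} = \|\cdot\|_{P(\varphi)}$, and no continuity-of-envelopes hypothesis is needed since we only compare integrals/maxima of the \emph{continuous} functions $|s|_\bullet$. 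I would write this out using the identity $Q(\varphi) = P(\varphi_\star)$ from \cite{BoucksomEriksson} together with the fact that $|s|_{\varphi}$ and $|s|_{\varphi_\star}$ have the same maximum on the compact space $X^{\an}$ because $s$ is a fixed continuous section.
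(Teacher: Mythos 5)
The paper gives no argument for this statement: it is quoted verbatim from \cite{Boucksom21}, Proposition 6.4. Your proposal essentially reconstructs the standard proof of that result, and its skeleton is sound: monotonicity of the envelope in one direction, the section-based candidate $\psi_{s}-\ln\|s\|_{\varphi}$ (psh by the example following Proposition \ref{prop:PSH_metrics}, and $\leq \varphi$ precisely because $|s|_{\varphi}\leq\|s\|_{\varphi}$ pointwise) in the other, and then part (2) follows formally because the relative $\chi$-volume of Definition \ref{def:local_volume} only sees the determinant seminorms induced by the sup-seminorms, combined with $P(n\varphi)=nP(\varphi)$ from Proposition \ref{prop:Boucksom21_6.3}. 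One cosmetic repair: with the paper's conventions (\S\ref{subsub:metrics}, where a larger weight gives a smaller norm), $P(\varphi)\leq\varphi$ yields $\|s\|_{P(\varphi)}\geq\|s\|_{\varphi}$, while the candidate inequality $\psi_{s}-\ln\|s\|_{\varphi}\leq P(\varphi)$ yields $|s|_{P(\varphi)}\leq\|s\|_{\varphi}$ off $\div(s)$ and hence $\|s\|_{P(\varphi)}\leq\|s\|_{\varphi}$; your two pointwise inequalities are stated with the opposite orientation, though the combination still gives the desired equality.

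The only genuine gap is the $Q(\varphi)$ step. The metric $\psi_{s}-\ln\|s\|_{\varphi}$ is singular along $\div(s)$, so it is not admissible in the supremum defining $Q(\varphi)$, and ``a continuous regularization'' must be made explicit. The standard fix: pick $\phi_{0}\in\PSH(L)\cap C^{0}(L)$ (it exists because $L$ is semiample, e.g.\ a pure Fubini--Study metric) and use the candidate $\max\{\psi_{s}-\ln\|s\|_{\varphi},\,\phi_{0}-C\}$ for $C\gg 0$; it is psh by Proposition \ref{prop:PSH_metrics} (iii), continuous (near $\div(s)$ it coincides with $\phi_{0}-C$), and $\leq\varphi$ by boundedness of $\varphi$, which gives $\|s\|_{Q(\varphi)}\leq\|s\|_{\varphi}$ and hence equality. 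Your alternative route via $Q(\varphi)=P(\varphi_{\star})$ also works, but note that $\varphi_{\star}$ is the \emph{lower} semicontinuous regularization of the metric, so $\varphi_{\star}\leq\varphi$ (not $\geq$ as you wrote); the relevant elementary fact is that $|s|_{\varphi_{\star}}$ is the usc envelope of the function $|s|_{\varphi}$, whose supremum over the compact space $X^{\an}$ is unchanged, so $\|\cdot\|_{\varphi_{\star}}=\|\cdot\|_{\varphi}$ and part (1) applied to the bounded metric $\varphi_{\star}$ concludes. With these two points fixed, your part (2) argument goes through as written.
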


Let $(L_1,\varphi_1),...,(L_d,\varphi_d)$ be metrized invertible $\cO_X$-modules such that $\varphi_i\in \PSH(L_i)\cap C^{0}(L_i)$ for all  $i=1,...,d$. Bedford-Taylor theory (in the complex case) and its non-Archimedean counterpart (\cite{CLD12,GK17}) allow to define a probability Radon measure on $X^{\an}$, called the \emph{Monge-Ampère measure} of $(L_1,\varphi_1),...,(L_d,\varphi_d)$, defined by
\begin{align*}
\MA(\varphi_1,...,\varphi_d) := \frac{1}{(L_1\cdots L_d)}\cdot \diff\dc\varphi_1\wedge\cdots\wedge \diff\dc\varphi_d.
\end{align*}
When $(L_i,\varphi_i)=(L,\varphi)$ for all $i=1,...,d$, we denote $\MA(\varphi):=\MA(\varphi_1,...,\varphi_d)$. Historically, the non-Archimedean Monge-Ampère measures were introduced by Chambert-Loir in \cite{ChambertLoir06}.

\begin{remark}
\label{rem:DPSH}
The above construction extends to the case where the metrics are assumed to be \emph{dpsh}, namely a difference of psh metrics (\cite{BoucksomEriksson}, \S 8.1). 
\end{remark}

\begin{definition}
\label{def:Monge-Ampère_energy}
Let $\varphi,\psi$ be two continuous psh metrics on a line bundle $L$. The \emph{Monge-Ampère energy} of $\varphi$ with respect to $\psi$ is defined by
\begin{align*}
\cE(\varphi,\psi) := \frac{1}{d+1} \int_{X^{\an}} (\varphi-\psi)\MA_d(\varphi,\psi),
\end{align*}
where
\begin{align*}
\MA_d(\varphi,\psi) := \displaystyle\sum_{j=0}^{d} (\diff\dc\varphi)^{\wedge j}\wedge(\diff\dc\psi)^{\wedge (d-j)}.
\end{align*}
\end{definition}

If $\varphi,\psi$ are two continuous metrics on a line bundle $L$ and if the continuity of envelopes holds for $L$, we denote 
\begin{align*}
\cE\circ P(\varphi,\psi) := \cE(P(\varphi),P(\psi)).
\end{align*}

\begin{theorem}[\cite{Boucksom21}, Corollary 6.5]
\label{th:Boucksom21_6.5}
Let $L$ be a semiample line bundle, let $\varphi,\psi$ be two continuous metrics on $L$ and assume that $P(\varphi),P(\psi)$ are continuous. Then
\begin{align*}
\vol_{\chi}(L,\varphi,\psi) = (d+1)\cE \circ P(\varphi,\psi).
\end{align*}
\end{theorem}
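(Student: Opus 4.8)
The plan is to first reduce the assertion to the case where $\varphi$ and $\psi$ are \emph{continuous plurisubharmonic} metrics — where it is exactly the ``energy $=$ volume'' theorem of Berman--Boucksom (\cite{BermanBoucksom08}, Theorem A) in the Archimedean case and of Boucksom (\cite{Boucksom21}, Theorem 4.6) in the non-Archimedean one, both recalled in the introduction — and then to indicate the structure of that theorem. For the reduction, Proposition \ref{prop:Boucksom_6.4}(2) gives $\vol_{\chi}(L,\varphi,\psi) = \vol_{\chi}(L,P(\varphi),P(\psi))$. Since $X$ is proper, $X^{\an}$ is compact, so $\varphi$ is bounded and, being continuous, lower semicontinuous; hence $P(\varphi) = Q(\varphi)$ (cf.\ \S\ref{subsub:envelopes}) is the supremum of the family $\cF_{\varphi} := \{\phi \in \PSH(L)\cap C^{0}(L) : \phi \le \varphi\}$, which is nonempty — if $\phi_{0}\in\FS(L)$ then $\phi_{0} - \sup_{X^{\an}}(\phi_{0}-\varphi) \in \cF_{\varphi}$ by Proposition \ref{prop:PSH_metrics}(i) — and directed upward by Proposition \ref{prop:PSH_metrics}(iii). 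Viewing $\cF_{\varphi}$ as an increasing net converging pointwise to the continuous function $P(\varphi)$, Dini's theorem (valid for monotone nets on a compact space) forces uniform convergence, so $P(\varphi) \in \PSH(L)\cap C^{0}(L)$ by Proposition \ref{prop:PSH_metrics}(v); likewise $P(\psi)$. Thus $\cE\circ P(\varphi,\psi) = \cE(P(\varphi),P(\psi))$ is well defined and it remains to prove
\begin{equation*}
\vol_{\chi}(L,\varphi,\psi) = (d+1)\cE(\varphi,\psi) \qquad \text{for all } \varphi,\psi \in \PSH(L)\cap C^{0}(L). \tag{$\ast$}
\end{equation*}

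For $(\ast)$ I would argue as in \cite{Boucksom21}, \S 4 (resp. \cite{BermanBoucksom08}). Both sides of $(\ast)$, as functionals $F(\varphi,\psi)$, are antisymmetric, vanish for $\varphi=\psi$, and satisfy the cocycle relation $F(\varphi_{1},\varphi_{3}) = F(\varphi_{1},\varphi_{2}) + F(\varphi_{2},\varphi_{3})$: for $\vol_{\chi}$ this is immediate from multiplicativity of determinant norms, and for $\cE$ it is the standard integration-by-parts identity for Monge--Amp\`ere products in the Chambert-Loir--Ducros/Gubler--K\"unnemann calculus. Since $\varphi,\psi \le \max(\varphi,\psi) \in \PSH(L)\cap C^{0}(L)$ by Proposition \ref{prop:PSH_metrics}(iii), the cocycle property reduces $(\ast)$ to the case of a comparable pair $\varphi_{0} := \psi \le \varphi_{1}$ of continuous psh metrics. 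Put $u := \varphi_{1} - \varphi_{0} \ge 0$ (continuous on $X^{\an}$) and $\varphi_{t} := \varphi_{0} + tu$ for $t\in[0,1]$; then $\varphi_{t} \in \PSH(L)\cap C^{0}(L)$, since $\PSH(L)$ is stable under convex combination (decreasing nets of Fubini--Study metrics being stable under convex combination, via stability of $\FS$ under sum and rescaling). On the energy side, the first-variation formula gives that $t\mapsto (d+1)\cE(\varphi_{t},\varphi_{0})$ is $C^{1}$ with derivative $(d+1)\int_{X^{\an}} u\,(\diff\dc \varphi_{t})^{\wedge d}$. On the volume side, for $n\ge 1$ let $N_{n} := h^{0}(X,nL)$ and $\Phi_{n}(t) := \tfrac{(d+1)!}{n^{d+1}}\ln\tfrac{\|\cdot\|_{n\varphi_{0},\det}}{\|\cdot\|_{n\varphi_{t},\det}}$, so $\Phi_{n}(0)=0$ and $\Phi_{n}(t) \to \vol_{\chi}(L,\varphi_{t},\varphi_{0})$ by Definition \ref{def:local_volume} (a genuine limit, by \cite{ChenMaclean15}, Theorem 4.5). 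As $\varphi_{t}$ is increasing in $t$, each $\Phi_{n}$ is nondecreasing; and since the norms $\|\cdot\|_{n\varphi_{t}}$ vary $n(\sup_{X^{\an}}|u|)$-Lipschitz-continuously in $t$ on the logarithmic scale, so do their determinant norms up to the factor $N_{n}$, whence — using $\tfrac{(d+1)!\,N_{n}}{n^{d}} \to (d+1)(L^{d})$ — the $\Phi_{n}$ are uniformly Lipschitz on $[0,1]$. For almost every $t$, the envelope theorem identifies $\Phi_{n}'(t)$ with $\tfrac{(d+1)!}{n^{d}}\sum_{i=1}^{N_{n}} u(x_{i})$, where $\{x_{i}\}\subset X^{\an}$ realizes an optimal basis of $(H^{0}(X,nL),\|\cdot\|_{n\varphi_{t}})$; the equidistribution of such configurations towards the Monge--Amp\`ere measure — the non-Archimedean Fekete-type statement underlying \cite{ChenMaclean15}, Theorem 4.5, and its Archimedean counterpart — gives $\tfrac1{N_n}\sum_{i}\delta_{x_{i}} \rightharpoonup \MA(\varphi_{t})$, hence $\Phi_{n}'(t) \to (d+1)\int_{X^{\an}} u\,(\diff\dc \varphi_{t})^{\wedge d}$. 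Dominated convergence in $t$ (the uniform Lipschitz bound) then yields
\begin{equation*}
\vol_{\chi}(L,\varphi_{1},\varphi_{0}) = \lim_{n} \Phi_{n}(1) = \lim_{n} \int_{0}^{1} \Phi_{n}'(t)\,\diff t = (d+1)\int_{0}^{1}\int_{X^{\an}} u\,(\diff\dc \varphi_{t})^{\wedge d}\,\diff t = (d+1)\cE(\varphi_{1},\varphi_{0}),
\end{equation*}
the last equality being the first variation of $\cE$ integrated over $[0,1]$. This proves $(\ast)$, and hence the theorem.

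The substance — and the main obstacle — lies entirely in the finite-level analysis: the precise identification of $\Phi_{n}'(t)$ through an optimal basis, which is delicate over a non-Archimedean field, where orthogonal bases need not exist and the determinant norm must be compared with the naive product over a basis up to dimensional factors (contributing only $o(n^{d+1})$ to the logarithm, hence harmless), and, above all, the weak convergence of the empirical ``equilibrium'' measures $\tfrac1{N_n}\sum_i\delta_{x_i} \rightharpoonup \MA(\varphi_{t})$ with enough uniformity in $t\in[0,1]$ to license interchanging $\lim_{n}$ with $\int_{0}^{1}(\cdot)\,\diff t$. Everything else — the reduction to continuous psh metrics, the cocycle manipulations, and the first variation of the energy — is formal.
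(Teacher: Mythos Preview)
The paper does not prove this theorem; it imports it verbatim from \cite{Boucksom21}, Corollary 6.5 (and, for the core identity $(\ast)$, from \cite{BoucksomEriksson}, Theorem A and \cite{BermanBoucksom08}, Theorem A). So there is no ``paper's own proof'' to compare against, and the question is whether your sketch stands on its own.

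Your reduction step is correct and is exactly the intended one: Proposition~\ref{prop:Boucksom_6.4}(2) plus the observation that a continuous $P(\varphi)$ is automatically in $\PSH(L)\cap C^0(L)$ reduces everything to $(\ast)$ for continuous psh metrics. The cocycle manipulations and the first variation of $\cE$ are likewise standard.

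The gap is in your treatment of the volume side of $(\ast)$. Two issues. First, the ``envelope theorem'' identification of $\Phi_n'(t)$ with $\tfrac{(d+1)!}{n^d}\sum_i u(x_i)$ for an optimal basis is not justified: the determinant norm $\|\cdot\|_{n\varphi_t,\det}$ is an infimum over all decompositions $\eta = s_1\wedge\cdots\wedge s_{N_n}$, not a product of pointwise evaluations at specific Fekete-type points, and in the non-Archimedean case (where orthogonal bases need not exist and one works with $\epsilon$-orthogonal bases) the link between the derivative of the log-determinant and a discrete empirical measure on $X^{\an}$ is not available in this form. Second, and more seriously, the weak convergence $\tfrac{1}{N_n}\sum_i \delta_{x_i} \rightharpoonup \MA(\varphi_t)$ that you invoke is \emph{not} what underlies \cite{ChenMaclean15}, Theorem 4.5 --- that result is proved by superadditivity and Okounkov body methods, with no equidistribution input. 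In fact, in the non-Archimedean setting the equidistribution of Fekete configurations toward the Monge--Amp\`ere measure is a \emph{consequence} of $(\ast)$ (via the differentiability of $\cE\circ P$ and the variational characterisation of equilibrium measures), not a tool available to prove it. Your argument is therefore circular.

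The route actually taken in \cite{Boucksom21} and \cite{BoucksomEriksson} is different: one first checks $(\ast)$ for Fubini--Study (equivalently, model) metrics, where both sides compute the same arithmetic intersection number on a model, and then passes to general continuous psh metrics by uniform approximation, using that both $\vol_\chi$ and $\cE$ are continuous for the uniform topology on $\PSH(L)\cap C^0(L)$.
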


In this article, we address a differentiability property for the relative $\chi$-volume. The latter is related to a so-called orthogonality property. If $L$ is a semiample invertible $\cO_X$-module, we say that 
\begin{itemize}
	\item[(a)] the \emph{orthogonality property} holds for $\varphi$ if the measure $(\diff\dc P(\varphi))^{\wedge d}$ is supported on $\{\varphi = P(\varphi)\}$;
	\item[(b)] $\cE\circ P$ is \emph{differentiable} in $\varphi$ if, for any $f\in C^0(X^{\an})$, we have
	\begin{align*}
	\left.\frac{\diff}{\diff t}\right|_{t=0} \cE\circ P (\varphi +tf,\varphi_0) = \int_{X^{\an}}f (\diff\dc P(\varphi))^{\wedge d},
	\end{align*}
	where $\varphi_0\in \DFS(L)$ is an arbitrary reference metric.
\end{itemize}
 
\begin{proposition}
\label{prop:orthognality_property}
The following statements are equivalent:
\begin{itemize}
	\item[(i)] for all metric $\varphi \in C^0(L)$, $\cE\circ P$ is differentiable in $\varphi$;
	\item[(ii)] for all metric $\varphi \in C^0(L)\cap\PSH(L)$, $\cE\circ P$ is differentiable in $\varphi$;
	\item[(iii)] the orthogonality property holds for any $\varphi\in C^0(L)$.
\end{itemize}
\end{proposition}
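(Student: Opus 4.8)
The plan is to prove the cycle of implications $(i)\Rightarrow(ii)\Rightarrow(iii)\Rightarrow(i)$. Throughout I use the standing hypothesis --- implicit in the very definition of $\cE\circ P$ --- that the continuity of envelopes holds for $L$, so that $P(\varphi)\in C^0(L)\cap\PSH(L)$ for every $\varphi\in C^0(L)$. Fix once and for all a reference metric $\varphi_0\in\DFS(L)$, abbreviate $E(\varphi):=\cE\circ P(\varphi,\varphi_0)=\cE(P(\varphi),P(\varphi_0))$, and write $\mu_\varphi:=(\diff\dc P(\varphi))^{\wedge d}$, a positive Radon measure on the compact space $X^{\an}$. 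The implication $(i)\Rightarrow(ii)$ is immediate, since $C^0(L)\cap\PSH(L)\subset C^0(L)$.

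For $(ii)\Rightarrow(iii)$, fix $\varphi\in C^0(L)$ and set $u:=P(\varphi)\in C^0(L)\cap\PSH(L)$ and $f:=\varphi-u$, which is continuous and satisfies $f\geq 0$ because $P(\varphi)\leq\varphi$. The key point is that $P(u+tf)=u$ for every $t\in[0,1]$: since $f\geq 0$, the psh metric $u$ is a competitor in the supremum defining $P(u+tf)$, whereas $u+tf=(1-t)u+t\varphi\leq\varphi$ forces $P(u+tf)\leq P(\varphi)=u$ by monotonicity (Proposition \ref{prop:Boucksom21_6.3}). Consequently $t\mapsto E(u+tf)=\cE(u,P(\varphi_0))$ is constant on $[0,1]$, so its one-sided derivative at $0$ is zero; the two-sided derivative granted by $(ii)$ is therefore zero as well. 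But $(ii)$ applied to $u$ (together with $P(u)=u$) identifies that derivative with $\int_{X^{\an}}f\,(\diff\dc u)^{\wedge d}=\int_{X^{\an}}(\varphi-P(\varphi))\,\mu_\varphi$. Hence $\int_{X^{\an}}(\varphi-P(\varphi))\,\mu_\varphi=0$, and since the integrand is continuous and $\geq 0$ the measure $\mu_\varphi$ must be carried by $\{\varphi=P(\varphi)\}$. As $\varphi$ was arbitrary, $(iii)$ holds.

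The core of the proof --- and the step I expect to be the main obstacle --- is $(iii)\Rightarrow(i)$, which I would derive from the supergradient-type inequality
\begin{align*}
E(\psi)-E(\varphi)\ \leq\ \int_{X^{\an}}(\psi-\varphi)\,\mu_\varphi\qquad\text{for all }\varphi,\psi\in C^0(L).
\end{align*}
To establish it: by the cocycle property of $\cE$ one has $E(\psi)-E(\varphi)=\cE(P(\psi),P(\varphi))$; the concavity of the Monge--Ampère energy along affine segments of psh metrics (a standard fact, see \cite{BoucksomEriksson}) bounds this above by $\int_{X^{\an}}(P(\psi)-P(\varphi))\,\mu_\varphi$; and the orthogonality property $(iii)$ for $\varphi$ says that $\mu_\varphi$ is carried by $\{\varphi=P(\varphi)\}$, on which $P(\psi)-P(\varphi)=P(\psi)-\varphi\leq\psi-\varphi$ since $P(\psi)\leq\psi$. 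This is the technical heart: it is the place where the orthogonality property is converted into an analytic estimate on the energy.

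Granting this inequality, apply it to the pairs $(\varphi,\varphi+tf)$ and $(\varphi+tf,\varphi)$ to get, for every $t\in\bR$,
\begin{align*}
t\int_{X^{\an}}f\,\mu_{\varphi+tf}\ \leq\ E(\varphi+tf)-E(\varphi)\ \leq\ t\int_{X^{\an}}f\,\mu_\varphi .
\end{align*}
Dividing by $t$ (which reverses the inequalities when $t<0$) and letting $t\to 0$, one is reduced to checking that $\int_{X^{\an}}f\,\mu_{\varphi+tf}\to\int_{X^{\an}}f\,\mu_\varphi$; this follows from Proposition \ref{prop:Boucksom21_6.3}, which gives $P(\varphi+tf)\to P(\varphi)$ uniformly as $t\to 0$, together with the continuity of the Monge--Ampère operator under uniform convergence of continuous psh metrics (\cite{CLD12,GK17} in the non-Archimedean case, Bedford--Taylor theory in the Archimedean one). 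A squeeze then shows that $\left.\frac{\diff}{\diff t}\right|_{t=0}E(\varphi+tf)$ exists and equals $\int_{X^{\an}}f\,(\diff\dc P(\varphi))^{\wedge d}$ for every $\varphi\in C^0(L)$ and $f\in C^0(X^{\an})$; this is precisely $(i)$, and the cycle is complete.
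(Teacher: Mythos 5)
Your proof is correct, and it is worth noting how it sits relative to the paper: the paper does not write out an argument at all, but simply defers to (\cite{BGM20}, Lemma 3.5) for the ample case and to (\cite{Boucksom21}, Remark 6.8) for the semiample generalisation. What you have produced is essentially the standard argument underlying those references, written out in full: for $(ii)\Rightarrow(iii)$ the observation that $P\bigl(P(\varphi)+t(\varphi-P(\varphi))\bigr)=P(\varphi)$ for $t\in[0,1]$, so that the energy is constant along this segment and the (two-sided) derivative furnished by $(ii)$ — which must agree with the vanishing right derivative — forces $\int_{X^{\an}}(\varphi-P(\varphi))\,(\diff\dc P(\varphi))^{\wedge d}=0$; and for $(iii)\Rightarrow(i)$ the supergradient inequality $E(\psi)-E(\varphi)\leq\int(\psi-\varphi)\,\mu_\varphi$, obtained from the cocycle relation, the concavity of $\cE$ along affine segments of psh metrics, and the orthogonality property, followed by the two-sided squeeze and the weak continuity of the Monge--Ampère operator under uniform convergence (here Proposition \ref{prop:Boucksom21_6.3}~(v) gives $P(\varphi+tf)\to P(\varphi)$ uniformly). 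All the ingredients you invoke are available in the paper's framework (convexity of $\PSH(L)$, Proposition \ref{prop:Boucksom21_6.3}, \cite{BoucksomEriksson} \S 8 for integration by parts and MA continuity), and your use of the unnormalised measures $(\diff\dc P(\varphi))^{\wedge d}$ is consistent with the paper's definition of differentiability. The only thing your write-up adds beyond the paper is self-containedness; the only thing it loses is that the cited references also treat some technical points (e.g.\ the semiample rather than ample case) in more detail than your one-line appeal to concavity, but nothing in your argument actually uses ampleness, so the proof stands as given.
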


\begin{proof}
When $L$ is ample, it is (\cite{BGM20}, Lemma 3.5). The arguments generalise to the semiample case (\cite{Boucksom21}, Remark 6.8). 
\end{proof}

\begin{theorem}[\cite{Boucksom21}, Theorem 6.7, \cite{BoucksomJonsson21}, Corollary 8.6]
\label{th:Boucksom21_6.7}
Let $\varphi\in C^0(L)$ such that $P(\varphi)\in C^0(L)$, then the orthogonality property holds for $\varphi$ and $E\circ P$ is differentiable in $\varphi$.
\end{theorem}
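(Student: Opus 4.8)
The plan is to establish the orthogonality property first and then deduce the differentiability statement from it, since the two are tied together through the fundamental inequality for the Monge--Ampère energy. Throughout I write $\psi:=P(\varphi)$, which by hypothesis lies in $C^0(L)\cap\PSH(L)$, so that the measure $\mu:=(\diff\dc\psi)^{\wedge d}$ is well defined; since $\varphi$ and $\psi$ are continuous, the contact set $D:=\{x\in X^{\an}:\varphi(x)=\psi(x)\}$ is closed and $U:=X^{\an}\setminus D$ is open.

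\emph{Orthogonality.} First I would show $\mu|_U=0$. Fix a compact $K\Subset U$ and set $\delta:=\min_K(\varphi-\psi)>0$. The idea is that $\psi$ is maximal on $U$: if $u$ is any bounded psh metric on $L$ with $\psi\le u$, $u=\psi$ off $K$, and $u\le\psi+\delta$, then $u\le\varphi$ everywhere (on $K$ because $u\le\psi+\delta\le\varphi$, off $K$ because $u=\psi\le\varphi$), so $u$ is a competitor in the definition of $P(\varphi)=\psi$ and hence $u\le\psi$; thus no strictly larger such $u$ exists. The local non-Archimedean Bedford--Taylor calculus (\cite{CLD12}, \cite{GK17}) then yields $(\diff\dc\psi)^{\wedge d}=0$ on $U$, i.e. $\Supp\mu\subseteq D$. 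Making this last implication precise — transporting the classical principle that the Monge--Ampère mass of a continuous envelope is carried by the contact set to Berkovich spaces, together with a workable local notion of maximal metric — is the step I expect to be the main obstacle.

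\emph{Reduction and the fundamental inequality.} Next I would record that $P$ is concave (by convexity of $\PSH(L)$ and $P(\theta)\le\theta$) and that $\cE$ is concave and increasing along psh metrics, so that for every $f\in C^0(X^{\an})$ the function $h(t):=\cE\circ P(\varphi+tf,\varphi_0)$ is concave in $t$; its one-sided derivatives at $0$ exist with $h'(0^+)\le h'(0^-)$, and it suffices to prove $h'(0^+)=\int_{X^{\an}}f\,\mu$. The analytic input is the standard fundamental inequality: for bounded psh metrics $u_0,u_1$ on $L$, concavity of $s\mapsto\cE((1-s)u_0+su_1,\varphi_0)$, with derivative $\int_{X^{\an}}(u_1-u_0)(\diff\dc((1-s)u_0+su_1))^{\wedge d}$, gives
\begin{equation}
\label{eq:plan_energy}
\int_{X^{\an}}(u_1-u_0)(\diff\dc u_1)^{\wedge d}\ \le\ \cE(u_1,\varphi_0)-\cE(u_0,\varphi_0)\ \le\ \int_{X^{\an}}(u_1-u_0)(\diff\dc u_0)^{\wedge d}.
\end{equation}

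\emph{The squeeze.} Finally, fixing $f$ and setting $\psi_t:=P(\varphi+tf)$, which is bounded psh with $\|\psi_t-\psi\|_\infty\le|t|\,\|f\|_\infty$ by Proposition~\ref{prop:Boucksom21_6.3}(v) — so $\psi_t\to\psi$ uniformly and $(\diff\dc\psi_t)^{\wedge d}\to\mu$ weakly — I would apply \eqref{eq:plan_energy} to $(u_0,u_1)=(\psi,\psi_t)$: for $t>0$ this yields $\tfrac1t\int_{X^{\an}}(\psi_t-\psi)(\diff\dc\psi_t)^{\wedge d}\le\tfrac{h(t)-h(0)}{t}\le\tfrac1t\int_{X^{\an}}(\psi_t-\psi)\,\mu$. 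Since $\psi_t\le\varphi+tf$ and $\varphi=\psi$ on $\Supp\mu\subseteq D$, one gets $\psi_t-\psi\le tf$ $\mu$-a.e., hence $h'(0^+)\le\int_{X^{\an}}f\,\mu$. For the reverse inequality I would feed in the explicit competitors $v_t:=P\bigl(\psi+f+\tfrac1t(\varphi-\psi)\bigr)$: one checks that $(1-t)\psi+tv_t$ is a bounded psh metric $\le\varphi+tf$, so $\psi_t\ge(1-t)\psi+tv_t$, i.e. $\psi_t-\psi\ge t(v_t-\psi)$, which inserted into the left half of \eqref{eq:plan_energy} gives $\tfrac{h(t)-h(0)}{t}\ge\int_{X^{\an}}(v_t-\psi)(\diff\dc\psi_t)^{\wedge d}$. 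As $t\downarrow0$ the $v_t$ increase to a limit $v_0$ which, using the orthogonality from Step~1, equals $\psi+f$ $\mu$-almost everywhere on $D$; letting $t\to0^+$ (weak convergence of the Monge--Ampère measures together with monotone convergence) then gives $h'(0^+)\ge\int_{X^{\an}}f\,\mu$. Combining the two bounds with the concavity reduction proves that $\cE\circ P$ is differentiable at $\varphi$ with derivative $f\mapsto\int_{X^{\an}}f\,(\diff\dc P(\varphi))^{\wedge d}$. (Conversely, differentiability re-implies orthogonality, by perturbing along $-g$ for $g\ge0$ with support in $U$ and observing $P(\varphi-tg)=\psi$ for small $t$; but the estimates above already require orthogonality, so the genuinely new ingredient is the obstacle-problem argument of Step~1.)
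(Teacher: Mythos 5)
The paper gives no argument for this statement at all: it is quoted directly from (\cite{Boucksom21}, Theorem 6.7), and the surrounding toolkit (\autoref{prop:orthognality_property}, \autoref{th:ChenMaclean_4.6}, \autoref{th:Boucksom21_6.5}) reflects how the cited proof actually runs. Your plan inverts that logical order, and the inversion is where the genuine gap lies: Step 1 needs the implication ``$P(\varphi)$ locally maximal on $\{\varphi>P(\varphi)\}$ $\Rightarrow$ $(\diff\dc P(\varphi))^{\wedge d}=0$ there''. In the complex case this is Bedford--Taylor's characterization of maximality, whose proof rests on solving local Dirichlet problems; no analogue is available in the Chambert-Loir--Ducros/Gubler--K\"unnemann theory over an arbitrary complete non-Archimedean field (the relevant Dirichlet/Monge--Amp\`ere solvability is precisely what is missing in this generality). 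This is exactly why \cite{Boucksom21} and \cite{BGM20} (and \cite{BermanBoucksom08} in the Archimedean case) argue in the opposite direction: they first prove differentiability of the relative volume/energy along continuous directions by global sup-norm and Okounkov-body estimates of Chen--Maclean type, identify it with $(d+1)\,\cE\circ P$ (\autoref{th:Boucksom21_6.5}), and only then obtain orthogonality through the equivalence of \autoref{prop:orthognality_property}. So the step you yourself flag as the main obstacle is not a routine transport of a classical lemma but the missing ingredient, and the proof cannot be completed along these lines with the tools available here.

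Even granting Step 1, the squeeze is incomplete. First, you use $(\diff\dc P(\varphi+tf))^{\wedge d}$ and its weak convergence, but the hypothesis only gives $P(\varphi)\in C^0(L)$; without continuity of envelopes, $P(\varphi+tf)$ need not even be psh (this is the content of \autoref{lemme:BoucksomEriksson_lemma_7.29}), so its Monge--Amp\`ere measure is not supplied by the theory you invoke. Second, the lower bound hinges on the unproved claim that $v_0=\lim_{t\to 0^+}P\bigl(\psi+f+\tfrac{1}{t}(\varphi-\psi)\bigr)$ equals $\psi+f$ $\mu$-a.e.\ on the contact set; this is itself an orthogonality/domination statement for a relative envelope and does not follow from Step 1, and the ``monotone plus weak convergence'' passage also needs the $v_t$ to be continuous (with merely usc integrands the weak-convergence inequality goes the wrong way). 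The standard route for the lower bound instead uses orthogonality at $\varphi+tf$, namely $P(\varphi+tf)-P(\varphi)\ge tf$ on $\{P(\varphi+tf)=\varphi+tf\}$, but that presupposes orthogonality in a generality your Step 1 does not provide. Your upper bound $h'(0^+)\le\int_{X^{\an}}f\,(\diff\dc P(\varphi))^{\wedge d}$ is the only part that goes through as written.
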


\subsection{Okounkov bodies and slopes}
\label{sub:Okounkov_bodies_and_slopes}

\subsubsection{Line bundle case}
\label{subsub:Okounkov_line_bundles}

Throughout this paragraph, we fix a complete valued field $(k,\va)$ and we denote by $\cC_k$ the class of finite dimensional vector spaces over $k$ endowed with two norms which are assumed to be ultrametric when $k$ is non-Archimedean. Let $\overline{V}=(V,\|\cdot\|_{\varphi},\|\cdot\|_{\psi})$ be an element in $\cC_k$, its \emph{degree} is defined by
\begin{align*}
\widehat{\deg}(\overline{V}) := -\ln\|s_1\wedge\cdots\wedge s_r\|_{\varphi} + \ln\|s_1\wedge\cdots\wedge s_r\|_{\psi},
\end{align*}
where $(s_1,...,s_k)$ is an arbitrary basis of $V$. This degree is well-defined thanks to the product formula
\begin{align*}
-\ln|a| + \ln|a| = 0, \forall a\in k^{\times}.
\end{align*}
The \emph{slope} of $\overline{V}$ is defined by
\begin{align*}
\widehat{\mu}(\overline{V}) := \frac{\widehat{\deg}(\overline{V})}{\dim_k(V)}.
\end{align*}

The abstract framework in which we consider Okounkov bodies is the following. We fix a graded semigroup $\Gamma \subset \bN^{d+1} = \bN \times \bN^d \subseteq \bN \times \bR^{d}$. For all $n\in\bN$, the homogeneous degree $n$ component of $\Gamma$ is $\Gamma_n := \{\alpha\in\bN^d : (n,\alpha)\in \Gamma\}$. $\Gamma$ is assumed to satisfy the following conditions (cf. \cite{LazarsfeldMustata09} \S 2.1 for further details) : 
\begin{itemize}
	\item[(a)] $\Gamma_0 = \{0\}$;
	\item[(b)] there exists a finite set $B\subset \{1\}\times \bN^d$ such that $\Gamma$ is included in the sub-monoid of $\bN^d$ generated by $B$;
	\item[(c)] $\Gamma$ generates $\bZ^{d+1}$ as a group.
\end{itemize}   
We denote by $\Sigma(\Gamma)\subset \bR^{d+1}\}$ the closed convex cone generated by $\Gamma$ and $\Delta(\Gamma):= \Sigma(\Gamma) \cap \left(\{1\}\times \bR^{d}\right)$ its base: it is the \emph{Okounkov body} of $\Gamma$. $\Delta(\Gamma)$ is a convex body in $\bR^d$ and we have the equality
\begin{align*}
\displaystyle\lim_{n\to+\infty} \frac{|\Gamma_n|}{n^d} = \vol(\Delta(\Gamma)).
\end{align*}
Moreover, we consider a superadditive function $\phi : \Gamma \to \bR$, namely $\phi(\gamma+\gamma')\geq \phi(\gamma) + \phi(\gamma')$ for all $\gamma,\gamma'\in \Gamma$.

\begin{lemma}[\cite{ChenMaclean15}, Lemma 4.1]
\label{lemma:ChenMaclean_4.1}
Assume that $\phi(0)=0$. 
\begin{itemize}
	\item[(1)] For any real number $t$, the set $\Gamma_{\phi}^{t} := \{(n,\alpha)\in\Gamma : \phi(n,\alpha)\geq nt\}$ is a sub-semigroup of $\Gamma$.
	\item[(2)] Let $t$ be a real number satisfying 
	\begin{align*}
	t < \displaystyle \lim_{n\to+\infty}\sup_{\alpha\in\Gamma_n}\frac{\phi(n,\alpha)}{n}.
	\end{align*}
	Then $\Gamma_{\phi}^{t}$ satisfies condtions $(a)-(c)$ above.
\end{itemize}
\end{lemma}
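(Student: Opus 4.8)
\textbf{Part (1)} I would settle immediately from superadditivity: if $(n,\alpha),(m,\beta)\in\Gamma_{\phi}^{t}$, then $(n+m,\alpha+\beta)\in\Gamma$ since $\Gamma$ is a semigroup, and $\phi(n+m,\alpha+\beta)\geq\phi(n,\alpha)+\phi(m,\beta)\geq nt+mt=(n+m)t$, so the sum lies in $\Gamma_{\phi}^{t}$; and $0\in\Gamma_{\phi}^{t}$ because $\phi(0)=0\geq0$. For \textbf{part (2)}, I expect conditions (a) and (b) to be essentially free: $(\Gamma_{\phi}^{t})_{0}\subseteq\Gamma_{0}=\{0\}$ together with $0\in\Gamma_{\phi}^{t}$ gives (a), and since $\Gamma_{\phi}^{t}\subseteq\Gamma$ the finite set $B\subset\{1\}\times\bN^{d}$ witnessing (b) for $\Gamma$ also witnesses it for $\Gamma_{\phi}^{t}$. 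So the real content is condition (c), that $\Gamma_{\phi}^{t}$ generates $\bZ^{d+1}$ as a group. Since $\Gamma_{\phi}^{t}$ is a semigroup containing $0$, I would reformulate this as $\Gamma_{\phi}^{t}-\Gamma_{\phi}^{t}=\bZ^{d+1}$, and, using that (c) for $\Gamma$ is exactly $\Gamma-\Gamma=\bZ^{d+1}$, reduce the claim to the inclusion $\Gamma-\Gamma\subseteq\Gamma_{\phi}^{t}-\Gamma_{\phi}^{t}$.

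This inclusion is where the hypothesis $t<\mu:=\lim_{n\to+\infty}\sup_{\alpha\in\Gamma_{n}}\phi(n,\alpha)/n$ is needed. I would first record, via Fekete's lemma in its superadditive form, that $a_{n}:=\sup_{\alpha\in\Gamma_{n}}\phi(n,\alpha)$ is superadditive (by superadditivity of $\phi$ and the semigroup property, $\Gamma_{n}$ being nonempty for $n\gg0$ since (c) forces the degrees occurring in $\Gamma$ to have gcd $1$) and hence $a_{n}/n\to\mu=\sup_{n}a_{n}/n$; then I fix $t'$ with $t<t'<\mu$ and obtain $N_{1}$ such that for every $N\geq N_{1}$ there exists $\gamma_{N}=(N,\alpha_{N})\in\Gamma$ with $\phi(\gamma_{N})\geq Nt'$. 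The key step is then the observation that, for a fixed $\gamma=(n,\alpha)\in\Gamma$, superadditivity gives $\phi(\gamma+\gamma_{N})\geq\phi(\gamma)+Nt'$, which is $\geq(n+N)t$ as soon as $N(t'-t)\geq nt-\phi(\gamma)$; hence $\gamma+\gamma_{N}\in\Gamma_{\phi}^{t}$ for all sufficiently large $N$ (depending on $\gamma$).

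To finish: given arbitrary $\gamma,\gamma'\in\Gamma$, I pick a single $N\geq N_{1}$ large enough that both $\gamma+\gamma_{N}$ and $\gamma'+\gamma_{N}$ lie in $\Gamma_{\phi}^{t}$, so that $\gamma-\gamma'=(\gamma+\gamma_{N})-(\gamma'+\gamma_{N})\in\Gamma_{\phi}^{t}-\Gamma_{\phi}^{t}$, giving the desired inclusion and hence (c). I expect the only genuine obstacle to be exactly this last manoeuvre — adding the \emph{same} high-slope element $\gamma_{N}$ to two different elements of $\Gamma$ while staying inside $\Gamma_{\phi}^{t}$ — which is resolved by choosing $N$ large relative to both $\gamma$ and $\gamma'$; the remaining ingredients (Fekete's lemma, inheritance of (a) and (b) from $\Gamma$) are routine.
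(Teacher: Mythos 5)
This lemma is not proved in the paper at all: it is quoted verbatim from \cite{ChenMaclean15} (Lemma 4.1), so there is no internal proof to compare against. Your argument is correct and is essentially the standard one used in that reference: parts (a)--(b) and the semigroup property are immediate from superadditivity and $\phi(0)=0$, and for (c) you use Fekete (over the eventually cofinite set of $n$ with $\Gamma_n\neq\emptyset$, which is finite in each degree by (b)) to produce elements $\gamma_N$ of slope $\geq t'>t$, then absorb two arbitrary elements $\gamma,\gamma'$ of $\Gamma$ into $\Gamma_{\phi}^{t}$ by adding the same $\gamma_N$ with $N$ large, so that $\gamma-\gamma'\in\Gamma_{\phi}^{t}-\Gamma_{\phi}^{t}$ and hence $\Gamma_{\phi}^{t}$ generates $\bZ^{d+1}$.
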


\begin{theorem}[\cite{ChenMaclean15}, Theorem 4.3]
We use the same notation as above. Assume that
\begin{align*}
\theta := \displaystyle \lim_{n\to+\infty}\sup_{\alpha\in\Gamma_n}\frac{\phi(n,\alpha)}{n} < +\infty.
\end{align*}
For all $n\geq 1$, denote by $Z_n := \phi(n,\cdot)$ the uniformly distributed random variable on $\Gamma_n$. The the sequence of random variables $(Z_n/n)_{n\geq 1}$ converges in law to a random variable $Z$ whose probability law is defined by 
\begin{align*}
\bP(Z\geq t) = \frac{\vol(\Delta(\Gamma_{\Phi}^{t}))}{\vol(\Delta(\Gamma))}, \quad \forall t\neq \theta.
\end{align*}
Furthermore, there exists a concave function $G_{\phi} : \Delta(\Gamma) \to \bR \cup \{-\infty\}$ taking finite values in $\Delta(\Gamma)^{\circ}$, such that
\begin{align*}
Z = (G_{\phi})_{\ast}\lambda,
\end{align*}
where $\lambda$ denotes the normalised Lebesgue measure on $\Delta(\Gamma)$. Concretely, for any bounded continuous function $h$, we have
\begin{align*}
\displaystyle\lim_{n\to+\infty} \frac{h(\phi(n,\alpha)/n)}{|\Gamma_n|} = \frac{1}{\vol(\Delta(\Gamma))} \int_{\Delta(\Gamma)^{\circ}} G_{\phi}(x)\diff x.
\end{align*}
\end{theorem}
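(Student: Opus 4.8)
The plan is to run the classical \emph{concave transform} construction (in the spirit of Lazarsfeld--Musta\textcommabelow{t}\u{a} and Boucksom--Chen), transcribed to the abstract superadditive setting of the statement. Convergence in law of $(Z_n/n)_{n\ge 1}$ will be reduced to pointwise convergence of the complementary distribution functions $t\mapsto\bP(Z_n/n\ge t)$, and the limit will be identified both as the volume ratio $t\mapsto\vol(\Delta(\Gamma_\phi^t))/\vol(\Delta(\Gamma))$ and as the pushforward $(G_\phi)_\ast\lambda$ of the normalised Lebesgue measure under a concave function $G_\phi$ manufactured from $\phi$.

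First I would settle the \textbf{asymptotics of the distribution functions}. For every $n$ and every real $t$ one has tautologically
\begin{align*}
\bP(Z_n/n\ge t)=\frac{\#\{\alpha\in\Gamma_n:\phi(n,\alpha)\ge nt\}}{|\Gamma_n|}=\frac{|(\Gamma_\phi^t)_n|}{|\Gamma_n|}.
\end{align*}
Fix $t<\theta$. By Lemma~\ref{lemma:ChenMaclean_4.1}, $\Gamma_\phi^t$ is a graded sub-semigroup of $\Gamma$ satisfying conditions $(a)$--$(c)$, so the Okounkov-body volume asymptotics apply both to $\Gamma$ and to $\Gamma_\phi^t$, giving $|\Gamma_n|/n^d\to\vol(\Delta(\Gamma))>0$ and $|(\Gamma_\phi^t)_n|/n^d\to\vol(\Delta(\Gamma_\phi^t))$; hence $\bP(Z_n/n\ge t)\to F(t):=\vol(\Delta(\Gamma_\phi^t))/\vol(\Delta(\Gamma))$. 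For $t>\theta$ the definition of $\theta$ forces $\phi(n,\alpha)/n<t$ for every $\alpha\in\Gamma_n$ once $n$ is large, so $\bP(Z_n/n\ge t)=0$ eventually while $\Delta(\Gamma_\phi^t)=\emptyset$, so $F(t)=0$. Tightness of the laws of $(Z_n/n)$ follows because $\bigcup_M\Gamma_\phi^{-M}=\Gamma$ yields $F(-M)\to 1$ as $M\to\infty$, which together with the convergence above controls the mass near $-\infty$ uniformly in $n$ (the finitely many small $n$ being harmless).

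The heart of the argument is the \textbf{concave transform}. Let $\widehat\Sigma\subset\bR^{d+1}\times\bR$ be the closed convex cone generated by the ``graph points'' $\{(n,\alpha,\phi(n,\alpha)):(n,\alpha)\in\Gamma\}$ together with the downward vertical ray $\{0\}\times\bR_{\le 0}$, and set $G_\phi(x):=\sup\{s\in\bR:(1,x,s)\in\widehat\Sigma\}$ for $x\in\Delta(\Gamma)$. Superadditivity of $\phi$ is exactly what keeps the convex hull from raising the last coordinate above the diagonal values, so that $G_\phi$ is concave; the hypothesis $\theta<+\infty$ bounds $s/n$ along $\widehat\Sigma$, whence $G_\phi\le\theta$, while a standard interior argument (the one proving $\Delta(\Gamma)$ has nonempty interior, applied to lattice points realising an interior point with controlled $\phi$-values) shows $G_\phi$ is real-valued on $\Delta(\Gamma)^\circ$ with $\sup_{\Delta(\Gamma)}G_\phi=\theta$. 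For $t<\theta$ one then checks, up to Lebesgue-null sets, the equality $\Delta(\Gamma_\phi^t)=\{x\in\Delta(\Gamma):G_\phi(x)\ge t\}$: the inclusion $\subseteq$ because each $(n,\alpha)\in\Gamma_\phi^t$ provides the point $(n,\alpha,\phi(n,\alpha))\in\widehat\Sigma$ with $\phi(n,\alpha)\ge nt$, and $\supseteq$ because any $x$ with $G_\phi(x)>t$ is a limit of $\alpha_k/n_k$ with $(n_k,\alpha_k)\in\Gamma$ and $\phi(n_k,\alpha_k)\ge n_k t$, i.e.\ of points of $\Gamma_\phi^t$. Consequently $\vol(\Delta(\Gamma_\phi^t))=\vol(\{G_\phi\ge t\})$, and since $G_\phi$ is concave its superlevel sets are convex bodies whose volume varies continuously with $t$ on $(-\infty,\sup G_\phi)=(-\infty,\theta)$; so $F$ is continuous on $\bR\setminus\{\theta\}$.

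It remains to assemble the pieces. The complementary distribution functions of $Z_n/n$ converge to $F$ at every point of $\bR\setminus\{\theta\}$, all of which are continuity points of $F$; combined with tightness and the fact that $F$ restricted to $\bR\setminus\{\theta\}$ pins down a unique probability law, standard weak-convergence bookkeeping gives $Z_n/n\Rightarrow Z$ with $\bP(Z\ge t)=F(t)$ for all $t\ne\theta$. Since $\bP((G_\phi)_\ast\lambda\ge t)=\lambda(\{G_\phi\ge t\})=\vol(\{G_\phi\ge t\})/\vol(\Delta(\Gamma))=F(t)$, the variable $Z$ has the law of $(G_\phi)_\ast\lambda$, and for bounded continuous $h$ the final formula is nothing but $\bE[h(Z_n/n)]\to\bE[h(Z)]$, i.e.
\begin{align*}
\frac{1}{|\Gamma_n|}\sum_{\alpha\in\Gamma_n}h\!\left(\frac{\phi(n,\alpha)}{n}\right)\xrightarrow[n\to\infty]{}\frac{1}{\vol(\Delta(\Gamma))}\int_{\Delta(\Gamma)^\circ}h(G_\phi(x))\,\diff x.
\end{align*}
The main obstacle is the third paragraph: producing $G_\phi$ from bare superadditivity and proving concavity, interior finiteness, and the measure-theoretic identity $\Delta(\Gamma_\phi^t)=\{G_\phi\ge t\}$, which is where essentially all the convex-geometric content sits; Step~1 is a direct application of Lemma~\ref{lemma:ChenMaclean_4.1} together with the Okounkov volume asymptotics, and the final step is routine probability once continuity of $F$ off $\theta$ is in hand.
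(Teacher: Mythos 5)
Your argument is correct and is essentially the proof of \cite{ChenMaclean15}, Theorem 4.3, which this paper only cites without reproducing: the reduction to the complementary distribution functions $t\mapsto \bP(Z_n/n\geq t)=|(\Gamma_\phi^t)_n|/|\Gamma_n|$ via Lemma \ref{lemma:ChenMaclean_4.1} and the semigroup volume asymptotics, together with the concave transform (your definition of $G_{\phi}$ through the closed cone generated by the graph points $(n,\alpha,\phi(n,\alpha))$ is equivalent, using superadditivity to replace rational convex combinations by actual semigroup elements, to the superlevel-set definition $G_{\phi}(x)=\sup\{t\in\bR : x\in\Delta(\Gamma_{\phi}^{t})\}$ used later in the paper), is exactly the standard route. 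You also correctly restore the sum over $\alpha\in\Gamma_n$ and the composition $h\circ G_{\phi}$ that are missing from the final display of the statement as transcribed here.
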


We now introduce some notation to apply the latter results in a geometric context. Let $X$ be an integral projective scheme over $\Spec(k)$ of dimension $d\geq 1$ and $L$ be a line bundle on $X$. We assume that there exists a regular rational point $p\in X(k)$. Then the Cohen structure theorem yields an isomorphism of rings $\widehat{\cO_{X,p}}\cong k[\![z_1,...,z_d]\!]$. If $V_{\bullet}\subset V_{\bullet}(L)$ is a graded linear series of $L$, after trivialising $L$ in a neighbourhood of $p$, $V_{\bullet}$ can be viewed as a graded subalgebra of $\cO_{X,p}[T]$. We also fix a monomial order on $\bN^d$. The latter yields a $\bN^d$-filtration of $V_n$ and we denote by $(\gr^{(n,\alpha)}(V_{\bullet}))_{\alpha\in\bN^d}$ its graded homogeneous components. 
The $\bN^{d+1}$-graded algebra $\gr(V_{\bullet})$ can then be viewed as a graded subalgebra of $k[z_1,...,z_d,T]$. We define the graded semigroup
\begin{align*}
\Gamma(V_{\bullet}) := \{(n,\alpha)\in\bN^{d+1} : \gr^{(n,\alpha)}(V_{\bullet}) \neq (0)\}.
\end{align*}

We now add analytic data. Using the same notation as above, for all $n\in\bN$, let $\varphi_n,\psi_n$ be norms on $V_n$, which are assumed to be ultrametric when $k$ is non-Archimedean. We assume that the families $(\varphi_n)_{n\geq 0},(\psi_n)_{n\geq 0}$ are submultiplicative. For all $(n,\alpha)\in\Gamma(V_{\bullet})$, $\varphi_n$ and $\psi_n$ induce norms on $\gr^{(n,\alpha)}(V_{\bullet})$ denoted by $\varphi_n^{\alpha}$ and $\psi_n^{\alpha}$. Taking the orthogonal direct sum (with respect to the index $\alpha$) of these norms, we obtain norms $\widehat{\varphi_n},\widehat{\psi_n}$ on $\gr(V_n) := \bigoplus_{\alpha\in\Gamma_n} \gr^{(n,\alpha)}(V_{\bullet})$. For any $n\in\bN$, we consider the auxiliary norm $\eta_n$ on $\gr(V_n)$ defined as follows. For all $\alpha\in\Gamma(V_n)$, we denote by $s_{(n,\alpha)}$ the canonical image of $(n,\alpha)$ in $\gr(V_n)$. We then have
\begin{align*}
\gr(V_n) = \displaystyle\bigoplus_{\alpha\in\Gamma(V_n)} k s_{(n,\alpha)}.
\end{align*}
$\eta_n$ is defined as the unique norm on $\gr(V_n)$ such that $(s_{(n,\alpha)})_{\alpha\in\Gamma(V_n)}$ is an orthonormal basis of $\gr(V_n)$. The function
\begin{align*}
\fonction{\phi}{\Gamma(V_{\bullet})}{\bR}{(n,\alpha)}{\widehat{\deg}(\gr^{(n,\alpha)}(V_{\bullet}),\varphi_n^{\alpha},\eta_n^{\alpha}),}
\end{align*}
is then superadditive. If additionally
\begin{align*}
\exists C>0,\quad \forall n\in\bN_{\geq 1},\quad \displaystyle\inf_{\alpha\in\Gamma(V_n)}\ln\|s_{(n,\alpha)}\|_{\widehat{\varphi}_n} \geq -Cn,
\end{align*}
the previous asymptotic result yields the existence of a concave function $G_{\phi}$, called the \emph{concave transform} and which depends on $(\eta_n)_{n\geq 0}$, defined by
\begin{align*}
\forall x\in\Delta(\Gamma(V_{\bullet})), \quad G_{\phi}(x) = \sup\{t\in\bR : x\in\Delta(\Gamma(V_{\bullet})_{\phi}^{t})\} \in \bR\cup\{-\infty\}. 
\end{align*}

\begin{remark}
\label{remark:concave_transform}
Using the same notation as above, the definition of the concave transform $G_{\phi}$ only depends on the norm family $\varphi := (\varphi_n)_{n\geq 0}$. It is denoted by $G_{\varphi}$. However, the construction of $G_{\varphi}$ depends on the choice of the regular rational point $p\in X(k)$ and of the monomial order on $\bN^d$.
\end{remark}

\begin{theorem}[\cite{ChenMaclean15}, Corollary 4.6]
\label{th:ChenMaclean_4.6}
Let $X$ be an integral projective scheme over $\Spec(k)$ of dimension $d$, $L$ be a line bundle on $X$ endowed with two continuous metrics $\varphi,\psi$. Let $V_{\bullet}$ be a graded linear series of $L$ such that the Okounkov semigroup $\Gamma(V_{\bullet})$ satisfies 
\begin{itemize}
	\item[(a)] $\Gamma(V_{0})=\{0\}$;
	\item[(b)] there exists a finite set $B\subset \{1\}\times\bN^d$ such that $\Gamma(V_{\bullet})$ is included in the sub-monoid of $\bN^{d+1}$ generated by $B$.   
	\item[(c)] $\Gamma(V_{\bullet})$ generates $\bZ^{d+1}$ as a group.
\end{itemize} 
Then the sequence $\left(\widehat{\mu}(V_n,\|\cdot\|_{n\varphi},\|\cdot\|_{n\psi})/n\right)_{n\geq 1}$ converges in $\bR$. Furthermore, if $L$ is big, we denote
\begin{align*}
\vol_{\chi}(L,\varphi,\psi) := \displaystyle(d+1)\vol(L)\lim_{n\to+\infty}  \frac{\widehat{\mu}(H^{0}(X,nL),n\varphi,n\psi))}{n} = -\lim_{n\to+\infty}\frac{(d+1)!}{n^{d+1}}\ln\frac{\|\cdot\|_{n\varphi,\det}}{\|\cdot\|_{n\psi,\det}}.
\end{align*}
\end{theorem}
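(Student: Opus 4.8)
The plan is to deduce the statement from the asymptotic theorem on concave transforms recalled just above (\cite{ChenMaclean15}, Theorem~4.3), applied to the two superadditive functions attached to $\varphi$ and $\psi$, and to read the limit of the normalised slope off the resulting concave transforms. I would first fix $n\ge 1$ and, since $\eta_n$ is defined so that the canonical generators $s_{(n,\alpha)}$ ($\alpha\in\Gamma_n$) form an orthonormal basis of $\gr(V_n)$, record that with $\phi_\varphi(n,\alpha):=\widehat{\deg}(\gr^{(n,\alpha)}(V_\bullet),\varphi_n^\alpha,\eta_n^\alpha)=-\ln\|s_{(n,\alpha)}\|_{\widehat{\varphi}_n}$ and, analogously, $\phi_\psi(n,\alpha):=-\ln\|s_{(n,\alpha)}\|_{\widehat{\psi}_n}$, additivity of the Arakelov degree over orthogonal direct sums gives $\widehat{\deg}(\gr(V_n),\widehat{\varphi}_n,\widehat{\psi}_n)=\sum_{\alpha\in\Gamma_n}(\phi_\varphi(n,\alpha)-\phi_\psi(n,\alpha))$. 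Then I would use the compatibility of the determinant norm with the associated graded of the monomial filtration — a filtration-adapted orthogonal basis of $(V_n,\|\cdot\|_{n\varphi})$, resp.\ of $(V_n,\|\cdot\|_{n\psi})$, identifies $\det V_n$ isometrically with $\bigotimes_\alpha\gr^{(n,\alpha)}(V_\bullet)$ — to get $\widehat{\deg}(V_n,\|\cdot\|_{n\varphi},\|\cdot\|_{n\psi})=\widehat{\deg}(\gr(V_n),\widehat{\varphi}_n,\widehat{\psi}_n)$, hence
\[
\frac{\widehat{\mu}(V_n,\|\cdot\|_{n\varphi},\|\cdot\|_{n\psi})}{n}=\frac{1}{|\Gamma_n|}\sum_{\alpha\in\Gamma_n}\frac{\phi_\varphi(n,\alpha)-\phi_\psi(n,\alpha)}{n};
\]
that is, the normalised slope is the expectation of $(\phi_\varphi-\phi_\psi)/n$ against the uniform law on $\Gamma_n$.

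Next I would check the hypotheses of Theorem~4.3: $\phi_\varphi$ and $\phi_\psi$ are superadditive and vanish at the origin, and, using continuity of $\varphi,\psi$ on the projective scheme $X$ (which, up to an $O(n)$ error, reduces to the case of a model, i.e.\ DFS, metric, for which the $\phi(n,\alpha)$ are $O(n)$ by standard estimates on the associated graded norms) together with conditions $(a)$--$(c)$ on $\Gamma(V_\bullet)$, one gets a constant $C'>0$ with $|\phi_\varphi(n,\alpha)|,|\phi_\psi(n,\alpha)|\le C'n$ for all $n,\alpha$; in particular the domination hypothesis of the concave-transform construction holds and $\theta_\varphi:=\lim_n\sup_\alpha\phi_\varphi(n,\alpha)/n$, $\theta_\psi$ are finite. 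Theorem~4.3 then produces concave functions $G_\varphi,G_\psi\colon\Delta(\Gamma(V_\bullet))\to\bR\cup\{-\infty\}$, finite on the interior, such that $\phi_\varphi(n,\cdot)/n$, resp.\ $\phi_\psi(n,\cdot)/n$, converges in law to $(G_\varphi)_\ast\lambda$, resp.\ $(G_\psi)_\ast\lambda$, with $\lambda$ the normalised Lebesgue measure on $\Delta(\Gamma(V_\bullet))$. Since these rescaled functions are uniformly bounded, convergence in law upgrades to convergence of first moments, so
\[
\lim_{n\to+\infty}\frac{\widehat{\mu}(V_n,\|\cdot\|_{n\varphi},\|\cdot\|_{n\psi})}{n}=\int_{\Delta(\Gamma(V_\bullet))}(G_\varphi-G_\psi)\,\diff\lambda\ \in\ \bR,
\]
which proves the first assertion.

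For the second assertion I would specialise to the full graded linear series $V_\bullet=\bigoplus_{n\ge 0}H^0(X,nL)$, which satisfies $(a)$--$(c)$ because $L$ is big, and combine the first assertion with $h^0(X,nL)=\tfrac{\vol(L)}{d!}n^d+o(n^d)$. Abbreviating $\widehat{\mu}_n:=\widehat{\mu}(H^0(X,nL),\|\cdot\|_{n\varphi},\|\cdot\|_{n\psi})$ and using $-\ln\bigl(\|\cdot\|_{n\varphi,\det}/\|\cdot\|_{n\psi,\det}\bigr)=\widehat{\deg}(H^0(X,nL),\|\cdot\|_{n\varphi},\|\cdot\|_{n\psi})=h^0(X,nL)\cdot\widehat{\mu}_n$, I would write $-\tfrac{(d+1)!}{n^{d+1}}\ln\tfrac{\|\cdot\|_{n\varphi,\det}}{\|\cdot\|_{n\psi,\det}}=(d+1)!\cdot\tfrac{h^0(X,nL)}{n^d}\cdot\tfrac{\widehat{\mu}_n}{n}$, whose limit is $(d+1)\vol(L)\lim_n\widehat{\mu}_n/n$; this is exactly the claimed identity, and via $\vol(\Delta(L))=\vol(L)/d!$ it also equals $(d+1)!\int_{\Delta(L)}(G_\varphi-G_\psi)\,\diff x$, the Okounkov-body expression used in the introduction.

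The main obstacle I anticipate is the interface between the geometric data and the abstract machinery of \cite{ChenMaclean15}, Theorem~4.3. First, the compatibility of the determinant norm with the associated graded of the monomial filtration (used in the first paragraph) must be checked uniformly in $n$, and in both the Archimedean and the ultrametric settings. Second, the $O(n)$ bound on the monomial graded log-norms $\phi_\varphi(n,\alpha),\phi_\psi(n,\alpha)$ — which simultaneously validates the domination hypothesis and provides the uniform boundedness needed to pass from convergence in law to convergence of means — rests on continuity of the metrics together with the birationality conditions $(a)$--$(c)$, and is where the geometry genuinely enters. Granting these points, the rest is bookkeeping and the cited asymptotic theorem.
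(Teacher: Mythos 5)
Note first that the paper does not prove this statement: it is quoted from \cite{ChenMaclean15} (Corollary 4.6), and the machinery recalled immediately before it (Lemma \ref{lemma:ChenMaclean_4.1}, the convergence-in-law theorem for superadditive functions on $\Gamma$, and the concave transform) is exactly the machinery your argument invokes. So your route is the same as that of the cited source: write $\widehat{\mu}(V_n,\|\cdot\|_{n\varphi},\|\cdot\|_{n\psi})$ as the average over $\Gamma_n$ of $\phi_\varphi-\phi_\psi$, apply the convergence-in-law result to each of the two superadditive functions, upgrade to first moments, and conclude with $h^0(X,nL)\sim \vol(L)\,n^d/d!$; the final bookkeeping, including $-\ln\bigl(\|\cdot\|_{n\varphi,\det}/\|\cdot\|_{n\psi,\det}\bigr)=h^0(X,nL)\,\widehat{\mu}_n$ and the normalisation $(d+1)\vol(L)$, is correct.

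However, the two steps you defer at the end are precisely the substance of the cited corollary, and as written they are not established. (i) The claimed exact isometry between $\det V_n$ and $\bigotimes_\alpha \gr^{(n,\alpha)}(V_\bullet)$ via a ``filtration-adapted orthogonal basis'' is correct verbatim only in the ultrametric case, and there not because orthogonal bases exist (in general they do not) but because determinant norms are multiplicative in short exact sequences of ultrametrically normed spaces. In the Archimedean case the sup norms $\|\cdot\|_{n\varphi}$ are not Hermitian, no orthogonal basis adapted to the monomial filtration exists in general, and the identity $\widehat{\deg}(V_n,\|\cdot\|_{n\varphi},\|\cdot\|_{n\psi})=\widehat{\deg}(\gr(V_n),\widehat{\varphi}_n,\widehat{\psi}_n)$ only holds up to an error that must be estimated, e.g.\ by comparing with Hermitian (or $L^2$) norms; the error is of size $O(\dim V_n\cdot\log\dim V_n)=o(n^{d+1})$ and hence harmless for the limit, but the statement as you wrote it is false and the repair has to be made explicit. (ii) The two-sided bound $|\phi_\varphi(n,\alpha)|\le C'n$ is the technical heart and is not proved. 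The upper bound on $\phi_\varphi$ (equivalently $\ln\|s_{(n,\alpha)}\|_{\widehat{\varphi}_n}\ge -Cn$, which is the domination hypothesis of \S\ref{sub:Okounkov_bodies_and_slopes} and gives $\theta<+\infty$) is an Izumi/Cauchy-type estimate on leading Taylor coefficients at $p$, using continuity of the metric and the bound $|\alpha|=O(n)$ from condition (b); it requires a genuine argument. The opposite bound $\phi_\varphi(n,\alpha)\ge -C'n$, i.e.\ an upper bound $e^{C'n}$ on the subquotient norms (existence of a representative of $s_{(n,\alpha)}$ with controlled sup norm), is exactly what your passage from convergence in law to convergence of expectations rests on, and ``standard estimates on the associated graded norms'' after reduction to a DFS metric does not supply it: replacing $\varphi$ by a DFS metric only changes $\phi_\varphi$ by $O(n)$ and leaves the same question open for the DFS metric. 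Until these two estimates are carried out, the argument is a correct skeleton of the Chen--Maclean proof rather than a complete proof.
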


\begin{remark}
If $L$ is big, Lemma 2.2 of\cite{LazarsfeldMustata09} ensures that the hypothesis of Theorem \ref{th:ChenMaclean_4.6} are satisfied .
\end{remark}

\subsubsection{Concave transform and relative $\chi$-volume of $\bR$-divisors}
\label{subsub:transformée_concave_R_diviseur}

Throughout this parargraph, we recall the construction of Okounkov bodies for $\bR$-divisors and the construction of concave transforms for $\bR$-divisors endowed with a Green functions. We use \cite{LazarsfeldMustata09,Ballay22}.


Let $\bK$ be either $\bQ$ or $\bR$. Let $X$ be an integral projective normal scheme over $\Spec(k)$ of dimension $d$ and assume there exists a regular rational point $p\in X(k)$. We fix a monomial order on $\bN^d$. Let $D\in \Div(X)_{\bK}$ be a big $\bK$-divisor. For all $f\in H^0_{\bK}(D)$, let $f_1^{a_1}\cdots f_l^{a_l}$ be a local equation of $\div(f)+D$ in $p$, where $f_1,...,f_l\in \cO_{X,p}\setminus\{0\}$ and $a_1,...,a_l\in \bR_{>0}$. Let $v(f) := a_1v(f_1) + \cdots + a_l v(f_l) \in \bR_{\geq 0}^d$, it is independent of the choice of the local equation of $\div(f)+D$ in $p$. We now define 
\begin{align*}
\Delta(D) := \overline{\left\{\frac{v(f)}{n} : n\geq 1,\quad f\in H^0_{\bR}(D)\right\}} \subset \bR^d.
\end{align*}
It is a convex body $\bR^d$ called the \emph{Okounkov body} of $D$.

Let $g$ be a Green function of $D$. The algebra $\bigoplus_{n\in \bN} H^0_{\bK}(X,nD)$ is a graded sub-algebra of subfinite type of $k[\![z_1,...,z_d]\!]$. Recall (cf. \S \ref{subsub:Green_functions}) that $g$ defines a norm $\xi_{ng}$ on $H_{\bK}^0(X,nD)$ for all integer $n\geq 0$. Note that when $\bK=\bQ$, this norm coincides with the supremum norm defined by the corresponding continuous metric $\varphi_g$ on the $\bQ$-line bundle $\cO_X(D)$. If $h$ is another Green function for $D$, an adaptation of the results of Chen and Maclean yields the convergence of the sequence
\begin{align*}
\frac{\widehat{\deg}(H_{\bK}^0(X,nD),\xi_{ng},\xi_{nh})}{n^{d+1}/(d+1)!},\quad n\geq 1, \quad H_{\bK}^0(X,nD)\neq (0).
\end{align*}
Its limit is denoted by $\vol_{\chi}(D,g,h)$. Furthermore, we have concave transforms $G_{(D,g)},G_{(D,h)} : \Delta(D) \to \bR\cup\{-\infty\}$ taking real values in $\Delta(D)^{\circ}$. We then have the equality
\begin{align*}
\vol_{\chi}(D,g,h) = (d+1)!\int_{\Delta(D)} G_{(D,g)}-G_{(D,h)}\diff\lambda,
\end{align*}
where $\lambda$ the Lebesgue measure on $\Delta(D)$. 

\begin{remark}
\label{rem:notation_fibrés_en_droites_diviseurs}
Let $L$ be an invertible $\cO_X$ module and $s$ be a non-zero rational section of $L$. By abuse of notation, we denote by $L$ the class $\div(s)$ in $N^{1}_{\bR}(X)$. This class is independent on the choice of $s$. Moreover, if $\varphi \in C^0(L)$, we have the equality $G_{(\div(s),g_{\varphi})} = G_{(L,\varphi)}$ in $\Delta(L)$. We will freely make use of expressions of the form $\Delta(L+tA)$ and $G_{(L+tA,\varphi + t\Phi)}$, where $A$ is an invertible $\cO_X$-module, $\Phi\in C^0(A)$ and $t\in \bR$.
\end{remark}

\begin{remark}
\label{rem:concave_transform_finite_ample_case}
Assume that $D$ is an ample $\bR$-divisor on $X$ equipped with a Green function $g$. Then the concave transform $G_{(D,g)}$ is finite on $\Delta(D)$ (cf. \cite{BoucksomChen}, \S 1.3).
\end{remark}

\section{Local differentiability}
\label{sec:local_differentiability}

In this section, we fix an infinite complete valued field $(k,\va)$ and a geometrically integral normal projective scheme $X$ de dimension $d$ over $\Spec(k)$. We assume that the continuity of envelopes conjecture (Conjecture \ref{conj:BoucksomEriksson_7.30}) holds for $k$.

\subsection{Singular metrics and slices of the Okounkov body}

We study a particular case. Let be $A$ an ample line bundle on $X$, and assume there exists a non-zero global section $s\in H^{0}(X,A)$. Example \ref{example:singular_metric_global_section} yields a singular metric $\psi_{s}$ on $A$. Recall that if $\phi_0$ is a reference continuous metric on $A$, then $\psi_{s} = \phi_{0} + \ln|s|_{\phi_{0}}$. Then $-\psi_{s}$ is a singular metric on $-A$ which can be written $-\psi_{s} = -\phi_0 - \ln|s|_{\phi_0}$ with $-\phi_0\in C^0(-A)$ and $-\ln|s|_{\phi_0} : X^{\an} \to \bR\cup\{+\infty\}$ which is continuous and non identically $+\infty$. For all $R>0$, we denote
\begin{align*}
-\psi_{s,R} := \min\{-\phi+R,-\psi_{s}\}.
\end{align*}
Let $L$ be a semiample and big line bundle on $X$ and let $\varphi\in C^0(L)$. Let $n\in\bN$ such that $nL-A$ est big. Then Lemma \ref{lemma:sup_norm} yields, for all $R>0$, $n\varphi-\psi_{s,R} \in C^0(nL-A)$ and, for all $R>0$ sufficiently big, the equality 
\begin{align}
\label{eq:equality_sup_norms_APCR}
\|\cdot\|_{n\varphi-\psi_{s,R}} = \|\cdot\|_{n\varphi_{0}-\psi_{s}}.
\end{align}

As $X$ is geometrically integral, one may assume that there exists a regular rational point $x\in X(k)$. We fix a monomial order on $\bN^d$ (e.g. the lexicographic order). Then the results of \S \ref{sub:Okounkov_bodies_and_slopes} yield a convex body $\Delta(nL-A)\subset \bR^d$ and, for all $R>0$, a concave transform $G_{n\varphi-\psi_{s,R}} : \Delta(nL-A) \to \bR\cup\{-\infty\}$ (taking real values in $\Delta(nL-A)^{\circ}$). Furthermore, for all $R>0$, the construction of $G_{n\varphi-\psi_{s,R}}$ only depends on $\|\cdot\|_{n\varphi-\psi_{s,R}},p$ and the monomial order. 

\begin{lemma}
\label{lemma:Nystrom_14.2}
There exists $R_0>0$ such that, for all $R\geq R_{0}$, the concave transform $G_{n\varphi-\psi_{s,R}}$ is independent on $R$ and is denoted by $G_{n\varphi-\psi_{s}}$.
\end{lemma}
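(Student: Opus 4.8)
The plan is to reduce the claim to a stabilisation property of the superadditive function on the Okounkov semigroup attached to the metric $n\varphi-\psi_{s,R}$, to establish this stabilisation degree by degree via Lemma~\ref{lemma:sup_norm}, and then to make the threshold $R_0$ independent of the degree; this last step is the main difficulty.

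Recall from \S\ref{subsub:transformée_concave_R_diviseur} and Remark~\ref{remark:concave_transform} that, $p$ and the monomial order being fixed, the concave transform $G_{n\varphi-\psi_{s,R}}$ on $\Delta(nL-A)$ is completely determined by the family of supremum norms $\bigl(\|\cdot\|_{m(n\varphi-\psi_{s,R})}\bigr)_{m\ge 1}$ on $V_m:=H^{0}(X,m(nL-A))$, namely by the superadditive function
\[
\phi_R\colon\Gamma(V_\bullet)\to\bR,\qquad \phi_R(m,\alpha)=-\ln\min\bigl\{\|f\|_{m(n\varphi-\psi_{s,R})}:f\in V_m,\ v(f)=\alpha\bigr\}
\]
(up to the usual normalisation), through $G_{n\varphi-\psi_{s,R}}(x)=\sup\bigl\{t:x\in\Delta\bigl(\Gamma(V_\bullet)_{\phi_R}^{t}\bigr)\bigr\}$. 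So it is enough to find $R_0$ such that, for $R\ge R_0$, the datum $\phi_R$ no longer depends on $R$, at least asymptotically.

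Write $\chi:=n\varphi-\phi_0\in C^{0}(nL-A)$ and $f_s:=-\ln|s|_{\phi_0}\colon X^{\an}\to\bR\cup\{+\infty\}$, so that $n\varphi-\psi_{s,R}=\chi+\min\{R,f_s\}$, and record the elementary identity, for $f\in V_m$,
\[
\|f\|_{m(n\varphi-\psi_{s,R})}=\max\bigl\{e^{-mR}\|f\|_{m\chi},\ \|f\,s^{\otimes m}\|_{mn\varphi}\bigr\},\qquad \|f\,s^{\otimes m}\|_{mn\varphi}=\|f\|_{m(n\varphi-\psi_s)}.
\]
Hence $R\mapsto\|f\|_{m(n\varphi-\psi_{s,R})}$ is non-increasing with limit $\|f\|_{m(n\varphi-\psi_s)}$, so $\phi_R$ is non-decreasing in $R$ and bounded above by the superadditive function $\phi_\infty$ obtained from the same formula with $\psi_s$ in place of $\psi_{s,R}$ — a legitimate datum, since $\psi_s$ is bounded above, so that each $\|\cdot\|_{m(n\varphi-\psi_s)}$ is a norm on $V_m$ and the family $(\|\cdot\|_{m(n\varphi-\psi_s)})_m$ is submultiplicative. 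For a fixed $m$, applying Lemma~\ref{lemma:sup_norm}(2) to $mnL$, $-mA$, the metric $mn\varphi$ and the singular metric $-\psi_{s^{\otimes m}}$ — observing that $m\psi_{s,R}$ is precisely the truncation of $\psi_{s^{\otimes m}}$ at level $mR$ — shows that $\phi_R(m,\cdot)=\phi_\infty(m,\cdot)$ once $R$ is large enough; in degree $m=1$ this is exactly \eqref{eq:equality_sup_norms_APCR}, already valid for all $R\ge R_0$, with $R_0$ the degree-one threshold.

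The hard part will be to make the threshold uniform in $m$, i.e.\ to bound the degree-$m$ thresholds produced by Lemma~\ref{lemma:sup_norm}. By the displayed identity, $\phi_R(m,\alpha)=\phi_\infty(m,\alpha)$ holds whenever some minimiser $f$ of $\|\cdot\|_{m(n\varphi-\psi_s)}$ in the graded component of index $\alpha$ satisfies $e^{-mR}\|f\|_{m\chi}\le\|f\|_{m(n\varphi-\psi_s)}$; so I would aim to produce a constant $C>0$ independent of $(m,\alpha)$ together with an (approximate) such minimiser $f$ with $\|f\|_{m\chi}\le C^{m}\|f\|_{m(n\varphi-\psi_s)}$, after which $R_0:=\ln C$ works — a bounded multiplicative slack in this inequality being harmless, since the concave transform is an asymptotic invariant insensitive to $O(1)$ perturbations of $\phi_R$ and the boundary of $\Delta(nL-A)$ is $\lambda$-negligible. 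Constructing such a $C$ is the crux: it amounts to showing that the extremal sections realising interior valuation vectors of $\Delta(nL-A)$ cannot concentrate along the zero locus of $s$; concretely, bigness of $nL-A$ should ensure that at every scale $m$ an interior point of $\Delta(nL-A)$ is hit by a section whose vanishing order along the components of $\div(s)$ is at most a fixed fraction of $m$, so that dividing $f\,s^{\otimes m}$ by $s^{\otimes m}$ inflates the supremum norm by no more than a uniform exponential factor. Combined with \eqref{eq:equality_sup_norms_APCR}, this yields the uniform $R_0$, and for $R\ge R_0$ the concave transforms $G_{n\varphi-\psi_{s,R}}$ all coincide; their common value is the function denoted $G_{n\varphi-\psi_s}$.
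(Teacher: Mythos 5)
Your reduction is sound, and in places sharper than what the paper itself records: the identity $\|g\|_{m(n\varphi-\psi_{s,R})}=\max\{e^{-mR}\|g\|_{m\chi},\,\|g\,s^{\otimes m}\|_{mn\varphi}\}$ is correct, it yields stabilisation degree by degree via Lemma \ref{lemma:sup_norm} (whose degree-one instance is exactly (\ref{eq:equality_sup_norms_APCR})), and you rightly insist that the concave transform is built from the whole graded family $(\|\cdot\|_{m(n\varphi-\psi_{s,R})})_{m\ge1}$, so that the real content of the lemma is a threshold uniform in $m$, equivalently a bound $\|g\|_{m\chi}\le C^{m}\|g\,s^{\otimes m}\|_{mn\varphi}$ with $C$ independent of $m$ and of $g$ (an $O(1)$ multiplicative slack being harmless, as you note). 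For comparison, the paper's proof is a one-liner: it combines (\ref{eq:equality_sup_norms_APCR}) with the remark that $G_{n\varphi-\psi_{s,R}}$ depends only on the norm data, the point $p$ and the monomial order, and it does not discuss the fact that the threshold produced by Lemma \ref{lemma:sup_norm} depends on the degree (the truncation level relevant in degree $m$ is $mR$, not $R$). So you have located where the actual content lies more carefully than the paper's write-up does.

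However, your proposal stops precisely at that point: the ``crux'' inequality is asserted, not proved, and the heuristic you offer would not prove it. The obstruction is not the order of vanishing of $g=h/s^{\otimes m}$ along the components of $\div(s)$ (this is automatically nonnegative, and bounding it gives no control on sup norms); it is the possibility that $|g|_{m\chi}$ concentrates near $\{s=0\}^{\an}$ while being small on $\{|s|_{\phi}\ge\epsilon\}$, and bigness of $nL-A$ alone says nothing about this. The correct mechanism is a capacity/Izumi-type sup-norm localisation, and the paper already contains the needed tool: in the semiample case in which the lemma is subsequently applied (Proposition \ref{prop:Nystrom_14.3}), $\tfrac1m\ln|g|$ defines a singular psh metric on $nL-A$, so Lemma \ref{lemma:BoucksomEriksson_4.45} (resting on Theorem \ref{th:Izumi_global}) applied at a quasimonomial point $x_0\notin\{s=0\}^{\an}$ — or, in the Archimedean case, the classical finiteness of the Alexander--Taylor capacity of the non-pluripolar compact $\{|s|_{\phi}\ge\epsilon\}$ — gives $\sup_{X^{\an}}|g|_{m\chi}\le e^{mT}|g|_{m\chi}(x_0)\le\bigl(e^{T}/|s|_{\phi}(x_0)\bigr)^{m}\|g\,s^{\otimes m}\|_{mn\varphi}$, with $T$ independent of $m$ and $g$ (up to a further $e^{O(m)}$ factor with uniform constant coming from changing the reference metric). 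With $R_0:=T-\ln|s|_{\phi}(x_0)$ your own reduction then closes the proof. As it stands, without this (or an honest appeal to the Archimedean argument of Witt Nyström), the decisive step of your proposal is a genuine gap.
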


\begin{proof}
It is a direct consequence of the above discussion and equality (\ref{eq:equality_sup_norms_APCR}).
\end{proof} 

\begin{proposition}
\label{prop:Nystrom_14.3}
Assume that $nL-A$ is semiample. Let $\psi\in C^0(nL-A)$. Then there exists $R_0>0$ such that, for all $R\geq R_0$, we have
\begin{align*}
\cE\circ P(n\varphi-\psi_{s},\psi):=\cE \circ P(n\varphi-\psi_{s,R},\psi) = d!\int_{\Delta(nL-A)^{\circ}}G_{n\varphi-\psi_{s,R}}(x)-G_{\psi}(x)\lambda(\diff x),
\end{align*}
where $\lambda$ denotes the Lebesgue measure on $\bR^d$.
\end{proposition}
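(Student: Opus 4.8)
The plan is to compute the relative $\chi$-volume $\vol_\chi(nL-A,\,n\varphi-\psi_{s,R},\,\psi)$ in two independent ways and compare the outcomes: on one side via the Monge--Ampère energy (Theorem~\ref{th:Boucksom21_6.5}), on the other side via the concave transform on the Okounkov body (\S\ref{subsub:transformée_concave_R_diviseur}). Before doing so I would record the hypotheses that make both computations available. Fix $R>0$. By Lemma~\ref{lemma:sup_norm} the metric $n\varphi-\psi_{s,R}$ lies in $C^0(nL-A)$, and $\psi\in C^0(nL-A)$ by assumption. Since $nL-A$ is assumed semiample, $X$ is normal and projective, and the continuity of envelopes is our standing hypothesis in \S\ref{sec:local_differentiability} (Conjecture~\ref{conj:BoucksomEriksson_7.30}), Definition~\ref{def:continuity_of_envelope} gives that $P(n\varphi-\psi_{s,R})=Q(n\varphi-\psi_{s,R})$ and $P(\psi)=Q(\psi)$ are continuous. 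Moreover $nL-A$ is big (by the choice of $n$) and $X$ carries a regular rational point, so the Okounkov body $\Delta(nL-A)$ and the concave transforms of \S\ref{subsub:transformée_concave_R_diviseur} are defined.

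With this in place, Theorem~\ref{th:Boucksom21_6.5} applied to $(nL-A,\,n\varphi-\psi_{s,R},\,\psi)$ yields
\[
\vol_\chi(nL-A,\,n\varphi-\psi_{s,R},\,\psi)=(d+1)\,\cE\circ P(n\varphi-\psi_{s,R},\psi).
\]
On the other hand, applying the concave-transform formula of \S\ref{subsub:transformée_concave_R_diviseur} to the divisor class $nL-A$ equipped with the Green functions attached to $n\varphi-\psi_{s,R}$ and to $\psi$, and identifying these concave transforms on $\Delta(nL-A)$ with $G_{n\varphi-\psi_{s,R}}$ and $G_\psi$ as in Remark~\ref{rem:notation_fibrés_en_droites_diviseurs}, gives
\[
\vol_\chi(nL-A,\,n\varphi-\psi_{s,R},\,\psi)=(d+1)!\int_{\Delta(nL-A)}\bigl(G_{n\varphi-\psi_{s,R}}(x)-G_\psi(x)\bigr)\,\lambda(\diff x).
\]
Comparing the two displays and dividing by $d+1$ produces
\[
\cE\circ P(n\varphi-\psi_{s,R},\psi)=d!\int_{\Delta(nL-A)}\bigl(G_{n\varphi-\psi_{s,R}}(x)-G_\psi(x)\bigr)\,\lambda(\diff x),
\]
and since $\partial\Delta(nL-A)$ is Lebesgue-negligible the domain may be replaced by $\Delta(nL-A)^\circ$, which is the asserted second equality.

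It remains to justify the first (definitional) equality in the statement, i.e.\ that $\cE\circ P(n\varphi-\psi_{s,R},\psi)$ is independent of $R$ for $R$ large. This follows at once from the last display together with Lemma~\ref{lemma:Nystrom_14.2}: there is $R_0>0$ such that for all $R\ge R_0$ the concave transform $G_{n\varphi-\psi_{s,R}}$ does not depend on $R$ (and $G_\psi$ never did), so the right-hand side, hence the left-hand side, is constant in $R\ge R_0$; this legitimizes the notation $\cE\circ P(n\varphi-\psi_s,\psi)$.

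I do not expect a genuine obstacle here: the proof is essentially the assembly of Theorem~\ref{th:Boucksom21_6.5}, the concave-transform expression for $\vol_\chi$, and Lemma~\ref{lemma:Nystrom_14.2}. The only points requiring care are verifying that the continuity-of-envelopes hypothesis is legitimately available for $nL-A$ (this is exactly where semiampleness of $nL-A$ and normality of $X$ are used) and matching the normalizations and signs of $\vol_\chi$ across Definition~\ref{def:local_volume}, Theorem~\ref{th:Boucksom21_6.5} and \S\ref{subsub:transformée_concave_R_diviseur}, so that the constant in front of the integral comes out to be $d!$ rather than $(d+1)!$.
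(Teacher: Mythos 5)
Your proof is correct and follows the same route as the paper: the first equality is Theorem~\ref{th:Boucksom21_6.5} (legitimate since continuity of envelopes makes $P(n\varphi-\psi_{s,R})$ continuous), the second is the concave-transform expression of $\vol_\chi$ (Theorem~\ref{th:ChenMaclean_4.6}), and the independence of $R$ is exactly Lemma~\ref{lemma:Nystrom_14.2}. Your extra care about hypotheses and normalization constants is fine but adds nothing beyond the paper's argument.
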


\begin{proof}
For all $R>0$, as $P(n\varphi-\psi_{s,R})$ is continuous (recall that we assume the continuity of envelopes), we have
\begin{align*}
    \cE \circ P(n\varphi-\psi_{s,R},\psi) = \frac{1}{d+1}\vol_{\chi}(nL-A,n\varphi-\psi_{s,R},\psi) = d!\int_{\Delta(nL-A)^{\circ}}G_{n\varphi-\psi_{s,R}}(x)-G_{\psi}(x)\lambda(\diff x),
\end{align*}
where the first equality comes from Theorem \ref{th:Boucksom21_6.5} and the second comes from Theorem \ref{th:ChenMaclean_4.6}. Lemma \ref{lemma:Nystrom_14.2} then yields the conclusion.
\end{proof}

Recall that 
\begin{itemize}
	\item[(1)] $L$ is a semiample and big line bundle on $X$ and $\varphi,\psi \in C^0(L)$;
	\item[(2)] $A$ is an ample line bundle on $X$, $s\in H^0(X,A)$, $\phi\in C^0(A)$ and $\psi_{s} = \phi + \ln|s|_{\phi}$ is a singular metric on $A$;
\end{itemize}
Let $u\in C^0(X^{\an})$, we denote
\begin{align*}
\fonction{f}{\bR}{\bR_{\geq 0}}{t}{\vol_{\chi}(L,\varphi + tu,\psi).}
\end{align*}
$f$ is a concave and continuously differentiable (cf. Theorem \ref{th:Boucksom21_6.7}) and we have
\begin{align}
\label{eq:differentiability_local_volume_continuous_direction}
\forall t \geq 0, \quad f'(t) = (d+1)\int_{X^{\an}} u (\diff\dc P(\varphi+tu))^{\wedge d}.
\end{align}

Following \cite{Nystrom13}, we will generalise (\ref{eq:differentiability_local_volume_continuous_direction}) in the case where $u = \phi-\psi_{s}=-\ln|s|_{\phi}$. For $R>0$, recall that $u_R := \phi-\psi_{s,R} = \min\{u, R\}$ is a continuous function on $X^{\an}$. 
\begin{lemma}
\label{lemma:Nystrom_14.1}
Let $t\geq 0$. Then there exists $R_0>0$ such that, for all $R\geq R_0$, we have $P(\varphi+tu_R) = P(\varphi+tu)^{\star}$. 
\end{lemma}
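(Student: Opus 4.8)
The plan is to show that, for $R$ large enough, the continuous metric $\varphi+tu_R$ has the \emph{same} psh envelope as the singular metric $\varphi+tu$, by squeezing between the two associated families of psh competitors. One may assume $t>0$: when $t=0$ the truncation plays no role, and the statement reduces to the fact that, under continuity of envelopes, $P(\varphi)\in C^0(L)\cap\PSH(L)$ is continuous, hence usc, so $P(\varphi)=P(\varphi)^{\star}$.

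First I would check that $P(\varphi+tu)^{\star}\in\PSH(L)$. Since $u=-\ln|s|_{\phi}$ with $|s|_{\phi}$ continuous on the compact space $X^{\an}$, the function $u$ is bounded below, so $\varphi+tu\geq\varphi+c$ for a suitable constant $c$; as $\varphi+c\in C^{0}(L)$ and $L$ is semiample, continuity of envelopes gives $P(\varphi+c)\in C^{0}(L)\cap\PSH(L)$, and monotonicity of the envelope yields $P(\varphi+tu)\geq P(\varphi+c)\not\equiv-\infty$. On the other hand, on the nonempty Zariski-open set $\{s\neq 0\}^{\an}$ the metric $\varphi+tu$ is finite and continuous, so $P(\varphi+tu)$ is finite and locally bounded above there; hence $P(\varphi+tu)^{\star}$ is finite at some point of $\{s\neq 0\}^{\an}$ (cf. Lemma~\ref{lemma:psh_envelope_non_infinite}), so $P(\varphi+tu)^{\star}\not\equiv+\infty$. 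Proposition~\ref{prop:BoucksomJonsson_5.17_général} (whose proof carries over verbatim to the semiample case) then forces $P(\varphi+tu)^{\star}\in\PSH(L)$; in particular $P(\varphi+tu)^{\star}-\varphi$ is usc on the compact $X^{\an}$, hence bounded above by some constant $C_0$.

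Now set $R_0:=C_0/t$ and fix $R\geq R_0$. For every $\psi\in\PSH(L)$ with $\psi\leq\varphi+tu$ we have $\psi\leq P(\varphi+tu)\leq P(\varphi+tu)^{\star}\leq\varphi+C_0\leq\varphi+tR$, together with $\psi\leq\varphi+tu$; since $\varphi+tu_R=\min\{\varphi+tu,\varphi+tR\}$, this gives $\psi\leq\varphi+tu_R$, so $\psi\leq P(\varphi+tu_R)$. Taking the supremum over such $\psi$ yields $P(\varphi+tu)\leq P(\varphi+tu_R)$, while $u_R\leq u$ gives the reverse inequality by monotonicity of the envelope. Hence $P(\varphi+tu_R)=P(\varphi+tu)$ for all $R\geq R_0$. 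Finally $\varphi+tu_R\in C^{0}(L)$ and $L$ is semiample, so continuity of envelopes makes $P(\varphi+tu_R)$ continuous, in particular usc; therefore $P(\varphi+tu)$ is usc, i.e. $P(\varphi+tu)=P(\varphi+tu)^{\star}$, and we conclude $P(\varphi+tu_R)=P(\varphi+tu)^{\star}$.

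The crux is the first step: knowing in advance that the envelope of the \emph{singular} metric $\varphi+tu$ regularizes to an honest psh metric, equivalently that its family of psh competitors is uniformly bounded above. This is precisely where the continuity-of-envelopes hypothesis enters, through Proposition~\ref{prop:BoucksomJonsson_5.17_général}; once the bound $C_0$ is available, the truncation level $R_0=C_0/t$ is forced and the remaining inequalities are formal.
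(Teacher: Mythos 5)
Your proof is correct and follows essentially the same route as the paper: rule out $P(\varphi+tu)^{\star}\equiv\pm\infty$, invoke Proposition \ref{prop:BoucksomJonsson_5.17_général} to get $P(\varphi+tu)^{\star}\in\PSH(L)$, use upper semicontinuity on the compact $X^{\an}$ to bound it by $\varphi+tR$ for large $R$, and conclude by the formal monotonicity squeeze. The only differences are cosmetic (you first prove $P(\varphi+tu_R)=P(\varphi+tu)$ and then identify this with the usc envelope via continuity of envelopes, whereas the paper sandwiches $P(\varphi+tu)^{\star}$ directly, and you make explicit the harmless extension of the cited proposition from the ample to the semiample case).
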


\begin{proof}
For all $R>0$, $\varphi+tu_R \leq \varphi+tu$ implies $P(\varphi+tu_R) \leq P(\varphi+tu)\leq P(\varphi+tu)^{\star}$. Let us show that there exists $R_0>0$ such that, for all $R\geq R_0$, $P(\varphi+tu_R) \geq P(\varphi+tu)^{\star}$. Since $\varphi+tu$ is continuous on the open set $X^{\an}\setminus \{s=0\}^{\an}$, it is locally bounded on this open set. Thus $P(\varphi+tu)^{\ast} \not\equiv +\infty$. Furthermore, $P(\varphi+tu)^{\star} \not\equiv -\infty$ as $P(\varphi+tu_1)\in C^0(L)$ is a candidate in the definition of $P(\varphi+tu)$. Proposition \ref{prop:BoucksomJonsson_5.17_général} yields $P(\varphi+tu)^{\star} \in \PSH(L)$. In particular, $P(\varphi+tu)^{\star} \in \PSH(L)$ is usc hence bounded from above. Whence the existence of $R_1>0$ such that
\begin{align*}
P(\varphi+tu)^{\star} \leq \varphi + tR_1.
\end{align*}
By definition of $P(\varphi+tu)^{\star}$, we have $P(\varphi+tu)^{\star} \leq (\varphi +tu)^{\star} = \varphi+tu$. Hence there exists $R_0>0$ such that 
\begin{align*}
P(\varphi+tu)^{\star} \leq \varphi+tu_{R_0}.
\end{align*}
But $P(\varphi+tu)^{\star}$ is a candidate in the definition of $P(\varphi+tu_{R_0})$, hence $P(\varphi+tu_{R_0}) \geq P(\varphi+tu)^{\star}$. For all $R\geq R_0$, we have $P(\varphi+tu)^{\star} \leq P(\varphi+tu_{R_0}) \leq P(\varphi+tu_{R}) \leq P(\varphi+tu)^{\star}$, hence the conclusion.
\end{proof}

Lemma \ref{lemma:Nystrom_14.1} allows to define, for all $t\geq 0$, a Monge-Ampère measure $(\diff\dc P(\varphi +tu))^{\wedge d} := (\diff\dc P(\varphi +tu_R))^{\wedge d}$ for all $R> 0$ sufficiently big.

Let $n>0$, $L':=L+\frac{1}{n}A$ and $nL'-A=nL$ are semiample and big. Lemma \ref{lemma:Nystrom_14.2} and Proposition \ref{prop:Nystrom_14.3} yield a concave transform $G_{n\varphi + (\phi-\psi_{s})}$ and the equality
\begin{align*}
\cE\circ P(n\varphi+(\phi-\psi_{s}),n\psi):=\cE \circ P(n\varphi+(\phi-\psi_{s,R}),n\psi) = d!\int_{\Delta(nL)^{\circ}}G_{n\varphi+(\phi-\psi_{s})}(x)-G_{n\psi}(x)\lambda(\diff x),
\end{align*}
for all $R>0$ sufficiently big.

\begin{proposition}
\label{prop:differentiability_singular_direction}
For all $t\in \bR_{\geq 0}$, let $f(t) := \vol_{\chi}(L,\varphi+tu,\psi) = (d+1)\cE\circ P(\varphi+tu,\psi)$. Then $f$ is continuously differentiable function and we have 
\begin{align*}
\forall t\geq 0, \quad f'(t) = (d+1)\int_{X^{\an}}u(\diff\dc P(\varphi+tu))^{\wedge d}.
\end{align*}
\end{proposition}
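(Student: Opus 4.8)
The plan is to reduce the differentiability of $f$ along the singular direction $u=\phi-\psi_s$ to the already-known differentiability along the continuous directions $u_R=\min\{u,R\}$, and then pass to the limit $R\to+\infty$ using the stabilization supplied by Lemma \ref{lemma:Nystrom_14.1}. First I would observe that for each $R>0$ the function $f_R(t):=\vol_\chi(L,\varphi+tu_R,\psi)=(d+1)\cE\circ P(\varphi+tu_R,\psi)$ is concave and continuously differentiable by Theorem \ref{th:Boucksom21_6.7}, with
\begin{align*}
f_R'(t)=(d+1)\int_{X^{\an}}u_R\,(\diff\dc P(\varphi+tu_R))^{\wedge d}.
\end{align*}
Since $\varphi+tu_R\leq\varphi+tu_{R'}\leq\varphi+tu$ for $R\leq R'$, monotonicity of $P$ (Proposition \ref{prop:Boucksom21_6.3}(iii)) and of $\cE$ shows $f_R(t)\nearrow f(t)$ as $R\to+\infty$, at each fixed $t\geq 0$; in fact, by Lemma \ref{lemma:Nystrom_14.1}, for each fixed $t$ the sequence $f_R(t)$ is eventually constant (equal to $f(t)$) once $R\geq R_0(t)$, because $P(\varphi+tu_R)=P(\varphi+tu)^\star$ stabilizes and the relative $\chi$-volume only depends on the psh envelope (Proposition \ref{prop:Boucksom_6.4}(2)).

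Next I would establish concavity of $f$ itself: $f$ is a pointwise increasing limit of the concave functions $f_R$, hence concave, and being concave on $\bR_{\geq 0}$ it has left and right derivatives everywhere and is differentiable off a countable set. To upgrade this to differentiability everywhere and to identify $f'(t)$, I would compare derivatives. Fix $t_0\geq 0$ and $R\geq R_0(t_0)$ large enough that $P(\varphi+t_0 u_R)=P(\varphi+t_0 u)^\star$. The key point is that for $s$ in a neighbourhood of $t_0$ one still has $P(\varphi+su_R)=P(\varphi+su)^\star$ for $R$ large (one can arrange a uniform $R_0$ on a compact interval of $t$-values, since the bound $R_1$ in the proof of Lemma \ref{lemma:Nystrom_14.1} depends only on $\sup_{X^{\an}}(P(\varphi+tu)^\star-\varphi)$, which is controlled locally in $t$ by concavity of $f$ and boundedness of the psh envelopes). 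Hence $f=f_R$ on that neighbourhood of $t_0$, so $f$ inherits the continuous differentiability of $f_R$ there, and
\begin{align*}
f'(t_0)=f_R'(t_0)=(d+1)\int_{X^{\an}}u_R\,(\diff\dc P(\varphi+t_0 u_R))^{\wedge d}.
\end{align*}

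Finally I would rewrite the right-hand side in terms of $u$ rather than $u_R$. Since $(\diff\dc P(\varphi+t_0 u))^{\wedge d}$ is \emph{defined} to be $(\diff\dc P(\varphi+t_0 u_R))^{\wedge d}$ for $R$ large (the remark following Lemma \ref{lemma:Nystrom_14.1}), it remains to check that $u$ and $u_R$ give the same integral against this measure, i.e. that the measure puts no mass on the set $\{u>R\}=\{|s|_\phi<e^{-R}\}$ where $u$ and $u_R$ disagree. This follows from the orthogonality property (Theorem \ref{th:Boucksom21_6.7}, via Proposition \ref{prop:orthognality_property}): $(\diff\dc P(\varphi+t_0u_R))^{\wedge d}$ is supported on $\{\varphi+t_0u_R=P(\varphi+t_0u_R)\}$, and on the locus where $u>R$ we have, using $P(\varphi+t_0u_R)=P(\varphi+t_0u)^\star\leq\varphi+t_0u_R$ with $u_R=R<u$, a strict inequality away from a pluripolar (hence measure-zero for the Monge–Ampère operator) set — more precisely $\{\varphi+t_0u_R=P(\varphi+t_0u_R)\}\cap\{u>R\}$ is contained in $\{P(\varphi+t_0u)^\star=\varphi+t_0R\}$, and for $R\geq R_0$ large this intersection is empty by the strict inequality $P(\varphi+t_0u)^\star<\varphi+t_0R$ coming from the choice of $R_1$ in Lemma \ref{lemma:Nystrom_14.1}. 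Thus $\int u_R\,(\diff\dc P(\varphi+t_0u_R))^{\wedge d}=\int u\,(\diff\dc P(\varphi+t_0u))^{\wedge d}$, giving the claimed formula; continuity of $t\mapsto f'(t)$ then follows from the continuity statement in each $f_R$ together with the local identification $f=f_R$. The main obstacle I anticipate is the last step: controlling the Monge–Ampère mass on $\{u>R\}$ and making the dependence of $R_0$ on $t$ locally uniform, both of which hinge on the orthogonality property and on the quantitative bound in the proof of Lemma \ref{lemma:Nystrom_14.1}.
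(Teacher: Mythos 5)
Your reduction to the continuous truncations $u_R$ is the same starting point as the paper, but the key step of your argument --- choosing $R$ \emph{locally uniformly in $t$} so that $P(\varphi+tu_R)=P(\varphi+tu)^{\star}$, hence $f\equiv f_R$ near $t_0$ --- fails at $t_0=0$, which is part of the statement and is exactly the case used later in the proof of Theorem \ref{th:local_differentiability_general} (see (\ref{eq:limit_RHS_local_differentiability_line_bundle_superior_slice})). The requirement $P(\varphi+tu)^{\star}\leq \varphi+tu_R$ amounts to $R\geq \frac{1}{t}\sup_{X^{\an}}\bigl(P(\varphi+tu)^{\star}-\varphi\bigr)$ on $\{u>R\}$. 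For $t$ in a compact interval $[\epsilon,T]$ with $\epsilon>0$ one can indeed bound $\sup_{X^{\an}}(P(\varphi+tu)^{\star}-\varphi)$ uniformly (e.g. from $\varphi+tu\leq \varphi+Tu+(T-t)\sup_{X^{\an}}(-u)$ and monotonicity of $P$ --- note that concavity of $f$, which you invoke, controls no such thing), so your identification works for $t_0>0$. But near $t=0$ the quantity $\frac{1}{t}\sup_{X^{\an}}(P(\varphi+tu)^{\star}-\varphi)$ is in general unbounded: already for $X=\bP^1$ over $\bC$, $L=A=\cO(1)$ with Fubini--Study metrics and $u=-\ln|s|_{\phi}$, a mass-balance computation gives $\sup_{X^{\an}}(P(\varphi+tu)-\varphi)\asymp \frac{t}{2}\ln\frac{1}{t}$, so the minimal admissible truncation level grows like $\frac{1}{2}\ln\frac{1}{t}$ as $t\to 0^{+}$. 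Consequently there is no fixed $R$ with $f=f_R$ on a neighbourhood of $0$ (one has $f>f_R$ for small $t>0$), and your orthogonality argument for replacing $u_R$ by $u$ also collapses at $t_0=0$: the support statement only involves $\{\varphi=P(\varphi)\}$ and says nothing about $\{u>R\}$, and indeed $\int_{X^{\an}}u_R\,(\diff\dc P(\varphi))^{\wedge d}\neq \int_{X^{\an}}u\,(\diff\dc P(\varphi))^{\wedge d}$ for fixed $R$ in general. So the right-differentiability at $0$, with the stated formula, is not established by your argument.

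For comparison, the paper does not try to identify $f$ with any single $f_R$ locally. It keeps the whole sequence $f_k(t)=\vol_{\chi}(L,\varphi+tu_k,\psi)$, uses the pointwise (in $t$) stabilization of the envelope to show that $f_k'(t)$ converges to $g(t)=(d+1)\int_{X^{\an}}u\,(\diff\dc P(\varphi+tu))^{\wedge d}$, proves that $g$ is continuous via the uniform convergence $P(\varphi+t_ju)\to P(\varphi+tu)$ along monotone sequences $t_j\to t$ (Lemma \ref{lemma:uniform_convergence_envelopes}) together with continuity of Monge--Amp\`ere measures under uniform convergence, and then applies a Dini-type argument to the decreasing functions $f_k'$ to get locally uniform convergence of derivatives, which yields $f'=g$ everywhere, including at $t=0$. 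To repair your proof you would need an argument of this kind (or another device handling $t=0$); the fixed-$R$ identification cannot provide it.
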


\begin{proof}
Let $t\in\bR_{\geq 0}$ and $k\in \bN$, we denote $f_k(t):= \vol_{\chi}(L,\varphi+tu_k,\psi)$. $(f_k)_{k\geq 0}$ is monotone sequence of concave and continuously differentiable functions converging pointwise to $f$. Theorem \ref{th:Boucksom21_6.7} implies that, for all $k\geq 0$, for all $t\geq 0$, we have
\begin{align*}
f'_k(t) = (d+1)\int_{X^{\an}} u_k (\diff\dc P(\varphi+tu_k))^{\wedge d}.
\end{align*}

\begin{lemma}
\label{lemma:uniform_convergence_envelopes}
Let $t\geq 0$. Let $(t_j)_{j\geq 0}$ be a monotone real sequence of non-negative real numbers with limit $t$. Then the sequence of metrics $(P(\varphi+t_ju))_{j\geq 0}$ converges uniformly to $P(\varphi+tu)$.
\end{lemma}

\begin{proof}
For all $j\geq 0$, let $\psi_j := P(\varphi+t_ju) \in \PSH(L)\cap C^0(L)$ and $\psi := P(\varphi+tu)$. We first assume that the sequence $(t_j)_{j\geq 0}$ is decreasing.  Then $(\psi_j)_{j\geq 0}$ is a decreasing sequence of psh metrics on $L$ and bounded from below by $\psi \not\equiv -\infty$. Therefore $P(\varphi+tu)\leq \rho := \lim_{j\to +\infty} \psi_j \in \PSH(L)$. For all $j\geq 0$, $\psi_j \leq \varphi+t_ju$. Thus $\rho \leq \varphi +tu$ and $\rho \leq P(\varphi+tu)$. Hence $\rho = P(\varphi+tu)$. 

We now consider the case where $(t_j)_{j\geq 0}$ is increasing. Denote $\rho := \lim_{j\to +\infty} \psi_j$. Then $\rho \leq \psi = P(\varphi+tu)$. Let $\epsilon >0$ and $\Psi \in \PSH(L)\cap C^0(L)$ such that $\Psi \leq \varphi +tu$. For all $j\geq 0$, $\psi_j-\Psi$ is lsc. Hence for all $x\in X^{\an}$, there exists $j_0\geq 0$ such that $\{\Psi < \psi_j + \epsilon\}$ is an open neighbourhood of $x$ for all $j\geq j_0$. By compactness of $X^{\an}$, there exists $j_0\geq 0$ such that, for all $j\geq j_0$, $\Psi \leq \psi_j + \epsilon$. Hence $P(\varphi+tu)\leq \rho +\epsilon$ and $P(\varphi+tu)=\rho$. 

In both cases, as $\rho\in C^0(L)$, Dini's lemma yields the uniform convergence of $(\psi_j)_{j\geq 0}$ to $\psi$.
\end{proof}

Lemma \ref{lemma:uniform_convergence_envelopes} and Theorem 8.12 of \cite{BoucksomEriksson} yield the continuity of the map
\begin{align*}
\fonction{g}{\bR_{\geq 0}}{\bR}{t}{(d+1)\int_{X^{\an}}u(\diff\dc P(\varphi+tu))^{\wedge d}.}
\end{align*}
Then $(f'_k)_{k\geq 0}$ is a sequence of real decreasing functions converging simply to a continuous function $g$. Dini's lemma provides the compact convergence of $(f'_k)_{k\geq 0}$ to $g$. Hence $(f_k)_{k\geq 0}$ converges compactly to the continuous function $f$, which is therefore continuously differentiable with derivative $g$. 
\end{proof}

\begin{corollary}
\label{corollary:differentiability_non_relative_singular_direction}
We use the same notation as in Proposition \ref{prop:differentiability_singular_direction}. Then the function
\begin{align*}
f_{\varphi} : (t\in\bR_{\geq 0}) \mapsto (d+1)!\int_{\Delta(L)} G_{\varphi+tu}(x)\lambda(\diff x),
\end{align*}
is continuously differentiable and, for any $t\geq 0$, we have 
\begin{align*}
f_{\varphi}'(t) = (d+1)\int_{X^{\an}}u(\diff\dc P(\varphi+tu))^{\wedge d}.
\end{align*}
\end{corollary}

\begin{proof}
This is a consequence of Proposition \ref{prop:differentiability_singular_direction} combined with the equality
\begin{align*}
f(t) = f_{\varphi}(t) - \int_{\Delta(L)}G_{\psi}(x)\lambda(\diff x).
\end{align*}
\end{proof}

To obtain a differentiability property of the relative $\chi$-volume, we will make use of the behaviour of concave transforms on slices of Okounkov bodies. As $X$ is geometrically integral, one may assume that there exists a regular rational point $p\in X(k)$. In $p$, we set a system of coordinates $(z_1,...,z_d)$ such that $z_1=0$ is the local equation in $p$ of a Cartier divisor $E$ of $X$ whose underlying subscheme is assumed to be integral and normal. We fix a big class $\xi \in N^1(X)_{\bR}$, whose augmented base locus is denoted by $B_{+}(\xi)$, and we assume that $E \nsubseteq B_{+}(\xi)$, . We denote
\begin{align*}
\Delta(\xi)_{t} &:= \Delta(\xi) \cap ({t} \times \bR^{d-1}),\\
\Delta(\xi)_{\geq t} &:= \Delta(\xi) \cap ([t,+\infty[ \times \bR^{d-1}).
\end{align*}
Let $e\in N^1(X)_{\bR}$ be the class of $E$ and we set $\mu(\xi;e) := \sup\{s>0 : \xi - s\cdot e \in \Big(X)\}$. $E \nsubseteq B_{+}(\xi)$ implies $\Delta(\xi)_0 \neq \emptyset$ (cf. $\xi_{|E} \in \Big(E)$). 

\begin{theorem}[\cite{LazarsfeldMustata09}, Theorem 4.26]
\label{th:LazarsfeldMustata_th:4.26}
Using the same notation as above, for all $0\leq t\leq \mu(\xi;e)$, we have
\begin{align*}
\Delta_{\geq t}(\xi) = \Delta(\xi-t\cdot e) +t\cdot e_1,
\end{align*}
where $(e_1,...,e_d)$ denotes the canonical basis of $\bR^d$. Furthermore,
\begin{align*}
\Delta(\xi)_{t} = \Delta_{X|E}(\xi-t\cdot e),
\end{align*}
where $\Delta_{X|E}(\xi-t\cdot e)$ is the Okounkov body of a big $\bR$-divisor on $E$ representing the class $(\xi-t\cdot e)_{|E}$ (Okounkov bodies are constructed with respect to the point $p \in E^{\reg}$).
\end{theorem}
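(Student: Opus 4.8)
The statement to prove is Theorem~\ref{th:LazarsfeldMustata_th:4.26}, which is quoted verbatim from \cite{LazarsfeldMustata09}. Since the result is attributed to Lazarsfeld--Musta\textcommabelow{t}\u{a}, the natural course is to recall its proof rather than reprove it from scratch; below I sketch how the argument goes. The idea is to work at the level of graded linear series on $X$ and to relate the valuation vector $v(f) = (v_1(f), \dots, v_d(f))$ of a section $f$ of $nD$ (where $D$ is a big $\bR$-divisor in the class $\xi$) to the first coordinate $v_1(f)$, which measures the order of vanishing along $E$. The key geometric input is the \emph{restricted} linear series $V^{X|E}_\bullet$, obtained by restricting sections of $nD$ to $E$: the point $p$ was chosen on $E^{\reg}$, and the coordinates $(z_2,\dots,z_d)$ restrict to a system of parameters on $E$ at $p$, so that the Okounkov body of $V^{X|E}_\bullet$ (a $(d-1)$-dimensional convex body) computes the slices $\Delta(\xi)_t$.

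\textbf{Step 1: the horizontal translation identity.} First I would establish $\Delta_{\geq t}(\xi) = \Delta(\xi - t\cdot e) + t\cdot e_1$ for $0 \le t \le \mu(\xi;e)$. The inclusion of valuation semigroups is elementary: if $D' \sim_\bR D - tE$ is effective-ish (more precisely, choose $D'$ in the class $\xi - t e$ and note that if $g \in H^0_\bR(mD')$ then $g \cdot s_E^{\otimes mt}$ — informally, adding back the divisor $mtE$ — gives a section of $mD$ whose first valuation coordinate is shifted by $mt$). Conversely, a section of $mD$ with $v_1 \ge mt$ vanishes to order $\ge mt$ along $E$, hence factors through $mtE$ and descends to a section of $m(D - tE)$. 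Passing to closures of the rescaled semigroups yields the identity; the hypothesis $t \le \mu(\xi;e)$ guarantees $\xi - te$ is still big so that $\Delta(\xi-te)$ is genuinely full-dimensional and the dimension count works. The point $E \nsubseteq B_+(\xi)$ ensures $\Delta(\xi)_0 \ne \emptyset$ because the restriction $\xi|_E$ is big on $E$, so there are sections of $nD$ not vanishing identically on $E$, i.e.\ with $v_1 = 0$.

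\textbf{Step 2: identifying the slice with a restricted Okounkov body.} Next I would prove $\Delta(\xi)_t = \Delta_{X|E}(\xi - t e)$. By Step 1 the slice at height $t$ of $\Delta(\xi)$ corresponds, after translation, to the slice at height $0$ of $\Delta(\xi - te)$; so it suffices to treat $t = 0$, i.e.\ show $\Delta(\xi)_0 = \Delta_{X|E}(\xi)$. The slice at $0$ consists of limits $v'(f)/n$ where $f \in H^0(nD)$ has $v_1(f) = 0$, i.e.\ $f$ does not vanish on $E$, and $v'(f) = (v_2(f),\dots,v_d(f))$ is precisely the valuation vector on $E$ of the restriction $f|_E \in H^0(nD|_E)$ computed with respect to the coordinates $(z_2,\dots,z_d)$ at $p \in E^{\reg}$. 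Thus the valuation semigroup of the $0$-slice is exactly the valuation semigroup of the restricted graded linear series $V^{X|E}_\bullet \subseteq \bigoplus_n H^0(E, nD|_E)$, and its Okounkov body is by definition $\Delta_{X|E}(\xi)$. The normality hypothesis on $X$ (and on the subscheme underlying $E$) is used to ensure these restriction maps and the factorization through $E$ behave well — e.g.\ that vanishing to order $mt$ along the prime divisor $E$ really does mean the section comes from $\cO_X(-mtE)$.

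\textbf{Main obstacle.} The delicate point — and the place where \cite{LazarsfeldMustata09} does real work — is showing that the \emph{closures} defining the Okounkov bodies are compatible with slicing: a priori, taking the closure of $\{v(f)/n\}$ and then intersecting with $\{x_1 = t\}$ could be strictly larger than the closure of the slice semigroup, because points in $\Delta(\xi)_t$ might only be approximable by points $v(f)/n$ with $v_1(f)/n \to t$ but $v_1(f)/n \ne t$ for all $n$. Resolving this requires the hypothesis $E \nsubseteq B_+(\xi)$ in an essential way: it provides, via the theory of augmented base loci and Fujita-type approximation, enough sections of $nD$ with prescribed vanishing order exactly $\lceil nt \rceil$ along $E$ whose restrictions to $E$ fill out $\Delta_{X|E}(\xi - te)$ densely. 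Concretely one perturbs $\xi$ by a small ample class, uses that $\mu(\xi;e)$ varies continuously, and takes a limit; this is exactly the content of the "restricted volumes" circle of ideas in \cite{LazarsfeldMustata09}, \S2.4 and \S4. I would cite that discussion rather than reproduce it, since in the present paper Theorem~\ref{th:LazarsfeldMustata_th:4.26} is used only as a black-box input to control concave transforms on slices.
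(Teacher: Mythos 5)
This statement is quoted in the paper purely as an external result (\cite{LazarsfeldMustata09}, Theorem 4.26) with no proof given, so your decision to recall the Lazarsfeld--Musta\textcommabelow{t}\u{a} argument and cite it for the hard part is exactly in line with how the paper treats it. Your sketch (the semigroup-level translation identity for $\Delta_{\geq t}(\xi)$, the identification of the slice with the restricted Okounkov body $\Delta_{X|E}$, and the correctly flagged subtlety that closures and slicing need $E \nsubseteq B_{+}(\xi)$ together with the restricted-volume/continuity machinery of \cite{LazarsfeldMustata09}) is a faithful summary of the original proof, so there is nothing to fault here.
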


We further assume that $E$ is an irreducible component of a Cartier divisor $\div(s)$, where $s\in H^{0}(X,A)$ is a global section of an ample invertible $\cO_X$-module $A$ such that $s=z_1$ locally in $p$. Let $t\geq 0$ and $\Phi \in C^0(A)$ be a reference metric. We will compare the concave transforms $G_{(L+tA,\varphi+t\Phi)}$ et $G_{(L,\varphi+t(\Phi-\psi_{s}))}$ on $\Delta(L+tA)_{\geq t}$.

\begin{proposition}
\label{prop:Nystrom_14.9}
Using the same notation as above, for all $(a,\alpha) \in \bR\times \bR^{d-1} \cap \Delta(L+tA)_{\geq t}$, we have
\begin{align}
\label{eq:equality_Nystrom_14.9}
G_{(L+tA,\varphi+t\Phi)}(a,\alpha) = G_{(L,\varphi+t(\Phi-\psi_{s}))}(a-t,\alpha).
\end{align}
\end{proposition}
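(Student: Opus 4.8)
The plan is to compare the two concave transforms component by component on the finite-dimensional graded pieces, the comparison being implemented by multiplication by the section $s$. It is harmless to assume $t\in\bQ_{\geq 0}$ (the general case follows by continuity of concave transforms in the numerical class and the metric), and then to work along integers $n$ divisible by the denominator of $t$, so that $nt\in\bN$. Representing $A$ by the effective divisor $\div(s)$ — so that the canonical rational section of $\cO_X(\div(s))$ corresponds to $s$ — multiplication by $s^{nt}$ gives a $k$-linear injection
\[
\iota_n\colon H^0(nL)\hookrightarrow H^0(n(L+tA)),\qquad \tau\longmapsto \tau\, s^{nt}.
\]
Two features of $\iota_n$ drive the argument. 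First, since $s=z_1$ is a local equation of $E$ at $p$ and $z_1$ is the first flag coordinate, one has $v(\iota_n(\tau))=v(\tau)+nt\,e_1$, and the image of $\iota_n$ is exactly the set of sections $\sigma\in H^0(n(L+tA))$ with $v(\sigma)_1\geq nt$ (a section is divisible by $s^{nt}$ precisely when it vanishes along $E$ to order $\geq nt$). Second, $\iota_n$ is isometric for the relevant sup-norms: writing $\psi_s=\Phi+\ln|s|_\Phi$ one has, pointwise on $X^{\an}$,
\[
|\tau\, s^{nt}|_{n(\varphi+t\Phi)}=|\tau|_{n\varphi}\,|s|_\Phi^{nt}=|\tau|_{n(\varphi+t(\Phi-\psi_s))},
\]
whence $\|\iota_n(\tau)\|_{n(\varphi+t\Phi)}=\|\tau\|_{n(\varphi+t(\Phi-\psi_s))}$ (both finite, $|s|_\Phi$ being bounded; by Lemma \ref{lemma:sup_norm} and Lemma \ref{lemma:Nystrom_14.2} this sup-norm, and the associated concave transform, are unchanged when $\psi_s$ is truncated, so the right-hand side is well defined).

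Next I would transfer this to Okounkov bodies. The sections whose valuation has first coordinate $\geq nt$ form a graded linear sub-series $W_\bullet\subseteq V_\bullet(L+tA)$, and $\iota_\bullet$ is a valuation-shifting isometric isomorphism onto it; hence $\Delta(W_\bullet)=\Delta(L)+t\,e_1$, which by Theorem \ref{th:LazarsfeldMustata_th:4.26} (applied with the prime divisor $E$, using $(L+tA)-tE=L$ under $A\leftrightarrow\div(s)$) equals the slice $\Delta(L+tA)_{\geq t}$. Now fix a monomial order on $\bN^d$ in which the first coordinate has highest priority — the lexicographic order already chosen, with $z_1$ first, does. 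Then for every $\alpha$ with $\alpha_1\geq nt$ one has $\mathcal F^{\succeq\alpha}H^0(n(L+tA))\subseteq W_n$, so the graded component $\gr^{(n,\alpha)}$ and its subquotient norm are the same whether computed inside $H^0(n(L+tA))$ or inside $W_n$; combined with the isometry and valuation shift of $\iota_n$, the superadditive function $\phi^{(1)}$ attached to $(L+tA,\varphi+t\Phi)$ satisfies $\phi^{(1)}(n,\alpha)=\phi^{(2)}(n,\alpha-nt\,e_1)$ for all such $\alpha$, $\phi^{(2)}$ being the one attached to $(L,\varphi+t(\Phi-\psi_s))$ for the same point $p$ and monomial order. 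Inserting this into the formula $G_\phi(x)=\sup\{c:x\in\Delta(\Gamma_\phi^c)\}$ and using that a point of $\Delta(L+tA)$ with first coordinate $>t$ is approximable by semigroup elements with $\alpha_1>nt$ yields, for every $(a,\alpha)$ in the interior of $\Delta(L+tA)_{\geq t}$, the equality
\[
G_{(L+tA,\varphi+t\Phi)}(a,\alpha)=G_{(L,\varphi+t(\Phi-\psi_s))}(a-t,\alpha),
\]
and the boundary case $a=t$ follows by continuity, the concave transforms being concave, hence continuous on the interior of their domains.

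The main obstacle is the bookkeeping at the interface of the two constructions: one must verify that the chosen monomial order is genuinely compatible with slicing by the first coordinate — so that the subquotient norms, and hence $\phi^{(1)}$ and $\phi^{(2)}$, really match — and one must be careful with the boundary slice $a=t$. One also has to make precise that the image of $\iota_n$ is exactly the set of sections with $v_1\geq nt$; this is where the geometric hypothesis on $s$ (that it cuts out $E$ near $p$) enters. The reduction to rational $t$, and the continuity in $t$ needed to dispense with it, is a secondary and essentially routine matter.
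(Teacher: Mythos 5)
Your skeleton is the same as the paper's (this is Witt Nyström's Lemma transported to the present setting): an injection of section spaces implemented by the section $s$, the pointwise identity coming from $\psi_{s}=\Phi+\ln|s|_{\Phi}$ making it an isometry for the sup-norms, the valuation shift by $nt\,e_{1}$, compatibility of the lexicographic order with slicing by the first coordinate, and the resulting equality of the superadditive functions on graded pieces. The genuine gap is your identification of the image of $\iota_{n}$ with $\{v_{1}\geq nt\}$, justified by ``a section is divisible by $s^{nt}$ precisely when it vanishes along $E$ to order $\geq nt$''. This is false unless $\div(s)=E$: the standing hypotheses only require $E$ to be an \emph{irreducible component} of $\div(s)$ (and in the $d=1$ step of the reduction in Lemma \ref{lemma:reduction_local_differentiability} the residual components are genuinely there), and a section of $n(L+tA)$ with $\ord_{E}\geq nt$ need not vanish along the other components of $\div(s)$, so multiplication by $s^{nt}$ is not onto the slice $\{v_{1}\geq nt\}$; your matching of graded pieces, hence of $\phi^{(1)}$ and $\phi^{(2)}$, breaks down there. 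The paper never multiplies by $s^{nt}$: it works in the $\bR$-divisor framework of \S \ref{subsub:transformée_concave_R_diviseur} and uses the tautological inclusion $H^{0}(nD)\subseteq H^{0}_{\bR}(nD+ntE)$, twisting by $ntE$ only, for which the image is exactly $\{f: v(f)\geq (nt,0,\dots,0)\}$; the metric bookkeeping $|f|_{g_{\varphi}+tg_{\Phi}}=|f|_{g_{\varphi}+tg_{\Phi-\psi_{s}}}$ is then the same as yours.

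Two further steps are asserted where the paper's formulation gives them for free. First, the reduction to $t\in\bQ$: the proposition is stated, and used in the proof of Theorem \ref{th:local_differentiability_general} (where one integrates over real $r$ and $t$), for arbitrary real $t\geq 0$, and the continuity in $t$ of $(t,x)\mapsto G_{(L+tA,\varphi+t\Phi)}(x)$ that you invoke is neither proved in the paper nor routine; working with $H^{0}_{\bR}$ of $\bR$-divisors is precisely what lets the same one-line isometry argument run for every real $t$ with no limiting process. Second, the boundary slice $a=t$: a concave function on a convex body is continuous only on the interior, and concave transforms may jump or take the value $-\infty$ at the boundary, so ``follows by continuity'' is not justified; the paper needs no such limit, since the equality of degrees at all semigroup points yields the equality of the two concave transforms at \emph{every} point of $\Delta(L+tA)_{\geq t}$ directly from the definition $G_{\phi}(x)=\sup\{c : x\in\Delta(\Gamma_{\phi}^{c})\}$ applied to the bijectively corresponding shifted semigroups. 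In the special case $\div(s)=E$ and $t$ rational your argument is essentially the paper's; as written, it does not prove the statement in the generality in which it is used.
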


\begin{proof}
First note that, for all $(a,\alpha)\in \Delta(L+tA)_{\geq t}$, Theorem \ref{th:LazarsfeldMustata_th:4.26} yields $(a-t,\alpha)\in \Delta(L)$.

Let $u$ be a non-zero rational section of $L$, denote $D:= \div(u)$ and by $g_{\varphi}$ the Green function $-\ln|u|_{\varphi}$. Then for all $f\in H^0(D)$, $f\cdot 1 \in H^0_{\bR}(D+tE)$ and, for all $x\in X^{\an}$, we have
\begin{align}
\label{eq:equality_sup_norms_Green_functions}
|f\cdot 1|_{g_{\varphi}+tg_{\Phi}}(x) = |f|_{g_{\varphi}}(x) \cdot |1|_{tg_{\Phi}}(x) = |f|_{g\varphi + tg_{\Phi-\psi_{s}}}(x).
\end{align}
Thus, for all $n\geq 1$, the injective morphism $\iota_n : H^0(nD) \to H^0_{\bR}(nD+ntE)$ induces an isometry on its image, where $H^0(nD)$, resp. $H^0_{\bR}(nD+ntE)$, is endowed with the norm $\|\cdot\|_{ng_\varphi+ntg_{\Phi-\psi_{s}}}$, resp. $\|\cdot\|_{g_{n\varphi+ntg_{\Phi}}}$. Explicitly, we have
\begin{align*}
\iota_n(H^0(nD)) = \{f\in H^0_{\bR}(nD+ntE) : \quad v(f) \geq (nt,0,...,0)\}.
\end{align*}
Hence the equivalence
\begin{align*}
\forall f\in H^0(nD), \quad f \in \gr^{(n,v(f))}(V_{\bullet}(D)) \Leftrightarrow \iota(f) \in \gr^{(n,v(f)+nte_1)}(V_{\bullet}(D+tE)))
\end{align*}
Therefore, $\iota_n$ induces an isometry between $\gr^{(n,(na,n\alpha))}(V_{\bullet}(D))$, endowed with the sub-quotient norm $\|\cdot\|_{(n,(na,n\alpha)),D}$ induced by $\|\cdot\|_{ng_\varphi+ntg_{\Phi-\psi_{s}}}$, and $\gr^{(n,(n(a+t),n\alpha))}(V_{\bullet}(D+tE)))$, endowed with the sub-quotient norm $\|\cdot\|_{(n,(n(a+t),n\alpha)),D+tE}$ induced by $\|\cdot\|_{g_{n\varphi+ntg_{\Phi}}}$. Furthermore, $\iota_n$ also induces an isometry between $\gr^{(n,(na,n\alpha))}(V_{\bullet}(D))$, endowed with the canonical norm $\|\cdot\|_{\eta^{(n,(na,n\alpha))}}$ (introduced in \S \ref{sub:Okounkov_bodies_and_slopes}), and $\gr^{(n,(n(a+t),n\alpha))}(V_{\bullet}(D+tE)))$, endowed with the canonical norm $\|\cdot\|_{\eta^{(n,(n(a+t),n\alpha))}}$.

Finally, the equality
\begin{align*}
&\widehat{\deg}(\gr^{(n,(na,n\alpha))}(V_{\bullet}(D)),\|\cdot\|_{(n,(na,n\alpha)),D},\|\cdot\|_{\eta^{(n,(na,n\alpha))}}) \\
= &\widehat{\deg}(\gr^{(n,(n(a+t),n\alpha))}(V_{\bullet}(D+tE))),\|\cdot\|_{(n,(n(a+t),n\alpha)),D+tE},\|\cdot\|_{\eta^{(n,(n(a+t),n\alpha))}}),
\end{align*}
holds for all $n\in \bN_{\geq 1}$ and $(a,\alpha)\in \bN^d$. (\ref{eq:equality_Nystrom_14.9}) is then obtained from the definition of concave transforms.
\end{proof}

We keep the same notation as above. By Theorem \ref{th:LazarsfeldMustata_th:4.26}, we have $\Delta(L)_{0} =  \Delta_{X|E}(L)$. Note that $E\nsubseteq B_+(L)$ implies that $L_{|E}$ is both semiample and big. Moreover, we have an injective morphism
\begin{align*}
\iota : \left((0,\alpha) \in \Delta(L)_{0}\right) \mapsto \alpha \in \Delta(L_{|E}).
\end{align*}
If $\varphi \in C^0(L)$, $P(\varphi)\in \PSH(L)\cap C^0(L)$ and therefore $P(\varphi)_{|E}\in \PSH(L_{|E})\cap C^0(L_{|E})$. We also have the concave transforms $G_{(L_{|E},P(\varphi)_{|E})} : \Delta(L_{|E}) \to \bR \cup \{-\infty\}$ and $G_{(L,\varphi)}:\Delta(L) \to \bR\cup\{-\infty\}$. The latter are related by the following proposition.

\begin{proposition}
\label{prop:Nystrom_11.10}
We use the same notation as above and assume that $L$ is ample. We have the equality
\begin{align*}
G_{(L,\varphi)}(0,\alpha) = G_{(L_{|E},P(\varphi)_{|E})}(\alpha),
\end{align*}
for all $(0,\alpha)\in \Delta(L)_0$. In particular, $G_{(L,\varphi)}$ takes real values in the relative interior of $\Delta(L)_0$ in $\Delta(L_{|E})$.
\end{proposition}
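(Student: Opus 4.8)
The plan is to identify the restriction-to-$E$ operation on sections with the slice of the Okounkov body at $t=0$, and then match the metric data on both sides. First I would recall that, by Theorem \ref{th:LazarsfeldMustata_th:4.26} applied with $\xi = L$ and $t=0$, the restriction $H^0(X,nL) \to H^0(E,nL_{|E})$ is, after passing to graded pieces for the monomial order, compatible with the identification $\Delta(L)_0 = \Delta_{X|E}(L) = \Delta(L_{|E})$: a section $f \in H^0(X,nL)$ with $v(f) = (0,\beta)$ restricts to a nonzero section of $nL_{|E}$ with valuation $\beta$ with respect to the induced coordinate system $(z_2,\dots,z_d)$ at $p \in E^{\reg}$, and conversely every section of $nL_{|E}$ lifts. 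Thus the graded piece $\gr^{(n,(0,n\alpha))}(V_\bullet(L))$ is identified with the graded piece $\gr^{(n,n\alpha)}$ of the section ring of $L_{|E}$.

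Next I would compare the norms. On the left, the concave transform $G_{(L,\varphi)}$ is built from the supremum-norm data $\|\cdot\|_{n\varphi}$ on $H^0(X,nL)$, and by Proposition \ref{prop:Boucksom_6.4}(1) (or directly, since $\|\cdot\|_\varphi = \|\cdot\|_{P(\varphi)}$) these sup-norms are unchanged if we replace $\varphi$ by $P(\varphi)$; since $L$ is ample and continuity of envelopes is in force, $P(\varphi) \in \PSH(L)\cap C^0(L)$, and $P(\varphi)_{|E} \in \PSH(L_{|E})\cap C^0(L_{|E})$. The key analytic input is that the restriction map $X^{\an} \supset E^{\an} \hookrightarrow X^{\an}$ lets one relate $\|s_{|E}\|_{nP(\varphi)_{|E}} = \sup_{x\in E^{\an}}|s|_{nP(\varphi)}(x) \le \|s\|_{nP(\varphi)}$, and — this is the point where ampleness and the psh envelope are used — for the purpose of the \emph{asymptotic} slope computation on the slice, the sub-quotient norms on $\gr^{(n,(0,n\alpha))}$ induced by $\|\cdot\|_{nP(\varphi)}$ agree (up to $o(n)$, which is all that matters for concave transforms) with the sub-quotient norms on $\gr^{(n,n\alpha)}$ of the section ring of $L_{|E}$ induced by $\|\cdot\|_{nP(\varphi)_{|E}}$. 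Given this, the definition of the concave transform as the asymptotic limit of $\widehat{\deg}$ of these graded pieces against the auxiliary norm $\eta_n$ (which is purely combinatorial and matches on the nose under the identification of semigroups) gives $G_{(L,\varphi)}(0,\alpha) = G_{(L_{|E},P(\varphi)_{|E})}(\alpha)$. The final "in particular" clause follows since $G_{(L_{|E},P(\varphi)_{|E})}$, being the concave transform of a continuous psh metric on the big semiample bundle $L_{|E}$, takes finite values on $\Delta(L_{|E})^\circ$, and the relative interior of $\Delta(L)_0$ maps into $\Delta(L_{|E})^\circ$ under $\iota$.

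The main obstacle I anticipate is the norm comparison in the previous paragraph: showing that passing to sections on $E$ does not lose asymptotic mass, i.e. that the restricted sup-norm on the image of $H^0(X,nL) \to H^0(E,nL_{|E})$ is asymptotically equivalent to the sup-norm computed from $P(\varphi)_{|E}$, and that the graded sub-quotient norms behave accordingly. This is precisely where ampleness of $L$ enters (to control restriction of sections, e.g. via vanishing, so that the restriction is surjective in large degree and the Okounkov-body slice picks up the full section ring of $L_{|E}$) and where the psh property of $P(\varphi)$ is needed (so that $P(\varphi)_{|E}$ is again psh and the envelope-based arguments of \cite{ChenMaclean15} apply on $E$). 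Once this asymptotic matching of normed graded pieces is in place, the equality of concave transforms is a formal consequence of their construction in \S\ref{sub:Okounkov_bodies_and_slopes}, and I would expect the argument to parallel closely the proof of Proposition \ref{prop:Nystrom_14.9}, replacing the shift $t\cdot e_1$ there by the passage to the slice here.
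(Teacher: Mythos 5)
Your outline correctly identifies the combinatorial side (the identification $\Delta(L)_0=\Delta_{X|E}(L)=\Delta(L_{|E})$ via Theorem \ref{th:LazarsfeldMustata_th:4.26}, the compatibility of valuations for sections with $v(f)=(0,\beta)$, and the use of $\|\cdot\|_{\varphi}=\|\cdot\|_{P(\varphi)}$), and the easy inequality $G_{(L,\varphi)}\leq G_{(L_{|E},P(\varphi)_{|E})}\circ\iota$ does follow, as in the paper, from the fact that restriction to $E^{\an}$ is norm non-increasing together with $P(\varphi)_{|E}\leq\varphi_{|E}$ and the isometry of the canonical norms $\eta$ on graded pieces. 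But the converse inequality is exactly the point you leave open, and the mechanism you propose for it does not work. You suggest that ampleness enters ``via vanishing, so that the restriction is surjective in large degree''; surjectivity of $H^0(X,nL)\to H^0(E,nL_{|E})$ for $n\gg 0$ gives no control whatsoever on the sup-norm of a lift, and without a bound of the form $\|t\|_{n\varphi}\leq e^{o(n)}\|\tilde t\|_{nP(\varphi)_{|E}}$ the subquotient norms on the slice could be strictly larger than those computed on $E$, so the concave transforms need not agree. The paper's proof of this direction rests on a genuinely analytic extension theorem with norm control (\cite{Fang22}, Theorems 1.1 and 4.1): for every $\epsilon>0$ and every $\tilde t\in H^0(E,nL_{|E})$, the \emph{powers} $\tilde t^m$ extend, for $m\gg0$, to sections $t_m\in H^0(X,nmL)$ with $(t_m)_{|E}=\tilde t^m$ and $\|t_m\|_{nm\varphi}=\|t_m\|_{nmP(\varphi)}\leq e^{m\epsilon}\|\tilde t^m\|_{nmP(\varphi)_{|E}}$, and then a degree computation on graded pieces plus density of the rational points of the slice yields $G_2\leq G_1$. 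This non-Archimedean Ohsawa--Takegoshi-type input is the heart of the proposition and is missing from your plan; flagging it as ``the main obstacle I anticipate'' does not close the gap.

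A secondary inaccuracy: the argument is not, as you suggest, a close parallel of Proposition \ref{prop:Nystrom_14.9}. There, multiplication by $1\in H^0_{\bR}(tE)$ is an exact isometry onto its image at every finite level, so the two concave transforms coincide on the nose. Here the finite-level map of graded pieces $\gr^{(n,(0,n\alpha))}(V_\bullet(L))\to\gr^{(n,n\alpha)}$ of the section ring of $L_{|E}$ is only an injective contraction (your claim that ``conversely every section of $nL_{|E}$ lifts'' is false at finite level, and even asymptotically only holds with the norm control quoted above), which is why the paper proves the two inequalities by entirely different arguments rather than by a single isometry.
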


\begin{proof}
By abuse of notation, we denote $G_1 := G_{(L,\varphi)} $ and $G_2 = G_{(L_{|E},P(\varphi)_{|E})} \circ \iota$. By definition of $P(\varphi)$, we have $P(\varphi)_{|E} \leq \varphi_{|E}$. For all $(n,(0,n\alpha)) \in \Gamma(L)$, we denote by $\gr^{(n,(0,n\alpha))}$, resp. $\gr^{(n,\alpha)}$, the $(n,(0,\alpha))$-graded component, resp. $(n,\alpha)$-graded component, given by the Okounkov filtration of the graded linear series $\bigoplus_{m\in\bN} H^0(X,mL)$, resp. $\bigoplus_{m\in\bN} H^0(E,mL_{|E})$. For all $(n,(0,n\alpha)) \in \Gamma(L)$, we denote by $\varphi^{(n,(0,n\alpha))}$, resp. $P(\varphi)_{|E}^{(n,n\alpha)}$, the sub-quotient norm on $\gr^{(n,(0,n\alpha))}$ induced by $\varphi$, resp. the sub-quotient norm on $\gr^{(n,\alpha)}$ induced by $P(\varphi)_{|E}$. Finally, for all $(n,(0,n\alpha)) \in \Gamma(L)$, we denote by $\eta^{(n,(0,n\alpha))}$, resp. $\eta^{(n,n\alpha)}$, the canonical norm on $\gr^{(n,(0,n\alpha))}$, resp. on $\gr^{(n,\alpha)}$.

For any integer $n\geq 1$, we have the inequality $\|\cdot\|_{nP(\varphi)_{|E}}\leq \|\cdot\|_{nP(\varphi)} = \|\cdot\|_{n\varphi}$. Moreover, for all $(n,(0,n\alpha)) \in \Gamma(L)$, $\iota$ induces an isometry between $(\gr^{(n,(0,n\alpha))},\|\cdot\|_{\eta^{(n,(0,n\alpha))}})$ and its image in $(\gr^{(n,\alpha)},\|\cdot\|_{\eta^{(n,n\alpha)}})$. Therefore, we obtain the inequality
\begin{align*}
\widehat{\deg}(\gr^{(n,(0,n\alpha))},\|\cdot\|_{\varphi^{(n,(0,n\alpha))}},\|\cdot\|_{\eta^{(n,(0,n\alpha))}}) \leq \widehat{\deg}(\gr^{(n,n\alpha)},\|\cdot\|_{P(\varphi)_{|E}^{(n,n\alpha)}},\|\cdot\|_{\eta^{(n,n\alpha)}}),
\end{align*}
for all $(n,(0,n\alpha)) \in \Gamma(L)$. Hence $G_1\leq G_2$.

To prove the inverse inequality, we use the following extension property (\cite{Fang22}, Theorems 1.1 and 4.1). Let $\epsilon >0$ and $n\geq 1$. Then for all  $\tilde{t} \in H^0(E,nL_{|E})$, there exists an integer $m_0\geq 0$ such that, for all $m\geq m_0$, there exists $t_m \in H^0(X,nmL)$ such that 
\begin{align}
\label{eq:extension_property}
(t_m)_{|E} = \tilde{t}^m \text{ and } \|t_m\|_{nm\varphi} = \|t_m\|_{nmP(\varphi)} \leq e^{m\epsilon}\|\tilde{t}^m\|_{nmP(\varphi)_{|E}}.
\end{align}
Let $t\in H^0(X,nL)$ and assume that $v(t)$ is of the form $(0,\alpha)\in \bN^d$. Denote $\tilde{t}:=t_{|E} \in H^0(X|E,nL)$, we have $v(t)/n \in \Delta(L)_0$ and $v(\tilde{t})/n\in \Delta(L_{|E})$. Let $u\in \bR$ be such that 
\begin{align*}
nu\leq \widehat{\deg}(\gr^{(n,v(\tilde{t}))},P(\varphi)_{|E}^{(n,v(\tilde{t}))},\eta^{(n,v(\tilde{t}))}).
\end{align*}
Let $m\gg 0$ and $t_m\in H^0(X,nmL)$ be such that both equalities in (\ref{eq:extension_property}) hold. Then
\begin{align*}
m\widehat{\deg}(\gr^{(n,v(\tilde{t}))},P(\varphi)_{|E}^{(n,v(\tilde{t}))},\eta^{(n,v(\tilde{t}))}) &\leq\widehat{\deg}(\gr^{(nm,mv(\tilde{t}))},P(\varphi)_{|E}^{(nm,mv(\tilde{t}))},\eta^{(nm,mv(\tilde{t}))}) \\
& = -\ln\|\tilde{t}^m\|_{P(\varphi_{|E})^{(nm,mv(\tilde{t}))}} + \ln\|\tilde{t}^m\|_{\eta^{(nm,mv(\tilde{t}))}}\\
 & \leq -\ln\|t_m\|_{\varphi^{nm,v(t_m)}}+\ln\|t_m\|_{\eta^{(nm,v(t_m))}} + m\epsilon\\
 & = \widehat{\deg}(\gr^{(nm,v(t_m))},\varphi^{(nm,v(t_m))},\eta^{(nm,v(t_m))}) + m\epsilon.
\end{align*}
Thus $nm(u-\epsilon/n) \leq \widehat{\deg}(\gr^{(nm,v(t_m))},\varphi^{(nm,v(t_m))},\eta^{(nm,v(t_m))})$. Furthermore, the equality  $v(f_m)=(0,mv(\tilde{t}))$ implies
\begin{align*}
\iota\left(\frac{v(t)}{n}\right)=\iota\left(\frac{v(t_m)}{nm}\right) = \frac{v(\tilde{t})}{n} \Rightarrow \frac{v(t)}{n}=\frac{v(t_m)}{nm}.
\end{align*}
Hence finally
\begin{align*}
u - \frac{\epsilon}{m} \leq G_1\left(\frac{v(t)}{n}\right) \Rightarrow G_2\left(\frac{v(t)}{n}\right) \leq G_1\left(\frac{v(t)}{n}\right) + \frac{\epsilon}{n}.
\end{align*}
Note that $\epsilon$ is independent on $\tilde{t}$, whence we have 
\begin{align*}
\forall x \in \displaystyle\bigcup_{n\in\bN_{\geq 1}}\frac{1}{n}(0,v(H^{0}(X|E,nL))) ,\quad  G_2(x) \leq G_1(x).
\end{align*}
By density we obtain the inequality $G_2 \leq G_1$ on $\Delta(L)_0$.
\end{proof}

\begin{corollary}
\label{cor:Nystrom_11.11}
Using the same notation as in Proposition \ref{prop:Nystrom_11.10}, for all $\varphi,\psi\in C^0(L)$, we have
\begin{align*}
\vol_{\chi}(L_{|E},P(\varphi)_{|E},P(\psi)_{|E}) = d! \int_{\Delta(L)_0} \left(G_{(L,\varphi)}-G_{(L,\psi)}\right)(0,\alpha)\diff\alpha. 
\end{align*}
\end{corollary}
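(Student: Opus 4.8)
The plan is to read off both sides from the concave-transform description of the relative $\chi$-volume on $E$, using Proposition \ref{prop:Nystrom_11.10} to translate the $E$-side concave transforms back into slices of the $X$-side ones. Since $E\nsubseteq B_+(L)$, the restriction $L_{|E}$ is semiample and big on the integral normal projective $(d-1)$-dimensional scheme $E$, equipped with the regular rational point $p\in E^{\reg}$, so the Okounkov-body machinery of \S\ref{subsub:transformée_concave_R_diviseur} applies to $E$ exactly as it was already used in the proof of Proposition \ref{prop:Nystrom_11.10}. As $L$ is ample, continuity of envelopes (assumed throughout this section) makes $P(\varphi),P(\psi)$ continuous, whence $P(\varphi)_{|E},P(\psi)_{|E}\in\PSH(L_{|E})\cap C^0(L_{|E})$ (restriction along the closed immersion $E\hookrightarrow X$ preserves psh metrics). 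Applying the $\bR$-divisor formula of \S\ref{subsub:transformée_concave_R_diviseur} (equivalently Theorem \ref{th:ChenMaclean_4.6}) on $E$, whose dimension $d-1$ produces the normalising factor $((d-1)+1)!=d!$, I would obtain
\[
\vol_{\chi}(L_{|E},P(\varphi)_{|E},P(\psi)_{|E}) = d!\int_{\Delta(L_{|E})}\bigl(G_{(L_{|E},P(\varphi)_{|E})}-G_{(L_{|E},P(\psi)_{|E})}\bigr)\,\diff\lambda,
\]
the Okounkov bodies on $E$ being built with respect to $p$ and the monomial order on $\bN^{d-1}$ induced by the one on $\bN^d$.

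Next I would compare the domain $\Delta(L_{|E})$ with the slice $\Delta(L)_0$. Theorem \ref{th:LazarsfeldMustata_th:4.26} gives $\Delta(L)_0=\Delta_{X|E}(L)$, hence an injection $\iota\colon\Delta(L)_0\hookrightarrow\Delta(L_{|E})$, $(0,\alpha)\mapsto\alpha$, and I claim it is onto. Indeed, for $L$ ample the exact sequence $0\to\cO_X(nL-E)\to\cO_X(nL)\to\cO_E(nL_{|E})\to 0$ together with Serre vanishing ($H^1(X,nL-E)=0$ for $n$ large) makes the restriction $H^0(X,nL)\to H^0(E,nL_{|E})$ surjective for all large $n$, so every valuation vector of a section of $nL_{|E}$ is of the form $v_E(f_{|E})/n$ with $f\in H^0(X,nL)$ and $\ord_E(f)=0$, i.e.\ lies in $\iota(\Delta(L)_0)$; taking closures gives $\Delta(L_{|E})\subseteq\iota(\Delta(L)_0)$, and the reverse inclusion is Theorem \ref{th:LazarsfeldMustata_th:4.26}. (Alternatively one can run the extension argument of \cite{Fang22} already recalled in the proof of Proposition \ref{prop:Nystrom_11.10}: a power $\tilde t^m$ extends to $t_m\in H^0(X,nmL)$ with $(t_m)_{|E}=\tilde t^m$, so $v(t_m)/(nm)=(0,v_E(\tilde t)/n)$.) Since $\iota$ is the restriction of a coordinate projection it is measure-preserving, and Proposition \ref{prop:Nystrom_11.10} identifies $G_{(L_{|E},P(\varphi)_{|E})}\circ\iota$ with $(0,\alpha)\mapsto G_{(L,\varphi)}(0,\alpha)$, and likewise for $\psi$. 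Substituting into the display then yields the asserted identity.

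The only genuine obstacle is this last matching of Okounkov bodies: a priori $\Delta_{X|E}(L)$ is merely contained in $\Delta(L_{|E})$, and the integrand $G_{(L_{|E},P(\varphi)_{|E})}-G_{(L_{|E},P(\psi)_{|E})}$ need not vanish on the difference, so one really must use that the restriction map is asymptotically surjective — which is exactly where ampleness of $L$ (rather than mere semiampleness and bigness) enters. Once this is in place, the rest is bookkeeping with the normalising constants and the measure-preserving projection $\Delta(L)_0\to\Delta(L_{|E})$.
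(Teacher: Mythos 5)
Your proposal is correct and takes essentially the same route as the paper: apply the concave-transform formula (Theorem \ref{th:ChenMaclean_4.6}) to $L_{|E}$ on the $(d-1)$-dimensional scheme $E$ (giving the factor $d!$), show $\iota(\Delta(L)_0)=\Delta(L_{|E})$, and then transfer the integrand via Proposition \ref{prop:Nystrom_11.10}. The only (harmless) variation is that you justify the asymptotic surjectivity of restriction by Serre vanishing for the ample $L$, whereas the paper reuses the extension property of \cite{Fang22} already invoked in Proposition \ref{prop:Nystrom_11.10} --- an alternative you yourself mention.
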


\begin{proof}

We first show that in the current context, we have $\iota(\Delta(L)_0) = \Delta(L_{|E})$. For all $n\geq 1$, for all $\tilde{t}\in H^0(E,nL_{|E})$ , we have $v(\tilde{t})/n \in \Delta(L_{|E})$ and there exist $m\geq 1$ and $t\in H^0(X,nmL)$ such that $t_{|E}=\tilde{t}^m$. Then $v(t)/nm \in H^0(X,nmL)\cap \Delta(L)_0$ and
\begin{align*}
\displaystyle\bigcup_{n\geq 1} \frac{1}{n}v(H^0(E,L_{|E})) \subset \iota \left(\bigcup_{n\geq 1} \frac{1}{n}H^0(X,nmL)\cap \Delta(L)_0 \right).
\end{align*}
Hence $\iota(\Delta(L)_0) = \Delta(L_{|E})$ as $\iota$ is closed and continuous. 

Then now the discussion following Theorem \ref{th:ChenMaclean_4.6} and Proposition \ref{prop:Nystrom_11.10} give 
\begin{align*}
\vol_{\chi}(L_{|E},P(\varphi)_{|E},P(\psi)_{|E}) &= d! \int_{\Delta(L_{|E})} \left(G_{(L_{|E},P(\varphi)_{|E})}-G_{L_{|E},P(\psi)_{|E}}\right)(\alpha)\diff\alpha\\
& = d! \int_{\iota(\Delta(L)_0)} \left(G_{(L_{|E},P(\varphi)_{|E})}-G_{L_{|E},P(\psi)_{|E}}\right)(\alpha)\diff\alpha\\
& = d! \int_{\Delta(L)_0} \left(G_{(L,\varphi)}-G_{(L,\psi)}\right)(0,\alpha)\diff\alpha.
\end{align*}
\end{proof}

\subsection{Proof of the local differentiability}
\label{sub:proof_of_the_local_differentiability}

We first introduce some notation. Let $m\geq 1$ be an integer.
\begin{itemize}
	\item[(i)] Let $L=L_0$ be an ample line bundle equipped with a continuous metric $\varphi=\varphi_0\in C^0(L)$.	
	\item[(ii)] For any $i=1,...,m$, let $L_i$ a line bundle on $X$ equipped with a continuous metric $\varphi_i\in C^0(L)$.
\end{itemize}
For all $i=0,...,m$, let $s_i$ be a non-zero rational section of $L_i$, $D_i:=\div(s_i)$ be its corresponding Cartier divisor and $g_i$ the Green function on $D_i$ associated to the metric $\varphi_i$. Denote $D:=D_0$ and $g:=g_0$. By abuse of notation, for all $a\in \bR^{m}$, denote $\overline{L_a}=(L_a,\varphi_a)$
\begin{align*}
\overline(L_a) := (L_a,\varphi_a) := \left(D + \displaystyle\sum_{i=1}^{m} a_i D_i, g+ \sum_{i=1}^{m} a_i g_i\right).
\end{align*}
We also denote $\mathbf{L}:=(\overline{L},\overline{L_1},...,\overline{L_m})$ and $O_{\mathbf{L}} := \{a\in \bR^m : L_a \text{ is ample}\}$. We will study the function
\begin{align*}
\fonction{f^{\mathbf{L}}}{O_{\mathbf{L}}}{\bR}{a}{(d+1)!\int_{\Delta(L_a)}G_{\varphi_a}(x)\lambda(\diff x)}
\end{align*}

\begin{theorem}
\label{th:local_differentiability_general}
The function $f^{\mathbf{L}}$ is continuously differentiable on $O_{\mathbf{L}}$.
\end{theorem}

\begin{proof}
It is enough to prove the continuous differentiability of $f^{\mathbf{L}}$ at $0\in O_{\mathbf{L}}$.  It suffices to prove that $f^{\mathbf{L}}$ admits continuous partial derivatives at $0$, and by symmetry, a continuous partial derivative along the first coordinate at $0$. Denote $\overline{A}=(A,\Phi):=(L_1,\varphi_1)$. Let 
\begin{align*}
U := \{t\in \bR : te_1 \in O_{\mathbf{L}}\},
\end{align*}
it is an open subset of $\bR$. 

We argue by induction on $d=\dim(X)$. We first consider the case $d=0$. Then for all $t\in U$, we have
\begin{align*}
f^{\mathbf{L}}(te_1) = f^{\mathbf{L}}(0) + t(d+1)!\int_{\Delta(A)}G_{\Phi}(x)\lambda(\diff x).
\end{align*}
Hence $f^{\mathbf{L}}$ admits a partial derivative at $0$ and
\begin{align*}
\frac{\partial f^{\mathbf{L}}}{\partial x_1}(a) = (d+1)!\int_{\Delta(A)}G_{\Phi}(x)\lambda(\diff x),
\end{align*}
which varies continuously with respect to $(L,\varphi)$.

We now assume that $d=\dim(X)\geq 1$. We start by a reduction.

\begin{lemma}
\label{lemma:reduction_local_differentiability}
It suffices to prove Theorem \ref{th:local_differentiability_general} in the case where:
\begin{itemize}
	\item[(i)] $A$ is very ample;
	\item[(ii)] there exists a regular rational point $p\in X(k)$;
	\item[(iii)] there exist a non-zero global section $s\in H^0(X,A)$, an irreducible component $E$ of $\div(s)$ and a coordinate system $(z_1,...,z_d)$ of $X$ in $p$ of $\div(s)$ such that $E$ is an effective Cartier divisor whose underlying subscheme is a geometrically integral normal subscheme of $X$ satisfying $E \nsubseteq B_{+}(L)$ and $z_1$ is a local equation of $E$ in $p$.
\end{itemize}  
\end{lemma}

\begin{proof}

We first show that one may assume that $A$ is very ample. Write $(A,\Phi) = (A_1,\Phi_1)- (A_2,\Phi_2)$, where $A_1,A_2$ are both very ample. Let $\mathbf{L}_{A}:=(\overline{A_1},\overline{A_2},\overline{L_2},...,\overline{L_m})$. For all $t\in U$, both $(t,-t)\in O_{\mathbf{L}_A}$ and $(t\in U) \mapsto f^{\mathbf{L}}(te_1)$ is continuously differentiable at $0$ iff $(t\in U) \mapsto f^{\mathbf{L}_A}(t,-t,0,...,0)$ is so. 

By homogeneity, we may assume that $A$ is very ample. If $\dim(X)=1$, by a theorem of Bertini (\cite{Flenner99}, Corollary 3.4.14), the set  
\begin{align*}
\{s \in H^0(X,A) : \div(s) \text{ is geometrically reduced and normal}\},
\end{align*}
is a dense Zariski open set. Since $X$ is geometrically integral, we may assume that there exist $s\in H^{0}(X,A)$ and an irreducible component $E$ of $\div(s)$ such that $E=\{p\}$, where $p\in X(k)$ is a regular rational point. Since $p$ is regular, $E$ is a Cartier divisor on $X$. Moreover, $E$ is geometrically integral and normal and $B_{+}(L)=\emptyset$ since $L$ is ample.

Likewise, in the $d=\dim(X)\geq 2$ case, a theorem of Bertini (\cite{Jouanolou83}, Theorème 6.3 and \cite{Flenner99}, Corollary 3.4.14) implies that the set
\begin{align*}
\{s \in H^0(X,A) : \div(s) \text{ is geometrically integral and normal}\},
\end{align*} 
is a dense Zariski open set. Since $B_{+}(L)\subsetneq X$ is a proper closed subscheme, there exists $s\in H^0(X,A)$ such that $E:=\div(s)$ is a geometrically integral normal subscheme of $X$ satisfying $E \nsubseteq B_{+}(L)$. One also may assume that there exists a regular rational point $p\in E(k)\cap E^{\reg}$ which belongs to $X(k)\cap X^{\reg}$. 

In both cases, as $p$ is both a regular point of $X$ and of $E$, there exists a coordinate system $(z_1,...,z_d)$ of $X$ in $p$ such that $z_1$ is a local equation of $E$ in $p$.
\end{proof}

From now on, we use the same notation and hypotheses of Lemma \ref{lemma:reduction_local_differentiability}. In the following, Okounkov bodies are constructed with respect to the filtrations induced by the lexicographic order on $\bN^d$ and the isomorphism $\widehat{\cO_{X,p}} \cong k[\![z_1,...,z_d]\!]$ defined by the choice of the coordinate from Lemma \ref{lemma:reduction_local_differentiability} $(iii)$. Moreover we may assume that $\mathbf{L}=(\overline{L},\overline{A})$

We first prove the right differentiability of $f^{\mathbf{L}}$. For any $0\leq t\in U$, we have 
\begin{align}
\label{eq:differential_local}
f^{\mathbf{L}}(t) &= (d+1)!\int_{\Delta(L+tA)}G_{(L+tA,\varphi+t\Phi)}(p)\diff p \nonumber\\
&= (d+1)!\int_{0}^{t}\int_{\Delta(L+tA)_{r}}G_{(L+tA,\varphi+t\Phi)}(r,\alpha)\diff\alpha\diff r\nonumber\\
+ &(d+1)!\int_{\Delta(L+tA)_{\geq t}} G_{(L+tA,\varphi+t\Phi)}(p)\diff p \nonumber\\
& = (d+1)!\int_{0}^{t}\int_{\Delta(L+tA)_{r}} G_{(L+tA,\varphi+t\Phi)}(r,\alpha)\diff\alpha\diff r\nonumber\\
+ & (d+1)!\int_{\Delta(L)} G_{(L,\varphi+t(\Phi-\psi_{s}))}(q)\diff q,
\end{align}
where we used 
\begin{align*}
\Delta(L+tA) = \displaystyle\bigcup_{0\leq r < t} \Delta(L+tA)_{r} \cup \Delta(L+tA)_{\geq t}
\end{align*}
for the first equality and Proposition \ref{prop:Nystrom_14.9} for the second one.

We first consider the first term of the RHS of (\ref{eq:differential_local}). For all $r,t\in \bR_{\geq 0}$ with $r\leq t$, let
\begin{align*}
f(r,t) = (d+1)!\int_{\Delta(L+tA)_r} G_{(L+tA,\varphi+t\Phi)}(\alpha)\diff\alpha.
\end{align*}
Assume that $r<t$. Then $L+(t-r)A$ is ample and Theorem \ref{th:LazarsfeldMustata_th:4.26} combined with Propositions \ref{prop:Nystrom_14.9} and \ref{prop:Nystrom_11.10} yield
\begin{align*}
f(r,t) &= (d+1)!\int_{\Delta(L+(t-r)A)_0}G_{(L+(t-r)A,\varphi+t\Phi-r\psi_{s})}(0,\alpha)\diff\alpha\\
& = (d+1)!\int_{\Delta(L_{|E}+(t-r)A_{|E})} G_{\left(L_{|E}+(t-r)A_{|E},P(\varphi+t\Phi-r\psi_{s})_{|E}\right)}(\alpha)\diff\alpha.
\end{align*}
By induction hypothesis, $f$ is a continuous function on $\{(r,t)\in(\bR_{\geq 0})^2 : r\leq t\}$. Let
\begin{align*}
\fonction{F}{\bR_{\geq 0}}{\bR,}{t}{\int_{0}^{t}f(r,t)\diff r.}
\end{align*}
Then
\begin{align*}
\left|\frac{F(t)-F(0)}{t}- f(0,0)\right| = \left|\int_{0}^{t} \frac{f(r,t)-f(0,0)}{t}\diff r \right|\leq \displaystyle\sup_{0\leq r \leq t} |f(r,t)-f(0,0)| \underset{t\to 0}{\longrightarrow}0.
\end{align*}
Hence $F$ is right differentiable in $0$ and we have
\begin{align}
\label{eq:limit_RHS_local_differentiability_line_bundle_slice}
F'(0) = f(0,0) = (d+1)!\int_{\Delta(L_{|E})} G_{(L_{|E},P(\varphi)_{|E})}(\alpha)\diff\alpha.
\end{align}

We now study the second term of the RHS of (\ref{eq:differential_local}). For any $t\in U\setminus\{0\}$, let
\begin{align*}
A(t) := \frac{(d+1)!}{t} \int_{\Delta(L)} \left(G_{(L,\varphi+t(\Phi-\psi_{s}))}-G_{(L,\varphi)}\right)(q)\diff q. 
\end{align*}
Then Proposition \ref{prop:Nystrom_14.3} yields
\begin{align*}
\forall t \in U\setminus\{0\},\quad A(t) = \frac{1}{t}\vol_{\chi}(L,\varphi+t(\Phi-\psi_s),\varphi).
\end{align*}
Now Proposition \ref{prop:differentiability_singular_direction} implies
\begin{align}
\label{eq:limit_RHS_local_differentiability_line_bundle_superior_slice}
\displaystyle\lim_{t\to 0^{+}} A(t) = (d+1)\int_{X^{\an}}(\Phi-\psi_{s})(\diff\dc P(\varphi))^{\wedge d}.
\end{align}
Combining (\ref{eq:limit_RHS_local_differentiability_line_bundle_slice}) and (\ref{eq:limit_RHS_local_differentiability_line_bundle_superior_slice}), we obtain that $f^{\mathbf{L}}$ is right differentiable at $0$ and
\begin{align}
\label{eq:local_differential}
\displaystyle\lim_{t\to 0^{+}} \frac{f^{\mathbf{L}}(t)-f^{\mathbf{L}}(0)}{t} = (d+1)!\int_{\Delta(L_{|E})} G_{(L_{|E},P(\varphi)_{|E})}(\alpha)\diff\alpha \nonumber\\
+ (d+1)\int_{X^{\an}}(\Phi-\psi_{s})(\diff\dc P(\varphi))^{\wedge d}.
\end{align}
Finally, for any family $\mathbf{F}$ satisfying the conditions of Theorem \ref{th:local_differentiability_general}, the function $f^{\mathbf{F}}$ admits a right derivative at $0$ in any direction.

We now consider the left differentiability of $f^{\mathbf{L}}$. By homogeneity, we may assume that $L-A$ is ample. Let $t\in U$ such that $-1<-t<0$. Let $\mathbf{L'}:=(L,L-A)$. Then from the equality
\begin{align*}
L-tA = (1-t)\left(L+\frac{t}{1-t}(L-A)\right),
\end{align*}
we obtain 
\begin{align}
\label{eq:left_derivative}
f^{\mathbf{L}}(-te_1) = (1-t)^{d+1}f^{\mathbf{L'}}\left(\frac{t}{1-t}\right).
\end{align}
Then the right differentiability result obtained above combined with the linearity of the right differential imply
\begin{align*}
\displaystyle\lim_{t\to 0^{+}}\frac{f^{\mathbf{L}}(-te_1)-f(0)}{-t} & =  (d+1)f^{\mathbf{L}}(0) - \lim_{t\to 0^{+}}\frac{f^{\mathbf{L'}}(t)-f(0)}{t}\\
& = (d+1)f^{\mathbf{L}}(0) - \lim_{t\to 0^{+}}\frac{f^{(\overline{L},\overline{L})}(t)-f(0)}{t} + \lim_{t\to 0^{+}}\frac{f^{(\overline{L},\overline{A})}(t)-f(0)}{t}\\
& = \lim_{t\to 0^{+}}\frac{f^{\mathbf{L}}(t)-f(0)}{t}.
\end{align*}
Thus $f^{\mathbf{L}}$ is right and left differentiable at $0$ and the derivatives are equal. Therefore we obtain the general differentiability result of Theorem \ref{th:local_differentiability_general}.

To conclude the proof of Theorem \ref{th:local_differentiability_general}, it suffices to prove that $g:= \frac{\partial f^{\mathbf{L}}}{\partial x_1}$ is continuous at $0$. We use the same notation as above: namely $\mathbf{L}=(L,A)$ and $U=\{t\in\bR : L+tA \text{ is ample}\}$. For any $t\in U$, we denote $(L_t,\varphi_t) := (L+tA,\varphi+t\Phi)$. By linearity of the differential, we may assume that the conditions of Lemma \ref{lemma:reduction_local_differentiability} hold for $A$. Recall that
\begin{align}
\label{eq:local_differential_2}
(f^{\mathbf{L}})'(0) = (d+1)!\int_{\Delta(L_{|E})} G_{(L_{|E},P(\varphi)_{|E})}(\alpha)\diff\alpha \nonumber\\
- (d+1)\int_{X^{\an}}\ln|s|_{\Phi}(\diff\dc P(\varphi))^{\wedge d}.
\end{align}

By induction hypothesis, the first term of the RHS of (\ref{eq:local_differential_2}) varies continuously w.r.t $(L,\varphi)$. Now the end of the proof is given by the following lemma.

\begin{lemma}
\label{lemma:continuity_local_differentiability}
The map
\begin{align*}
(t\in U) \mapsto \int_{X^{\an}}\ln|s|_{\Phi}(\diff\dc P(\varphi_t))^{\wedge d}
\end{align*}
is continuous.
\end{lemma}

\begin{proof}
It suffices to show the continuity at $0$. For all $t\geq 0$, let $\tilde{\varphi}_t := P(\varphi+t\Phi)-tP(\Phi) \in C^0(L)$. For all $0\leq s \leq t$, $P(\varphi+s\Phi)+(t-s)P(\Phi)\in \PSH(L+tA)$ and $P(\varphi+s\Phi)+(t-s)P(\Phi) \leq \varphi + t\Phi$, hence $\tilde{\varphi}_s \leq \tilde{\varphi}_t$. Denote $\tilde{\varphi} := \lim_{t\to 0^+}\tilde{\varphi}_t$, it is a bounded  psh metric on $L$ (cf. Remark \ref{rem:DPSH}) $L$. For all $t\geq 0$, $P(\varphi+t\Phi) \in \PSH(L+tA)$ implies the positivity of the currents $\diff\dc\tilde{\varphi}_t + t\diff\dc P(\Phi)$ and $\diff\dc \tilde{\varphi}$, hence $\tilde{\varphi}\in \PSH(L)$. 

Let $(t_n)_{n\geq 0}\in (O_{\mathbf{L}}\cap \bR_{>0})^{\bN}$ be a decreasing sequence of real numbers with limit $0$. Then $(P(\tilde{\varphi}_{t_n}))_{n\geq 0}$ is a decreasing sequence in $\PSH(L)$, bounded from below by $P(\varphi)$, hence converging to $P(\tilde{\varphi})\in \PSH(L)$ (cf. Proposition \ref{prop:PSH_metrics} $(iv)$ and continuity of the operator $P$). Furthermore, $P(\varphi)\leq P(\tilde{\varphi})\leq\varphi$ yields $P(\tilde{\varphi}) = P(\varphi)$. Note that the properties of the Monge-Ampère operator(cf. \cite{BoucksomEriksson}, \S 8.1) yield the weak convergence
\begin{align*}
\MA(P(\tilde{\varphi}_{t_n})) \displaystyle\rightharpoonup_{n\to +\infty} \MA(P(\varphi)).
\end{align*}
As $\tilde{\varphi}\in\PSH(L)$, we obtain $\tilde{\varphi}=P(\tilde{\varphi})=P(\varphi)$.

For all $n\geq 0$, writing $P(\varphi +t_n\Phi) = \tilde{\varphi}_{t_n} + t_n P(\Phi)$, we have an equality of positive Radon measures on $X^{\an}$ of the form
\begin{align*}
\MA(P(\varphi+t_n\Phi)) = \MA(\tilde{\varphi}_{t_n}) + O(t_n).
\end{align*}
Hence the weak convergence $\MA(P(\varphi+t_n\Phi)) \rightharpoonup_{n\to +\infty} \MA(P(\varphi))$ which concludes the proof of the lemma.
\end{proof}
\end{proof}

\begin{corollary}
\label{cor:differentiability_relative_chi_volume}
Let $L,M$ be line bundles on $X$, where $L$ is ample. Let $\varphi,\psi\in C^0(L)$ and $\Phi,\Psi\in C^0(L)$. Then the limit
\begin{align*}
\displaystyle\lim_{\bQ\ni t\to 0} \frac{\vol_{\chi}(L+tM,\varphi+t\Phi,\psi+t\Psi)-\vol_{\chi}(L,\varphi,\psi)}{t}
\end{align*}
exists. Assume that $M$ satisfies the conditions of Lemma \ref{lemma:reduction_local_differentiability}. Then the limits equals
\begin{align*}
(d+1)\vol_{\chi}(L_{|E},P(\varphi_{|E}),P(\psi_{|E})) - (d+1)\int_{X^{\an}}\ln|s|_{\Phi}(\diff\dc P(\varphi))^{\wedge d}.
\end{align*}
\end{corollary}

\begin{proof}
This is a consequence of Theorem \ref{th:local_differentiability_general} combined with the results of \S \ref{subsub:transformée_concave_R_diviseur}.
\end{proof}

\section{Differentiability over an adelic curve}
\label{sec:differentiability_adelic_curve}

\subsection{Adelic curves}
\label{sub:adelic curves}

We first recall several facts of the theory of adelic curves. The references are\cite{ChenMori,ChenMori21,ChenMoriHSP,CM22_equidistribution}.

\subsubsection{Adelic structure on a field}

An \emph{adelic curve} is the data $S=(K,(\Omega,\cA,\nu),(\va_{\omega})_{\omega\in \Omega})$ where $K$ is a field, $(\Omega,\cA,\nu)$ is a measure space and $(\va_{\omega})_{\omega\in \Omega}$ is a family of absolute values on $K$ satisfying the following condition:
\begin{align*}
\forall a \in K^{\times}, \quad (\omega\in\Omega) \mapsto \ln|a|_{\omega}
\end{align*} 
is $\cA$-measurable and $\nu$-integrable. The adelic curve $S$ is called \emph{proper} if the product formula
\begin{align*}
\forall a\in K^{\times},\quad \int_{\Omega}\ln|a|_{\omega} \nu(\diff\omega) = 0 
\end{align*}
holds. From now on, we fix a proper adelic curve $S=(K,(\Omega,\cA,\nu),(\va_{\omega})_{\omega\in \Omega})$. We further assume that $K$ is perfect and that either $K$ is countable or the $\sigma$-algebra $\cA$ is discrete. We denote by $\Omega_{\ar}$, resp. $\Omega_{\um}$, the set of $\omega\in\Omega$ such that $\va_{\omega}$ is Archimedean, resp. non-Archimedean.

\subsubsection{Adelic vector bundles on $S$}\label{subsub:adelic_vector_bundle} Let $V$ be a finite dimensional vector space over $K$. for all $\omega \in \Omega$, let $V_{\omega}:=V\otimes_{K} K_{\omega}$. A \emph{norm family} on $V$ is a family $\xi=(\|\cdot\|_{\omega})_{\omega\in \Omega}$ such that, for all $\omega\in\Omega$, $\|\cdot\|_{\omega}$ is a norm on $V_{\omega}$ (assumed to be ultrametric if $\omega\in\Omega_{\um}$). Let $\mathbf{e}:=(e_i)_{i=1}^{r}$ be a basis of $V$, we denote by $\xi_{\mathbf{e}}=(\|\cdot\|_{\mathbf{e},\omega})_{\omega\in\Omega}$ the norm family on $V$ defined by
\begin{align*}
\forall \omega\in\Omega,\quad \forall (\lambda_1,...,\lambda_r)\in K_{\omega}, \quad \|\lambda_{1}e_{1}+\cdots\lambda_{r}e_{r}\|_{\mathbf{e},\omega} := \left\{\begin{matrix}
\max_{i=1,...,r}|\lambda_{i}|_{\omega},\quad \text{si }\omega\in\Omega_{\um}, \\
|\lambda_{1}|_{\omega}+\cdots+|\lambda_{r}|_{\omega},\quad \text{si }\omega\in\Omega_{\ar}.
\end{matrix}\right.
\end{align*}

An \emph{adelic vector bundle} on $S$ is the data $\overline{V} = (V,\xi)$ where $V$ is a finite dimensional vector space over $K$ and $\xi=(\|\cdot\|_{\omega})_{\omega\in \Omega}$ is a norm family on $V$ satisfying the following conditions:
\begin{itemize}
	\item[(i)] $\xi$ is \emph{strongly dominated}, namely there exists a basis $\mathbf{e} = (e_i)_{i=1}^r$ of $V$ such that the local distance
	\begin{align*}
	\omega\in\Omega\mapsto d_{\omega}(\xi,\xi_{\mathbf{e}}) := \displaystyle\sup_{x \in V\setminus\{0\}} \left|\ln \frac{\|x\|_{\omega}}{\|x\|_{\mathbf{e},\omega}}\right|
	\end{align*}
is $\nu$-dominated;
	\item[(ii)] $\xi$ is \emph{measurable}, namely for all $s\in V$, $\omega\in\Omega \mapsto \|s\|_{\omega}$ is $\cA$-measurable. 
\end{itemize}


\subsubsection{Arakelov degree, positive degree, slope} Let $\overline{V} = (V,\xi)$ be an adelic vector bundle on $S$, its \emph{Arakelov degree} is defined by 
\begin{align*}
\widehat{\deg}(\overline{V)} := -\int_{\Omega} \ln\|e_1\wedge\cdots\wedge e_r\|_{\omega,\det}\nu(\diff\omega),
\end{align*}
where $(e_i)_{i=1}^{r}$ is an arbitrary basis of $V$ and $\|\cdot\|_{\omega,\det}$ denotes the \emph{determinant norm} on $\det(V)$ defined by
\begin{align*}
\forall \eta\in \det(V), \quad \|\eta\|_{\omega,\det} = \displaystyle\inf_{\eta=s_1\wedge\cdots\wedge s_r} \|s_1\|_{\omega}\cdots\|s_r\|_{\omega}.
\end{align*}
Morally, the Arakelov degree is the analogue of the Euler-Poincaré characteristic of an euclidean lattice.

The \emph{positive degree} of $\overline{V}$ is defined by
\begin{align*}
\widehat{\deg}_{+}(\overline{V}) := \displaystyle\sup_{W\subset V} \widehat{\deg}(\overline{W}) \geq 0.
\end{align*}
It is the counterpart of the number of "small sections" from the classical Arakelov theory (\cite{ChenMori}, \S 4.6). 

If $V\neq \{0\}$, we define its \emph{slope}  by 
\begin{align*}
\widehat{\mu}(\overline{V}) := \frac{\widehat{\deg}(\overline{V})}{\dim(V)},
\end{align*}
and its \emph{maximal slope}, resp. \emph{minimal slope}, by
\begin{align*}
\widehat{\mu}_{\max}(\overline{V}) := \displaystyle\sup_{W\subset V} \widehat\mu(\overline{W}), \quad \text{resp. } \widehat{\mu}_{\min}(\overline{V}) := \inf_{V \twoheadrightarrow W \neq \{0\}} \widehat\mu(\overline{W}),
\end{align*}
where $W$ runs over the set of vector subspaces, resp. of non-zero quotients, of $V$.

\subsubsection{Adelic line bundle on a projective scheme}
\label{subsub:adelic_line_bundle}
Throughout this paragraph, we fix a projective scheme $\pi : X \to \Spec(K)$ over $\Spec(K)$. For all $\omega\in\Omega$, we denote $X_{\omega}:=X\otimes_{K} K_{\omega}$ and by $X_{\omega}^{\an}$ the analytification in the sense of Berkovich of $X_{\omega}$.

Let $E$ be a finite locally free $\cO_{X}$-module. A \emph{metric family} on $E$ is a family $\varphi=(\varphi_{\omega})_{\omega\in\Omega}$ of continuous metrics on $E_{\omega}:=E\otimes_{\cO_{X}} \cO_{X_{\omega}}$. From now on, we fix an invertible $\cO_X$-module $L$ and a metric family $\varphi=(\varphi_{\omega})_{\omega\in\Omega}$ on $L$. Then $-\varphi:=(-\varphi_{\omega})_{\omega\in\Omega}$ is a metric family on $-L$. If $f: Y \to X$ is a morphism of projective scheme over $\Spec(K)$, $f^{\ast}\varphi:=(f_{\omega}^{\ast}\varphi_{\omega})_{\omega\in\Omega}$ is a metric family on $f^{\ast}L$. If $\psi=(\psi_{\omega})_{\omega\in\Omega}$ is a metric family on an invertible $\cO_X$-module $M$, then $\varphi+\psi:=(\varphi_{\omega}+\psi_{\omega})_{\omega\in\Omega}$ is a metric family on $L+M$. Furthermore, the metric family $\varphi$ is called \emph{plurisubharmonic} (\emph{psh} for short) if, for all $\omega\in \Omega$, $\varphi_{\omega}\in \PSH(L_{\omega})$ (cf. Definition \ref{def:psh_metric_unified}).

We mention an example of metric family. Let $V$ be a finite dimensional vector space over $K$ and $\xi=(\|\cdot\|_{\omega})_{\omega\in\Omega}$ be a norm family on $V$. Assume that there exists a projective morphism $f:X\to \bP(V)$ over $\Spec(K)$, then $(f\circ \pi)^{\ast}\xi$ is a metric family on $(f\circ \pi)^{\ast}E$ inducing by quotient a metric family on $L := f^{\ast}\cO_{V}(1)$ called \emph{quotient metric family} induced by $\overline{V}=(V,\xi)$.

The metric family $\varphi$ is called \emph{dominated} if there exist two invertible $\cO_{X}$-modules $L_1,L_2$, respectively endowed with quotient metrics $\varphi_1,\varphi_2$, such that $L=L_1-L_2$ and $\varphi=\varphi_1-\varphi_2$.

The metric family $\varphi$ is called \emph{measurable} if the following conditions are satisfied:
\begin{itemize}
	\item[(i)] for any closed point $P\in X$, the induced norm family $P^{\varphi}\varphi$ on $P^{\ast}L$ is measurable;
	\item[(ii)] let $\omega\in\Omega$ such that $\va_{\omega}$ is trivial. Then, for any valued extension $(L_{\omega},\va_{L,\omega})$ of transcendence degree $1$ of rational exponent of $(K,\va_{\omega})$, for any $K$-morphism $P : \Spec(L_{\omega}) \to X$, the norm family $P^{\ast}\varphi$ is measurable (we refer to \cite{ChenMori}, \S 6.1 for more details). 
\end{itemize}

$\overline{L}=(L,\varphi)$ is an \emph{adelic line bundle} on $X$ if $\varphi$ is both measurable and dominated. We denote by $\cM(L)$ the set of \emph{adelic} metrics on $L$, namely measurable and dominated families of metrics . Assume that $X$ is geometrically reduced and $\overline{L}$ is an adelic line bundle on $X$. For all $\omega\in\Omega$, let $(f_{\omega}(L))_{\omega} = H^{0}(X,L)\otimes_{K} K_{\omega}$ endowed with the supremum norm, denoted by $\|\cdot\|_{\varphi_{\omega}}$. We denote by $f_{\ast}(\varphi)$ the corresponding norm family on $H^0(X,L)$. If $\varphi$ is dominated, then $f_{\ast}(\varphi)$ is strongly dominated (\cite{ChenMori}, Theorem 6.1.13). If $K$ is countable and $\varphi$ is measurable, then $f_{\ast}(\varphi)$ is measurable (\cite{ChenMori}, Theorem 6.1.32).

\subsubsection{Adelic Cartier divisors}

Throughout this paragraph, we fix a geometrically integral normal projective scheme $\pi : X \to \Spec(K)$ over $\Spec(K)$. 

Let $D$ be a Cartier divisor on $X$. A \emph{Green function family} of $D$ is a family $g=(g_{\omega})_{\omega\in\Omega} $ such that, for all $\omega\in\Omega$, $g_{\omega}$ is a Green function of $D_{\omega}$. From \S \ref{subsub:Green_functions}, for all $\omega\in\Omega$, $g_{\omega}$ induces a continuous metric $\cO_{X_{\omega}}(D_{\omega}) := \cO_{X}(D) \otimes_{\cO_X} \cO_{X_{\omega}}$ denoted by $\varphi_{g_{\omega}}$. A pair $(D,g)$ is called an \emph{adelic Cartier divisor} if the metric family $\varphi =(\varphi_{g_{\omega}})_{\omega\in\Omega}$ is adelic. Let $\widehat{\Div}(X)$ be the abelian group of adelic Cartier divisors. We denote by $\widehat{C}^0(X)$ the set of Green function families $g$ of $0$ such that $(0,g)$ is an adelic Cartier divisor. It carries the structure of vector space over $\bR$.

\begin{definition}[\cite{ChenMori}, Definition 6.2.9]
Let $\bK$ be either $\bQ$ or $\bR$. Let $\widehat{\Div}_{\bK}(X)$ be the vector space over $\bK$ defined as the quotient of $\widehat{\Div}(X)\otimes_{\bZ} \bK$ by the vector subspace spanned by elements of the form
\begin{align*}
(0,g_1)\otimes \lambda_1 + \cdots (0,g_n)\otimes \lambda_n - (0,\lambda_1g_1+\cdots\lambda_ng_n),
\end{align*}
where $g_1,...,g_n \in \widehat{C}^0(X)$ and $\lambda_1,...,\lambda_n\in\bK$. The elements of $\widehat{\Div}_{\bK}(X)$ are called $\bK$\emph{-Cartier adelic divisors} on $X$. If $\overline{D}\in \widehat{\Div}_{\bK}(X)$, then we have a height function $h_{\overline{D}} : X(\overline{K}) \to \bR$.
\end{definition}

\begin{example}[\cite{ChenMori}, Example 2.9.11]
Let $s\in K(X)^{\times}$. Then, for all $\omega\in \Omega$, $-\ln|s|_{\omega}$ is a Green function of $\div(s)$. Moreover, the corresponding Green function family is measurable and dominated. Therefore
\begin{align*}
(s\in K(X)^{\times}) \mapsto (\div(s),(-\ln|s|_{\omega})_{\omega\in\Omega}) \in \widehat{\Div}(X)
\end{align*}
is a group homomorphism whose image is the set of \emph{principal adelic Cartier divisors}. This homomorphism induces a $\bK$-linear map $K(X)^{\times} \otimes_{\bZ}\bK \to \widehat{\Div}_{\bK}(X)$ whose image is the set of \emph{principal} $\bK$\emph{-Cartier adelic divisors}.
\end{example}

Let $(D,g)\in\widehat{\Div}_{\bK}(X)$ and let $s\in H^{0}_{\bK}(X,D)$. For all $\omega\in\Omega$, we denote
\begin{align*}
\|s\|_{g_{\omega}} := \displaystyle\sup_{x\in X_{\omega}^{\an}} \{(|s|_{\omega}\exp(-g_{\omega}))(x)\}.
\end{align*}
Then denote by $\xi_{g} := (\|\cdot\|_{g_{\omega}})_{\omega\in\Omega}$ the corresponding norm family on $H^{0}_{\bK}(X,D)$ (if $\bK$ is either $\bQ$ or $\bR$, $X$ is assumed to be normal so that $H^{0}_{\bK}(X,D)$ is a vector subspace of $k(X)$). Theorem 6.2.18 of \cite{ChenMori} then ensures that $\pi_{\ast}(D,g) := (H^{0}_{\bK}(X,D),\xi_{g})$ is an adelic vector bundle on $S$

\subsubsection{Arithmetic intersection product}

Let $X$ be a projective scheme over $\Spec(K)$ of dimension $d\geq 1$. An adelic line bundle $\overline{L}=(L,\varphi)$ on $X$ is called \emph{dpsh} if we can write $(L,\varphi) = (A_1,\varphi_1)-(A_2,\varphi_2)$ where $A_1,A_2$ are ample and $\varphi_1,\varphi_2$ are psh. For $i=0,...,d$, let $\overline{L_i}=(L_i,\varphi_i)$ be a dpsh adelic line bundle on $X$. Then an arithmetic intersection product $(\overline{L_0}\cdot \overline{L_1}\cdots \overline{L_d})_S$ is defined in \cite{ChenMori21}. This arithmetic intersection product is multi-linear with respect to the $\overline{L_i}$. 

\subsubsection{$\chi$-volume, arithmetic Hilbert-Samuel formula}
\label{subsub:chi_volume_arithmetic_Hilbert-Samuel}

Let $\bK$ be either $\bZ$, $\bQ$ or $\bR$. Throughout this paragraph, we fix a geometrically integral normal projective scheme of dimension $d$ $\pi : X \to \Spec(K)$ over $\Spec(K)$ and an adelic $\bK$-Cartier divisor $\overline{D}=(D,g)$ on $X$. The $\chi$\emph{-volume} of $\overline{D}$ is the superior limit
\begin{align*}
\widehat{\vol}_{\chi}(\overline{D})=\displaystyle\limsup_{n\to+\infty} \frac{\widehat{\deg}(\pi_{\ast}(n\overline{D}))}{n^{d+1}(d+1)!}.
\end{align*}
If $\bK=\bZ$ and $D=L$ is semiample and big, (\cite{ChenMoriHSP} Theorem-Definition 3.1.2) yields the convergence (to $\widehat{\vol}_{\chi}(\overline{L})$) of the sequence
\begin{align*}
\left(\frac{\widehat{\deg}(\pi_{\ast}(n\overline{L}))}{n^{d+1}(d+1)!}\right)_{n\in\bN_{>0}}.
\end{align*}
The adelic line bundle $\overline{L}$ is called \emph{relatively ample} if $L$ is ample and $\varphi$ is psh. In that case, (\cite{ChenMoriHSP} Theorem 4.5.1) gives the equality
\begin{align}
\label{eq:HS_arithmétique_rel_ample}
\widehat{\vol}_{\chi}(\overline{L}) = (\overline{L}^{d+1})_{S}.
\end{align}
This equality can be extended to the case where $L$ is only assumed to be semiample (\cite{Luo23}, Theorem 3.7).

In \cite{Luo23}, Luo proved the following continuity result for the $\chi$-volume. 
\begin{theorem}[\cite{Luo23}, Theorem 4.5]
Let $\overline{D}_1=(D_1,g_1),...,\overline{D}_r=(D_r,g_r)$ be adelic $\bQ$-Cartier divisors on $X$, where, for any $i=1,...,r$, $D_i$ is semiample. Then the function $\widehat{\vol}_{\chi}(\cdot)$ is continuous on 
\begin{align*}
C_{\bR}(\overline{D}_1,...,\overline{D}_r) := \left\{\lambda_1\overline{D}_1+\cdots+\lambda_r\overline{D}_r : \lambda_1,...,\lambda_r\in \bR_{\geq 0}\right\}.
\end{align*}
\end{theorem} 

\subsection{Complements on the arithmetic $\chi$-volume}
\label{sub:complement_chi_volume}

Throughout this paragraph, we fix a proper adelic curve $S=(K,(\Omega,\cA,\nu),(\va_{\omega})_{\omega\in\Omega})$ where $K$ is a perfect countable field and a geometrically integral normal projective scheme $X$ of dimension $d$ over $\Spec(K)$.

Let $L$ be an invertible $\cO_X$-module. For all $\omega\in \Omega$, recall that $K_{\omega}$ denotes the completion of $K$ with respect to $|\cdot|_{\omega}$, $X_{\omega} := X \otimes_{K} K_{\omega}$ and $L_{\omega} := L \otimes_{\cO_{X}} \cO_{X_{\omega}}$. Note that, for all $\omega\in\Omega$, $X_{\omega}$ is a geometrically integral normal projective scheme of dimension $d$ over $\Spec(K_{\omega})$. We fix a monomial order $\leq$ on $\bN^d$. We assume that there exists a regular rational point $p\in X(K)$. 

For any integer $n\geq 0$, let $V_n:=H^{0}(X,L^{\otimes n})$. Recall that in \S \ref{sub:Okounkov_bodies_and_slopes}, we have defined a $\bN^d$-filtration on $V_n$ and we denote by $(\gr^{\alpha}(V_n))_{\alpha\in \bN^d}$ its graded homogeneous components. Moreover, the non-zero homogeneous components in the previous decomposition are indexed by a certain finite semi-group $\Gamma(V_n)$. Finally, let $\gr(V_n) := \bigoplus_{\alpha\in \Gamma(V_n)} \gr^{\alpha}(V_n)$. 

We further assume that $\varphi\in \cM(L)$ is an adelic metric family on $L$. For any integer $n\geq 0$, the metric family induces a measurable and dominated norm family on $V_n$ denoted by $\xi_{n\varphi}$. For any $\alpha\in \Gamma(V_n)$, the subquotient norm family induced on $\gr^{\alpha}(V_n)$ yields a measurable and dominated norm family on $\gr^{\alpha}(V_n)$ denoted by $\xi_{n\varphi}^{\alpha}$. Finally, taking the orthogonal direct sum of all adelic vector bundles $(\gr^{\alpha}(V_n),\xi_{n\varphi}^{\alpha})$, for $\alpha$ we obtain an adelic vector bundle denoted by $\overline{\gr(V_n)} := (\gr(V_n), \widehat{\xi}_{n\varphi}=(\|\cdot\|_{\widehat{n\varphi}_{\omega}})_{\omega\in\Omega})$. Moreover, from (\cite{ChenMori}, Propositions 4.3.10 and 4.3.13), we have the following estimate.
\begin{align}
\label{eq:estimate_degree}
\forall n\in \bN, \quad \mid\widehat{\deg}(V_n,\xi_{n\varphi}) -\widehat{\deg}(\gr(V_n), \widehat{\xi}_{n\varphi})\mid \leq \frac{1}{2}\dim_{K}(V_n)\ln\dim_{K}(V_n)\nu(\Omega_{\ar}).
\end{align}

Recall that, for any integer $n\geq 0$, $\overline{\gr(V_n)}$ has a canonical basis $(s_{(n,\alpha)})_{\alpha\in\Gamma(V_n)}$. Let $\omega\in\Omega$. Recall that in \S \ref{sub:Okounkov_bodies_and_slopes}, a concave transform $G_{\varphi_{\omega}}$ has been defined on the Okounkov body $\Delta(L_{\omega})$. Moreover, we have
\begin{align*}
\widehat{\deg}(\gr(V_n)) &= -\int_{\Omega}\ln\|\bigwedge_{\alpha\in\Gamma(V_n)}s_{(n,\alpha)}\|_{\widehat{n\varphi}_{\omega},\det}\nu(\diff \omega)\\
& = \int_{\Omega}\widehat{\deg}_{\omega}(\gr(V_n)_{\omega},\widehat{n\varphi}_{\omega},\eta_{n,\omega})\nu(\diff \omega),
\end{align*}
where, for any $\omega\in \Omega$, $\eta_{n,\omega}$ denotes the monomial norm on $\gr(V_n)_{\omega}$ and $\widehat{\deg}_{\omega}$ denotes the local degree introduced in \S \ref{sub:Okounkov_bodies_and_slopes} (we refer to this paragraph for the details). 

Moreover, for any $\omega\in\Omega$, (\cite{ChenMaclean15}, Remark 4.4) yields the equality
\begin{align*}
\displaystyle\lim_{n\to\infty}\frac{\widehat{\deg}_{\omega}(\gr(V_n)_{\omega},\widehat{n\varphi}_{\omega},\eta_{n,\omega})}{n^{d+1}/(d+1)!} = \int_{\Delta(L_{\omega})} G_{\varphi_{\omega}}(x)\lambda(\diff x).
\end{align*}

Finally, the dominated convergence theorem implies the $\nu$-integrability of the LHS of the above equality and the estimate (\ref{eq:estimate_degree}) end the proof of the following proposition, which can be seen as an analogue of Proposition 2.13 in \cite{CM22_equidistribution} in our context. 

\begin{proposition}
\label{prop:complement_chi_volume}
With the above notation, we have the equality
\begin{align}
\label{eq:complement_chi_volume}
\widehat{\vol}(L,\varphi) = \int_{\Omega}\int_{\Delta(L_{\omega})} G_{\varphi_{\omega}}(x)\lambda(\diff x)\nu(\diff \omega).
\end{align}
\end{proposition}

Finally, let $\overline{L}=(L,\varphi)$ be an adelic line bundle on $X$, we assume that $L$ is semiample. Then, for any $\omega\in\Omega$, the set $\PSH(L_{\omega})$ is non-empty. We denote by $P(\varphi)$ the metric family $(P(\varphi_{\omega}))_{\omega\in\Omega}$. In general, it is not clear that $P(\overline{L}) := (L,P(\varphi))$ is an adelic line bundle on $X$. Nonetheless, Proposition \ref{prop:Boucksom_6.4} (1) yields 
\begin{align*}
\forall s\in H^{0}(X,L),\quad \forall \omega\in\Omega, \quad \|s\|_{\varphi_{\omega}} =\|s\|_{P(\varphi_{\omega})}. 
\end{align*}
Hence the quantity 
\begin{align*}
\widehat{\vol}_{\chi}(P(\overline{L})) := \widehat{\vol}_{\chi}(\overline{L})
\end{align*}
is well-defined.

\subsection{Proof of Theorem \ref{th:global_differentiability_intro}}

Throughout this paragraph, we fix a proper adelic curve $S=(K,(\Omega,\cA,\nu),(\va_{\omega})_{\omega\in\Omega})$ where $K$ is a perfect countable field and a geometrically integral normal projective scheme $X$ of dimension $d$ over $\Spec(K)$. We assume that, for all $\omega\in\Omega$, the continuity of envelopes (Conjecture \ref{conj:BoucksomEriksson_7.30}) holds on $K_{\omega}$.

We start by introducing some notation. Let $L$ be an invertible $\cO_X$-module. For all $\omega\in \Omega$, recall that $K_{\omega}$ denotes the completion of $K$ with respect $\phi(\omega)$, $X_{\omega} := X \otimes_{K} K_{\omega}$ and $L_{\omega} := L \otimes_{\cO_{X}} \cO_{X_{\omega}}$. Note that, for all $\omega\in\Omega$, $X_{\omega}$ is a geometrically integral normal projective scheme of dimension $d$ over $\Spec(K_{\omega})$. 

Let $m\geq 1$ be an integer. For all $i=0,...,m$, let $\overline{L_i}:=(L_i,\varphi_i)$ where
\begin{itemize}
	\item[(i)] $L_i$ is a line bundle on on $X$ and $L:=L_0$ is ample;
	\item[(ii)] $\varphi_i\in \cM(L_i)$ is an adelic metric family and $\varphi:=\varphi_0$ is a psh metric family.
\end{itemize}
For all $i=0,...,m$, let $s_i$ be a non-zero rational section of $L_i$, $D_i:=\div(s_i)$ be its corresponding Cartier divisor and let $g_i$ be the family of Green functions associated to $\varphi_i$. By abuse of notation, for all $a\in \bR^{m}$, we denote $\overline{L_a}:=(L_a,\varphi_a)$, where
\begin{align*}
L_a := D_0 + \displaystyle\sum_{i=1}^{m} a_i D_i,\quad \varphi_a := g_0 + \sum_{i=1}^{m} a_i g_i.
\end{align*}
Let $\mathbf{L}=(\overline{L},\overline{L_1},...,\overline{L_m})$ and $O_{\mathbf{L}} := \{a\in \bR^m : L_a \text{ is ample}\}$. We will study the function
\begin{align*}
\fonction{\widehat{\vol}_{\chi}^{\mathbf{L}}}{O}{\bR,}{a}{\widehat{\vol}_{\chi}(\overline{L_a}).}
\end{align*}

\begin{theorem}
\label{th:global_differentiability}
For any family $\mathbf{L}$ as above, the function $\widehat{\vol}_{\chi}^{\mathbf{L}}$ is continuously differentiable on $O_{\mathbf{L}}$.
\end{theorem}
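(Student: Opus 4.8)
The plan is to deduce the statement from the local result Theorem~\ref{th:local_differentiability_general} via the local--global principle of Proposition~\ref{prop:gloabl_volume_integral}, and then to differentiate under the integral sign. For each $\omega\in\Omega$ I would first introduce the base-changed family $\mathbf{L}_\omega:=\big((\overline{L_1})_\omega,\dots,(\overline{L_m})_\omega\big)$ over the complete valued field $(K_\omega,\va_\omega)$. Since ampleness is stable under base change, $O_{\mathbf{L}}\subseteq O_{\mathbf{L}_\omega}$, and $X_\omega$ remains geometrically integral, normal and projective; as continuity of envelopes is assumed on each $K_\omega$, Theorem~\ref{th:local_differentiability_general} shows each $\vol_\chi^{\mathbf{L}_\omega}$ is $C^1$ on $O_{\mathbf{L}}$. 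For $a\in O_{\mathbf{L}}$ the divisor $L_a$ is ample, hence semiample and big, so Proposition~\ref{prop:gloabl_volume_integral} yields
\begin{equation}
\label{eq:plan_local_global}
\widehat{\vol}_\chi^{\mathbf{L}}(a)=\int_{\Omega}\vol_\chi^{\mathbf{L}_\omega}(a)\,\nu(\diff\omega),
\end{equation}
and the whole problem reduces to justifying differentiation under the integral in \eqref{eq:plan_local_global} together with continuity of the resulting integral in $a$.

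The core of the argument — and what I expect to be the main obstacle — is a local uniform $\nu$-domination estimate for the local gradients: given a compact $N\subset O_{\mathbf{L}}$, one must produce a $\nu$-integrable $C\colon\Omega\to\bR_{\geq0}$ with $\sup_{a\in N}\big|\nabla_a\vol_\chi^{\mathbf{L}_\omega}(a)\big|\leq C(\omega)$ for every $\omega$. To obtain it I would use the explicit form of the local derivative given by \eqref{eq:local_differential} and by the identities in the proof of Theorem~\ref{th:local_differentiability_general}: each partial $\partial_i\vol_\chi^{\mathbf{L}_\omega}(a)$ decomposes as a relative $\chi$-volume on an auxiliary divisor $E$ of dimension $d-1$ (for envelope metrics on $(L_a)_\omega|_E$) plus Monge--Amp\`ere energy integrals of the form $\int_{X_\omega^{\an}}(\text{difference of metrics})\,(\diff\dc P(\cdot))^{\wedge d}$ and $\int_{X_\omega^{\an}}(\text{difference of metrics})\wedge\MA_{d-1}(\cdot,\cdot)$. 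Each such integral is bounded, up to a constant depending only on $N$ and on the geometric intersection numbers of the $L_i$ on $X$, by the $X_\omega^{\an}$-supremum norm of the relevant difference of metrics; and those supremum norms are $\nu$-dominated, uniformly for $a\in N$, precisely by the dominancy conditions imposed on the adelic metric families $\varphi_i,\psi_i$ (equivalently on the adelic Green function families $g_i,h_i$). The term supported on $E$ is handled by induction on $d=\dim X$. I would also record that $\omega\mapsto\partial_i\vol_\chi^{\mathbf{L}_\omega}(a)$ is $\cA$-measurable, being the pointwise limit as $t\to0$ along $\bQ$ of the difference quotients $\tfrac1t\big(\vol_\chi^{\mathbf{L}_\omega}(a+te_i)-\vol_\chi^{\mathbf{L}_\omega}(a)\big)$, each of which is measurable by Proposition~\ref{prop:gloabl_volume_integral}.

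With \eqref{eq:plan_local_global} and this domination in hand the rest is routine: applying the mean value theorem to $t\mapsto\vol_\chi^{\mathbf{L}_\omega}(a+te_i)$ fibrewise, the difference quotients of $\widehat{\vol}_\chi^{\mathbf{L}}$ are dominated near $a$ by $C$, so dominated convergence gives the existence of all partial derivatives together with $\partial_i\widehat{\vol}_\chi^{\mathbf{L}}(a)=\int_{\Omega}\partial_i\vol_\chi^{\mathbf{L}_\omega}(a)\,\nu(\diff\omega)$; since each integrand is continuous in $a$ (Theorem~\ref{th:local_differentiability_general}) and the family is locally uniformly $\nu$-dominated, a second dominated-convergence argument shows $a\mapsto\partial_i\widehat{\vol}_\chi^{\mathbf{L}}(a)$ is continuous, so $\widehat{\vol}_\chi^{\mathbf{L}}\in C^1(O_{\mathbf{L}})$. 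Finally, to recover the value of the derivative claimed in Theorem~\ref{th:global_differentiability_intro}, I would use Theorem~\ref{th:Boucksom21_6.7} and the orthogonality property (Proposition~\ref{prop:orthognality_property}) to rewrite $\partial_i\vol_\chi^{\mathbf{L}_\omega}(a)$ as $(d+1)$ times the mixed Monge--Amp\`ere integral forming the $\omega$-component of the arithmetic intersection products of \cite{ChenMori21}, and then integrate over $\Omega$, using their multilinearity, to obtain
\begin{equation*}
\frac{\partial\widehat{\vol}_\chi^{\mathbf{L}}}{\partial x_i}(a)=(d+1)\big((L_a,\varphi_a)^d\cdot(D_i,g_i)\big)_S-(d+1)\big((L_a,\psi_a)^d\cdot(D_i,h_i)\big)_S.
\end{equation*}
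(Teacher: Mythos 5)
Your proposal follows essentially the same route as the paper's proof: the local differentiability result (Theorem~\ref{th:local_differentiability_general}), the local--global formula of Proposition~\ref{prop:gloabl_volume_integral}, and a $\nu$-domination of the local derivatives based on the explicit decomposition \eqref{eq:local_differential} so as to differentiate under the integral sign (mean value theorem plus dominated convergence, then a second dominated-convergence argument for continuity of the derivative). The only points to make explicit are the Bertini-type reduction carried out over $K$ itself, so that the very ample $A$, the section $s$, the divisor $E$ and the point $p$ entering \eqref{eq:local_differential} are independent of $\omega$ (this is what makes the dominating functions, e.g.\ $\omega\mapsto\ln\|s\|_{\Phi_\omega}$ and the metric distances, measurable and $\nu$-integrable uniformly in $t$), and that in your closing formula the envelopes $P(\varphi_a)$, $P(\psi_a)$ should appear, since the metric families are only assumed continuous rather than plurisubharmonic.
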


\begin{proof}

As in the proof of Theorem \ref{th:local_differentiability_general}, it is enough to prove that $\vol_{\chi}^{\mathbf{L}}$ admits a continuous partial derivatives along the direction $e_1$ at $0$. Let $\overline{A} := \overline{L_{1}} =(A,\Phi)$ and 
\begin{align*}
U := \{t\in \bR : te_1 \in O_{\mathbf{L}}\},
\end{align*}
it is an open subset of $\bR$.

\begin{lemma}
\label{lemma:reduction_global_differentiability}
It suffices to prove Theorem \ref{th:global_differentiability} in the case where:
\begin{itemize}
	\item[(i)] $A$ is very ample;
	\item[(ii)] there exists a regular rational point $p\in X(K)$;
	\item[(iii)] there exist a non-zero global section $s\in H^0(X,A)$, an irreducible component $E$ of $\div(s)$ and a coordinate system $(z_1,...,z_d)$ of $X$ in $p$ of $\div(s)$ such that $E$ is an effective Cartier divisor whose underlying subscheme is a geometrically integral normal subscheme of $X$ satisfying $E \nsubseteq B_{+}(L)$ and $z_1$ is a local equation of $E$ in $p$.
\end{itemize}  
\end{lemma}

\begin{proof}
Note that the proof of Lemma \ref{lemma:reduction_local_differentiability} is purely of algebro-geometric nature. Hence we can reproduce it \emph{mutatis mutandis}. 
\end{proof}

From now on, we use the same notation and hypotheses of Lemma \ref{lemma:reduction_global_differentiability} and assume that $\mathbf{L}=(\overline{L},\overline{A})$. For any $\omega\in \Omega$, let $E_{\omega} := E \otimes_{\Spec(K)} \Spec(K_{\omega})$, $s_\omega:= \pi_{\omega}^{\ast}s$ and $p_{\omega}:=\pi_{\omega}^{\ast}p$ where $\pi_{\omega}: X_{\omega} \to X$. Note that $A_{\omega},s_{\omega},E_{\omega}$ and $p_{\omega}$ satisfy conditions $(i)$-$(iii)$ of Lemma \ref{lemma:reduction_local_differentiability}. Let $\mathbf{L}_{\omega}:=(\overline{L_{1,\omega}},...,\overline{L_{m,\omega}})$ and set
\begin{align*}
\forall t \in U,\quad f^{\mathbf{L}_{\omega}}(t) = (d+1)\int_{\Delta(L_{\omega}+tA_{\omega})}G_{\varphi_{\omega}+t\Phi_{\omega}}(x)\diff x. 
\end{align*} From the proof of Theorem \ref{th:local_differentiability_general}, if we let , we have the equality
\begin{align*}
(f^{\mathbf{L}_{\omega}})'(0) &= (d+1)!\int_{\Delta(L_{|E_{\omega}})} G_{(L_{|E_{\omega}},P(\varphi_{\omega})_{|E_{\omega}})}(\alpha)\diff\alpha - (d+1)\int_{X_{\omega}^{\an}}\ln|s|_{\Phi_{\omega}}(\diff\dc P(\varphi_{\omega}))^{\wedge d} \\
&= (d+1)!\int_{\Delta(L_{|E_{\omega}})} G_{(L_{|E_{\omega}},\varphi_{\omega|E_{\omega}})}(\alpha)\diff\alpha - (d+1)\int_{X_{\omega}^{\an}}\ln|s|_{\Phi_{\omega}}(\diff\dc \varphi_{\omega})^{\wedge d},
\end{align*}
since $\varphi_{\omega}=P(\varphi_{\omega})$.

Let $t\in U$. Then conditions (i)-(iii) of Lemma \ref{lemma:reduction_local_differentiability} are satisfied by $s_{\omega},p_{\omega}$ at $L_t=L+tA$. Hence we obtain
\begin{align}
\label{eq:local_partial_derivative_segment}
(f^{\mathbf{L}_{\omega}})'(t) = (d+1)!\int_{\Delta(L_{|E_{\omega}}+tA_{|E_{\omega}})} G_{(L_{|E_{\omega}+tA_{|E_{\omega}})},P(\varphi_{\omega}+t\Phi_{\omega})_{|E_{\omega}})}(\alpha)\diff\alpha \\
- (d+1)\int_{X_{\omega}^{\an}}\ln|s_{\omega}|_{\Phi_{\omega}}(\diff\dc P(\varphi_{\omega}+t\Phi_{\omega}))^{\wedge d}\nonumber.
\end{align}

Fix $t_0\in U$ and let $(t_n)_{n\geq 1}\in [0,t_0]^{\bN}$ be a decreasing sequence with limit $0$. For all $n\geq 1$, for all $\omega\in \Omega$, denote 
\begin{align*}
A_{n}(\omega) := \frac{f^{\mathbf{L}_{\omega}}(t_n)-f^{\mathbf{L}_{\omega}}(0)}{t_n}.
\end{align*}
Theorem \ref{th:local_differentiability_general} yields the pointwise convergence of the sequence $(A_n)_{n\geq 1}$ to the function
\begin{align*}
A : (\omega\in \Omega) \mapsto (f^{\mathbf{L}_{\omega}})'(0).
\end{align*}
For all $n\geq 1$, for all $\omega\in\Omega$, the mean value theorem yields
\begin{align*}
|A_n(\omega)| \leq \displaystyle\sup_{t\in [0,t_0]} (f^{\mathbf{L}_{\omega}})'(t).
\end{align*}

We now state the fundamental domination result.

\begin{lemma}
\label{lemma:domination_condition}
The function
\begin{align*}
B : (\omega\in\Omega) \mapsto \displaystyle\sup_{t\in [0,t_0]} (f^{\mathbf{L}_{\omega}})'(t)
\end{align*}
is $\nu$-integrable.
\end{lemma}

\begin{proof}
Let $t\in [0,t_0]$ and $\omega\in\Omega$. Using the same notation as above, let 
\begin{align*}
B_{1,t}(\omega) &:= \int_{\Delta(L_{|E_{\omega}}+tA_{|E_{\omega}})} G_{(L_{|E_{\omega}+tA_{|E_{\omega}})},P(\varphi_{\omega}+t\Phi_{\omega})_{|E_{\omega}})}(\alpha)\diff\alpha,\\
B_{2,t}(\omega) &:= \int_{X_{\omega}^{\an}}\ln|s_{\omega}|_{\Phi_{\omega}}(\diff\dc P(\varphi_{\omega}+t\Phi_{\omega}))^{\wedge d}..
\end{align*} 
Equality (\ref{eq:local_partial_derivative_segment}) yields
\begin{align*}
\forall t\in [0,t_0],\quad \left|(f^{\mathbf{L}_{\omega}})'(t)\right| \leq \frac{1}{(d+1)!}|B_{1,t}(\omega)|+\frac{1}{d+1}|B_{2,t}(\omega)|.
\end{align*} 
Hence it is enough to show that, for all $i=1,2$, the function
\begin{align*}
B_i : (\omega\in\Omega) \mapsto \displaystyle\sup_{t\in[0,t_0]} |B_{i,t}(\omega)|,
\end{align*}
is $\nu$-integrable.

We first consider $B_1$. Let $t\in[0,t_0]$ and $\omega\in \Omega$. By homogeneity, we may assume that $L-A$ is ample and effective. Then Remark \ref{rem:concave_transform_finite_ample_case} gives the finiteness of the concave transforms $G_{(L_{\omega}+tA_{\omega},\varphi_{\omega}+t\Phi_{\omega})}$, $G_{(tL_{\omega}-tA_{\omega},t\varphi_{\omega}-t\Phi_{\omega})}$, $G_{(L_{\omega},\varphi_{\omega})}$ and $G_{(tA_{\omega},t\Phi_{\omega})}$. Moreover, the hypothesis on $A$ allows to use (\cite{Wilms22}, Theorem 1.2), which yields the equalities
\begin{align}
\label{eq:equality_Okounkov_bodies}
(1+t)\Delta(L_{\omega}) = \Delta(L_{\omega}+tA_{\omega}) + t\Delta(L_{\omega}-A_{\omega}) \quad\text{and}\quad \Delta(L_{\omega}+tA_{\omega}) = \Delta(L_{\omega}) + t\Delta(A_{\omega}).
\end{align}
Now (\cite{ChenMori}, Remark 6.3.22) implies 
\begin{align*}
&\forall (x,y) \in \Delta(L_{\omega}+tA_{\omega})\times\Delta(tL_{\omega}-tA_{\omega}),\\
& G_{(L_{\omega}+tA_{\omega},\varphi_{\omega}+t\Phi_{\omega})}(x) + G_{(tL_{\omega}-tA_{\omega},t\varphi_{\omega}-t\Phi_{\omega})}(y) \leq G_{\left((1+t)L_{\omega},(1+t)\varphi_{\omega}\right)}(x+y),\\
&\forall (\alpha,\beta)\in\Delta(L_{\omega})\times\Delta(tA_{\omega}),\\
&G_{(L_{\omega},\varphi_{\omega})}(\alpha) + G_{(tA_{\omega},t\Phi_{\omega})}(\beta) \leq G_{(L_{\omega}+tA_{\omega},\varphi_{\omega}+t\Phi_{\omega})}(\alpha+\beta).
\end{align*}
Combining these inequalities  with (\ref{eq:equality_Okounkov_bodies}) and Proposition \ref{prop:Nystrom_14.9}, we obtain the inequality
\begin{align*}
A_{1,t}(\omega) := \int_{\Delta(L_{|E_{\omega}})} G_{(L_{|E_{\omega}},P(\varphi_{\omega})_{|E_{\omega}})}(\alpha)\diff\alpha + t\int_{\Delta(A_{|E_{\omega}})} G_{(A_{|E_{\omega}},P(\Phi_{\omega})_{|E_{\omega}})}(\beta)\diff\beta \leq B_{1,t}(\omega)\\
 \leq (1+t)\int_{\Delta(L_{|E_{\omega}})} G_{(L_{|E_{\omega}},P(\varphi_{\omega})_{|E_{\omega}})}(\alpha)\diff\alpha -t\int_{\Delta(L_{|E_{\omega}}-A_{|E_{\omega}})} G_{(L_{|E_{\omega}}-A_{|E_{\omega}},P(\varphi_{\omega}-\Phi_{\omega})_{|E_{\omega}})}(\beta)\diff\beta =: A_{2,t}(\omega).
\end{align*}
Now Proposition \ref{prop:complement_chi_volume} implies that $(\omega\in \Omega) \sup_{t\in[0,t_0]} |A_{1,t}(\omega)|$ and $(\omega\in \Omega) \sup_{t\in[0,t_0]} |A_{2,t}(\omega)|$ are both $\nu$-integrable. Henceforth $B_1$ is $\nu$-integrable.

We now prove the integrability of $B_2$. For all $t\in [0,t_0]$, for all $\omega \in \Omega$, we have
\begin{align*}
|B_{2,t}(\omega)| \leq |\ln\|s\|_{\Psi_{\omega}}| \vol(L_{\omega}+tA_{\omega}) =  |\ln\|s\|_{\Psi_{\omega}}| \vol(L+tA)\leq C|\ln\|s\|_{\Psi_{\omega}}|,
\end{align*}
where $C>0$  is a constant independent on $\omega$ given by the continuity of the map $(t\in [0,t_0])\mapsto \vol(L+tA)$. As $\Psi \in \cM(A)$, the function $(\omega\in\Omega) \mapsto \ln\|s\|_{\Psi_{\omega}}$ is $\nu$-integrable (\cite{ChenMori}, Proposition 6.2.12). Hence $B_2$ is $\nu$-integrable.
\end{proof}

Combining Lemma \ref{lemma:domination_condition}, Proposition \ref{prop:complement_chi_volume} and the dominated convergence theorem we obtain the equality
\begin{align*}
\frac{\partial \widehat{\vol}_{\chi}^{\mathbf{L}}}{\partial x_1}(0) := \displaystyle\lim_{n\to+\infty} \frac{\widehat{\vol}_{\chi}^{\mathbf{L}}(t_ne_1)-\widehat{\vol}_{\chi}^{\mathbf{L}}(0)}{t_n} = \int_{\Omega} \frac{f^{\mathbf{L}_{\omega}}(t_n)-f^{\mathbf{L}_{\omega}}(0)}{t_n}\nu(\diff\omega) = \int_{\Omega}(f^{\mathbf{L}_{\omega}})'(0) \nu(\diff\omega).
\end{align*}

The above equality yields the right differentiability of $\widehat{\vol}_{\chi}^{\mathbf{L}}$ along the direction $e_1$. To show the left differentiability, one can combine (\ref{eq:left_derivative}) for all $\omega\in\Omega$ with the domination result of Lemma \ref{lemma:domination_condition}.

Ultimately, the function $\widehat{\vol}_{\chi}^{\mathbf{L}}$ admits a partial derivative along the first coordinate in any $a\in O_{\mathbf{L}}$. The function $\frac{\partial \widehat{\vol}_{\chi}^{\mathbf{L}}}{\partial x_1}$ is continuous on $O_{\mathbf{L}}$ (e.g. by applying Lemma \ref{lemma:domination_condition} along with the dominated convergence theorem). Hence $\widehat{\vol}_{\chi}^{\mathbf{L}}$ is continuously differentiable on $O_{\mathbf{L}}$.
\end{proof}

\begin{corollary}
\label{corollary:formula_global_derivative}
Let $\overline{L}=(L,\varphi)$ be a relatively ample line bundle on $X$. Let $\overline{M}=(M,\Phi)$ be any adelic line bundle on $X$. Then we have
\begin{align}
\label{eq:formula_global_derivative}
\displaystyle\lim_{t\to 0} \frac{\widehat{\vol}_{\chi}(\overline{L}+t\overline{M})-\widehat{\vol}_{\chi}(\overline{L})}{t} = (d+1)\left(\overline{L}^{d}\cdot \overline{M}\right)_{S}.
\end{align}
\end{corollary}

\begin{proof}
Using the same notation and hypotheses as in the proof of Theorem \ref{th:global_differentiability}, and we further assume that $\varphi$ is a psh metric family. Then we have
\begin{align*}
\frac{\partial \widehat{\vol}_{\chi}^{\mathbf{L}}}{\partial x_1}(0) &= (d+1)!\int_{\Omega}\left(\int_{\Delta(L_{|E_{\omega}})} G_{(L_{|E_{\omega}},\varphi_{\omega|E_{\omega}})}(\alpha)\diff\alpha\right)\nu(\diff\omega) - (d+1)\int_{\Omega}\left(\int_{X_{\omega}^{\an}}\ln|s_{\omega}|_{\Phi_{\omega}}(\diff\dc \varphi_{\omega})^{\wedge d}\right)\nu(\diff\omega)\\
&=  (d+1)\widehat{\vol}_{\chi}(L_{|E},\varphi_{|E}) - (d+1)\int_{\Omega}\left(\int_{X_{\omega}^{\an}}\ln|s_{\omega}|_{\Phi_{\omega}}(\diff\dc \varphi_{\omega})^{\wedge d}\right)\nu(\diff\omega)\\
& = (d+1)\left((L_{|E},\varphi_{|E})^{d}\right)_S - (d+1)\int_{\Omega}\left(\int_{X_{\omega}^{\an}}\ln|s_{\omega}|_{\Phi_{\omega}}(\diff\dc \varphi_{\omega})^{\wedge d}\right)\nu(\diff\omega)\\
& = (d+1)\left((L,\varphi)^{d}\cdot (A,\Phi)\right)_S,
\end{align*}
where: the first equality comes from Proposition \ref{prop:complement_chi_volume}, the second equality is given by the Hilbert-Samuel formula (cf. $(L_{|E},\varphi_{|E})$ is relatively ample) and the first one is given by definition of the arithmetic intersection product on $S$. Using the multilinearity of the arithmetic intersection intersection product, we obtain the desired explicit formula.
\end{proof}

\section{Logarithmic equidistribution over adelic curves}

The differentiability studied in \S \ref{sec:differentiability_adelic_curve} allows to prove a logarithmic equidistribution result. We generalise the results obtained in \cite{CLT09} in the framework of adelic curves.

Throughout this paragraph, we fix a proper adelic curve $S=(K,(\Omega,\cA,\nu),(\va_{\omega})_{\omega\in\Omega})$ where $K$ is a perfect countable field and a geometrically integral normal projective scheme $X$ of dimension $d$ over $\Spec(K)$. 

\subsection{Complements on maximal asymptotic slopes}
\label{sub:complement_maximal_asymptotic_slopes}

Let $(D,g)$ be an adelic $\bR$-Cartier divisor on $X$. Recall that the \emph{asymptotic maximal slope} of $(D,g)$, denoted by, $\widehat{\mu}_{\max}^{\asy}(D,g)$, is defined by
\begin{align*}
\widehat{\mu}_{\max}^{\asy}(D,g) := \displaystyle\limsup_{n\to+\infty}\frac{\widehat{\mu}_{\max}(H^{0}_{\bR}(X,nD),\xi_{ng})}{n}.
\end{align*}

\begin{lemma}
\label{lemma:CM23_prop_8.2.1_divisor}
Let $(D_1,g_1)$ and $(D_2,g_2)$ be adelic $\bR$-Cartier divisors on $E$ such that both $H^{0}_{\bR}(X,D_1)$ and $H^{0}_{\bR}(X,D_2)$ are non-zero. Then we have the inequality
\begin{align*}
\widehat{\mu}_{\max}((H^{0}_{\bR}(X,D_1),\xi_{g_1})\otimes(H^{0}_{\bR}(X,D_2),\xi_{g_2})) \geq \widehat{\mu}_{\max}((H^{0}_{\bR}(X,D_1),\xi_{g_1})) + \widehat{\mu}_{\max}((H^{0}_{\bR}(X,D_2),\xi_{g_2})) \\- \frac{3}{2}\nu(\Omega_{\ar})\ln\left(\dim_{K}\left(H^{0}_{\bR}(X,D_1)\right)\cdot\dim_{K}\left(H^{0}_{\bR}(X,D_2)\right)\right).
\end{align*}
\end{lemma}

\begin{proof}
The proof goes along the same lines of the one of (\cite{CM22_equidistribution}, Proposition 8.2.1). 
\end{proof}

\begin{proposition}
\label{prop:asymptotic_maximal_slope_R_divisor}
Assume that $D$ is big.
\begin{itemize}
	\item[(1)] Then the sequence $(\widehat{\mu}_{\max}(H^{0}_{\bR}(X,nD),\xi_{ng})/n)_{n\in \bN_{>0}}$ converges to $\widehat{\mu}_{\max}^{\asy}(D,g)$.
	\item[(2)] Then $\widehat{\mu}_{\max}^{\asy}(D,g)$ is equal to $\sup_{x\in \Delta(D)^{\circ}} G_{(D,g)}(x)$.
	\item[(3)] Let $a\in \bR_{\geq 0}$. Then $\widehat{\mu}_{\max}^{\asy}(aD,ag)=a\widehat{\mu}_{\max}^{\asy}(D,g)$.
	\item[(4)] Let $(D',g')$ be another adelic $\bR$-Cartier divisor on $X$ such that $D'$ is big. Then the following inequality holds:
	\begin{align*}
	\widehat{\mu}_{\max}^{\asy}(D+D',g+g') \geq \widehat{\mu}_{\max}^{\asy}(D,g) + \widehat{\mu}_{\max}^{\asy}(D',g').
	\end{align*}
\end{itemize}
\end{proposition}

\begin{proof}
(1) is a consequence of Lemma \ref{lemma:CM23_prop_8.2.1_divisor} as in the proof of (\cite{CM22_equidistribution}, Corollary 8.2.2).

(2) is exactly the statement of (\cite{ChenMori}, Lemma 6.4.17).

(3) Follows form (2) combined with the homogeneity of the concave transform.

Finally, (4) follows from Lemma \ref{lemma:CM23_prop_8.2.1_divisor} similarly as in the proof of (\cite{CM22_equidistribution}, Proposition 8.3.2).
\end{proof}

\subsection{Logarithmic equidistribution of closed points}
\label{sub:logarithmic_equidistribution_points}

Let $\overline{L}=(L,\varphi)$ be a relatively ample adelic line bundle on $X$. Let $(x_n)_{n\geq 0}$ be a sequence of closed points of $X$. We further assume that
\begin{itemize}
	\item[(i)] $(x_n)_{n\geq 0}$ is \emph{generic}, namely, for any closed subscheme $Z \subsetneq X$, the set $\{n\in\bN : x_n \in Z\}$ is finite;
	\item[(ii)] $(x_n)_{n\geq 0}$ is \emph{small}, namely the sequence of normalised heights $(h_{\overline{L}}(x_n))_{n\geq 0}$ converges to $h_{\overline{L}}(X)$.
\end{itemize}
For any $\omega\in\Omega$, for any $n\in \bN$, let $\delta_{\overline{L},x_n,\omega}$ denote the probability counting measure on the finite set $\{x_n\}^{\an}_{\omega}$. We also denote by $\delta_{\overline{L},X,\omega}$ the probability measure $(1/\vol(L))(\diff\dc\varphi)^{\wedge d}$.
Finally, let $\overline{M}=(M,\Phi)$ be an arbitrary adelic line bundle on $X$. Recall that the arithmetic intersection product $(\overline{L}^d \cdot \overline{M})_S$ is well defined.

The goal of this section is to prove Theorem \ref{th:logarithmic_equidistribution_intro}.

\begin{theorem}
\label{th:logarithmic equidistribution_points}
We assume that $M$ admits a non-zero global section $s$. Denote $D:=\div(s)$.
\begin{itemize}
	\item[(1)] We have
	\begin{align*}
	h_{\overline{L}}(D) \geq h_{\overline{L}}(X).
	\end{align*}
	\item[(2)] We further assume that $h_{\overline{L}}(D) = h_{\overline{L}}(X)$. Then we have 
	\begin{align}
	\label{eq:logarithmic_equidistribution_points}
	\displaystyle\lim_{n\to+\infty} \int_{\Omega}\left(\int_{X_{\omega}^{\an}} (-\ln|s|_{\Phi_{\omega}})\delta_{\overline{L},x_n,\omega}\right)\nu(\diff\omega) = \int_{\Omega}\left(\int_{X_{\omega}^{\an}} (-\ln|s|_{\Phi_{\omega}})\delta_{\overline{L},X,\omega}\right)\nu(\diff\omega).
	\end{align}
\end{itemize}
\end{theorem}

We start by recalling the following fact. 
\begin{lemma}
\label{lemma:pente_max_point}
Let $(L,\varphi)$ be an adelic line bundle on $X$ such that $L$ is pseudo-effective. Then, for any closed point $x\in X(\overline{K})$, we have
\begin{align*}
\widehat{\mu}_{\max}^{\asy}(\overline{L}_{|{x}}) = \widehat{\vol}_{\chi}(\overline{L}_{|x})=(\overline{L}_{|x})_{S}.
\end{align*}
\end{lemma}

The following limit result is the analogue of (\cite{CLT09}, Lemma 6.1) over adelic curves.
\begin{proposition}
\label{prop:CLT09_lemme_6.1_points}
With the above notation, we have
\begin{align*}
\displaystyle\lim_{n\to +\infty} h_{\overline{M}}(x_n) = \frac{\left(\overline{L}^d\cdot \overline{M}\right)_S}{\vol(L)} - \frac{d\widehat{\vol}_{\chi}(\overline{L})(L^{d-1}\cdot M)}{(d+1)\vol(L)^2} = \frac{\left(\overline{L}^d\cdot \overline{M}\right)_S}{\vol(L)} - \frac{dh_{\overline{L}}(X)(L^{d-1}\cdot M)}{\vol(L)}.
\end{align*}
\end{proposition}

\begin{proof}
Let $n\geq 0$ be an integer. We define a map 
\begin{align*}
\fonction{\psi_n}{U}{\bR}{t}{\widehat{\vol}_{\chi}(\overline{L}_{|{x_n}}+t\overline{M}_{|x_n}),}
\end{align*}
where $U$ is an open convex neighbourhood of $0$ in $\bR$ such that, for any $t\in U$, $L+tM$ is ample. Then Proposition \ref{prop:asymptotic_maximal_slope_R_divisor} (3) implies that, for any $n\geq 0$, $\psi_n$ is concave.

We now define 
\begin{align*}
\forall t\in U,\quad \Psi(t) := \displaystyle\liminf_{n\to+\infty}\psi_n(t).
\end{align*} 
The above lemma implies that $\Psi$ is a concave function. From the Hilbert-Samuel formula for $\overline{L}$ and since the sequence $(x_n)_{n\geq 0}$ is small, we have 
\begin{align*}
\Psi(0) = \displaystyle\liminf_{n\to+\infty}\left(\overline{L}_{|x_n}\right)_S = \liminf_{n\to+\infty}h_{\overline{L}}(x_n) = h_{\overline{L}}(X) = \frac{\widehat{\vol}_{\chi}(\overline{L})}{(d+1)\deg_{L}(X)}.
\end{align*}

Now Proposition 8.10.1 in \cite{CM22_equidistribution} yields 
\begin{align*}
\forall t\in U, \quad \Psi(t) = \displaystyle\liminf_{n\to+\infty}\widehat{\mu}_{\max}^{\asy}(\overline{L}_{|{x_n}} + t\overline{M}_{|x_n}) \geq \widehat{\mu}_{\max}^{\asy}(\overline{L}+t\overline{M}) \geq \frac{\widehat{\vol}_{\chi}(\overline{L}+t\overline{M})}{(d+1)\vol(L+tM)}.
\end{align*}

For any $t\in U$, let 
\begin{align*}
f(t) = \frac{\widehat{\vol}_{\chi}(\overline{L}+t\overline{M})}{(d+1)\vol(L+tM)}.
\end{align*}
Then Corollary \ref{corollary:formula_global_derivative} implies that $f$ and the $\Psi_n$ are both differentiable in $O$, and we have
\begin{align*}
f'(0) = \frac{\left(\overline{L}^d\cdot \overline{M}\right)_S}{\vol(L)} - \frac{d\widehat{\vol}_{\chi}(\overline{L})(L^{d-1}\cdot M)}{(d+1)\vol(L)^2},\\
\forall n\geq 0, \quad \Psi'_n(0) = h_{\overline{M}}(x_n).
\end{align*}
Therefore (\cite{BermanBoucksom08}, Lemma 7.6) ensures that 
\begin{align*}
\displaystyle\lim_{n\to +\infty} h_{\overline{M}}(x_n) = \frac{\left(\overline{L}^d\cdot \overline{M}\right)_S}{\vol(L)} - \frac{d\widehat{\vol}_{\chi}(\overline{L})(L^{d-1}\cdot M)}{(d+1)\vol(L)^2}.
\end{align*}
\end{proof}

\begin{proposition}
\label{prop:CLT09_lemme_6.2}
Let $\Omega'\in \cA$. Assume that $M$ admits a global section $s$ which forms a regular meromorphic section of $M$ over $X$. Then we have
\begin{align*}
\displaystyle\liminf_{n\to + \infty} \int_{X_{\Omega'}^{\an}} (-\ln |s|_{\Phi}(x))\delta_{\overline{L},x_n,\Omega'}(\diff x) \geq \int_{X_{\Omega'}^{\an}}(-\ln|s|_{\Phi}(x))\delta_{\overline{L},X,\Omega'}(\diff x).
\end{align*}
Moreover, 
\begin{align*}
\displaystyle\liminf_{n\to+\infty} h_{\overline{M}}(x_n) \geq  \int_{X_{\Omega}^{\an}}(-\ln|s|_{\Phi}(x))\delta_{\overline{L},X,\Omega}(\diff x).
\end{align*}
\end{proposition}

\begin{proof}
We adapt the proof of (\cite{CLT09}, Lemme 6.2). Let $R\in \bR$. Then (\cite{CM22_equidistribution}, Corollary 8.11.4) yields
\begin{align*}
\displaystyle\lim_{n\to+\infty} \int_{\Omega'}\left(\int_{X_{\omega}^{\an}}\min(R,-\ln|s|_{\Phi_{\omega}})\delta_{\overline{L},x_n,\omega}\right)\nu(\diff\omega) = \int_{\Omega'}\left(\int_{X_{\omega}^{\an}}\min(R,-\ln|s|_{\Phi_{\omega}})\delta_{\overline{L},X,\omega}\right)\nu(\diff\omega).
\end{align*}
Thus, for any $R\in \bR$, the inequality
\begin{align*}
\displaystyle\liminf_{n\to + \infty}  \int_{\Omega'}\left(\int_{X_{\omega}^{\an}}-\ln|s|_{\Phi_{\omega}}\delta_{\overline{L},x_n,\omega}\right)\nu(\diff\omega) \geq \int_{\Omega'}\left(\int_{X_{\omega}^{\an}}\min(R,-\ln|s|_{\Phi_{\omega}})\delta_{\overline{L},X,\omega}\right)\nu(\diff\omega)
\end{align*}
holds. Since $(\omega\in\Omega)\mapsto \int_{X_{\omega}^{\an}}-\ln|s|_{\Phi_{\omega}}\delta_{\overline{L},X,\omega}$ is $\nu$-integrable, the dominated convergence theorem implies the first assertion by taking the limit $R\to+\infty$.

The second assertion is a consequence of the first one with $\Omega'=\Omega$ combined with the fact that , for any $n\in\bN$, we have
\begin{align*}
h_{\overline{M}}(x_n) =  \int_{\Omega}\left(\int_{X_{\omega}^{\an}}-\ln|s|_{\Phi_{\omega}}\delta_{\overline{L},x_n,\omega}\right)\nu(\diff\omega).
\end{align*}
\end{proof}

In the following, for any $\Omega'\in \cA$, we denote
\begin{align*}
\epsilon_{\Omega'} := \displaystyle\liminf_{n\to + \infty}  \int_{\Omega'}\left(\int_{X_{\omega}^{\an}}-\ln|s|_{\Phi_{\omega}}\delta_{\overline{L},x_n,\omega}\right)\nu(\diff\omega) - \int_{\Omega'}\left(\int_{X_{\omega}^{\an}}-\ln|s|_{\Phi_{\omega}}\delta_{\overline{L},X,\omega}\right)\nu(\diff\omega).
\end{align*}
Note that Proposition \ref{prop:CLT09_lemme_6.2} can be reformulated as, for any $\Omega'\in\cA$, $\epsilon_{\Omega'}\geq 0$. We are now ready to give the proof of Theorem \ref{th:logarithmic equidistribution_points}.

\begin{proof}

\begin{lemma}
\label{lemme:réduction_hauteur_nulle} 
To prove Theorem \ref{th:logarithmic equidistribution_points}, we may assume that $h_{\overline{L}}(X) = 0$. 
\end{lemma}

\begin{proof}
For any $C\in\bR$, let $\varphi+C$ be the metric family on $L$ defined as follows: for any $\omega\in\Omega$, $(\varphi+C)_{\omega}$ is $\varphi_{\omega}$ if $\omega\in\Omega_{\um}$ and $\varphi_{\omega} + C$ if $\omega\in\Omega_{\ar}$. We denote $\overline{L}(C):=(L,\varphi+C)$. Let $C := -h_{\overline{L}}(X)/\nu(\Omega_{\ar})$. 
By a direct computation, we have
\begin{align*}
\forall n\in \bN,\quad \int_{\Omega}\left(\int_{X_{\omega}^{\an}}-\ln|s|_{\Phi_{\omega}}\delta_{\overline{L}(C),x_n,\omega}\right)\nu(\diff\omega) - \int_{\Omega}\left(\int_{X_{\omega}^{\an}}-\ln|s|_{\Phi_{\omega}}\delta_{\overline{L}(C),X,\omega}\right)\nu(\diff\omega)\\
= \int_{\Omega}\left(\int_{X_{\omega}^{\an}}-\ln|s|_{\Phi_{\omega}}\delta_{\overline{L},x_n,\omega}\right)\nu(\diff\omega) - \int_{\Omega}\left(\int_{X_{\omega}^{\an}}-\ln|s|_{\Phi_{\omega}}\delta_{\overline{L},X,\omega}\right)\nu(\diff\omega).
\end{align*}
Moreover, 
\begin{align*}
h_{\overline{L}(C)}(X) = \frac{(\overline{L}(C)^{d+1})_{S}}{(d+1)\deg_{L}(X)} = \frac{(\overline{L}^{d+1})_{S}}{(d+1)\deg_{L}(X)} + \frac{(\overline{\cO_X}(C)\cdot\overline{L}^{d})_{S}}{\deg_{L}(X)} = h_{\overline{L}}(X) + C\nu(\Omega_{\ar}) = 0.
\end{align*}
Hence the lemma.
\end{proof}

We assume that $h_{\overline{L}}(X)=0$. Let $D=\div(s)$, By definition of the arithmetic intersection product, we have  
\begin{align*}
\frac{(\overline{L}_{|D}^{d})_{S}}{\deg_{L}(X)} &= \frac{(\overline{L}^d\cdot \overline{M})_{S}}{\deg_{L}(X)} - \int_{\Omega}\left(\int_{X_{\omega}^{\an}}-\ln|s|_{\Phi_{\omega}}\delta_{\overline{L},X,\omega}\right)\nu(\diff\omega)\\
& = \displaystyle\lim_{n\to+\infty}h_{\overline{M}}(x_n)-\int_{\Omega}\left(\int_{X_{\omega}^{\an}}-\ln|s|_{\Phi_{\omega}}\delta_{\overline{L},X,\omega}\right)\nu(\diff\omega)\\
&\geq \epsilon_{\Omega},
\end{align*}
where we have used Proposition \ref{prop:CLT09_lemme_6.1_points} in the second equality and Proposition \ref{prop:CLT09_lemme_6.2} in the last inequality. 
We deduce $h_{\overline{L}}(D)\geq 0 = h_{\overline{L}}(X)$. In the $h_{\overline{L}}(D) = h_{\overline{L}}(X)=0$ case, we obtain $\epsilon_{\Omega}=0$. 
\end{proof}

\bibliographystyle{alpha}
\bibliography{biblio}

\end{document}